\newtheorem{theorem}{Theorem}[section]
\newtheorem{lemma}[theorem]{Lemma}
\newtheorem{corollary}[theorem]{Corollary}
\newtheorem{proposition}[theorem]{Proposition}
\newtheorem{question}[theorem]{Question}
\newtheorem*{conjecture*}{Conjecture}
\newtheorem*{corollary*}{Corollary}
\newtheorem*{claim*}{Claim}
\newtheorem*{theorem*}{Theorem}
\theoremstyle{remark}
\newtheorem{remark}[theorem]{Remark}
\theoremstyle{definition}
\newtheorem{definition}[theorem]{Definition}
\newtheorem*{definition*}{Definition}
\newtheorem*{notation*}{Notation}
\newtheorem{example}[theorem]{Example}
\newcommand{\CG}{\mathcal{G}}
\newcommand{\A}{\mathcal{A}}
\newcommand{\Z}{\mathbb{Z}}
\newcommand{\Q}{\mathbb{Q}}
\newcommand{\N}{\mathbb{N}}
\newcommand{\BL}{\mathbb{S}}
\newcommand{\CA}{\mathcal{A}}
\newcommand{\CL}{\mathcal{L}}
\newcommand{\CR}{\mathcal{R}}
\newcommand{\CW}{\mathcal{W}}
\newcommand{\GL}{\mathrm{GL}}
\newcommand{\SL}{\mathrm{SL}}
\newcommand{\ab}{\mathrm{ab}}
\newcommand{\id}{\mathrm{Id}}
\newcommand{\diag}{\mathrm{diag}}
\newcommand{\rts}{\mathfrak{R}}
\newcommand{\aut}{\textnormal{Aut}}
\newcommand{\inert}{\textnormal{Inert}}
\newcommand{\simp}{\textnormal{Simp}}
\newcommand{\homeo}{\textnormal{Homeo}}
\newcommand{\bij}{\textnormal{Bijection}}
\newcommand{\sym}{\textnormal{Sym}}
\newcommand{\alt}{\textnormal{Alt}}
\newcommand{\topo}{\textnormal{top}}
\newcommand{\ev}{\textnormal{ev}}
\newcommand{\autinf}[1]{\textnormal{Aut}^{(\infty)}(\sigma_{#1})}
\newcommand{\autk}[2]{\textnormal{Aut}^{(#1)}(\sigma_{#2})}
\newcommand{\inertinf}[1]{{\inert}^{(\infty)}(\sigma_{#1})}
\newcommand{\inv}[1]{\textnormal{Inv}(\sigma_{n})}
\newcommand{\infinv}[1]{\textnormal{Inv}^{\infty}(\sigma_{n})}
\newcommand{\Homeo}{{\rm Homeo}}
\newcommand{\im}{{\rm Im}}
\newcommand{\Id}{{\rm Id}}
\newcommand{\simpinf}[1]{\textnormal{Simp}^{(\infty)}(\Gamma_{#1})}
\newcommand{\simpinfev}[1]{\textnormal{Simp}_{\ev}^{(\infty)}(\Gamma_{#1})}
\newcommand{\etwo}[2]{{\scriptsize \begin{pmatrix} #1 \\ #2 \end{pmatrix}}}
\newcommand{\etwoid}[2]{{\scriptsize \textnormal{$\id$ on } \begin{pmatrix} #1 \\ #2 \end{pmatrix}}}
\newcommand{\smalldots}{\ldots}
\title{The stabilized automorphism group of a subshift}
\author{Yair Hartman}
\address{Ben-Gurion University of the Negev, Israel}
\email{hartmany@bgu.ac.il}
\author{Bryna Kra}
\address{Northwestern University, Evanston, IL 60208 USA}
\email{kra@math.northwestern.edu}
\author{Scott Schmieding}
\address{University of Denver, Denver, CO 80208 USA}
\email{scott.schmieding@du.edu}
\thanks{YH was partially supported by ISF grant 1175/18, BK by NSF grant 1800544, and SS by NSF grant 1502643.}
\begin{document}
\keywords{automorphism, shift of finite type.}
\subjclass[2010]{Primary 37B10}
\begin{abstract}
For a mixing shift of finite type, the associated automorphism group has a rich algebraic structure,
and yet we have few criteria to distinguish when two such groups are isomorphic. We introduce a stabilization of the automorphism group, study its
 algebraic properties, and use them to distinguish many of the stabilized automorphism groups. We also show that for a full shift, the subgroup of the stabilized automorphism group generated by elements of finite order is simple, and that the stabilized automorphism group is an extension of a free abelian group of finite rank by this simple group.
\end{abstract}

\maketitle
\setcounter{tocdepth}{1}
%\tableofcontents
\section{Distinguishing automorphism groups}
\subsection{Automorphism groups and stabilized automorphism groups}
Let $(X, \sigma)$ be a shift over a finite alphabet $\A$,  that is, $X\subset\A^\Z$  is closed and invariant under the left shift $\sigma\colon \A^\Z\to \A^\Z$.   The automorphism group $\aut(X, \sigma)$ of the shift is the collection of homeomorphisms $\phi\colon X\to X$ such that $\phi\circ\sigma = \sigma\circ\phi$. For many shifts with complicated dynamical behavior, including any mixing shift of finite type, the associated automorphism group is known to have a rich algebraic structure, for example containing isomorphic copies of any finite group,  the countably infinite direct sum of copies of $\Z$, and the free group on two generators (see~\cite{H, BLR}). In contrast to shifts of finite type, numerous results show that for many zero entropy shifts, the automorphism group is more constrained (see for example~\cite{CK1,DDMP,CK2}).

In spite of much attention, several natural and simple to state questions remain open.  Boyle, Lind, and Rudolph~\cite{BLR} raised the question of distinguishing (up to isomorphism) the automorphism groups of full shifts $(X_n, \sigma_n)$ for various $n$ (meaning $X_n = \A^\Z$ and the alphabet $\A$ has $n$ symbols). They ask if the automorphism group of the full shift on $2$ symbols is isomorphic to the automorphism group of the full shift on $3$ symbols, and more generally, for which $p$ and $q$ the groups $\aut(X_p,\sigma_p)$ and $\aut(X_q,\sigma_q)$ are isomorphic as groups. For some choices of $p$ and $q$, such as when $q=p^2$ for a prime $p$, one can show that the associated automorphism groups are not isomorphic (this was explicitly pointed out for $2$ and $4$ in~\cite{BLR}, and we
make note in Theorem~\ref{th:rootdistinguish} of the natural generalization using their method).  But for general $p$ and $q$, this problem remains open.

While many groups are known to embed into the automorphism group of a shift of finite type,
the subgroup structure of the automorphism groups can not be used to distinguish them, as
shown by a result of Kim and Roush~\cite{KR90}.  Namely, they
showed the automorphism group of any full shift can be embedded into the automorphism group of any other full shift (in fact, it can be embedded into the automorphism group of any mixing shift of finite type). Thus any strategy for distinguishing two automorphism groups relying on finding some subgroup of one that does not lie in the other,
 must fail.

Taking a new approach to this problem, we define a certain stabilization of the automorphism group, and show that many of these stabilized groups can be distinguished (up to isomorphism) based only on the alphabet size. To simplify notation, we suppress the associated space in the notation for the automorphism group,
writing $\aut(\sigma)$ instead of $\aut(X, \sigma)$. For a subshift $(X, \sigma)$, we define the {\em stabilized automorphism group $\aut^{(\infty)}(\sigma)$} to be
$$
\aut^{(\infty)}(\sigma) = \bigcup_{k=1}^\infty\aut(\sigma^k).
$$
Passing from the non-stabilized automorphism group to the stabilized setting offers certain advantages, and some of our results are analogous to what happens in the realm of algebraic K-theory. Given a ring $\mathcal{R}$, one defines the stabilized general linear group $GL(\mathcal{R})$ by taking the union of the finite general linear groups $GL_{n}(\mathcal{R})$. An important subgroup of $GL_{n}(\mathcal{R})$ is $E_{n}(\mathcal{R})$, the subgroup generated by elementary matrices (matrices which differ from the identity in at most one coordinate), and in 1950, Whitehead proved that, upon stabilizing, the commutator of $GL(\mathcal{R})$ coincides with the stabilized subgroup of elementary matrices $E(\mathcal{R})$. One way to interpret this result is that, by stabilizing, a certain abstract subgroup which is defined group-theoretically (in this case the commutator) may be identified with a concrete naturally occurring subgroup: the group of stabilized elementary matrices. In our setting, stabilizing produces analogous results. While the commutator of $\aut(\sigma)$ is not very well understood, we prove in Theorem~\ref{thm:stablecommutator} that, at the stabilized level, the abelianization of $\aut^{(\infty)}(\sigma)$ coincides with the abelianization of a certain explicit quotient of $\aut^{(\infty)}(\sigma)$: the dimension representation (see Section~\ref{sec:stabdimrep} for definitions). Thus in many cases (e.g. when $(X,\sigma)$ is a full shift), the commutator subgroup of $\aut^{(\infty)}(\sigma)$ coincides with a certain naturally occurring subgroup (the subgroup of stabilized inert automorphisms).

Illustrating the stronger tools available in the stabilized setting, we are able to distinguish many stabilized automorphism groups for which there are currently no techniques to distinguish the (non-stabilized) counterparts. In particular, in Section~\ref{sec:commutator}, we show that the stabilized automorphism groups of full shifts on alphabets with different numbers of prime factors can not be isomorphic:
\begin{theorem}
\label{thm:main-distinguish}
Assume that $(X_m, \sigma_m)$ and $(X_n, \sigma_n)$ are the
full shifts on $m$ and $n$ symbols for some integers $m,n\geq 2$ and assume
that the stabilized automorphism group $\autinf m$ on $m$ symbols and
the stabilized automorphism group $\autinf n$ on $n$ symbols are isomorphic.
Then $m$ and $n$ have the same number of distinct prime divisors.
\end{theorem}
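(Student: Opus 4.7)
The overall strategy is to pin down an algebraic invariant of $\aut^{(\infty)}(\sigma_n)$ that depends only on $\omega(n)$, the number of distinct prime divisors of $n$. The natural candidate, as suggested by the discussion preceding the theorem, is the abelianization (and in particular its torsion-free rank).

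First, I would invoke Theorem~\ref{thm:stablecommutator}: the abelianization of $\aut^{(\infty)}(\sigma_n)$ coincides with the abelianization of the image of the stabilized dimension representation. Any isomorphism $\aut^{(\infty)}(\sigma_m)\cong \aut^{(\infty)}(\sigma_n)$ induces an isomorphism of these two abelianizations, so it suffices to extract from this image an invariant equal to $\omega(n)$.

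Next, I would compute the image of the stabilized dimension representation for a full shift on $n$ symbols. The dimension group is $(\Z[1/n],\Z[1/n]_+)$ with the shift acting as multiplication by $n$, so any dimension image commutes with $\times n$ and preserves the positive cone; hence it must be multiplication by a positive unit of $\Z[1/n]$. The group of positive units of $\Z[1/n]$ is the free abelian group on the distinct prime divisors $p_1,\ldots,p_r$ of $n$, isomorphic to $\Z^r$. The nontrivial direction is that this upper bound is attained in the stabilized setting: for each prime divisor $p$ of $n$, one must produce some $k$ and some $\phi\in\aut(\sigma_n^k)$ whose induced action on the dimension group is multiplication by $p$. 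This is exactly what stabilization unlocks (for $k=1$ the image is only $\langle n\rangle$), and it follows from the realization machinery for dimension data of shifts of finite type, i.e.\ the ability to explicitly build SFT automorphisms with prescribed action on $\Z[1/n]$ in the style of Kim--Roush and Boyle--Handelman. Putting these generators together in $\aut^{(\infty)}(\sigma_n)$, the image of the stabilized dimension representation is all of $\Z^r$ (possibly with an additional $\{\pm 1\}$-torsion factor, depending on conventions).

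Finally, the abelianization of this image has torsion-free rank $r=\omega(n)$, and torsion-free rank is an isomorphism invariant of abelian groups. Combined with the first step, an isomorphism $\aut^{(\infty)}(\sigma_m)\cong \aut^{(\infty)}(\sigma_n)$ forces $\omega(m)=\omega(n)$, which is the conclusion.

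\textbf{Main obstacle.} The crux is the realization step: showing that the stabilized dimension representation surjects (up to torsion) onto the full unit group $\langle p_1,\ldots,p_r\rangle$. Equivalently, one needs, for each prime divisor $p$ of $n$, an explicit construction of an automorphism of $\sigma_n^k$ (for suitable $k$) whose dimension action is $\times p$. Once this is in hand, invoking Theorem~\ref{thm:stablecommutator} and extracting torsion-free rank is essentially formal.
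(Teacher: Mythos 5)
Your overall route is the same as the paper's: by Theorem~\ref{thm:stablecommutator} the commutator subgroup of $\autinf{n}$ is exactly $\inertinf{n}$, so the abelianization of $\autinf{n}$ is the image of the stabilized dimension representation; one then identifies this image with $\mathbb{Z}^{\omega(n)}$ (Proposition~\ref{prop:stableautodimgroup}, Corollary~\ref{cor:rankprimediv}) and uses the rank of the abelianization as the distinguishing invariant (Theorem~\ref{thm:invofprims}). The logical skeleton is correct.

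However, your diagnosis of the ``main obstacle'' contains a factual error that would send you down the wrong path. You assert that for $k=1$ the image of the dimension representation is only $\langle n\rangle$ and that stabilization is what unlocks surjectivity onto the full positive unit group. For a full shift this is false: the classical, unstabilized dimension representation $\pi_{n}\colon\aut(\sigma_{n})\to\aut(\mathcal{G}_{n})\cong\mathbb{Z}^{\omega(n)}$ is already surjective (Proposition~\ref{prop:dimrepnonstable}), and the construction is elementary rather than a deep realization result: writing $n=\prod_{i}p_{i}^{a_{i}}$, conjugate $(X_{n},\sigma_{n})$ to $\bigl(\prod_{i}X_{p_{i}},\prod_{i}\sigma_{p_{i}}^{a_{i}}\bigr)$ and take the automorphism $\phi_{i}$ that applies $\sigma_{p_{i}}$ in the $i$th coordinate and the identity elsewhere; its dimension action is multiplication by $p_{i}$. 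This matters beyond pedantry, because the ``realization machinery'' you propose to invoke cannot be cited as a black box: for general mixing shifts of finite type the dimension representation is \emph{not} surjective (Kim--Roush--Wagoner), so ``the ability to build SFT automorphisms with prescribed action on $\mathbb{Z}[\tfrac{1}{n}]$'' is not a theorem you may appeal to in general; you need either the explicit full-shift construction above or the stabilized surjectivity of Boyle--Lind--Rudolph (Theorem~\ref{thm:stabledimrepsurj}). A smaller point: preservation of the positive cone rules out your hedged $\{\pm 1\}$ factor, since any admissible automorphism of $(\mathbb{Z}[\tfrac{1}{n}],\mathbb{Z}_{+}[\tfrac{1}{n}])$ is multiplication by a positive unit. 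Once the surjectivity step is filled in correctly, the remainder of your argument coincides with the paper's.
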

In particular, this means that the stabilized automorphism groups on $2$ symbols and $6$ symbols are not isomorphic; the analog of this result for the (non-stabilized) automorphism groups on $2$ and on $6$ symbols remains open.
However, our results do not distinguish the stabilized automorphism groups with $2$ and $3$ symbols, nor those with $6$ and $12$ symbols, and another method is needed to address this question (see Question~\ref{conj:stable}).

In Section~\ref{sec:stables}, we prove various properties of the stabilized automorphism group, and compare them to the (non-stabilized) automorphism group of the shift.
It is easy to check that, as for the automorphism group, the stabilized automorphism group is countable. We also prove that, like the automorphism group, the stabilized automorphism group is not finitely generated;
in contrast, though,
 the proof is quite different from the proof for the non-stabilized case.

However, differences between the (non-stabilized) automorphism group and the stabilized group appear quickly. For example, while Ryan's Theorem~\cite{R1, R2} states that the center of the automorphism is exactly the powers of the shift, in Proposition~\ref{prop:trivial-center} we show that the stabilized automorphism group has a trivial center.

A mixing shift of finite type $(X_{A},\sigma_{A})$ has
a dense set of periodic points, and as a result, the action of the
automorphism group on $X_A$ is far from minimal, and has many invariant measures.  However, it follows from a result of Boyle, Lind, and Rudolph~\cite{BLR} that the $\autinf{A}$-action  on the space $X_A$ is
minimal and uniquely ergodic. We discuss this in Section~\ref{sec:autinfaction}.

An important tool for studying $\aut(\sigma)$ when $(X,\sigma)$ is a shift of finite type is the dimension representation, a certain homomorphism from $\aut(\sigma)$ to the group of automorphisms of an ordered abelian group associated to $(X,\sigma)$. The kernel of this dimension representation, known as the subgroup of inert automorphisms, is a large, algebraically rich subgroup of $\aut(\sigma)$: for example, in the case of a full shift, the automorphism group is an extension of a finitely generated free abelian group by the inert subgroup.  However, in general the inert subgroup is not  well understood. In Section~\ref{sec:stables} we show the dimension representation extends naturally to a stabilized dimension representation, and that the abelianization of the group $\aut^{(\infty)}(\sigma)$ factors through this stabilized dimension representation. Similar to the non-stabilized group $\aut(\sigma)$, the kernel of the stabilized dimension representation, which we refer to as the group of stabilized inerts, constitutes the core combinatorial part of $\aut^{(\infty)}(\sigma)$. In the classical (non-stabilized) setting, the
inert subgroup $\inert(\sigma) \subset \aut(\sigma)$ is residually finite, and hence far from simple. In stark contrast to this, in Section~\ref{sec:simplicity},
we prove:
\begin{theorem}
\label{th:even-simple}
For any $n \ge 2$, the group of stabilized inert automorphisms of the full shift $(X_{n},\sigma_{n})$ is simple.
\end{theorem}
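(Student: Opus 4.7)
The plan is to prove simplicity via the classical normal-closure argument: given a nontrivial normal subgroup $N \trianglelefteq \inertinf n$, I would show $N$ contains a full generating family by exploiting the flexibility that stabilization provides. The overall strategy is analogous to the proof that the infinite alternating group $A_\infty$ is simple, viewing $\inertinf n$ as the direct limit of inert subgroups $\inert(\sigma_n^k) \cong \inert(\sigma_{n^k})$, where each successive inclusion provides substantially more room to maneuver.

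First I would fix a convenient family of generators for $\inertinf n$, such as the ``block swap'' or marker automorphisms of Boyle--Lind--Rudolph. For suitable pairs of isolated blocks these swaps are inert, and as one varies the block length (equivalently, the stabilization level $k$) they should generate the whole stabilized inert subgroup. Once such a family is in hand, simplicity reduces to showing that the normal closure of any single nontrivial element contains at least one of these generators. Given $\phi \in N \setminus \{1\}$, I would choose a periodic point $x$ with $\phi(x) \neq x$ and pass to a stabilization level $k$ so large that $\phi \in \aut(\sigma_n^k)$, the period of $x$ divides $k$, and there exist ``small'' inert automorphisms $\psi$ whose action is localized near $x$. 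The commutator $[\phi,\psi]$ then lies in $N$ and, by a careful choice of $\psi$, should have strictly smaller support than $\phi$. Iterating this reduction eventually deposits a single elementary block swap inside $N$, and conjugating this swap by elements of $\inertinf n$ produces the full generating family, giving $N = \inertinf n$.

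The principal obstacle is the support-reduction step: controlling the support of $[\phi,\psi]$ and ensuring that it is strictly smaller than that of $\phi$. This is precisely where stabilization is essential: the non-stabilized inert subgroup $\inert(\sigma_n)$ is residually finite and far from simple (as noted earlier in the paper), but after stabilizing, every fixed level is contained in higher levels that supply many localized inert automorphisms, and this extra room should allow the commutator trick to strictly simplify an arbitrary element. Executing this reduction carefully, likely via a marker-based construction that balances the complexity of $\phi$ against the local freedom gained by passing to higher powers of $\sigma_n$, is the technical heart of the proof.
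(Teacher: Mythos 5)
There is a genuine gap, and it sits exactly at the step you identify as the technical heart: the support-reduction commutator trick. Your plan requires, for a periodic point $x$ moved by $\phi$, ``small'' inert automorphisms $\psi$ at some high stabilization level whose action is localized near $x$, so that $[\phi,\psi]\in N$ has strictly smaller support. But no such elements exist: for a mixing shift of finite type, if $\id\ne\psi\in\autinf{A}$ then $\psi\ne\id$ on every nonempty open subset of $X_{A}$. Indeed, if $\psi$ commutes with $\sigma_{A}^{k}$ and equals the identity on an open set $U$, then its fixed-point set is a closed $\sigma_{A}^{k}$-invariant set with nonempty interior, which by transitivity of $(X_{A},\sigma_{A}^{k})$ must be all of $X_{A}$. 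Stabilizing does not help here --- passing to higher powers of the shift never produces elements of small topological support --- so the analogy with $A_\infty$, topological full groups, and the Juschenko--Monod/Matui/Nekrashevych style arguments breaks down at the outset, and the iteration that is supposed to ``deposit a single elementary block swap inside $N$'' has no first step. (One could try to reinterpret ``support'' combinatorially, e.g.\ as the set of periodic orbits moved, but you would then need an entirely different mechanism to shrink it, and you give none.)

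The actual argument goes a different way. The generating family is not marker automorphisms but the conjugates $\sigma_{n}^{-m}\simpinf{n}\sigma_{n}^{m}$ of the locally finite group of stabilized \emph{simple graph} automorphisms, which generate $\inertinf{n}$ by Wagoner's finite-order-generation theorem in the sharpened form due to Boyle ($\alpha=\psi_{1}\sigma^{m}\psi_{2}\sigma^{-m}$ with $\psi_{i}$ simple graph automorphisms). The group $\simpinf{n}$ is itself simple for a full shift (a direct limit of products of symmetric groups along embeddings that land in the alternating parts), so everything reduces to showing that a nontrivial normal $N$ meets some conjugate $\sigma_{n}^{m}\simpinf{n}\sigma_{n}^{-m}$ nontrivially. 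That reduction is the hard part, and it is carried out not by shrinking supports but by encoding the pairs $(\phi_{1},\phi_{2})$ of simple graph symmetries with $\phi_{1}\sigma\phi_{2}^{-1}\sigma^{-1}\in N$ as a subgroup of $\sym(E^{(2)})\times\sym(E^{(2)})$, applying Goursat's Lemma, and using Jordan's primitivity criterion to force the relevant projections to be $\sym(E^{(2)})$ or $\alt(E^{(2)})$, at which point a rigidity argument yields a contradiction unless $N$ already contains a nontrivial simple graph automorphism. Your proposal would need to be rebuilt around some such mechanism rather than localization.
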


In some sense, the stabilized automorphism groups capture different information about the shift system than the non-stabilized automorphism groups.
For example, the stabilized automorphism groups for the full shift on $2$ symbols and on $4$ symbols are isomorphic, whereas for the automorphism groups this is essentially the only case in which these groups can be distinguished.  However,
there is often an advantage in working with a stabilized object
involving sufficiently high powers of the transformation, rather than the original object.  Examples of success in solving problems in the stabilized setting, but which are still open in the non-stabilized setting, are
Wagoner's Finite Order Generation Theorem~\cite{WagonerEFOG} for stabilized inert automorphisms,  the classification~\cite{williams, KR79} of shifts of finite type up to topological conjugacy, and the characterization~\cite{BMT}
of the existence of a closing factor map between equal entropy mixing shifts of finite.
Some of these results, in turn, have shed light on problems in the non-stabilized setting, such as the use of shift equivalence to address the problem of classification of shifts of finite type up to conjugacy.

In this direction, we use our results on the stabilized automorphism group to address a question about the (non-stabilized) automorphism group.
In~\cite{WagonerEFOG}, Wagoner, asked whether the group of inert automorphisms is
always generated by simple automorphisms. Kim and Roush~\cite{KR91b} answered Wagoner's question by constructing a particular shift of finite type which has an inert automorphism that is not a product of simple automorphisms. Our methods (together with the realization results in~\cite{KRWinertaction1, KRWinertaction2}) also show that the same result holds for a wide class of shifts of finite type; for example, any shift of finite type having at least three fixed points and no points of least period two (we note this can also be deduced using some results from~\cite{Boyle88proc}, though our methods are quite different).
However, we do not know if this phenomena is even more general, and it is possible that the same result holds
for any shift of finite type (including the full shift).
A related problem is posed in Question~\ref{question:indexcomm}.

In Section~\ref{sec:stabilizedembedding} we prove a stabilized version of the Kim-Roush Embedding Theorem; namely, we show the stabilized automorphism group of any full shift embeds into the stabilized automorphism group of any mixing shift of finite type. We use this to show that, unlike the classical automorphism group, the stabilized automorphism group of a mixing shift of finite type is never residually finite. We also prove along the way that the stabilized group contains divisible subgroups, highlighting another difference with the classical setting.

\subsection{Guide to the paper}
In Section~\ref{sec:background} we give an overview of the tools we need from the classical setting of (non-stabilized) automorphism groups.
Most of these results appear scattered throughout the literature, and we present them with the goal of generalizing and adapting these results for the setting of stabilized automorphisms. Along the way, in Theorem~\ref{th:rootdistinguish} we write down the natural generalization of the observation made by Boyle, Lind, and Rudolph~\cite{BLR} that Ryan's Theorem may be used to distinguish the automorphism groups of the full 2 shift and the full 4 shift.

In Section~\ref{sec:stables}, we introduce the stabilized automorphism group. The basic properties are small variations on the classical setting, allowing us to set up and study the stabilized versions of the center, the dimension representation, and the inert subgroup.
The innovations arise when we turn to studying the commutator subgroup of the stabilized automorphism group. The key ingredient used throughout this section that is not available in the classical setting is Wagoner's Theorem, which shows that the stabilized inert automorphisms are generated by simple automorphisms. Our analysis in particular leads to Theorem~\ref{th:wagoner-answer}, which, in conjunction with the constructions in~\cite{KRWinertaction1, KRWinertaction2}, gives a method to detect, in the classical non-stabilized setting, the difference between the subgroup of inerts and the subgroup generated by simple automorphisms.
In Section~\ref{sec:abelianization}, we study the abelianization of the stabilized automorphism group.  Using our characterization of the commutator, we show how the abelianization can be used to distinguish many automorphism groups in the stabilized setting.

Section~\ref{sec:stabilizedembedding} continues the extension of various properties from the classical setting to the stabilized automorphism group. In particular, we prove a stabilized version of the Kim-Roush Embedding Theorem. The proof adapts the original construction used by Kim and Roush, with some necessary modifications.

The most difficult arguments of the paper are in Section~\ref{sec:simplicity}, where we show that the group of stabilized inert automorphisms of a
full shift is simple.
For a given shift of finite type presented by a labeled graph $\Gamma$, the group of stabilized inerts contains a certain locally finite subgroup of stabilized simple graph automorphisms associated to the presenting graph $\Gamma$. In the case of a full shift, this locally finite subgroup turns out to be simple. By a result of Boyle, this locally finite subgroup, together with the shift, generates all of the stabilized inert subgroup. The key ingredient for us then is Lemma~\ref{lemma:lemma1}, which shows that any non-trivial normal subgroup of the stabilized inert automorphisms must have non-trivial intersection with the subgroup of stabilized simple graph automorphisms. The proof of Lemma~\ref{lemma:lemma1} occupies the majority of the section.

\section{Background and notation}
\label{sec:background}
\subsection{Symbolic Dynamics}
Assume that $\A$ is a finite set endowed with the discrete topology; we call $\A$ the {\em alphabet}.   The
space $\A^{\Z}$, endowed with the product topology, is a compact, metrizable space.
An element $x\in \A^{\Z}$ is a bi-infinite sequence over the alphabet $\A$,
and we write  $x=(x_i)_{i\in\Z}$ with each $x_i\in\A$.
It is easy to check that the left shift $\sigma\colon\A^{\Z}\to\A^{\Z}$ defined by $(\sigma x)_i:=x_{i+1}$ is a homeomorphism of $\A^\Z$ to itself, and the dynamical system $(\A^{\Z},\sigma)$ is called the {\em full $\A$-shift}.  While the choice of symbols in the alphabet is irrelevant, we often
want to distinguish different full shifts by the size of the alphabet $\A$, and
so to emphasize the size of the alphabet, we write the full shift as $(X_n,\sigma_n)$ when $|\CA| = n$.

A {\em subshift} $X\subset\A^{\Z}$ is a closed, $\sigma$-invariant set $X$, and we use the shorthand {\em shift} to refer to a subshift.  We write $(X, \sigma_X)$ for this system, though when the context is clear we simplify this and just write $(X, \sigma)$.

If $w=w_1\ldots w_n\in\CA^n$, then we call $w$ a {\em word of length $n$}.  If $w$ is a word of length $n$, then
the set $[w]$ defined by
$$[w] = \{x\in \CA^\Z\colon x_i = w_i \text{ for } i=1, \ldots n\}
$$
is the {\em cylinder set determined by $w$}.  If $(X, \sigma)$ is a subshift, then the {\em language $\CL(X)$ of $X$}
is defined by
$$
\CL(X) = \{w\in\bigcup_{n=1}^\infty \CA^n\colon [w]\cap X \neq\emptyset\}.
$$
The cylinder sets associated to words in $\CL(X)$ generate the topology of the space $X$.

If $x\in X$ and $k,m\in\Z$ with $m> k$, then  $x_{[k,m]}$ denotes the word $x_kx_{k+1}\ldots x_m$ of consecutive entries in $x$.  Analogously,  $x_{(-\infty, m]}$ denotes the infinite word $\ldots  x_{m-1}x_m$, and we similarly define $x_{[k,\infty)}$.

A shift $(X, \sigma)$ is {\em irreducible} if for all words $u,v\in\CL(X)$ there exists some $w\in\CL(X)$ such that $uwv\in\CL(X)$, and the shift is {\em mixing} if for all $u,v\in\CL(X)$, there exists $N\in\N$  such that for all $n\geq N$ there is a word $w\in\CL(X)$ of length $n$ such that $uwv\in\CL(X)$.
Irreducibility of the shift $(X, \sigma)$ is equivalent to the system $(X,\sigma)$ being {\em transitive}: there exists some $x\in X$ such that the orbit closure $\overline{\{\sigma^n x\}_{n\in \mathbb{N}}}$ is all of $X$.

Two systems $(X,\sigma_{X})$ and $(Y,\sigma_Y)$ are {\em (topologically) conjugate} if there exists a homeomorphism
$h\colon X\to Y$ such that $h\circ \sigma_{X} = \sigma_{Y} \circ h$ and we refer to the map $h$ as a {\em conjugacy}.

A {\em shift of finite type} is a subshift whose language consists of all words (over some finite alphabet) which do not contain some given finite list of words.  Alternatively, a shift of finite type can be defined by an $n\times n$ adjacency matrix $A = (a_{i,j})$ over $\Z_+$ as follows. Given $A$, we define $\Gamma_{A}$ to be a graph with $n$ vertices and $a_{i,j}$ edges between vertices $i$ and $j$. Labeling the set of edges, the associated shift of finite type, which we denote by $(X_{A},\sigma_{A})$, consists of bi-infinite walks through edges in $\Gamma_{A}$. Any shift of finite type $(X,\sigma)$ is conjugate to a shift of finite type $(X_{A},\sigma_{A})$ for some $\mathbb{Z}$-matrix $A$.

A shift of finite type $(X,\sigma)$ is mixing if and only if it is conjugate to a shift of finite type $(X_{A},\sigma_{A})$ for which the $\mathbb{Z}_{+}$-matrix $A$ is \emph{primitive}, meaning there exists $K$ such that every entry of $A^{K}$ is positive. A shift of finite type $(X,\sigma)$ is irreducible if and only if it is conjugate to some $(X_{A},\sigma_{A})$ for which $A$ is an irreducible matrix, i.e. for any entry $A_{i,j}$ in $A$ there exists $k$ such that $A_{i,j}^{k}$ is positive.

\textbf{Standing Assumption: } Unless otherwise noted, we always assume that any shift of finite type  $(X,\sigma)$ has positive entropy $h_{\topo}(X)$: in terms of the language, this means that
$$ h_{\topo}(X) =\lim_{n\to \infty} \frac{\log |\{w\in \CL(X)\colon  |w|=n\}|}{n} >0 . $$
In terms of a matrix presentation, if $A$ is an irreducible matrix and $(X,\sigma)$ is conjugate to $(X_{A},\sigma_{A})$, then $h(\sigma) = h(\sigma_{A}) = \log \lambda_{A}$ where $\lambda_{A}$ is the Perron-Frobenius eigenvalue of the matrix $A$.

It follows from the  Curtis-Hedlund-Lyndon Theorem~\cite{H} that any such conjugacy is given by a {\em sliding block code}, meaning there exists some radius $r\in\N$ such that
for all $x\in X$, the value $h(x)_i$ only depends on the entries $x_{i-r}\ldots x_i\ldots x_{i+r}$.  For example, the shift $\sigma$ is given by a sliding block code with $r=1$.

\subsection{Automorphism groups}
Given a compact space $X$, let $\Homeo(X)$ denote the group of all homeomorphisms from $X$ to itself (with group operation given by composition). It is obvious that for a shift system $(X, \sigma)$ one has $\sigma\in\Homeo(X)$, and the centralizer of $\sigma$ in $\Homeo(X)$ is
called the {\em automorphism group of the subshift $(X, \sigma)$}.
As we consider various shift spaces, we denote the group (under composition) of all automorphisms of a subshift
$(X, \sigma)$ by $\aut(X,\sigma)$, and when the shift is clear from the context, we write this as $\aut(\sigma)$.
So the automorphism group of the full shift on $n$ letters is denoted by $\aut(\sigma_n)$.

 A topological conjugacy $h \colon (X,\sigma_{X}) \to (Y,\sigma_{Y})$ between shift spaces $(X, \sigma_X)$ and $(Y, \sigma_Y)$  induces an isomorphism
 $h_{*} \colon \aut(X,\sigma_{X}) \to \aut(Y,\sigma_{Y})$ defined by
\begin{equation*}
h_{*}(\phi) = h \circ \phi \circ h^{-1}.
\end{equation*}

For any subshift $(X, \sigma)$, the subgroup $\langle\sigma\rangle$ generated by the shift always lies, by definition, in the center $Z(\aut(X))$ of the automorphism group $\aut(X)$; when $X$ is infinite, the subgroup generated by $\sigma$ is isomorphic to $\mathbb{Z}$. For an irreducible shift of finite type, this subgroup is the whole center:
\begin{theorem}[Ryan~\cite{R1, R2}]
\label{th:ryan}
If $(X, \sigma)$ is an infinite irreducible shift of finite type, then $Z(\aut(X)) = \langle\sigma\rangle$.
\end{theorem}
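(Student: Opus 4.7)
The plan is to prove the easy inclusion $\langle\sigma\rangle \subseteq Z(\aut(X))$ immediately from the definition (every element of $\aut(X)$ commutes with $\sigma$), then tackle the reverse inclusion by identifying a central element $\phi$ with a power of $\sigma$ on a dense set and invoking continuity. Because $(X,\sigma)$ is an infinite irreducible shift of finite type, the set of periodic points is dense in $X$ and contains, for each sufficiently large $n$, many distinct orbits of least period $n$; this will be the source of leverage against any $\phi \in Z(\aut(X))$.

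The central step is to produce a rich supply of marker automorphisms. Adapting Hedlund's original construction (and more refined versions developed by Boyle--Lind--Rudolph), given two distinct periodic orbits $O_1, O_2$ of equal least period in an irreducible shift of finite type, there exists an automorphism $\psi \in \aut(X)$ which acts as a point-swap on $O_1 \cup O_2$ for a chosen phase between representatives, and which acts as the identity on all but finitely many other periodic orbits. Write $P_n$ for the finite set of points of $\sigma$-period dividing $n$. The restriction map $\aut(X) \to \sym(P_n)$ has image containing all such orbit-transpositions. Since $\phi$ commutes with each such $\psi$, the restriction $\phi|_{P_n}$ lies in the centralizer of this image. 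A standard centralizer computation (using that the symmetric group on three or more orbits of a given least period has trivial center, which is available once $n$ is large enough by the exponential growth of the number of least-period-$n$ orbits in a positive-entropy irreducible SFT) forces $\phi$ to preserve each periodic orbit setwise and to act on each such orbit as a power of $\sigma$.

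Next, I would show the exponent is uniform. For each sufficiently large $n$ write $\phi|_{P_n} = \sigma^{k_n}|_{P_n}$, where $k_n$ is only defined modulo the periods appearing in $P_n$. Using a second class of markers that couples orbits of different least period, or, more directly, using that $\phi$ is a single continuous sliding block code and so its restriction to any one orbit $O$ of length $\ell$ determines $k_n \bmod \ell$ consistently for every $n$ with $O \subseteq P_n$, the residues $k_n$ coalesce into a single integer $k$. Thus $\phi$ and $\sigma^k$ agree on the dense set $\bigcup_n P_n$, and continuity gives $\phi = \sigma^k$ on all of $X$.

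The main obstacle is making the marker-automorphism construction precise for a general irreducible (not necessarily mixing) SFT: one must arrange that enough orbits at each large period can be swapped independently so that the centralizer computation actually pins $\phi|_{P_n}$ down to a power of the shift, and one must control phases carefully enough to match exponents across different periods. Orbits of very small period (for example, a small number of fixed points or points of period two) may not afford enough room for these swaps and so must either be handled as a separate base case, or absorbed by first passing to a sufficiently high power of $\sigma$ before making the comparison.
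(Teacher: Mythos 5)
The paper does not prove this statement; it is quoted from Ryan's papers, so there is no internal proof to compare against. Your strategy (easy inclusion from the definition; then marker automorphisms swapping equal-period orbits, a centralizer computation on periodic points, and density plus continuity) is the standard route to Ryan's theorem and is sound in outline, but as written it has one genuine gap at its crux.

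The gap is the uniformization of the exponent. The centralizer computation gives you, for each sufficiently large least period $\ell$, a single residue $a_{\ell} \bmod \ell$ with $\phi = \sigma^{a_{\ell}}$ on all orbits of least period $\ell$. Your ``more direct'' argument for coalescing these --- that $\phi$ is one sliding block code whose restriction to an orbit of length $\ell$ determines $k_{n} \bmod \ell$ consistently --- only reproduces the residues themselves; it supplies no relation between $a_{\ell}$ and $a_{\ell'}$ for distinct $\ell,\ell'$, and even a fully coherent family of residues is merely an element of $\varprojlim_n \Z/n\Z$, which need not come from any integer $k$. Some additional input is required to bound the exponent uniformly: either the cross-period marker automorphisms you mention in passing (coupling an orbit of period $\ell$ to one of period $\ell'$ and reading off a congruence between $a_\ell$ and $a_{\ell'}$), or an argument using the finite radius $r$ of $\phi$ directly --- for instance, evaluating $\phi$ on periodic points consisting of a long run of a fixed admissible block followed by a short synchronizing word, which pins $a_{\ell}$ to a representative of absolute value bounded independently of $\ell$, after which the congruences for infinitely many $\ell$ force a single $k$. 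Without one of these, the step from ``a power of $\sigma$ on each $P_n$'' to ``a single power of $\sigma$'' fails.

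A secondary issue: for the irreducible non-mixing case your proposed remedy of ``passing to a sufficiently high power of $\sigma$ before making the comparison'' does not work as stated, since an element central in $\aut(\sigma)$ need not centralize the strictly larger group $\aut(\sigma^{m})$ --- indeed Proposition~\ref{prop:trivial-center} of this paper shows how completely centrality is destroyed under such passage. The marker constructions must instead be carried out inside $\aut(\sigma)$ itself, respecting the cyclic decomposition of the irreducible SFT. The small-period orbits, on the other hand, are not really an obstacle at all: once $\phi = \sigma^{k}$ on the orbits of all sufficiently large least periods, these are already dense in an infinite irreducible shift of finite type, and continuity finishes the argument without any separate base case.
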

As observed in~\cite{BLR}, this has an immediate application to distinguishing automorphism groups of full shifts, using arithmetic properties of the size of the alphabet.
\begin{corollary}\label{cor:primepowerdistinguish}
For any prime $p$, $\aut(\sigma_p)$ is not isomorphic to $\aut(\sigma_{p^p})$.
\end{corollary}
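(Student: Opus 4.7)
The plan is to argue by contradiction: assuming $\Phi\colon \aut(\sigma_{p^p}) \to \aut(\sigma_p)$ is an isomorphism, I will first use Ryan's theorem to pin down where the shift is sent, then produce a $p$-th root of $\sigma_p^{\pm 1}$ in $\aut(\sigma_p)$, and finally show via the dimension representation that no such root can exist. The two ingredients, the existence of a root on the $p^p$ side and its nonexistence on the $p$ side, are the heart of the argument.

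First, by Theorem~\ref{th:ryan}, the centers $Z(\aut(\sigma_p)) = \langle\sigma_p\rangle$ and $Z(\aut(\sigma_{p^p})) = \langle\sigma_{p^p}\rangle$ are both infinite cyclic, and any group isomorphism must carry center to center. Thus $\Phi$ restricts to an isomorphism $\langle\sigma_{p^p}\rangle \to \langle\sigma_p\rangle$, which forces $\Phi(\sigma_{p^p}) = \sigma_p^{\varepsilon}$ for some $\varepsilon \in \{\pm 1\}$.

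Next, I would use the standard $p$-block conjugacy $\phi\colon X_p \to X_{p^p}$ defined by $\phi(x)_i = (x_{pi},x_{pi+1},\ldots,x_{pi+p-1})$, which intertwines $\sigma_p^p$ with $\sigma_{p^p}$. The automorphism $\tau := \phi\circ \sigma_p\circ \phi^{-1} \in \aut(\sigma_{p^p})$ then satisfies $\tau^p = \sigma_{p^p}$. Applying $\Phi$, the element $\rho := \Phi(\tau)\in \aut(\sigma_p)$ is a $p$-th root of $\sigma_p^{\varepsilon}$. To derive the contradiction, I would invoke the dimension representation $\delta\colon \aut(\sigma_p) \to \aut(\mathbb{Z}[1/p],\mathbb{Z}[1/p]_+)$ of~\cite{BLR}, which sends $\sigma_p$ to multiplication by $p$. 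The order-preserving group automorphisms of $\mathbb{Z}[1/p]$ are precisely multiplications by $p^k$ with $k \in \Z$, so $\delta(\rho) = p^k$ for some integer $k$, and hence $p^{pk} = \delta(\rho)^p = p^{\varepsilon}$. This forces $pk = \varepsilon \in \{\pm 1\}$, which has no integer solution, a contradiction.

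The main obstacle is the final nonexistence step: Ryan's theorem gives the reduction for free, and the construction of a $p$-th root on the $p^p$-shift side is immediate from the higher-block presentation, but ruling out a $p$-th root of $\sigma_p$ requires a numerical invariant finer than entropy or purely subgroup-theoretic data. The dimension representation provides exactly such an invariant, pinning down the image of $\sigma_p$ to a cyclic group in which $p$ is a generator, so that extracting a $p$-th root is arithmetically impossible.
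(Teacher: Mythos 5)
Your proof is correct, but the decisive step is carried out differently from the paper's own proof of this corollary. Both arguments begin identically: Ryan's Theorem forces any isomorphism to send $\sigma_{p^p}$ to $\sigma_p^{\pm 1}$, and the higher-block conjugacy $(X_{p^p},\sigma_{p^p})\cong(X_p,\sigma_p^p)$ supplies a $p$th root of $\sigma_{p^p}$, so one is reduced to showing that neither $\sigma_p$ nor $\sigma_p^{-1}$ admits a $p$th root in $\aut(\sigma_p)$. For that last step you invoke the dimension representation: since $\aut(\mathcal{G}_p,\mathcal{G}_p^+,\delta_p)\cong\mathbb{Z}$ with $\sigma_p$ mapping to a generator, the equation $pk=\pm 1$ has no integer solution. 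The paper instead argues by counting periodic orbits: $(X_p,\sigma_p)$ has $p^{p-1}-1$ orbits of least period $p$, a number not divisible by $p$, so a hypothetical root $\psi$ would fix some such orbit and hence act on it as a power of $\sigma_p$, forcing a point of period less than $p$. The paper's argument is more elementary and entirely self-contained (no dimension group machinery), while your argument is exactly the mechanism the paper deploys later for the stronger Theorem~\ref{th:rootdistinguish} (that an isomorphism $\aut(\sigma_m)\cong\aut(\sigma_n)$ forces $\rts(m)=\rts(n)$), of which this corollary is the special case $\rts(p)=\{1\}\neq\{1,p\}=\rts(p^p)$; so your route generalizes immediately at the cost of relying on Proposition~\ref{prop:dimrepnonstable}. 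One cosmetic remark: you write the target of the dimension representation as $\aut(\mathbb{Z}[\tfrac1p],\mathbb{Z}_+[\tfrac1p])$ without recording the intertwining with $\delta_p$; for a full shift every positive automorphism of $\mathbb{Z}[\tfrac1p]$ is multiplication by a power of $p$ and hence commutes with $\delta_p$ automatically, so nothing is lost, but the identification of this group with $\{p^k:k\in\mathbb{Z}\}$ is the point that deserves the one line of justification you gave.
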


\begin{proof}
Fix a prime $p$. It is easy to check that $\sigma_{p^p} \in \aut(\sigma_{p^{p}})$ has a $p$th root, meaning there exists $\phi \in \aut(\sigma_{p^{p}})$ such that $\phi^{p} = \sigma_{p^{p}}$ (for example, one can construct such an $\phi$ using the fact that $(X_{p^{p}}, \sigma_{p^{p}})$ and $(X_{p}, \sigma_{p}^{p})$ are topologically conjugate).

If $\aut(\sigma_p)$ and $\aut(\sigma_{p^{p}})$ are isomorphic, then any isomorphism maps the center isomorphically onto the center.  By Ryan's Theorem, this means that $\sigma_p \in \aut(\sigma_p)$ is mapped to $\sigma_{p^{p}}^{\pm 1} \in \aut(\sigma_{p^{p}})$. Since $\sigma_{p^p}$ has a $p$th root, this implies either $\sigma_{p}$ or $\sigma_{p}^{-1}$ has a $p$th root.
However, we claim that neither $\sigma_p$ nor $\sigma_p^{-1}$ does.
Indeed, suppose there exists $\psi\in\aut(\sigma_{p})$ such that  $\psi^{p} = \sigma_p$ or $\psi^{p} = \sigma_{p}^{-1}$; we'll suppose $\psi^{p} = \sigma_{p}$, as the other case is similar. The system $(X_{p},\sigma_{p})$ has $p^{p}-p$ points of least period $p$, and hence $p^{p-1}-1$ orbits of length $p$. Since $p$ does not divide $p^{p-1}-1$, there exists some $1 \le i < p$, $0 \le j < p$, such that $\psi^{i}(x) = \sigma_{p}^{j}(x)$ for some period $p$ point $x$. But this implies
$$\sigma_{p}^{i}(x) = \psi^{pi}(x) = \sigma_{p}^{pj}(x) = x$$
which, since $i < p$, is a contradiction.
\end{proof}

We prove a more general result along these lines in Theorem~\ref{th:rootdistinguish}.

\subsection{The dimension representation}
Krieger~\cite{krieger80a, krieger} defined a {\em dimension triple $(\CG_A, \CG_A^+, \delta_A)$} associated to a shift of finite type $(X_{A},\sigma_{A})$, where $\CG_{A}$ is an abelian group, $\CG_{A}^{+}$ is a positive cone in $\CG_{A}$ (i.e. a subsemigroup of $\CG_{A}$ containing 0 which generates $\CG_{A}$), and $\delta_{A}$ is a group automorphism of the pair $(\CG_{A},\CG_{A}^{+})$.
A conjugacy between shifts of finite type induces a corresponding isomorphism of their respective dimension triples; since each element of $\aut(\sigma_A)$ is a conjugacy from $(X_A,\sigma_A)$ to itself, this gives rise to the \emph{dimension representation}
$$\pi_{A} \colon \aut(\sigma_{A}) \to \aut(\mathcal{G}_{A}).$$
To define this representation precisely in the manner suitable for our purposes, we briefly outline two definitions of the dimension triple $(\CG_A, \CG_A^+, \delta_A)$; the first is an intrinsic definition given by Krieger, and the second is more algebraic. These two definitions produce isomorphic objects  and this is described in~\cite[Section 7.5]{LM}; our presentation closely follows
the one given there.

Assume that $A$ is an irreducible $k \times k$ matrix with entries in $\mathbb{Z}_{+}$ and let $(X_A, \sigma_{A})$ denote the associated shift of finite type.  We further assume that  $(X_{A},\sigma_{A})$ has positive topological entropy $h_{\topo}(\sigma_{A}) >0 $, and note that $h_{\topo}(\sigma_{A}) = \log \lambda_{A}$ where $\lambda_{A}$ denotes the Perron-Frobenius eigenvalue of $A$.
The {\em eventual range $\CR(A)$ of $A$} is the subspace of $\Q^k$ defined by
$$
\CR(A) = \bigcap_{j=1}^\infty \Q^{k}A^{j}
$$
(throughout we assume the matrices act on row vectors).
The \emph{dimension triple $(\mathcal{G}_{A},\mathcal{G}_{A}^{+},\delta_{A})$ associated to $A$} consists of the abelian group $\mathcal{G}_{A}$, the semigroup $\mathcal{G}_{A}^{+} \subset \mathcal{G}_{A}$, and the automorphism $\delta_{A}$ of $\mathcal{G}_{A}$, where
\begin{enumerate}
\item
$\mathcal{G}_{A} = \{ x \in \CR(A) \colon x A^{j} \in \mathbb{Z}^{k} \textnormal{ for some }j \ge 0\}$.
\item
$\mathcal{G}_{A}^{+} = \{x \in \CR(A) \colon x A^{j} \in (\mathbb{Z}_{+})^{k} \textnormal{ for some } j \ge 0\}$.
\item
$\delta_{A}(x) = x A$.
\end{enumerate}

When $A=(n)$, we usually simply write $(\mathcal{G}_{n},\mathcal{G}^{+}_{n},\delta_{n})$ instead of $(\mathcal{G}_{(n)},\mathcal{G}^{+}_{(n)},\delta_{(n)})$.

We now describe the intrinsic definition of the dimension triple.
An \emph{$m$-ray} is defined to be a subset of $X_{A}$ of the form
$$R(x,m) = \{y \in X_{A} \colon y_{(-\infty,m]}=x_{(-\infty,m]}\}$$
for some $x \in X_{A}$ and $m \in \mathbb{Z}$,
and an \emph{$m$-beam} is a finite union of $m$-rays.
A \emph{ray} is defined to be an $m$-ray for some $m \in \mathbb{Z}$,
and a {\em beam} is an $m$-beam for some $m\in\Z$.
Note that if $U$ is an $m$-beam for some $m\in\Z$, then $U$ is also an $n$-beam for any $n \ge m$. Given an $m$-beam
$$U = \bigcup_{i=1}^{j}R(x^{(i)},m),$$
 let $v_{U,m} \in \mathbb{Z}^{k}$ denote the vector whose $J$-th component is the cardinality of the set
$$\{x^{(i)} \in U \colon \textnormal{ the edge corresponding to }x_{m}^{(i)} \textnormal{ ends at state } J\}.$$
Beams $U$ and $V$ are said to be \emph{equivalent} if there exists some $m\in\Z$ such that $v_{U,m} = v_{V,m}$, and we use $[U]$ to  denote the equivalence class of a beam $U$. Since $A$ is irreducible and $0 < h_{\topo}(\sigma_{A}) = \log \lambda_{A}$, given beams $U,V$, there exists beams $U^{\prime}, V^{\prime}$ such that
$$[U]=[U^{\prime}],\ [V] = [V^{\prime}], \text{ and }  \ U^{\prime} \cap V^{\prime} = \emptyset.$$
Let $D_{A}^{+}$ denote the abelian semigroup whose elements are equivalence classes of beams endowed with the operation defined by
$$[U] + [V] = [U^{\prime} \cup V^{\prime}].$$
Letting $D_{A}$ denote the group completion of $D_{A}^{+}$ (thus elements of $D_{A}$ are formal differences $[U]-[V]$), the map $d_{A} \colon D_{A} \to D_{A}$ induced by
$$d_{A}([U]) = [\sigma_{A}(U)]$$
is a group automorphism of $D_{A}$.  This defines Krieger's dimension triple $(D_{A},D_{A}^{+},d_{A})$.

An automorphism $\phi \in \aut(X_A, \sigma_{A})$ induces an automorphism
$$\phi_{*} \colon (D_{A},D_{A}^{+},d_{A}) \to (D_{A},D_{A}^{+},d_{A})$$
by setting
$$\phi_{*}([U]) = [\phi(U)].$$
Here by a {\em morphism of a triple}, we mean a morphism preserving all the relevant data given by the group, the subsemigroup, and the group automorphism associated to $D_A$ or $\CG_{A}$. For example, an automorphism $\Phi \in \aut(\mathcal{G}_{A},\mathcal{G}_{A}^{+},\delta_{A})$ is a group automorphism $\Phi \colon \mathcal{G}_{A} \to \mathcal{G}_{A}$ taking $\mathcal{G}_{A}^{+}$ onto $\mathcal{G}_{A}^{+}$ such that $\Phi\circ \delta_{A} = \delta_{A}\circ  \Phi$.

The relation between these two definitions
is settled by the following.
\begin{proposition}[see {\cite[Theorem 7.5.13]{LM}}]\label{prop:fernus}  Assume $(X_A, \sigma_A)$ is a shift of finite type.
The map $\theta \colon D_{A}^{+} \to \mathcal{G}_{A}^{+}$ induced by the map
$$\theta([U]) = \delta_{A}^{-k-n}(v_{U,n}A^{k}), $$
where $U$ is an $n$-beam,
is a semigroup isomorphism, and its completion is  a group isomorphism $\theta \colon D_{A} \to \mathcal{G}_{A}$ such that
$$\theta \circ d_{A} = \delta_{A} \circ \theta.$$
\end{proposition}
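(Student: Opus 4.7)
The plan is to verify the proposition in three stages: well-definedness of the map $\theta$ on $D_A^+$, bijectivity onto $\mathcal{G}_A^+$, and extension to a group isomorphism together with the intertwining identity $\theta \circ d_A = \delta_A \circ \theta$.

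The central combinatorial ingredient is the following refinement identity: if $U$ is an $n$-beam, then viewing $U$ as an $(n+1)$-beam by subdividing each constituent $n$-ray according to its one-step extensions yields
\[ v_{U, n+1} = v_{U, n} \cdot A, \]
since $A_{J,K}$ counts the edges of $\Gamma_A$ from state $J$ to state $K$. Because $\delta_A$ acts on $\mathcal{R}(A)$ by right multiplication by $A$, this identity shows that the quantity $\delta_A^{-k-n}(v_{U,n}A^k)$ is unchanged when $n$ is replaced by $n+1$ (and $k$ by $k-1$), and also when $k$ is replaced by $k+1$; so it depends only on the beam $U$, not on the auxiliary choices of $n$ and $k$. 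Moreover, if $v_{U,m} = v_{V,m}$ for some $m$, then iterating the identity gives $v_{U,m+j} = v_{V,m+j}$ for every $j \geq 0$, so $\theta$ descends to the equivalence relation defining $D_A^+$. Additivity $\theta([U]+[V]) = \theta([U]) + \theta([V])$ is then immediate after replacing $[U], [V]$ by disjoint representatives $U^\prime, V^\prime$ at a common level, since $v_{U^\prime \cup V^\prime, n} = v_{U^\prime, n} + v_{V^\prime, n}$.

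For bijectivity, suppose $\theta([U]) = \theta([V])$ and pick a common level $n$; applying $\delta_A^{k+n}$ to the equality yields $v_{U,n}A^k = v_{V,n}A^k$ in $\mathbb{Z}^k$, which by the refinement identity is exactly $v_{U,n+k} = v_{V,n+k}$, hence $[U] = [V]$. For surjectivity, given $x \in \mathcal{G}_A^+$ choose $j$ large enough that $v := xA^j$ lies in $(\mathbb{Z}_+)^k$; using irreducibility of $A$ together with positive entropy, which supplies arbitrarily many distinct rays, one assembles a $j$-beam $U$ by selecting $v_J$ distinct $j$-rays ending at state $J$ for each $J$, so that $v_{U,j} = v$. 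A direct substitution then gives $\theta([U]) = \delta_A^{-2j}(vA^j) = \delta_A^{-2j}(xA^{2j}) = x$.

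Finally, since $\theta \colon D_A^+ \to \mathcal{G}_A^+$ is a semigroup isomorphism between two positive cones that generate their ambient abelian groups, the universal property of group completion extends $\theta$ uniquely to a group isomorphism $\theta \colon D_A \to \mathcal{G}_A$. For the intertwining, if $U$ is an $n$-beam then $\sigma_A(U)$ is an $(n-1)$-beam whose terminal-edge profile agrees with that of $U$, so $v_{\sigma_A(U), n-1} = v_{U, n}$; substituting into the defining formula gives
\[ \theta(d_A([U])) = \delta_A^{-k-(n-1)}(v_{U,n}A^k) = \delta_A\bigl(\delta_A^{-k-n}(v_{U,n}A^k)\bigr) = \delta_A(\theta([U])), \]
as required. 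The principal technical obstacle throughout is the careful bookkeeping of how the boundary index $n$, the auxiliary exponent $k$, and the action of $\sigma_A$ interact on the eventual range $\mathcal{R}(A)$; once the refinement identity $v_{U,n+1} = v_{U,n}A$ is in hand, the remaining steps are essentially formal.
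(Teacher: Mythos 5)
Your proof is correct. The paper does not actually prove this proposition — it is quoted from Lind--Marcus \cite[Theorem 7.5.13]{LM} — and your argument is essentially the standard one: everything reduces to the refinement identity $v_{U,n+1} = v_{U,n}A$, which you state and use correctly for well-definedness (independence of the level $n$ and the auxiliary exponent), injectivity, additivity, and the intertwining $\theta \circ d_A = \delta_A \circ \theta$ via $v_{\sigma_A(U),n-1} = v_{U,n}$. The one step deserving an extra word is surjectivity, where you need, for each state $J$, at least $v_J$ distinct $j$-rays terminating at $J$; this is precisely where the standing hypotheses of irreducibility and positive entropy enter (they give infinitely many distinct left-infinite paths into every state), and you invoke them in the right place.
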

In other words, this proposition means that $\theta$ induces an isomorphism of triples
$$\theta \colon (D_{A},D_{A}^{+},d_{A}) \to (\mathcal{G}_{A},\mathcal{G}_{A}^{+},\delta_{A}).$$

For $\phi \in \aut(\sigma_{A})$, let $S_{\phi} \colon (\mathcal{G}_{A},\mathcal{G}_{A}^{+},\delta_{A}) \to (\mathcal{G}_{A},\mathcal{G}_{A}^{+},\delta_{A})$ denote the automorphism of the dimension triple such that  the diagram
\begin{equation*}
\begin{aligned}\label{diagram}
\xymatrix{
D_{A} \ar^{\theta}[r] \ar_{\phi_{*}}[d] & \mathcal{G}_{A} \ar^{S_{\phi}}[d]\\
D_{A} \ar^{\theta}[r] & \mathcal{G}_{A} \\
}
\end{aligned}
\end{equation*}
commutes. We can now define the dimension representation
\begin{equation*}
\begin{gathered}
\pi_{A} \colon \aut(\sigma_{A}) \to \aut(\mathcal{G}_{A},\mathcal{G}_{A}^{+},\delta_{A}) \end{gathered}
\end{equation*}
by setting
$\pi_{A}(\phi)  = S_{\phi}$.

\subsection{An application of the dimension representation}\label{sec:application-dim}

As usual, $\omega(n)$ denotes the number of distinct prime divisors of $n$ (counted without multiplicity).

The following result appears implicitly in~\cite{BLR}:
\begin{proposition}\label{prop:dimrepnonstable}
For a full shift $(X_{n},\sigma_{n})$ we have
$$\aut(\mathcal{G}_{n},\mathcal{G}_{n}^{+},\delta_{n}) \cong \left(\mathbb{Z}^{\omega(n)},\mathbb{Z}_{+}^{\omega(n)},\bm{1}\right)$$
where $\bm{1}$ denotes the vector of all $1$'s. Moreover, the dimension representation $\pi_{n} \colon \aut(\sigma_{n}) \to \aut(\mathcal{G}_{n},\mathcal{G}_{n}^{+},\delta_{n})$ is surjective.
\end{proposition}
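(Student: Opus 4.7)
The plan is to make the dimension triple concrete, determine its automorphism group by pure algebra, and then produce enough automorphisms of $\sigma_n$ to realize every element of that group.

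For $A=(n)$, the eventual range is $\CR((n))=\Q$, so $\mathcal{G}_n=\Z[1/n]$, with positive cone $\mathcal{G}_n^+=\Z[1/n]\cap \Q_{\ge 0}$ and $\delta_n$ equal to multiplication by $n$. Any $\Phi\in\aut(\mathcal{G}_n,\mathcal{G}_n^+,\delta_n)$ commutes with multiplication by $n$, and from $\Phi(x)=\Phi(n\cdot (x/n))=n\,\Phi(x/n)$ one deduces that $\Phi$ also commutes with multiplication by $1/n$; hence $\Phi$ is $\Z[1/n]$-linear, and therefore equal to multiplication by the unit $u:=\Phi(1)\in\Z[1/n]^{\times}$. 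Preservation of the positive cone forces $u>0$. Writing $n=p_1^{e_1}\cdots p_k^{e_k}$, the group of positive units of $\Z[1/n]$ is free abelian on $\{p_1,\dots,p_k\}$, which identifies $\aut(\mathcal{G}_n,\mathcal{G}_n^+,\delta_n)$ with $\Z^{\omega(n)}$.

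For surjectivity of $\pi_n$ it suffices to realize each generator. For each prime $p_i\mid n$, set $m_i=n/p_i$ and fix any bijection of alphabets $\CA_n\cong \CA_{p_i}\times \CA_{m_i}$; this identifies $(X_n,\sigma_n)$ with $(X_{p_i}\times X_{m_i},\sigma_{p_i}\times\sigma_{m_i})$. The product map $\phi_i:=\sigma_{p_i}\times\id_{X_{m_i}}$ commutes with $\sigma_n$, so $\phi_i\in\aut(\sigma_n)$, and my claim is that $\pi_n(\phi_i)$ is multiplication by $p_i$.

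The key step, and the one I expect to be the main obstacle, is verifying this claim. Using Krieger's intrinsic definition together with Proposition~\ref{prop:fernus}, it suffices to track how $\phi_i$ acts on a single $0$-ray $U=R(x,0)$. A direct unpacking of the definition of $\phi_i$ shows that $\phi_i(U)$ is a disjoint union of exactly $p_i$ many $0$-rays, one for each possible value of the $\CA_{p_i}$-coordinate at position $1$ (the coordinate that $\sigma_{p_i}$ brings into position $0$). Because the underlying graph of $A=(n)$ has a single vertex, the vector $v_{\phi_i(U),0}$ is the scalar $p_i$, so $[\phi_i(U)]=p_i[U]$ in Krieger's dimension group; transporting through $\theta$ gives $S_{\phi_i}=\cdot\,p_i$ on $\mathcal{G}_n$. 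Since $p_1,\dots,p_k$ generate the positive units of $\Z[1/n]$, the dimension representation $\pi_n$ is surjective.
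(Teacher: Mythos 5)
Your proposal is correct and follows essentially the same route as the paper: identify the triple with $(\mathbb{Z}[\tfrac{1}{n}],\mathbb{Z}_{+}[\tfrac{1}{n}],\mathfrak{m}_{n})$, compute its automorphism group as the positive units (free abelian on the primes dividing $n$), and realize each generator $\mathfrak{m}_{p_i}$ by a partial shift under a product decomposition of $(X_n,\sigma_n)$. The only differences are cosmetic --- you split off one prime factor at a time via $X_n\cong X_{p_i}\times X_{n/p_i}$ where the paper uses the full decomposition $\prod_i X_{p_i}$ at once, and you carry out explicitly the beam computation and the unit-group argument that the paper leaves as ``straightforward to check.''
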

In the proof and in the sequel, if $H \subset \mathbb{R}$ is a subgroup and $n \ge 1$, we use the notation $\mathfrak{m}_{n}$ to refer to the map from $H$ to itself given by $a \mapsto n \cdot a$.

\begin{proof}
For a full shift $(X_{n},\sigma_{n})$, there is an isomorphism of triples
$$(\mathcal{G}_{n},\mathcal{G}_{n}^{+},\delta_{n}) \cong (\mathbb{Z}[\tfrac{1}{n}],\mathbb{Z}_{+}[\tfrac{1}{n}],\mathfrak{m}_{n}).$$  Then it is straightforward to check that
$$\aut(\mathbb{Z}[\tfrac{1}{n}],\mathbb{Z}_{+}[\tfrac{1}{n}],\mathfrak{m}_{n}) \cong \left(\mathbb{Z}^{\omega(n)},\mathbb{Z}_{+}^{\omega(n)},\bm{1}\right)$$
is generated by the maps $\{\mathfrak{m}_{p} \colon p \textnormal{ is a prime dividing } n\}$.

For the second part, we write the prime factorization of $n$ as $n = \prod_{i=1}^{\omega(n)}p_{i}^{a_{i}}$ with $p_{i}$ prime. There exists a conjugacy $h \colon (X_{n},\sigma_{n}) \to \left(\prod_{i=1}^{\omega(n)}X_{p_{i}},\prod_{i=1}^{\omega(n)}\sigma_{p_{i}}^{a_{i}}\right)$ and we let $h_{*} \colon \aut(\sigma_{n}) \to \aut(\prod_{i=1}^{\omega(n)}\sigma_{p_{i}}^{a_{i}})$ denote the induced isomorphism of automorphism groups. For each $i$, let $\phi_{i}$ denote the automorphism of $\left(\prod_{i=1}^{\omega(n)}X_{p_{i}},\prod_{i=1}^{\omega(n)}\sigma_{p_{i}}^{a_{i}}\right)$ which acts by $\sigma_{p_{i}}$ in the $i$th coordinate and the identity in the other coordinates. Then the image of the automorphisms $h_{*}^{-1}(\phi_{i})$ under $\pi_{n}^{}$ generate $\aut(\mathcal{G}_{n})$.
\end{proof}

For $a \in \mathbb{N}$, let $\rts(a) = \{k \in \mathbb{N} \colon a^{1/k} \in \mathbb{N}\}$ denote the non-negative integral roots of $a$. To the authors' knowledge, the only known method for distinguishing automorphism groups of full shifts relies on Ryan's Theorem~\cite{R1}, which characterizes the center of the group of $\aut(\sigma_{A})$. This technique was explicitly mentioned in~\cite{BLR} for the full shifts on $2$ and $4$ symbols. The following result, a natural generalization of this, is not altogether new; we include it since it could not be found explicitly in the literature. Our argument uses the dimension representation; an alternative proof may be given using~\cite[Theorem 8]{Lind}.
\begin{theorem}
\label{th:rootdistinguish}
Let $n,m \ge 2$ and suppose $\aut(\sigma_{m}) \cong \aut(\sigma_{n})$. Then $\rts(m) = \rts(n)$. In particular, for any prime $p$ and $k \ge 2$, $\aut(\sigma_{p})$ and $\aut(\sigma_{p^{k}})$ are not isomorphic.
\end{theorem}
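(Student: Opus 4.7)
The plan is to reduce the question to understanding which powers of the shift admit roots in $\aut(\sigma_m)$, and then to pin down these roots via the dimension representation.

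First, I would set up the reduction. By Theorem~\ref{th:ryan}, any isomorphism $\Phi \colon \aut(\sigma_m) \to \aut(\sigma_n)$ sends the center $\langle \sigma_m\rangle$ onto the center $\langle \sigma_n\rangle$, so $\Phi(\sigma_m) = \sigma_n^{\varepsilon}$ for some $\varepsilon \in \{\pm 1\}$. Fix $k \in \rts(m)$, say $m = \ell^k$ with $\ell \in \N$. Then $(X_m,\sigma_m)$ is topologically conjugate to $(X_\ell,\sigma_\ell^k)$, so $\sigma_m$ admits a $k$-th root $\psi_0 \in \aut(\sigma_m)$ (coming from $\sigma_\ell$). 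Applying $\Phi$, the element $\sigma_n^{\varepsilon}$ has a $k$-th root in $\aut(\sigma_n)$; taking inverses if necessary, $\sigma_n$ itself has a $k$-th root in $\aut(\sigma_n)$. By symmetry, $\rts(m) = \rts(n)$ will follow once we prove:

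\emph{Key Claim.} For any $m \ge 2$ and $k \ge 1$, the shift $\sigma_m$ has a $k$-th root in $\aut(\sigma_m)$ if and only if $k \in \rts(m)$.

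The ``if'' direction is the conjugacy observation above. For the converse, I would pass through the dimension representation $\pi_m \colon \aut(\sigma_m) \to \aut(\CG_m,\CG_m^+,\delta_m)$. Suppose $\psi \in \aut(\sigma_m)$ satisfies $\psi^k = \sigma_m$. Under the isomorphism of triples $(\CG_m,\CG_m^+,\delta_m) \cong (\Z[\tfrac{1}{m}],\Z_+[\tfrac{1}{m}],\mathfrak{m}_m)$ used in the proof of Proposition~\ref{prop:dimrepnonstable}, we have $\pi_m(\sigma_m) = \mathfrak{m}_m$, and every element of $\aut(\Z[\tfrac{1}{m}],\Z_+[\tfrac{1}{m}],\mathfrak{m}_m)$ is multiplication $\mathfrak{m}_r$ by some positive rational $r$ whose numerator and denominator are supported on the primes dividing $m$ (this is precisely the content of $\aut(\CG_m,\CG_m^+,\delta_m) \cong \Z^{\omega(m)}$ generated by the $\mathfrak{m}_p$). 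Writing $\pi_m(\psi) = \mathfrak{m}_r$, the relation $\pi_m(\psi)^k = \mathfrak{m}_m$ gives $r^k = m$. Factoring $m = \prod_{p \mid m} p^{b_p}$ and $r = \prod_{p \mid m} p^{a_p}$ with $a_p \in \Z$, this forces $k a_p = b_p$ for every $p$. In particular $k \mid b_p$ for each $p$, so $m = \bigl(\prod_p p^{b_p/k}\bigr)^k$ with $\prod_p p^{b_p/k} \in \N$, i.e. $k \in \rts(m)$, proving the Key Claim.

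Combining these, $k \in \rts(m)$ iff $\sigma_m$ has a $k$-th root iff $\sigma_n$ has a $k$-th root iff $k \in \rts(n)$, yielding $\rts(m) = \rts(n)$. For the ``in particular'' clause, note $\rts(p) = \{1\}$ while $k \in \rts(p^k)$ for every $k \ge 2$, so $\aut(\sigma_p) \not\cong \aut(\sigma_{p^k})$. The only real obstacle is the converse of the Key Claim, and the crux there is identifying every dimension-representation image of a root of $\sigma_m$ with a genuine integer $k$-th root of $m$; this is where the explicit description of $\aut(\CG_m,\CG_m^+,\delta_m)$ from Proposition~\ref{prop:dimrepnonstable} is essential.
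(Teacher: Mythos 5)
Your proposal is correct and follows essentially the same route as the paper: Ryan's Theorem to force $\Phi(\sigma_m)=\sigma_n^{\pm1}$, construction of a $k$-th root of $\sigma_m$ from the conjugacy $(X_m,\sigma_m)\cong(X_\ell,\sigma_\ell^k)$, and the dimension representation $\aut(\CG_n)\cong\Z^{\omega(n)}$ to show that a $k$-th root of $\sigma_n$ forces $k$ to divide every exponent in the prime factorization of $n$. Your packaging of the second step as a clean ``Key Claim'' characterizing roots of the shift is a nice organizational touch but not a mathematically different argument.
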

\begin{proof}
Let $k \in \rts(m)$, so there exists $a \in \mathbb{N}$ such that $a^{k} = m$. Then $(X_{m},\sigma_{m})$ is topologically conjugate to $(X_{a},\sigma_{a}^{k})$, and in particular, there exists $\phi \in \aut(\sigma_{m})$ such that $\phi^{k} = \sigma_{m}$. Suppose $\Psi \colon \aut(\sigma_{m}) \to \aut(\sigma_{n})$ is an isomorphism and let $\phi^{\prime} = \Psi(\phi)$. By Ryan's Theorem (Theorem~\ref{th:ryan}), $\Psi(\sigma_{m}) = \sigma_{n}^{\pm 1}$, so $(\phi^{\prime})^{k} = \sigma_{n}^{\pm 1}$. Applying the dimension representation gives
$$k(\pi_{n}(\phi^{\prime})) = \pi_{n}((\phi^{\prime})^{k}) = \pi_{n}(\sigma_{n}^{\pm 1}) = \pm \begin{pmatrix}v_{1} \\ v_{2} \\ \vdots \\ v_{r} \end{pmatrix} \in \mathbb{Z}^{\omega(n)}.$$
Since $\pi_{n}(\phi^{\prime}) \in \mathbb{Z}^{\omega(n)}$, each $v_{i}$ must be divisible by $k$. Let $w_{i} = \frac{v_{i}}{k}$.
Writing $n = \prod_{i=1}^{\omega(n)}p_{i}^{v_{i}}$ for some primes $p_{i}$, it follows  from Proposition~\ref{prop:dimrepnonstable} that $n = \left( \prod_{i=1}^{\omega(n)}p_{i}^{w_{i}}\right)^{k}$ so $k \in \rts(n)$. Thus $\rts(m) \subset \rts(n)$, and the same argument shows $\rts(n) \subset \rts(m)$.  Thus  $\rts(m) = \rts(n)$.
\end{proof}

In particular, it follows that the group $\aut(\sigma_{9})$ is not isomorphic to the group $\aut(\sigma_{27})$, as $\rts(9) \ne \rts(27)$.

\subsection{Inert and Simple Automorphisms}

An automorphism $\phi \in \aut(\sigma_{A})$ is said to be \emph{inert} if it lies in the kernel of the dimension representation, and we denote the subgroup of inert automorphisms by $\inert(\sigma_{A})$.  A particularly important collection of inert automorphisms is the class of simple automorphisms, first introduced by Nasu~\cite{nasu}.
We recall the definition.

If $\Gamma$ is a directed graph, we call a graph automorphism of $\Gamma$ which fixes every vertex a \emph{simple graph symmetry}  of the graph $\Gamma$.
We use the term graph symmetry instead of graph automorphism to avoid confusion between automorphisms of a graph and automorphisms of a shift.

Let $(X_{A},\sigma_{A})$ be a shift of finite type presented by a matrix $A$ over $\mathbb{Z}_{+}$ with associated directed labeled graph $\Gamma_{A}$, and suppose $\tau$ is a simple graph symmetry of $\Gamma_{A}$. Then $\tau$ induces an automorphism $\tilde{\tau} \in \aut(\sigma_{A})$ given by a 1-block code, and any automorphism in $\aut(\sigma_{A})$ which is induced by such a graph symmetry is called a \emph{simple graph automorphism}. An automorphism $\phi \in \aut(\sigma_{A})$ is called \emph{simple} if there exists a shift of finite type $(X_{B},\sigma_{B}$), a conjugacy $h \colon (X_{A},\sigma_{A}) \to (X_{B},\sigma_{B})$, and a simple graph automorphism $\tilde{\tau} \in \aut(X_{B},\sigma_{B})$ such that
$$\phi = h_{*}^{-1}(\tilde{\tau}) = h^{-1} \circ \tilde{\tau} \circ h.$$
Note that, by construction, any simple automorphism is of finite order.
It is straightforward to check that the subgroup of $\aut(\sigma_{A})$ generated by simple automorphisms forms a normal subgroup contained in $\inert(\sigma_{A})$, and we denote this subgroup by $\simp(\sigma_{A})$.

There exist irreducible shifts of finite type $(X_{A},\sigma_{A})$ for which $\simp(\sigma_{A})$ is a proper subgroup of $\inert(\sigma_{A})$; see~\cite{KR91b}. In general, the difference between $\simp(\sigma_{A})$ and $\inert(\sigma_{A})$ for an irreducible shift of finite type is not well understood; for example, it is not known whether for a full shift $(X_{n},\sigma_{n})$ the groups $\simp(\sigma_{n})$ and $\inert(\sigma_{n})$ agree.

However, Wagoner in~\cite{WagonerEFOG} showed that, upon passing to sufficiently large powers of the shift, inert automorphisms can be written as products of simple automorphisms (an alternate proof was given by Boyle in~\cite{Boyle1988}).
\begin{theorem}[Wagoner~\cite{WagonerEFOG}]
\label{th:wagoner}
If $\phi$ is an inert automorphism of a mixing shift of finite type $(X_A, \sigma_A)$, then there exists $N$ such that for all $n \geq N$, $\phi$ can be written as a product of simple automorphisms lying in $\aut(X_A, \sigma_A^n)$.
\end{theorem}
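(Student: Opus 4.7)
The plan is to follow Wagoner's homotopical strategy, representing the inert automorphism as a closed loop in an appropriate simplicial complex of strong shift equivalences and showing the loop becomes null-homotopic after passing to a higher power of the shift. Concretely, I would begin by realizing $\phi$ as a self-conjugacy $(X_A,\sigma_A) \to (X_A,\sigma_A)$ and applying Williams' classification to express $\phi$ as a chain of elementary strong shift equivalences that begins and ends at $A$. This produces a closed loop $\gamma_\phi$ in Wagoner's simplicial complex whose vertices are matrices strongly shift equivalent to $A$, whose edges are elementary equivalences, and whose $2$-cells encode the standard triangle relations. The dimension representation $\pi_A$ can then be realized as an action derived from the fundamental group of this complex after quotienting by simple graph symmetries at each vertex, and the inertness hypothesis translates to the statement that $\gamma_\phi$ bounds a triangulated disk $D$, modulo simple graph automorphisms along the way.

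Next, I would pass to $\sigma_A^n$ for $n$ large. The system $(X_A,\sigma_A^n)$ is itself a shift of finite type: its natural presentation replaces each matrix $B$ occurring in the chain by $B^n$, producing many parallel edges between pairs of states. This refinement provides abundant local permutation freedom, and the aim is to show that for $n$ exceeding the combinatorial complexity of $D$, each interior triangle of $D$ is realized concretely by a product of vertex-fixing (i.e.\ simple) graph symmetries of the refined presentation. Telescoping these realizations around $\partial D = \gamma_\phi$ would then exhibit $\phi \in \aut(X_A,\sigma_A^n)$ as a product of simple automorphisms.

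I expect the main obstacle to be this combinatorial realization step: verifying that every elementary triangle in Wagoner's complex, once refined via $B \mapsto B^n$, is realized by an honest product of simple graph symmetries rather than by something merely conjugate to one. This is the content of Wagoner's triangle identity argument and requires a careful inductive analysis to ensure that refinement creates enough parallel edges at each level to accommodate every local move demanded by $D$. Boyle's alternative proof handles the same obstacle through direct state-splitting arguments rather than through the simplicial complex, but any approach must produce this quantitative refinement bound relating $n$ to the complexity of a bounding disk.
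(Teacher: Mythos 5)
This theorem is not proved in the paper at all: it is quoted verbatim from Wagoner~\cite{WagonerEFOG} (with the paper also noting Boyle's alternative proof in~\cite{Boyle1988}), so there is no in-paper argument to compare yours against. Judged on its own terms, your proposal is a broadly accurate description of the \emph{architecture} of Wagoner's original argument --- loops in a complex built from strong shift equivalences, triangle relations, and the passage to $\sigma_A^n$ to create parallel edges --- but it is an outline rather than a proof. The step you explicitly defer (``verifying that every elementary triangle \dots is realized by an honest product of simple graph symmetries'') is not a technical loose end; it is essentially the entire content of the theorem, and the quantitative bound relating $n$ to the bounding disk is exactly what must be produced. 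As written, nothing is established.

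Two further points where the sketch is imprecise enough to matter. First, the dimension representation is not obtained by ``quotienting the fundamental group by simple graph symmetries at each vertex''; in Wagoner's framework one compares the fundamental group of the strong shift equivalence space with that of the shift equivalence space, and inertness of $\phi$ says only that the image of $\gamma_\phi$ dies in the latter. Upgrading ``dies under the comparison map'' to ``bounds a triangulated disk in the strong shift equivalence complex after stabilization'' is itself a substantial step --- this is precisely where the eventual (stabilized) nature of the conclusion enters --- and your phrase ``the inertness hypothesis translates to the statement that $\gamma_\phi$ bounds a triangulated disk'' assumes that step rather than proving it. Second, you should be careful that a product of conjugates of simple graph automorphisms along the chain must be converted into simple automorphisms of $(X_A,\sigma_A^n)$ in the sense of Nasu's definition used here (conjugate to a simple graph automorphism of \emph{some} presentation); the telescoping you describe produces the former, and identifying it with the latter requires tracking the conjugacies carefully. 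If you want a complete argument, you will need to either reproduce Wagoner's inductive triangle analysis or follow Boyle's state-splitting proof; the sketch as it stands does neither.
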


\section{The stabilized automorphism group}
\label{sec:stables}
\subsection{First properties}
For a subshift $(X,\sigma_{X})$,  let $\aut^{(k)}(\sigma_{X})$ denote  the centralizer of $\sigma_{X}^{k}$ in the group $\Homeo(X)$. Thus $\aut^{(k)}(\sigma_{X})$ is precisely $\aut(X, \sigma_{X}^{k})$ and $\aut^{(k)}(\sigma_{X})$ is a subgroup of $\aut^{(km)}(\sigma_{X})$ for all $k,m \geq 1$.

\begin{definition}
If $(X, \sigma_{X})$ is a subshift, define the {\em stabilized automorphism group $\aut^{(\infty)}(\sigma_{X})$} to be
$$
\aut^{(\infty)}(\sigma_{X}) = \bigcup_{k=1}^\infty\aut^{(k)}(\sigma_{X}),
$$
where the union is taken in $\homeo(X)$.

\end{definition}

For the full shift $(X_n, \sigma_n)$ on $n$ symbols, we denote the stabilized automorphism group by $\autinf{n}$.

It is straightforward to verify the following:
\begin{lemma}[Stabilized Curtis-Lyndon-Hedlund Theorem]\label{lem:stabilized-CLH}
Let $(X,\sigma_X)$ be a shift with alphabet $\mathcal{A}$ and let $\phi \in \aut^{(k)}(\sigma_{X})$.
Then there exist natural numbers $k$ and $r$, and $k$ block maps $\beta_i\colon \mathcal{A}^{2r+1}\to \mathcal{A}$ for $i=0,1, \dots, k-1$ such that

$$\phi(x)_z = \beta_{z \textnormal{ mod } k}(x_{z-r} , \dots, x_z, \dots , x_{z+r}).$$
\end{lemma}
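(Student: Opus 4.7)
The plan is to reduce this to the classical Curtis-Hedlund-Lyndon theorem by exploiting the equivariance of $\phi$ with respect to $\sigma_{X}^{k}$ rather than $\sigma_{X}$ itself. The loss of full shift-equivariance is precisely what forces the block map to depend on $z \bmod k$ rather than being a single global block map.

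First I would use continuity of $\phi$ and compactness of $X$ to produce a local radius at each residue class. For each fixed $j \in \{0, 1, \dots, k-1\}$, the map $x \mapsto \phi(x)_{j}$ is a continuous function from $X$ into the finite discrete set $\mathcal{A}$, hence locally constant. By compactness, there exists $r_{j} \in \mathbb{N}$ such that whenever $x, y \in X$ satisfy $x_{[j-r_{j}, j+r_{j}]} = y_{[j-r_{j}, j+r_{j}]}$, one has $\phi(x)_{j} = \phi(y)_{j}$. Setting $r = \max_{0 \le j < k} r_{j}$, I can then define block maps $\beta_{j} \colon \mathcal{A}^{2r+1} \to \mathcal{A}$ (for each $j = 0, \dots, k-1$) by reading off the value of $\phi(x)_{j}$ from the word $x_{[j-r, j+r]}$ (extending arbitrarily on words that do not appear in the language of $X$).

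Second, I would invoke the commutation relation $\phi \circ \sigma_{X}^{k} = \sigma_{X}^{k} \circ \phi$ to propagate these $k$ block maps across all positions. Given an arbitrary $z \in \mathbb{Z}$, write $z = qk + j$ with $0 \le j < k$. Then
\begin{equation*}
\phi(x)_{z} = (\sigma_{X}^{qk} \phi(x))_{j} = \phi(\sigma_{X}^{qk} x)_{j} = \beta_{j}\bigl((\sigma_{X}^{qk} x)_{j-r}, \dots, (\sigma_{X}^{qk} x)_{j+r}\bigr) = \beta_{j}(x_{z-r}, \dots, x_{z+r}),
\end{equation*}
which is exactly the claimed formula with $j = z \bmod k$.

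No step here presents a genuine obstacle; the only subtlety is remembering that the standard proof of the classical Curtis-Hedlund-Lyndon theorem uses shift-equivariance to upgrade the local constant radius at the origin to a uniform sliding block code, so in the $\sigma_{X}^{k}$-equivariant setting one must repeat the argument at the $k$ residue classes modulo $k$ separately. Equivalently, one could observe that $(X, \sigma_{X}^{k})$ is topologically conjugate (via the obvious recoding) to a subshift over the alphabet $\mathcal{A}^{k}$, apply the classical theorem there, and unwind the conjugacy to recover the statement; I prefer the direct continuity-plus-compactness approach above since it produces the maps $\beta_{0}, \dots, \beta_{k-1}$ and the radius $r$ explicitly.
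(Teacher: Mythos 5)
Your proof is correct; the paper does not actually supply an argument for this lemma (it is stated as "straightforward to verify"), and your continuity-plus-compactness argument at each residue class, followed by propagation via the commutation $\phi\circ\sigma_X^{k}=\sigma_X^{k}\circ\phi$, is exactly the intended standard adaptation of the Curtis--Hedlund--Lyndon proof. The computation $\phi(x)_{qk+j}=\phi(\sigma_X^{qk}x)_j=\beta_j(x_{z-r},\dots,x_{z+r})$ checks out, so nothing is missing.
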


Note that, the case that all $\beta_i$ are identical  yields an element that commutes with $\sigma_X$.

One concludes, either from the definition or using Lemma~\ref{lem:stabilized-CLH} that $\aut^{(\infty)}(\sigma_{X})$ is a countable group that contains the automorphism group $\aut(\sigma_{X})$.

For some subshifts, nothing new arises in the stabilized automorphism group:
\begin{example}
Let $(X,\sigma_{X})$ be a minimal shift associated to an irrational rotation: for example, such a shift can be defined by fixing an irrational $\alpha\in(0,1)$, considering $$T(x) =x+\alpha\pmod 1,$$
and using the coding of the orbit of $0$ defined by setting the $n^{th}$ entry to be $0$ if $T^n(x)\in[0, \alpha)$ and $1$ if $T^n(x)\in[\alpha,1)$.  This gives rise to a Sturmian shift (see for example~\cite[Chapter 6]{PF} for background on Sturmian shifts), and $\aut(\sigma_{X}) \cong \mathbb{Z}$ is generated by the shift $\sigma_{X}$ (see~\cite{Olli2013}).

The system $(X,\sigma_{X})$ has a single pair of asymptotic orbits $\mathcal{O}_{1}, \mathcal{O}_{2}$, and for each $k \ge 1$ the system $(X,\sigma_{X}^{k})$ then has $k$ pairs of asymptotic orbits given by the collection $\{\sigma_{X}^{i}(\mathcal{O}_{1}),\sigma_{X}^{i}(\mathcal{O}_{2})\}_{i=0}^{k-1}$. Using~\cite[Lemma 2.3]{DDMP}, it follows that any automorphism in $\aut(\sigma_{X}^{k})$ is of the form $\sigma_{X}^{j}$ for some $j \in \mathbb{Z}$, and hence
$$\aut(\sigma_{X}^{k}) \cong \mathbb{Z} = \langle \sigma_{X} \rangle.$$
Thus, in this case, for any $k, m \ge 1$ we have
\begin{equation}
\xymatrix{
\aut^{(k)}(\sigma_{X}) \ar[r] \ar[d]_{\cong} & \aut^{(km)}(\sigma_{X}) \ar[d]^{\cong}\\
\mathbb{Z} \ar[r]_{\textnormal{id}} & \mathbb{Z} \\
}
\end{equation}
and $\aut^{(\infty)}(\sigma_{X}) = \aut(\sigma_{X}) \cong \mathbb{Z}$.
Moreover, these groups are not just abstractly both isomorphic to $\Z$ but are the same as subgroups of $\Homeo(X)$, as they are all equal to $\aut(\sigma_X)$.
\end{example}

However, for a shift of finite type, each inclusion in the definition of the stabilized automorphism group is strict:
\begin{lemma}\label{lemma:onthewayup}
If $(X_A, \sigma_A)$ is an infinite irreducible shift of finite type, then for any $k\in\N$ and any $m \geq 2$, the subgroup $\aut^{(k)}(\sigma_A)$ is a proper subgroup of $\aut^{(km)}(\sigma_A)$.
\end{lemma}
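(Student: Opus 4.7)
The plan is to argue by contradiction: suppose $\aut^{(k)}(\sigma_A) = \aut^{(km)}(\sigma_A)$. Since the containment $\aut^{(k)}(\sigma_A) \subseteq \aut^{(km)}(\sigma_A)$ is automatic, this equality is equivalent to the statement that every element of $\aut(X_A,\sigma_A^{km})$ commutes with $\sigma_A^k$, i.e. $\sigma_A^k$ lies in the center of $\aut(X_A,\sigma_A^{km})$. The goal is to force a contradiction using Ryan's Theorem (Theorem~\ref{th:ryan}).

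Let $p$ denote the period of the irreducible SFT $(X_A,\sigma_A)$ and set $d = \gcd(km,p)$. A standard fact is that $(X_A,\sigma_A^{km})$ decomposes as a disjoint union of $d$ clopen subsets $Y_0,\ldots,Y_{d-1}$, each of which is an infinite irreducible SFT under the restriction of $\sigma_A^{km}$, and the powers of $\sigma_A$ cyclically permute the components via $\sigma_A^n(Y_i) = Y_{i+n \bmod d}$. Pick any non-trivial $\psi \in \aut(Y_0,\sigma_A^{km}|_{Y_0})$, for instance $\psi = \sigma_A^{km}|_{Y_0}$, which is non-trivial because $Y_0$ is infinite. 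Extend $\psi$ by the identity on the remaining components to obtain an element $\tilde{\psi} \in \aut(X_A,\sigma_A^{km})$. Centrality of $\sigma_A^k$ forces $\sigma_A^k \circ \tilde{\psi} = \tilde{\psi} \circ \sigma_A^k$, and comparing the two sides on points of $Y_0$ splits into two cases.

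If $d \nmid k$, then $\sigma_A^k(Y_0) = Y_{k \bmod d} \neq Y_0$, so $\tilde{\psi}$ acts as the identity on the image $\sigma_A^k(Y_0)$, and the commutation reduces to $\sigma_A^k(\psi(y)) = \sigma_A^k(y)$ for every $y \in Y_0$; injectivity of $\sigma_A^k$ then forces $\psi = \id$, contradicting our choice. If $d \mid k$, then $\sigma_A^k$ preserves each $Y_i$, and commutation with every such $\tilde{\psi}$ forces $\sigma_A^k|_{Y_0}$ to lie in the center of $\aut(Y_0,\sigma_A^{km}|_{Y_0})$. Applying Ryan's Theorem to the infinite irreducible SFT $(Y_0,\sigma_A^{km}|_{Y_0})$ yields $\sigma_A^k|_{Y_0} = \sigma_A^{kmj}|_{Y_0}$ for some $j \in \mathbb{Z}$. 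Transporting this identity along the $\sigma_A$-orbit of components shows $\sigma_A^{k(1-mj)} = \id$ on all of $X_A$, and since $X_A$ is infinite the shift $\sigma_A$ has infinite order, so $k(1-mj) = 0$, i.e.\ $mj = 1$, which is impossible for $m \geq 2$.

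The main obstacle is that $(X_A,\sigma_A^{km})$ need not itself be irreducible, so Ryan's Theorem cannot be applied directly to this system. The argument circumvents this by using the decomposition into irreducible components and the identity-extension trick, which embeds each $\aut(Y_i,\sigma_A^{km}|_{Y_i})$ into $\aut(X_A,\sigma_A^{km})$ and allows the hypothetical centrality of $\sigma_A^k$ to be funneled into a single component where Ryan's Theorem does apply. The remaining bookkeeping is verifying the clopen decomposition, the way $\sigma_A^k$ permutes the $Y_i$, and transporting the identity $\sigma_A^{k(1-mj)} = \id$ from one component to all of $X_A$ via conjugacy by $\sigma_A$.
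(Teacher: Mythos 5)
Your proof is correct. It rests on the same key ingredient as the paper's proof, namely Ryan's Theorem (Theorem~\ref{th:ryan}), but the route is genuinely different in one respect. The paper's argument is two sentences: it applies Ryan's Theorem directly to $\aut(\sigma_A^{km})$ to conclude that its center is $\langle \sigma_A^{km}\rangle$, so that $\sigma_A^k$ (which is not a power of $\sigma_A^{km}$, since $m\geq 2$ and $\sigma_A$ has infinite order on an infinite shift of finite type) fails to be central, producing an element of $\aut^{(km)}(\sigma_A)$ outside $\aut^{(k)}(\sigma_A)$. That direct application presumes $(X_A,\sigma_A^{km})$ is itself an infinite irreducible shift of finite type, which is automatic when $\sigma_A$ is mixing but can fail when $\sigma_A$ is irreducible with period $p$ and $\gcd(km,p)>1$. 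Your decomposition into the $d=\gcd(km,p)$ transitive components $Y_0,\dots,Y_{d-1}$, together with the identity-extension trick and the case split on whether $d$ divides $k$, addresses exactly this point: you apply Ryan's Theorem only to the genuinely irreducible system $(Y_0,\sigma_A^{km}|_{Y_0})$ and then transport the resulting identity around the $\sigma_A$-orbit of components. The price is length; the payoff is that the lemma is established for all infinite irreducible shifts of finite type as stated, rather than only in the mixing (or coprime-period) case where the one-line appeal to Ryan's Theorem is literally licensed. The individual steps all check out: the $Y_i$ are clopen, infinite, and transitive for $\sigma_A^{km}$; the extension $\tilde\psi$ does lie in $\aut(\sigma_A^{km})$; injectivity of $\sigma_A^k$ kills $\psi$ in the case $d\nmid k$; and the final arithmetic $k(1-mj)=0$ forces $mj=1$, which is impossible for $m\geq 2$.
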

\begin{proof}
By Ryan's Theorem (Theorem~\ref{th:ryan}), the center of
$\aut^{(km)}(\sigma_A) = \aut(\sigma_A^{km})$ is exactly $\langle\sigma_A^{km}\rangle$.  Thus there exists some $\phi\in\aut^{(km)}(\sigma_A)$ such that $\phi$ does not commute
with $\sigma_A^k$.
\end{proof}

In Proposition~\ref{prop:trivial-center},
we make further use of Ryan's Theorem and prove a stronger result, showing that for an irreducible shift of finite type $(X, \sigma_A)$, we have that $\aut(\sigma_A)$ is not abstractly isomorphic to $\aut^{\infty}(\sigma_A)$.

The following proposition follows immediately from  the definition of the stabilized automorphism group:
\begin{proposition}
\label{prop:smallprop}
For any shift $(X, \sigma)$ and $k \ge 1$, $\aut^{(\infty)}(\sigma^{k}) = \aut^{(\infty)}(\sigma)$.
\end{proposition}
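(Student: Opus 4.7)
The proof is essentially an unwinding of definitions, so my plan is brief. First, I would unfold both sides: by definition,
\[
\aut^{(\infty)}(\sigma^{k}) = \bigcup_{j=1}^{\infty} \aut\bigl((\sigma^{k})^{j}\bigr) = \bigcup_{j=1}^{\infty} \aut(\sigma^{kj}),
\]
while
\[
\aut^{(\infty)}(\sigma) = \bigcup_{m=1}^{\infty} \aut(\sigma^{m}).
\]
The containment $\aut^{(\infty)}(\sigma^{k}) \subseteq \aut^{(\infty)}(\sigma)$ is then immediate, since each term $\aut(\sigma^{kj})$ in the left-hand union is one of the terms (namely, $m = kj$) appearing in the right-hand union.

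For the reverse containment, I would fix $\phi \in \aut^{(\infty)}(\sigma)$ and choose $m \geq 1$ with $\phi \in \aut(\sigma^{m})$. Since $\phi$ commutes with $\sigma^{m}$, it commutes with every power of $\sigma^{m}$; in particular it commutes with $\sigma^{mk} = (\sigma^{k})^{m}$. Hence $\phi \in \aut\bigl((\sigma^{k})^{m}\bigr) = \aut^{(m)}(\sigma^{k}) \subseteq \aut^{(\infty)}(\sigma^{k})$. The only conceptual point worth articulating is that both unions are cofinal in the natural numbers ordered by divisibility: every $m$ divides some $kj$, and every $kj$ is some $m$. There is no real obstacle here — the statement is a formal consequence of the definition of the stabilized group and the fact that centralizers grow when one passes from $\sigma^{m}$ to $\sigma^{mk}$.
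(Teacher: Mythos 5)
Your proof is correct and matches the paper's intent exactly: the paper offers no written argument, stating only that the proposition ``follows immediately from the definition of the stabilized automorphism group,'' and your unwinding of the two unions (with the observation that commuting with $\sigma^{m}$ forces commuting with $\sigma^{mk}=(\sigma^{k})^{m}$) is precisely that immediate argument made explicit.
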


It is well known that if two shifts are conjugate, then their
automorphism groups are isomorphic, and the same holds true
for their stabilized automorphism groups.
In fact, a stronger result holds  in the stabilized setting, and to make this precise, we define a weaker notion that suffices for the associated groups to be isomorphic.

Recall that $(X,\sigma_{X})$ and $(Y,\sigma_{Y})$ are {\em eventually conjugate} if there exists some $K \in\N$ such that for all $k \ge K$, $(X,\sigma_{X}^{k})$ and $(Y,\sigma_{Y}^k)$ are conjugate.
We define a weaker notion: we say that the systems $(X,\sigma_{X})$ and $(Y,\sigma_{Y})$ are \emph{rationally conjugate} if there exist $j,k \ge 1$ such that the systems $(X,\sigma_{X}^{j})$ and $(Y,\sigma_{Y}^{k})$ are conjugate.
For example, the systems $(X_2,\sigma_2)$ and $(X_4,\sigma_4)$ are rationally conjugate but are not eventually conjugate.
\begin{proposition}
\label{prop:eventualconj}
If the systems $(X,\sigma_{X})$ and $(Y,\sigma_{Y})$
are rationally conjugate, then $\aut^{(\infty)}(\sigma_{X})$ and $\aut^{(\infty)}(\sigma_{Y})$ are isomorphic.
\end{proposition}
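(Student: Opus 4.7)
The plan is to reduce to the case of ordinary (eventual) conjugacy by applying Proposition~\ref{prop:smallprop}, and then lift the single conjugacy $h$ to an isomorphism between the corresponding stabilized groups via conjugation.

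Suppose $j, k \geq 1$ are such that there exists a conjugacy $h\colon (X,\sigma_X^j)\to (Y,\sigma_Y^k)$, so $h$ is a homeomorphism with $h\circ \sigma_X^j = \sigma_Y^k \circ h$. The first step is to observe that by iterating this relation, one has $h\circ \sigma_X^{jm} = \sigma_Y^{km}\circ h$ for every $m \geq 1$. Consequently, for any $m$ and any $\phi\in \aut(\sigma_X^{jm})$, a direct computation shows
\[
(h\phi h^{-1})\circ \sigma_Y^{km} = h\phi \sigma_X^{jm} h^{-1} = h\sigma_X^{jm}\phi h^{-1} = \sigma_Y^{km}\circ (h\phi h^{-1}),
\]
so conjugation by $h$ defines an injective homomorphism $h_*\colon \aut(\sigma_X^{jm})\to \aut(\sigma_Y^{km})$. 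The same reasoning applied to $h^{-1}$ shows that $h_*$ is an isomorphism for each $m \geq 1$.

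The next step is to package these level-wise isomorphisms into an isomorphism of the stabilized groups. By Proposition~\ref{prop:smallprop}, we have $\aut^{(\infty)}(\sigma_X) = \aut^{(\infty)}(\sigma_X^j)$, and moreover
\[
\aut^{(\infty)}(\sigma_X^j) = \bigcup_{m=1}^\infty \aut\bigl((\sigma_X^j)^m\bigr) = \bigcup_{m=1}^\infty \aut(\sigma_X^{jm}),
\]
and analogously $\aut^{(\infty)}(\sigma_Y) = \bigcup_{m=1}^\infty \aut(\sigma_Y^{km})$. Since the isomorphisms $h_*\colon \aut(\sigma_X^{jm})\to \aut(\sigma_Y^{km})$ are all given by the single formula $\phi\mapsto h\phi h^{-1}$, they are compatible with the inclusions $\aut(\sigma_X^{jm})\hookrightarrow \aut(\sigma_X^{jm'})$ for $m\mid m'$, and so assemble into a well-defined group homomorphism
\[
h_*\colon \aut^{(\infty)}(\sigma_X) \longrightarrow \aut^{(\infty)}(\sigma_Y), \qquad \phi\mapsto h\phi h^{-1}.
\]
Conjugation by $h^{-1}$ provides a two-sided inverse by exactly the same argument, which completes the proof.

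There is no real obstacle here; the only point requiring care is the book-keeping to confirm that an element of $\aut^{(\infty)}(\sigma_X)$, which a priori commutes only with some power of $\sigma_X$, can always be viewed as lying in $\aut(\sigma_X^{jm})$ for some $m$, and this is exactly what Proposition~\ref{prop:smallprop} delivers.
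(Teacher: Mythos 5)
Your proof is correct and follows essentially the same route as the paper: conjugation by the single conjugacy $h\colon (X,\sigma_X^j)\to(Y,\sigma_Y^k)$ induces an isomorphism $\aut^{(\infty)}(\sigma_X^j)\to\aut^{(\infty)}(\sigma_Y^k)$, and Proposition~\ref{prop:smallprop} identifies these with $\aut^{(\infty)}(\sigma_X)$ and $\aut^{(\infty)}(\sigma_Y)$. The paper states this in two lines; you have simply supplied the level-wise verification and compatibility check explicitly.
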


\begin{proof}
If $h \colon (X,\sigma_{X}^{j}) \to (Y,\sigma_{Y}^{k})$ is a conjugacy then $h_{*}$ gives rise to an isomorphism
$$h_{*} \colon \aut^{(\infty)}(\sigma_{X}^{j}) \to \aut^{(\infty)}(\sigma_{Y}^{k}).$$
By Proposition~\ref{prop:smallprop}, this implies $\autinf{X}$ and $\autinf{Y}$ are isomorphic.
\end{proof}

In particular, since $(X_{4},\sigma_{4})$ is conjugate to $(X_{2},\sigma_{2}^{2})$, it follows that $\aut^{(\infty)}(\sigma_2)$ and
$\aut^{(\infty)}(\sigma_4)$ are isomorphic, in constrast to the non-stabilized setting, where
$\aut(\sigma_2)$ and $\aut(\sigma_4)$ are not isomorphic (see Theorem~\ref{th:rootdistinguish}).

Recall that   two matrices  $A$ and $B$ with entries in $\Z_+$ are said to be {\em shift equivalent} (over $\mathbb{Z}_{+}$) if there exists an integer $m\geq 1$
and matrices $R$ and $S$ over $\Z_+$ such that
$$
AR = RB, \ SA = BS, \ A^m = RS, \text{ and } B^m= SR.
$$
If $A$ and $B$ are irreducible $\mathbb{Z}_{+}$-matrices which are shift equivalent then the systems $(X_{A},\sigma_{A}), (X_{B},\sigma_{B})$ are eventually conjugate, and Kim and Roush~\cite{KR79} showed the converse holds. We use this to show:
\begin{proposition}
Suppose $(X_A, \sigma_A)$ and $(X_B, \sigma_B)$ are irreducible shifts of finite type defined by $\mathbb{Z}_{+}$-matrices $A,B$. If $A$ and $B$ are shift equivalent, then $\autinf{A}$ and $\autinf{B}$ are isomorphic.
\end{proposition}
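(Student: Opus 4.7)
The plan is to assemble two results already recorded in the excerpt: the Kim--Roush theorem identifying shift equivalence with eventual conjugacy for irreducible shifts of finite type, and Proposition~\ref{prop:eventualconj}, which gives an isomorphism of stabilized automorphism groups from a rational conjugacy.

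First I would invoke Kim--Roush~\cite{KR79} to upgrade shift equivalence of the irreducible $\mathbb{Z}_{+}$-matrices $A$ and $B$ to an eventual conjugacy of the associated shifts of finite type $(X_A,\sigma_A)$ and $(X_B,\sigma_B)$: there exists some $K$ such that for every $k \ge K$ the systems $(X_A,\sigma_A^k)$ and $(X_B,\sigma_B^k)$ are topologically conjugate. In particular, choosing any single $k \ge K$ (say $k=K$) gives a conjugacy $h\colon (X_A,\sigma_A^k) \to (X_B,\sigma_B^k)$, which shows that $(X_A,\sigma_A)$ and $(X_B,\sigma_B)$ are rationally conjugate in the sense defined just before Proposition~\ref{prop:eventualconj} (with $j=k$).

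Having established rational conjugacy, Proposition~\ref{prop:eventualconj} applies directly and yields an isomorphism $\autinf{A} \cong \autinf{B}$. Concretely, the conjugacy $h$ induces a group isomorphism $h_* \colon \aut^{(\infty)}(\sigma_A^k) \to \aut^{(\infty)}(\sigma_B^k)$, and then Proposition~\ref{prop:smallprop} identifies $\aut^{(\infty)}(\sigma_A^k)$ with $\aut^{(\infty)}(\sigma_A)$ and $\aut^{(\infty)}(\sigma_B^k)$ with $\aut^{(\infty)}(\sigma_B)$, giving the desired isomorphism.

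There is essentially no obstacle here once the Kim--Roush theorem is cited: the entire content of the proposition is packaged in the implication shift equivalence $\Rightarrow$ eventual conjugacy $\Rightarrow$ rational conjugacy $\Rightarrow$ isomorphism of stabilized automorphism groups, and each step is either a direct citation or a formality. The only thing worth emphasizing in the write-up is that eventual conjugacy is strictly stronger than rational conjugacy, so no extra argument is needed to feed the hypothesis into Proposition~\ref{prop:eventualconj}.
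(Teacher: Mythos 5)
Your proof is correct and follows exactly the paper's own argument: cite Kim--Roush to convert shift equivalence into eventual conjugacy, observe that this gives rational conjugacy, and apply Proposition~\ref{prop:eventualconj}. Nothing further is needed.
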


\begin{proof}
By Kim and Roush~\cite{KR79, KR99}, matrices $A$ and $B$ are shift equivalent if and only if the systems $(X_{A},\sigma_{A})$ and $(X_{B},\sigma_{B})$ are eventually conjugate. The result then follows from Proposition~\ref{prop:eventualconj}.
\end{proof}

\subsection{The center}
\label{sec:the-center}
Ryan's Theorem (Theorem~\ref{th:ryan}) shows that for any irreducible shift of finite type, the center is exactly the powers of the shift.
In contrast, the center is trivial in the stabilized automorphism group:
\begin{proposition}
\label{prop:trivial-center}
Suppose $(X_{A},\sigma_{A})$ is an infinite irreducible shift of finite type. Then the center $Z(\autinf A)$ of $\autinf A$ is trivial, and the group
$\autinf A$ is not finitely generated.
\end{proposition}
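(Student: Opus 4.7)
The plan is to prove the trivial-center claim by a two-step application of Ryan's Theorem, and to deduce the non-finite-generation statement from the fact that $\autinf A$ is a proper filtered union of the $\aut(\sigma_A^k)$.

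First I would show $Z(\autinf A)\subset\langle\sigma_A\rangle$. Take any $\phi\in Z(\autinf A)$. Since $\aut(\sigma_A)\subset\autinf A$, the element $\phi$ commutes with $\sigma_A$ (so $\phi\in\aut(\sigma_A)$) and moreover lies in $Z(\aut(\sigma_A))$. By Ryan's Theorem (Theorem~\ref{th:ryan}), this center equals $\langle\sigma_A\rangle$, so $\phi=\sigma_A^n$ for some $n\in\Z$.

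Next I would rule out $n\neq 0$ by moving to a higher power of the shift. For any $m\geq 1$ one has $\aut(\sigma_A^m)\subset\autinf A$, so $\phi$ must centralize $\aut(\sigma_A^m)$ and hence lie in $Z(\aut(\sigma_A^m))$. Provided $(X_A,\sigma_A^m)$ is still an infinite irreducible shift of finite type, Ryan's Theorem applied to it gives $Z(\aut(\sigma_A^m))=\langle\sigma_A^m\rangle$, so $\sigma_A^n\in\langle\sigma_A^m\rangle$, i.e.\ $m\mid n$. Choosing $m$ arbitrarily large then forces $n=0$. In the mixing case every $m$ is admissible. In the general irreducible case of period $p$, the power $(X_A,\sigma_A^m)$ remains irreducible whenever $\gcd(m,p)=1$; since infinitely many such $m$ exceed any given $|n|$, the conclusion again follows. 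This choice of $m$ is the only minor subtlety in the argument.

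For the non-finite-generation statement, I would use the standard argument for a proper filtered union. Any finite list $\phi_1,\ldots,\phi_r\in\autinf A$ satisfies $\phi_i\in\aut(\sigma_A^{k_i})$ for some $k_i$, and setting $K=\textnormal{lcm}(k_1,\ldots,k_r)$ places all of them in $\aut(\sigma_A^K)$. Hence the subgroup they generate is contained in $\aut(\sigma_A^K)$, but Lemma~\ref{lemma:onthewayup} gives the strict containment $\aut(\sigma_A^K)\subsetneq\aut(\sigma_A^{2K})\subset\autinf A$, so no finite subset can generate the whole group.

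I do not expect a serious obstacle: Ryan's Theorem and Lemma~\ref{lemma:onthewayup} do all the work. The only point requiring care is the selection of the auxiliary $m$ so that $(X_A,\sigma_A^m)$ remains irreducible, which is routine once the period of $A$ is tracked.
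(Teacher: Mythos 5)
Your proof is correct and follows the same basic strategy as the paper's: two applications of Ryan's Theorem at different powers of the shift to kill the center, and then the filtration $\autinf A=\bigcup_k\autk{k}{A}$ to rule out finite generation. There are two mild differences. For the center, the paper starts by placing $\phi$ in some $\autk{k}{A}$ and applies Ryan there to get $\phi=\sigma_A^{km}$, then applies Ryan again in $\autk{2km}{A}$; you instead observe directly that $\phi$ commutes with $\sigma_A$, so it already lies in $Z(\aut(\sigma_A))=\langle\sigma_A\rangle$, and then pass to higher powers. Your explicit care about whether $(X_A,\sigma_A^m)$ remains irreducible (choosing $m$ coprime to the period) is a point the paper's proof glosses over, and it is handled correctly. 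For non-finite-generation, the paper argues that a finitely generated subgroup sits inside some $\autk{k}{A}$ and hence is centralized by $\sigma_A^k\neq\id$, contradicting triviality of the center; you instead invoke Lemma~\ref{lemma:onthewayup} to get the strict inclusion $\aut(\sigma_A^K)\subsetneq\aut(\sigma_A^{2K})$. Both deductions are one-liners resting on Ryan's Theorem, so nothing of substance is gained or lost either way.
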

\begin{proof}
Suppose $\phi \in Z(\autinf A)$ and choose $k \ge 1$ such that $\phi \in \autk{k}{A}$. Then $\phi \in Z(\autk{k}{A})$,
so by Ryan's Theorem we have $\phi = \sigma_{A}^{k m}$ for some $m \in \mathbb{Z}$.
However if $\sigma_{A}^{k m} = \phi \in Z(\autinf A)$, then $\sigma_{A}^{k m} \in Z(\autk{2km}A) = \langle \sigma_{A}^{2k m} \rangle$, so $m=0$.

For any irreducible shift of finite type $(X_{A},\sigma_{A})$,
any finitely generated subgroup of $\autinf{A}$ has  nontrivial centralizer (as each finitely generated subgroup is included in $\autk k A$ for some $k$, for which $\sigma_A^k$ would be in the centralizer). By the previous part, it follows that for any infinite irreducible shift of finite type, the group $\autinf{A}$ is not finitely generated.
\end{proof}

\subsection{The $\autinf{A}$-action on $X_{A}$}\label{sec:autinfaction}
Let $(X_{A},\sigma_{A})$ be a mixing shift of finite type and let $P(X_{A})$ denote the set of $\sigma_{A}$-periodic points in $X_{A}$. Then both $\aut(\sigma_{A})$ and $\autinf{A}$ act on the set $P(X_{A})$. While the action of $\aut(\sigma_{A})$ on $P(X_{A})$ is far from transitive (since any $\phi \in \aut(\sigma_{A})$ must preserve the order of a $\sigma_{A}$-periodic point), it follows from~\cite[Theorem 3.6]{BK1987} that $\autinf{A}$ acts highly transitively on the $\sigma_A$-periodic points of $X_{A}$ (recall an action of a group $G$ on a countable set $X$ is said to be highly transitive if for all $k \ge 1$ it is transitive on the set of ordered $k$-tuples of distinct elements in $X$).

It is straightforward to check that the action of $\aut(\sigma_{A})$ on $X_{A}$ is not minimal, since there are periodic points. Similarly, there are many $\aut(\sigma_{A})$-invariant probability measures, including atomic measures supported on periodic points, and the measure of maximal entropy. However, the minimal components and $\aut(\sigma_{A})$-invariant measures are essentially classified in~\cite[Sections  9 and 10]{BLR}. Using this, we deduce:
\begin{proposition}
If $(X_{A},\sigma_{A})$ is a mixing shift of finite type, then $\autinf{A}$ acts highly transitively on the set of $\sigma_{A}$-periodic points in $X_{A}$, and the action of $\autinf{A}$ on $X_{A}$ is minimal and uniquely ergodic. Moreover, the unique $\autinf{A}$-invariant probability measure is given by the measure of maximal entropy for the system $(X_{A},\sigma_{A})$.
\end{proposition}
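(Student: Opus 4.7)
The plan is to establish the three claims---high transitivity of $\autinf{A}$ on $P(X_A)$, minimality of the $\autinf{A}$-action on $X_A$, and unique ergodicity with the measure of maximal entropy $\mu_{\max}$ as the unique invariant probability measure---in that order, since the first feeds into the other two. For the transitivity claim, given two ordered $r$-tuples $(x_i)$ and $(y_i)$ of distinct $\sigma_A$-periodic points, I would take $N$ to be a common multiple of all of their $\sigma_A$-periods, so that every $x_i$ and $y_i$ is a $\sigma_A^N$-fixed point. Since $A$ primitive implies $A^N$ primitive, $(X_A,\sigma_A^N)$ is conjugate to a mixing shift of finite type, and the high transitivity of $\aut(\sigma_A^N)$ on the fixed points of $\sigma_A^N$ established in~\cite[Theorem 3.6]{BK1987} supplies the desired $\phi \in \aut(\sigma_A^N) \subseteq \autinf{A}$ with $\phi(x_i)=y_i$ for each $i$.

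For minimality, fix $x \in X_A$ and set $Y = \overline{\autinf{A} \cdot x}$, which is closed and $\autinf{A}$-invariant, hence in particular $\aut(\sigma_A)$-invariant. By the classification of the minimal components of the $\aut(\sigma_A)$-action in~\cite[Section 9]{BLR}, $Y$ must meet $P(X_A)$; picking any $p \in Y \cap P(X_A)$ and applying the high transitivity step yields $P(X_A) \subseteq Y$, and then density of $P(X_A)$ in any mixing shift of finite type forces $Y = X_A$.

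For unique ergodicity, let $\mu$ be an $\autinf{A}$-invariant Borel probability measure on $X_A$. I would first show $\mu$ is non-atomic: if $\mu(\{x\}) > 0$, invariance assigns that same mass to the entire $\autinf{A}$-orbit of $x$, which is infinite---it equals all of $P(X_A)$ if $x$ is periodic by the high transitivity step, and it contains the infinite $\sigma_A$-orbit of $x$ otherwise---contradicting $\mu(X_A) = 1$. Since $\mu$ is now non-atomic and $\aut(\sigma_A)$-invariant, the classification of $\aut(\sigma_A)$-invariant probability measures in~\cite[Section 10]{BLR} identifies it with $\mu_{\max}$; conversely, $\mu_{\max}$ is $\autinf{A}$-invariant because for every $k$ the system $(X_A,\sigma_A^k)$ is again a mixing shift of finite type whose unique measure of maximal entropy is $\mu_{\max}$, and is therefore preserved by every $\phi \in \aut(\sigma_A^k)$.

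The step I expect to be the main obstacle is extracting from~\cite[Sections 9 and 10]{BLR} the precise statements invoked above, since the BLR classification is stated for the non-stabilized group $\aut(\sigma_A)$ and in particular permits nontrivial atomic $\aut(\sigma_A)$-invariant measures supported on unions of periodic orbits. Minimality transfers easily from $\aut(\sigma_A)$ to $\autinf{A}$ because $\aut(\sigma_A) \subseteq \autinf{A}$, but the uniqueness of the invariant measure genuinely requires the passage to the stabilized group: it is only the high transitivity of $\autinf{A}$ on $P(X_A)$ which kills the atomic candidates allowed by BLR and leaves $\mu_{\max}$ as the single possibility.
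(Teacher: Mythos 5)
Your proposal is correct and takes essentially the same route as the paper: the paper's entire proof consists of citing \cite[Theorem 3.6]{BK1987} for the high transitivity and \cite[Theorem 9.2 and Corollary 10.2]{BLR} for the minimality, unique ergodicity, and identification of the invariant measure with the measure of maximal entropy. Your write-up simply makes explicit how those cited results combine (passing to a common power for transitivity, using transitivity to rule out atoms and to upgrade the BLR classification from $\aut(\sigma_A)$ to $\autinf{A}$), which the paper leaves implicit.
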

\begin{proof}
It follows from~\cite[Theorem 3.6]{BK1987} that $\autinf{A}$ acts highly transitively on the $\sigma_A$-periodic points of $X_{A}$. Given this, the minimality, unique ergodicity, and claim regarding the measure of maximal entropy then follow from~\cite[Theorem 9.2 and Corollary 10.2]{BLR}.
\end{proof}

\subsection{The stabilized dimension representation}\label{sec:stabdimrep}
Let $A$ be a $\mathbb{Z}_{+}$-matrix, and recall we have defined the dimension representation
\begin{equation*}
\pi_{A} \colon \aut(\sigma_{A}) \to \aut(\mathcal{G}_{A},\mathcal{G}_{A}^{+},\delta_{A}).
\end{equation*}
For any $k\geq 1$, we also have a homomorphism
$$\pi_{A}^{(k)} \colon \aut(\sigma_A^k) \to \aut(\mathcal{G}_{A^k},\mathcal{G}_{A^{k}}^{+},\delta_{A^{k}}).$$
Note that in general we have $(\mathcal{G}_{A},\mathcal{G}_{A}^{+}) = (\mathcal{G}_{A^{k}},\mathcal{G}_{A^{k}}^{+})$ for all $k \in \mathbb{N}$. However the dimension triples $(\mathcal{G}_{A},\mathcal{G}_{A}^{+},\delta_{A})$ and $(\mathcal{G}_{A^{k}},\mathcal{G}_{A^{k}}^{+},\delta_{A}^{k})$ are \emph{not} isomorphic, as there is no isomorphism that intertwines the maps $\delta_{A}$ and $\delta_{A}^{k}$. For each $k\geq 1$ the map $\pi_A^{(k)}:\aut^{(k)}(\sigma_A)\to \aut(\mathcal{G}_A,\mathcal{G}_{A^{k}}^{+},\delta_{A^{k}})$ sends $\sigma_{A}^k$ to $\delta_{A}^{k}$, and the image of $\pi_{A}^{(k)}$ lands in the centralizer of $\delta_{A}^{k}$, so in fact we have a homomorphism
$$\pi_{A}^{(k)} \colon \aut^{(k)}(\sigma_{A}) \to \aut(\mathcal{G}_{A},\mathcal{G}_{A}^{+},\delta_{A}^{k}).$$

It follows from the definitions that for all $k\geq 1$, $\aut(\mathcal{G}_{A},\mathcal{G}_{A}^{+},\delta_{A})$
can be viewed naturally as a subgroup of  $\aut(\mathcal{G}_{A},\mathcal{G}_{A}^{+},\delta_{A}^{k})$, and we can define the \emph{stabilized group of automorphisms of the dimension triple} by setting
$$\aut^{(\infty)}(\mathcal{G}_{A},\mathcal{G}_{A}^{+},\delta_A) = \bigcup_{k=1}^{\infty}\aut(\mathcal{G}_{A},\mathcal{G}_{A}^{+},\delta_{A}^{k}).$$
Equivalently, $\aut^{(\infty)}(\mathcal{G}_{A},\mathcal{G}_{A}^{+},\delta_A)$ is the union of the centralizers of $\delta_{A}^{k}$ in the group of automorphisms of the pair $(\mathcal{G}_{A},\mathcal{G}_{A}^{+})$, that is, all automorphisms of the group $\mathcal{G}_{A}$, which preserve $\mathcal{G}_{A}^{+}$.

Furthermore, as remarked in~\cite[p. 87]{BLR}, for $k \geq 1$, the restriction of the map
$$\pi_{A}^{(k)} \colon \aut^{(k)}(\sigma_{A}) \to \aut(\mathcal{G}_{A^{k}},\mathcal{G}_{A^{k}}^{+},\delta_{A}^{k})=\aut(\mathcal{G}_{A},\mathcal{G}_{A}^{+},\delta_{A}^{k})$$
to $\aut(\sigma_{A}) \subset \aut(\sigma_{A}^{k})$ coincides with the map $\pi_{A} \colon \aut(\sigma_{A}) \to \aut(\mathcal{G}_{A},\mathcal{G}_{A}^{+},\delta_{A})$. We can thus define the \emph{stabilized dimension representation}
$$
\pi_{A}^{(\infty)} \colon \autinf A \to \aut^{(\infty)}(\mathcal{G}_{A},\mathcal{G}_{A}^{+},\delta_A).
$$
In what follows, we use the shorthand notation $\aut^{(\infty)}(\mathcal{G}_{A})$ to refer to the group $\aut^{(\infty)}(\mathcal{G}_{A},\mathcal{G}_{A}^{+},\delta_A)$.
\begin{example}
Consider the case of the full 3-shift, presented via the matrix  $A = (3)$. For all $k\in\N$, we have $\mathcal{G}_{3} = \mathcal{G}_{3^{k}} = \mathbb{Z}[\frac{1}{3}]$. In this case $\aut(\mathcal{G}_{3^{k}}) = \aut(\mathcal{G}_{3}) \cong \left(\mathbb{Z},\mathbb{Z}_{+},1\right)$ for any $k$ (so $\delta_{3}$ corresponds to $1$ in $\mathbb{Z}$), and
$$\pi_{3}^{(k)} \colon \autk{k}{3} \to \aut(\mathcal{G}_{3}) \cong \left(\mathbb{Z},\mathbb{Z}_{+},1\right)$$
with $\pi_{3}^{(k)}(\sigma_{3}) = \delta_{3}.$
\end{example}
Recall $\omega(n)$ denotes the number of distinct prime factors of $n$, and
the maps $\mathfrak{m}_p$ are defined by $\mathfrak{m}_{p}(x) = p \cdot x$.
\begin{proposition}\label{prop:stableautodimgroup}
For the full shift $(X_n, \sigma_n)$, we have
$$\aut^{(\infty)}(\mathcal{G}_{n}) \cong \aut(\mathcal{G}_{n}) \cong \left(\mathbb{Z}^{\omega(n)},\mathbb{Z}_{+}^{\omega(n)},\bm{1}\right)$$
is generated by the maps  $\{\mathfrak{m}_{p} \colon p \textnormal{ is a prime dividing } n\}$.
\end{proposition}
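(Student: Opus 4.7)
The plan is to reduce everything to the observation that $\mathcal{G}_n = \mathbb{Z}[1/n]$ is a commutative ring, so any group automorphism of the pair $(\mathcal{G}_n, \mathcal{G}_n^+)$ automatically commutes with every power of $\delta_n$. Concretely, I would recall (as in the proof of Proposition~\ref{prop:dimrepnonstable}) the identification $(\mathcal{G}_n, \mathcal{G}_n^+, \delta_n) \cong (\mathbb{Z}[1/n], \mathbb{Z}_+[1/n], \mathfrak{m}_n)$, so that for every $k \geq 1$ we have $\delta_n^k = \mathfrak{m}_{n^k}$.

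Next I would show that the ambient group $\aut(\mathcal{G}_n, \mathcal{G}_n^+)$ (that is, group automorphisms preserving the positive cone, with no commutation requirement) coincides with $\aut(\mathcal{G}_n, \mathcal{G}_n^+, \delta_n)$. Any group automorphism $\Phi$ of $\mathbb{Z}[1/n]$ is determined by the image $\alpha = \Phi(1)$, which must be a unit in $\mathbb{Z}[1/n]$; preserving the cone forces $\alpha > 0$. Consequently $\Phi = \mathfrak{m}_\alpha$ is a multiplication map, and since $\mathbb{Z}[1/n]$ is commutative, $\mathfrak{m}_\alpha$ commutes with $\mathfrak{m}_{n^k}$ for every $k$. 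Thus
\begin{equation*}
\aut(\mathcal{G}_n, \mathcal{G}_n^+, \delta_n^k) \;=\; \aut(\mathcal{G}_n, \mathcal{G}_n^+) \;=\; \aut(\mathcal{G}_n, \mathcal{G}_n^+, \delta_n)
\end{equation*}
for all $k \geq 1$. Taking the union over $k$ gives $\aut^{(\infty)}(\mathcal{G}_n) = \aut(\mathcal{G}_n)$.

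Finally, identifying this group explicitly amounts to computing the positive units of $\mathbb{Z}[1/n]$: if $n = p_1^{b_1}\cdots p_r^{b_r}$ with $r = \omega(n)$, then these units are exactly the products $p_1^{a_1}\cdots p_r^{a_r}$ with $a_i \in \mathbb{Z}$, forming a free abelian group of rank $\omega(n)$ generated by the primes dividing $n$. The positive cone is recovered from the fact that $\alpha \cdot \mathbb{Z}_+[1/n] = \mathbb{Z}_+[1/n]$ iff $\alpha > 0$, which corresponds to the semigroup $\mathbb{Z}_+^{\omega(n)}$ under the identification, and $\delta_n = \mathfrak{m}_n = \mathfrak{m}_{p_1}^{b_1}\cdots\mathfrak{m}_{p_r}^{b_r}$ is now irrelevant to the automorphism group — it was already accounted for by Proposition~\ref{prop:dimrepnonstable}. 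This yields the desired isomorphism with $(\mathbb{Z}^{\omega(n)}, \mathbb{Z}_+^{\omega(n)}, \bm{1})$ generated by $\{\mathfrak{m}_p : p \mid n\}$. There is no real obstacle: the whole statement is a consequence of commutativity of the base ring, which forces the stabilization to collapse.
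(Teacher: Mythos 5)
Your proposal is correct and follows essentially the same route as the paper: the paper's proof is a one-line appeal to Proposition~\ref{prop:dimrepnonstable} together with the fact that the maps $\mathfrak{m}_{p}$ generate $\aut(\mathbb{Z}[\frac{1}{n}],\mathbb{Z}_{+}[\frac{1}{n}],\delta_{n})$, and the commutativity observation you spell out (every cone-preserving group automorphism of $\mathbb{Z}[\frac{1}{n}]$ is multiplication by a positive unit, hence commutes with every $\delta_{n}^{k}$, so the stabilization collapses) is exactly the implicit content of that appeal. Your writeup simply makes the "immediately" explicit.
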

\begin{proof}
The statement follows immediately from Proposition~\ref{prop:dimrepnonstable}, and the fact that the maps $\mathfrak{m}_{p}$ generate $\aut(\mathbb{Z}[\frac{1}{n}],\mathbb{Z}_{+}[\frac{1}{n}],\delta_{n}) \cong \left(\mathbb{Z}^{\omega(n)},\mathbb{Z}_{+}^{\omega(n)},\bm{1}\right)$.
\end{proof}

In the case of a full shift $(X_{n},\sigma_n)$, the classical dimension representation
$$\pi_{n} \colon \aut(\sigma_{n}) \to \aut(\mathcal{G}_{n})$$
is surjective (see Proposition~\ref{prop:dimrepnonstable}). However, in the general setting of mixing shifts of finite type, the dimension representation need not be surjective: Kim, Roush, and Wagoner~\cite{KRW} give an example of a mixing shift of finite type for which the dimension representation is not surjective, and in the general setting of mixing shifts of finite type, the question of when the dimension representation is surjective remains open. In the stabilized setting, however, the question has a satisfying answer, as shown in~\cite{BLR} (our terminology is different, but this is an immediate translation of their result):
\begin{theorem}[{Boyle, Lind and Rudolph~\cite[Theorem 6.8]{BLR}}]
\label{thm:stabledimrepsurj}
For any mixing shift of finite type $(X_{A},\sigma_{A})$, the stabilized dimension representation
$$\pi_{A}^{(\infty)} \colon \autinf{A} \to \aut^{(\infty)}(\mathcal{G}_{A})$$
is surjective.
\end{theorem}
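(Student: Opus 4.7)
Given $\Phi \in \aut^{(\infty)}(\mathcal{G}_A)$, the plan is to realize $\Phi$ as the image of a concrete element in $\aut^{(m)}(\sigma_A)$ for some $m \geq 1$, by translating $\Phi$ first into matrix data via Krieger's theorem and then into an SFT conjugacy via Williams' theorem. Fix $k \geq 1$ with $\Phi \in \aut(\mathcal{G}_A, \mathcal{G}_A^+, \delta_A^k)$; since $A$ is primitive so is $A^k$, and $\Phi$ becomes an automorphism of the dimension triple of the mixing shift of finite type $(X_A, \sigma_A^k) = (X_{A^k}, \sigma_{A^k})$.

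The first step is to realize $\Phi$ by a non-negative shift equivalence of $A^k$ with itself. By Krieger's representation of automorphisms of the dimension triple, every element of $\aut(\mathcal{G}_{A^k}, \mathcal{G}_{A^k}^+, \delta_{A^k})$ arises from a $\mathbb{Z}_+$-shift-equivalence of $A^k$ with itself: there exist $\mathbb{Z}_+$-matrices $R, S$ and an integer $\ell \geq 1$ satisfying
$$A^k R = R A^k, \quad S A^k = A^k S, \quad RS = (A^k)^\ell, \quad SR = (A^k)^\ell,$$
such that the induced automorphism $v \mapsto vR$ of $\mathcal{G}_A$ agrees with $\Phi$ up to a power of $\delta_A^k$.

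The second step converts the matrix shift equivalence into a genuine topological conjugacy. The equations $RS = (A^k)^\ell = SR$ constitute a single elementary strong shift equivalence from $(A^k)^\ell$ to itself, so Williams' classification theorem supplies an explicit sliding-block conjugacy $h \colon (X_A, \sigma_A^{k\ell}) \to (X_A, \sigma_A^{k\ell})$ whose induced action on the dimension group is exactly the automorphism coming from $(R, S)$. Since $h$ commutes with $\sigma_A^{k\ell}$, we have $h \in \aut^{(k\ell)}(\sigma_A) \subseteq \autinf{A}$, and $\pi_A^{(\infty)}(h)$ equals $\Phi$ modulo a power of $\delta_A^k$. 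Because $\pi_A^{(\infty)}(\sigma_A^k) = \delta_A^k$ already lies in the image of $\pi_A^{(\infty)}$, multiplying $h$ by a suitable power of $\sigma_A^k$ yields an element of $\autinf{A}$ whose image under $\pi_A^{(\infty)}$ is precisely $\Phi$.

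The main obstacle is the realization step: showing that every abstract automorphism of the dimension triple comes from an explicit non-negative shift equivalence requires a delicate positivity argument combining Perron--Frobenius theory with Krieger's structure theorem for the eventual range. Once this is in place, the conjugacy-realization in step two is a direct consequence of Williams' theorem on elementary strong shift equivalence, so the heart of the proof is the matrix-level realization of $\Phi$ by non-negative $R, S$ satisfying the displayed identities.
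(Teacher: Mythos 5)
Your argument is correct and is essentially the proof of \cite[Theorem 6.8]{BLR}, which the paper cites without reproving: Krieger's theorem (see \cite[Theorem 7.5.8]{LM}) realizes any automorphism of $(\mathcal{G}_{A},\mathcal{G}_{A}^{+},\delta_{A}^{k})$, up to a power of $\delta_{A}^{k}$, by a $\mathbb{Z}_{+}$-shift equivalence $(R,S)$ of $A^{k}$ with itself, and a lag-$\ell$ shift equivalence is exactly an elementary strong shift equivalence of $(A^{k})^{\ell}$ with itself, so Williams' theorem produces an element of $\aut(\sigma_{A}^{k\ell}) \subset \autinf{A}$ inducing multiplication by $R$ on the dimension group. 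Composing with the appropriate power of $\sigma_{A}^{k}$ (which maps to $\delta_{A}^{k}$) then hits the given automorphism exactly, as you say, and you have correctly identified the Krieger realization step as the only place where real work (the positivity argument) is needed.
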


As in the standard setting, we define the {\em group of stabilized inert automorphisms} to be the kernel of $\pi_{A}^{(\infty)}$, and we denote this group by
$$\inert^{(\infty)}(\sigma_{A}) = \ker \pi_{A}^{(\infty)}.$$
It follows immediately from the definitions that
$$\inert^{(\infty)}(\sigma_{A}) = \bigcup_{k=1}^{\infty}\inert(\sigma_{A^{k}}).$$

We show later that one of the many differences between stabilized and standard automorphism groups lies in the structure of their corresponding inert subgroups. In particular, in Section~\ref{sec:simplicity} we prove that, in the case of a full shift, $\inertinf{n}$ is always simple. This is in stark contrast to the classical inert subgroup $\inert(\sigma_{n})$, which is residually finite. Using the stabilized version of the Kim-Roush Embedding proved in Section~\ref{sec:stabilizedembedding}, it follows that for any mixing shift of finite type $(X_{A},\sigma_{A})$, $\inertinf{A}$ always contains an infinite simple group; in particular, $\inertinf{A}$ is never residually finite (see Section~\ref{sec:residual}). We note that, as a consequence, $\inert^{(\infty)}(\sigma_{A})$ and $ \inert(\sigma_{A})$ are not isomorphic as groups (in fact, it follows that $\inertinf{A}$ does not even embed into $\inert(\sigma_{A})$).

Rewriting Wagoner's Theorem (Theorem~\ref{th:wagoner}) in our terminology, we have:
\begin{theorem}[Wagoner (Theorem~\ref{th:wagoner} rephrased)]\label{thm:efog}  If $(X_A, \sigma_A)$ is a mixing shift of finite type, then
$\inert^{(\infty)}(\sigma_{A})$ is generated by simple automorphisms.
\end{theorem}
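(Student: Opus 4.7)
The plan is to deduce Theorem~\ref{thm:efog} directly from Wagoner's original statement, Theorem~\ref{th:wagoner}, by unpacking the definitions of $\inertinf{A}$ and of a stabilized simple automorphism. By construction
$$
\inertinf{A} = \bigcup_{k \ge 1} \inert(\sigma_{A^k}),
$$
so fixing an arbitrary $\phi \in \inertinf{A}$, there is some $k \ge 1$ with $\phi \in \inert(\sigma_{A^k})$. To invoke Theorem~\ref{th:wagoner} on $\phi$ viewed as an inert automorphism of $(X_{A^k},\sigma_{A^k})$, I first need to verify that the latter system is itself a mixing shift of finite type. Since $(X_A,\sigma_A)$ is mixing, we may take $A$ primitive; then $A^k$ is primitive as well, and hence $(X_{A^k},\sigma_{A^k})$ is a mixing shift of finite type.

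With the hypothesis verified, Theorem~\ref{th:wagoner} applied to $\phi \in \inert(\sigma_{A^k})$ produces some $N \ge 1$ such that for every $n \ge N$ the automorphism $\phi$ factors as a product of simple automorphisms lying in $\aut(X_{A^k}, \sigma_{A^k}^n)$. Under the standard conjugacy identifying $(X_{A^k}, \sigma_{A^k}^n)$ with $(X_A, \sigma_A^{kn})$, this group is exactly $\autk{kn}{A}$, which sits inside $\autinf{A}$. Each of the factors produced is by definition a simple automorphism of the mixing shift of finite type $(X_A,\sigma_A^{kn})$, and hence is a simple automorphism in the stabilized group $\autinf{A}$ in the sense used throughout this paper. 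Consequently $\phi$ lies in the subgroup of $\autinf{A}$ generated by simple automorphisms, completing the proof.

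Since Theorem~\ref{th:wagoner} is being cited as a black box, there is no serious obstacle; the entire argument is a bookkeeping exercise translating the non-stabilized statement into stabilized language. The only points worth double-checking are that primitivity of $A$ is preserved under taking $k$th powers (so that Wagoner's theorem genuinely applies at level $A^k$), and that a factor which is simple with respect to the shift $\sigma_A^{kn}$ counts as a simple automorphism in the stabilized sense; both observations are immediate from the definitions.
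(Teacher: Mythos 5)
Your argument is correct and is exactly the translation the paper has in mind: the paper offers no separate proof of Theorem~\ref{thm:efog}, presenting it simply as a restatement of Theorem~\ref{th:wagoner}, and your bookkeeping (writing $\phi\in\inert(\sigma_{A^k})$, noting $A^k$ is primitive, applying Wagoner at level $A^k$, and identifying $\aut(X_{A^k},\sigma_{A^k}^n)$ with $\autk{kn}{A}$) supplies precisely the omitted details. The two points you flag for double-checking are indeed the only ones that need checking, and both hold as you say.
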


\subsection{The commutator subgroup}
\label{sec:commutator}
The goal of this section is to prove:
\begin{theorem}\label{thm:stablecommutator}
Let $(X_{A},\sigma_{A})$ be a mixing shift of finite type. Then
$$\inert^{(\infty)}(\sigma_{A}) \subseteq [\autinf{A}, \autinf{A}].$$
If $\aut^{(\infty)}(\mathcal{G}_{A})$ is abelian, then equality holds. In particular, for a full shift we have
$$\inert^{(\infty)}(\sigma_{n}) = [\autinf{n}, \autinf{n}].$$
\end{theorem}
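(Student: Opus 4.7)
The plan is to prove the two inclusions of the theorem separately, with the full-shift statement as a corollary. The easier inclusion uses the abelianness hypothesis, while the deeper inclusion $\inertinf{A}\subseteq[\autinf{A},\autinf{A}]$ relies on Wagoner's Theorem.

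For the easier inclusion, assume $\aut^{(\infty)}(\mathcal{G}_A)$ is abelian. The stabilized dimension representation $\pi_A^{(\infty)} \colon \autinf{A} \to \aut^{(\infty)}(\mathcal{G}_A)$ has kernel $\inertinf{A}$ by the definition of stabilized inerts. Since the target is abelian, $\pi_A^{(\infty)}$ kills every commutator, so $[\autinf{A}, \autinf{A}] \subseteq \ker(\pi_A^{(\infty)}) = \inertinf{A}$.

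For the deeper inclusion $\inertinf{A} \subseteq [\autinf{A}, \autinf{A}]$, which should hold without any abelianness hypothesis, I would first invoke Theorem~\ref{thm:efog} (Wagoner): $\inertinf{A}$ is generated by simple automorphisms. Thus it suffices to verify that every simple automorphism of some $(X_A, \sigma_A^k)$ lies in $[\autinf{A}, \autinf{A}]$.

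The main obstacle is showing that each simple automorphism is a commutator (or product of commutators) in the stabilized group. A simple automorphism $\phi$ is a finite-order permutation of edges in a presenting graph, and in the non-stabilized setting such an element need not be a commutator (in $\sym(n)$, odd permutations lie outside the alternating group). The argument must therefore make essential use of the stabilization. The approach is to pass to a higher block presentation $(X_A, \sigma_A^{kN})$ for large $N$---which has the same stabilized automorphism group by Proposition~\ref{prop:smallprop}---and use the enriched graph there to construct auxiliary elements $\alpha, \beta \in \autinf{A}$ (themselves simple graph automorphisms of the $N$-block presentation) whose commutator realizes $\phi$. Concretely, one reduces to the elementary case of a transposition exchanging two parallel edges $e_1, e_2$ from a vertex $v$ to $w$: in the higher-block graph each such edge splits into many refinements indexed by length-$(N-1)$ contexts (extensions allowed by mixing), and one constructs $\alpha$ that permutes contexts and $\beta$ that swaps $e_1,e_2$ only within a tagged subfamily of contexts, so that $[\alpha, \beta]$ telescopes context by context and leaves only the desired transposition. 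Combining these constructions shows every simple lies in $[\autinf{A}, \autinf{A}]$, yielding the deeper inclusion. The full-shift case follows immediately from Proposition~\ref{prop:stableautodimgroup}, which gives $\aut^{(\infty)}(\mathcal{G}_n) \cong \mathbb{Z}^{\omega(n)}$, an abelian group, so both inclusions apply and combine to give $\inertinf{n}=[\autinf{n},\autinf{n}]$.
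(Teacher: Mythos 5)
The overall architecture of your argument matches the paper's: the inclusion $[\autinf{A},\autinf{A}]\subseteq\inertinf{A}$ under the abelianness hypothesis follows exactly as you say (the stabilized dimension representation kills commutators), and the reduction of the hard inclusion, via Wagoner's Theorem, to showing that every simple automorphism lies in $[\autinf{A},\autinf{A}]$ is also the paper's route. The gap is in the one step that actually carries the content: your proposed realization of a transposition of two parallel edges as a commutator $[\alpha,\beta]$ with \emph{both} $\alpha$ and $\beta$ simple graph automorphisms of an $N$-block presentation. A simple graph symmetry fixes every vertex and hence restricts to a permutation of each edge set $E_{i,j}$, so any commutator of two such symmetries acts on each $E_{i,j}$ as a commutator in $\sym(E_{i,j})$, i.e.\ as an \emph{even} permutation. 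But the image of the transposition $e\leftrightarrow f$ in the $N$-block graph acts on $E_{i,j}^{(N)}$ as a product of $M_{i,j}/2$ transpositions, where $M_{i,j}$ counts the length-$N$ paths from $i$ to $j$ meeting $\{e,f\}$; for a general mixing shift of finite type nothing forces $M_{i,j}/2$ to be even, so the target element need not lie in the alternating part and your construction cannot produce it. (For the full shift with $N=2$ one gets $2n-2$ transpositions, which is even — this is exactly the paper's Lemma on $\simp_{\ev}$ — but the theorem is stated for arbitrary mixing SFTs.) Moreover, even where parity is not an obstruction, "$\alpha$ permutes contexts, $\beta$ swaps within a tagged subfamily, and the commutator telescopes" is not yet a construction: you need $\alpha(S)$ and $S$ to partition the context set, $\alpha$ to be a well-defined vertex-fixing symmetry, and the partial swaps to commute appropriately, none of which is verified.

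The paper sidesteps all of this with a cleaner device that you should compare against: work in $\aut(\sigma_A^{2})$ on the alphabet of admissible $2$-blocks $\begin{pmatrix}a_0\\ a_1\end{pmatrix}$, let $\phi_0$ be the $0$-block code applying $\tau$ to the top coordinate only, and observe that
$$\tilde{\tau}=\phi_{0}\,\sigma_{A}\,\phi_{0}^{-1}\,\sigma_{A}^{-1},$$
because conjugating $\phi_0^{-1}$ by $\sigma_A$ moves the action of $\tau^{-1}=\tau$ to the other parity class (here one uses that a transposition is an involution). The essential point is that one of the two commutator entries is the shift $\sigma_A$ itself — an element of $\aut(\sigma_A^2)$ that is \emph{not} a simple graph symmetry — which is precisely what evades the parity obstruction. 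The general simple automorphism is then handled by transporting along the conjugacy $h$ in its definition and decomposing the graph symmetry into edge transpositions. I would recommend replacing your construction with this identity (or proving an honest version of your telescoping argument that allows $\alpha$ to be the shift).
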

Note that, in the case where $\aut^{(\infty)}(\mathcal{G}_{A})$ is torsion-free (e.g. a full shift), Wagoner's Theorem as phrased in Theorem~\ref{thm:efog} characterizes the dynamical object given by the group of stabilized inert automorphisms via an abstract property of the group: the subgroup generated by the elements of finite order.  Theorem~\ref{thm:stablecommutator} gives a general relation between an abstract group property, this time the commutator, and the dimension representation of the symbolic system.

The following lemma is the technical tool needed for the proof of Theorem~\ref{thm:stablecommutator}:
\begin{lemma}\label{lemma:swapcomm}
Let $(X_{A},\sigma_{A})$ be a shift of finite type and let $\tau$ be a simple graph symmetry of the graph $\Gamma_{A}$ which permutes two distinct edges $e$ and $f$ between the vertices $i$ and $j$.
Let $\tilde{\tau}$ denote the automorphism of $(X_{A},\sigma_{A})$ induced by $\tau$. Then $\tilde{\tau} \in [\aut(\sigma_{A}^{2}),\aut(\sigma_{A}^{2})]$.
\end{lemma}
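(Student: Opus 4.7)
The plan is to produce an explicit commutator expression $\tilde{\tau} = [\psi, \sigma_{A}]$ in $\aut(\sigma_{A}^{2})$. The key idea is that, while in $\aut(\sigma_{A})$ one can only apply $\tau$ uniformly along an orbit, in $\aut(\sigma_{A}^{2})$ we gain the freedom to apply $\tau$ in a position-dependent way as long as the dependence has period $2$. Accordingly, I would define
\[
\psi(x)_{k} = \begin{cases} \tau(x_{k}) & \text{if } k \equiv 0 \pmod{2}, \\ x_{k} & \text{if } k \equiv 1 \pmod{2}, \end{cases}
\]
so that $\psi$ applies $\tau$ only at even coordinates. Since $\sigma_{A} \in \aut(\sigma_{A}^{2})$, it will suffice to verify that $\psi \in \aut(\sigma_{A}^{2})$ and that $[\psi, \sigma_{A}] = \tilde{\tau}$.

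The verification that $\psi \in \aut(\sigma_{A}^{2})$ is routine. Because $\tau$ is a simple graph symmetry, it fixes every vertex of $\Gamma_{A}$, so replacing the edge $x_{k}$ by $\tau(x_{k})$ preserves its initial and terminal vertices and thus the vertex-compatibility of consecutive entries; hence $\psi(x) \in X_{A}$ whenever $x \in X_{A}$. The map $\psi$ commutes with $\sigma_{A}^{2}$ since shifting by $2$ preserves parity of positions. Next I would compute $\psi \sigma_{A} \psi^{-1}$ by tracking a point through $\psi^{-1}$, then $\sigma_{A}$, then $\psi$ coordinate by coordinate; one obtains
\[
(\psi \sigma_{A} \psi^{-1})(y)_{k} = \begin{cases} \tau(y_{k}) & \text{if } k \equiv 0 \pmod{2}, \\ \tau^{-1}(y_{k}) & \text{if } k \equiv 1 \pmod{2}. \end{cases}
\]
The hypothesis that $\tau$ is a transposition of the two edges $e$ and $f$ (so $\tau^{-1} = \tau$) then collapses the two cases into the uniform expression $\tau(y_{k})$, giving $\psi \sigma_{A} \psi^{-1} = \tilde{\tau} \sigma_{A}$. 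Rearranging yields $[\psi, \sigma_{A}] = \tilde{\tau}$.

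The one point to watch, and what I would regard as the crux of the argument, is the invocation of $\tau^{2} = \id$. Without it, the conjugate $\psi \sigma_{A} \psi^{-1}$ would apply $\tau$ and $\tau^{-1}$ alternately on the shifted sequence and could not be written as $\tilde{\tau} \sigma_{A}$ for any single graph automorphism; the clean two-element commutator construction depends precisely on $\tau$ being an involution, which explains why the statement is formulated in terms of a transposition of a pair of edges rather than an arbitrary simple graph symmetry. I do not expect any other substantial obstacle in carrying out the plan.
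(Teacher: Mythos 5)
Your proposal is correct and is essentially identical to the paper's proof: the paper defines the same automorphism (a $0$-block code on the period-$2$ recoding that applies $\tau$ only in the even coordinate) and exhibits $\tilde{\tau}$ as the same commutator $\phi_{0}\sigma_{A}\phi_{0}^{-1}\sigma_{A}^{-1}$, with the involution property $\tau^{-1}=\tau$ playing exactly the role you identify. The only quibble is a harmless indexing slip in your displayed formula for $\psi\sigma_{A}\psi^{-1}$ (it should act on $y_{k+1}$, i.e.\ include the shift), but your stated conclusion $\psi\sigma_{A}\psi^{-1}=\tilde{\tau}\sigma_{A}$ is the correct one.
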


\begin{proof}
We consider $(X_{A},\sigma_{A}^{2})$ as a shift on the alphabet $\begin{pmatrix} a_{0} \\ a_{1} \end{pmatrix}$ where $a_{0}a_{1}$ is an admissible word in $X_{A}$. Define the zero-block code $\phi_{0}$ in $\aut(\sigma_{A}^{2})$ by
\begin{equation*}
\phi_{0} \colon \begin{pmatrix} a_{0} \\ a_{1} \end{pmatrix} \mapsto  \begin{pmatrix} \tau(a_{0}) \\ a_{1} \end{pmatrix}
\end{equation*}

Note that since $\tilde{\tau}$ is a simple graph automorphism, it follows that $\phi_{0}$ is an
automorphism of $(X_{A},\sigma_{A}^{2})$. Then in $\aut(\sigma_{A}^{2})$, we have
\begin{equation}\label{eqn:simplecommeqn}
\tilde{\tau} = \phi_{0} \sigma_{A} \phi_{0}^{-1} \sigma_{A}^{-1}.\quad\qedhere
\end{equation}
\end{proof}

For a set $X$, let $\sym(X)$ denote the group of all permutations of the set $X$.
\begin{theorem}\label{theorem:simptocomm}
Let $(X_{A},\sigma_{A})$ be a shift of finite type and let $\phi \in \aut(\sigma_{A})$ be a simple automorphism. Then $\phi \in [\aut(\sigma_{A}^{2}),\aut(\sigma_{A}^{2})]$.
\end{theorem}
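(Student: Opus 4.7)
The plan is to reduce the general statement to Lemma~\ref{lemma:swapcomm}, using the observation that every simple graph symmetry is a product of transpositions, each swapping a single pair of parallel edges. More precisely, unpacking the definition of a simple automorphism, write $\phi = h_*^{-1}(\tilde{\tau})$ where $h \colon (X_A,\sigma_A) \to (X_B,\sigma_B)$ is a conjugacy and $\tilde{\tau} \in \aut(\sigma_B)$ is induced by a simple graph symmetry $\tau$ of $\Gamma_B$.

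The key structural observation is the following. Since $\tau$ fixes every vertex of $\Gamma_B$, for every ordered pair of vertices $(i,j)$ the permutation $\tau$ restricts to a permutation of the (finite) set $E_{ij}$ of edges from $i$ to $j$. Any permutation of $E_{ij}$ is a product of transpositions of pairs of distinct parallel edges, and each such transposition (extended by the identity on all other edges of $\Gamma_B$) is itself a simple graph symmetry of $\Gamma_B$, since it preserves the initial and terminal vertex maps. Hence $\tau = \tau_1 \tau_2 \cdots \tau_N$ where each $\tau_s$ is a simple graph symmetry of $\Gamma_B$ swapping exactly two parallel edges $e_s, f_s$ between some vertices $i_s, j_s$. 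Passing to the induced automorphisms, $\tilde{\tau} = \tilde{\tau}_1 \tilde{\tau}_2 \cdots \tilde{\tau}_N$ in $\aut(\sigma_B)$.

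By Lemma~\ref{lemma:swapcomm}, each $\tilde{\tau}_s$ lies in $[\aut(\sigma_B^2), \aut(\sigma_B^2)]$, so the product $\tilde{\tau}$ does as well. The map $h$, being a conjugacy between $(X_A,\sigma_A)$ and $(X_B,\sigma_B)$, is simultaneously a conjugacy between $(X_A,\sigma_A^2)$ and $(X_B,\sigma_B^2)$, so the induced isomorphism $h_*^{-1} \colon \aut(\sigma_B^2) \to \aut(\sigma_A^2)$ carries commutators to commutators. Therefore
\[
\phi = h_*^{-1}(\tilde{\tau}) \in h_*^{-1}\bigl([\aut(\sigma_B^2), \aut(\sigma_B^2)]\bigr) = [\aut(\sigma_A^2), \aut(\sigma_A^2)],
\]
completing the proof.

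The argument is essentially a bookkeeping reduction, so there is no serious obstacle; the only point requiring a moment of care is verifying that a single transposition of two parallel edges really is a simple graph symmetry (so that Lemma~\ref{lemma:swapcomm} applies to each factor), which is immediate from the vertex-fixing definition of a simple graph symmetry.
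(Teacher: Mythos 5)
Your proposal is correct and follows essentially the same route as the paper's proof: transfer to $(X_B,\sigma_B)$ via the conjugacy $h$, decompose the simple graph symmetry into transpositions of parallel edges within each edge set $E_{i,j}$, apply Lemma~\ref{lemma:swapcomm} to each factor, and pull back. No gaps.
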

\begin{proof}
Since $\phi$ is simple, there exists some shift of finite type $(X_{B},\sigma_{B})$ and a conjugacy $h \colon (X_{A},\sigma_{A}) \to (X_{B},\sigma_{B})$ such that $h_{*}(\phi)$ is a simple graph automorphism. Set $\tilde{\tau} = h_{*}(\phi)$. Since $h$ also induces an isomorphism between $\aut(\sigma_{A}^{2})$ and $\aut(\sigma_{B}^{2})$, it suffices to show that $\tilde{\tau} \in [\aut(\sigma_{B}^{2}),\aut(\sigma_{B}^{2})]$.

Let $E_{i,j}$ denote the set of edges between vertices $i,j$ in the graph $\Gamma_{B}$. There exist permutations $\tau_{i,j} \in \sym(E_{i,j})$ such that $\tilde{\tau}$ is induced by the simple graph symmetry $\prod_{i,j}\tau_{i,j}$. For each pair $i,j$, the permutation $\tau_{i,j}$ is given by a product of transpositions in $\sym(E_{i,j})$. By Lemma~\ref{lemma:swapcomm}, the automorphism induced by each of these transpositions lies in $[\aut(\sigma_{B}^{2}),\aut(\sigma_{B}^{2})]$, so  $\tilde{\tau}$ lies in $[\aut(\sigma_{B}^{2}),\aut(\sigma_{B}^{2})]$ as well.
\end{proof}

We now use Theorem~\ref{theorem:simptocomm} to complete the proof of Theorem~\ref{thm:stablecommutator}:
\begin{proof}[Proof of Theorem~\ref{thm:stablecommutator}]
Theorem~\ref{theorem:simptocomm} implies that any simple automorphism lies in the commutator. By Theorem~\ref{thm:efog}, the group $\inert^{(\infty)}(\sigma_{A})$ is generated by simple automorphisms, proving the first part.

To check the second statement, when
$\aut^{(\infty)}(\mathcal{G}_{n})$ is abelian, the dimension representation
$$\pi^{(\infty)}_{n} \colon \autinf n \to \aut^{(\infty)}(\mathcal{G}_{n})$$
factors through the abelianization of $\autinf{A}$.
Thus \begin{equation*}[\autinf{A},\autinf{A}] \subseteq \inert^{(\infty)}(\sigma_{A}). \quad\qedhere
\end{equation*}
\end{proof}

As a second corollary of Theorem~\ref{theorem:simptocomm}, we can in some cases show that, in the non-stabilized automorphism group $\aut(\sigma_{A})$, a particular inert
automorphism can not lie in the subgroup generated by simple automorphisms. Such results can also be deduced from~\cite[Theorem 2]{Boyle88proc}, where the possible actions of simple automorphisms on finite subsystems of the shift were classified. Together with the powerful realization result in~\cite{KRWinertaction1, KRWinertaction2}, this provides a large class of examples where the answer to Wagoner's question~\ref{th:wagoner-answer} is no:
\begin{theorem}
\label{th:wagoner-answer}
Let $(X_{A},\sigma_{A})$ be a shift of finite type and suppose there exists odd $k \in \mathbb{N}$ such that $X_{A}$ has no $\sigma_{A}$-periodic points of least period $2k$ and further assume that there are at least three distinct orbits of least period $k$.  Then the group generated by
 simple automorphisms is a proper subgroup of $\inert(\sigma_{A})$.
\end{theorem}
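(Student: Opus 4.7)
The approach is to construct a sign-type homomorphism on $\aut(\sigma_A^2)$ that vanishes on the subgroup generated by simple automorphisms, thanks to Theorem~\ref{theorem:simptocomm}, and then to exhibit via the realization results of~\cite{KRWinertaction1, KRWinertaction2} an inert automorphism on which it does not vanish.

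First I would unpack the hypotheses in terms of $\sigma_A^2$-orbits. Since $k$ is odd, each $\sigma_A$-orbit of least period $k$ is equal (as a set) to a single $\sigma_A^2$-orbit of least period $k$. The only other way to produce a $\sigma_A^2$-orbit of least period $k$ would be by splitting a $\sigma_A$-orbit of least period $2k$ into two $\sigma_A^2$-orbits, and the hypothesis forbids such orbits. Hence the set $\mathcal{O}$ of $\sigma_A^2$-orbits of least period $k$ coincides with the set of $\sigma_A$-orbits of least period $k$, and has cardinality $r \geq 3$. The natural action gives a homomorphism $\rho \colon \aut(\sigma_A^2) \to \sym(\mathcal{O})$; composing with the sign map yields $\varepsilon := \sgn \circ \rho \colon \aut(\sigma_A^2) \to \{\pm 1\}$. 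Because $\varepsilon$ lands in an abelian group, it kills the commutator subgroup $[\aut(\sigma_A^2), \aut(\sigma_A^2)]$. By Theorem~\ref{theorem:simptocomm}, every simple automorphism of $(X_A, \sigma_A)$ lies in this commutator subgroup, and hence so does every element of the subgroup $\simp(\sigma_A)$ generated by simples. Thus any element of $\simp(\sigma_A)$ acts by an even permutation on the $r$ orbits in $\mathcal{O}$, and $\varepsilon$ restricted to $\simp(\sigma_A)$ is identically trivial.

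To finish, I would invoke the realization machinery of Kim-Roush-Wagoner~\cite{KRWinertaction1, KRWinertaction2} to construct an inert automorphism $\psi \in \inert(\sigma_A)$ whose induced permutation on $\mathcal{O}$ is a single transposition of two chosen orbits, fixing the remaining $r - 2 \geq 1$ orbits pointwise. For such a $\psi$ we have $\varepsilon(\psi) = -1$, so $\psi$ cannot lie in $\simp(\sigma_A)$, establishing the proper containment $\simp(\sigma_A) \subsetneq \inert(\sigma_A)$. The main obstacle is this realization step: producing an inert automorphism with precisely a prescribed odd permutation action on periodic orbits requires the flexibility of the KRW results, and the hypothesis of at least three orbits of least period $k$ is exactly what ensures a transposition can be realized while leaving at least one orbit fixed, supplying the room needed by the construction.
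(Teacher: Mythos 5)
There is a genuine gap at the realization step, and it is fatal to the whole strategy. Your invariant $\varepsilon = \sgn\circ\rho$ is identically trivial not just on $\simp(\sigma_A)$ but on all of $\inert(\sigma_A)$: by the sign--gyration compatibility condition (Boyle--Krieger, and the fact proved in~\cite{KRW} that the SGCC homomorphism factors through the dimension representation), every inert automorphism satisfies $\textnormal{SGCC}_k = 0$, and for odd $k$ the gyration contributions vanish, so $\textnormal{SGCC}_k$ reduces to the orbit sign $\textnormal{OS}_k$, i.e.\ to your $\varepsilon$. Consequently every inert automorphism acts by an \emph{even} permutation on the orbits of least period $k$, and the Main Theorem of~\cite{KRWinertaction1, KRWinertaction2} realizes exactly those periodic-orbit actions with vanishing SGCC --- a single transposition is odd and therefore cannot be realized by any inert automorphism. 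So the $\psi$ you propose does not exist, and no sign-type homomorphism on orbits can separate $\simp(\sigma_A)$ from $\inert(\sigma_A)$. (This also explains the hypothesis of three orbits: it is not there to leave room for a fixed orbit next to a transposition, but to make room for a $3$-cycle.)

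The paper's argument avoids this by extracting a strictly stronger conclusion from Theorem~\ref{theorem:simptocomm}: each simple automorphism is a product of commutators of the specific form $\gamma\sigma_A\gamma^{-1}\sigma_A^{-1}$ with $\gamma\in\aut(\sigma_A^2)$, and since $\sigma_A$ acts \emph{trivially} on the set of $\sigma_A$-orbits of length $k$ (which, by your correct reduction using oddness of $k$ and the absence of least period $2k$ points, is preserved by $\aut(\sigma_A^2)$), every such commutator --- hence every element of $\simp(\sigma_A)$ --- acts trivially, not merely evenly, on that set of orbits. One then realizes an inert automorphism acting by a $3$-cycle on these orbits; a $3$-cycle is even, hence SGCC-compatible and realizable by KRW, yet nontrivial, so it cannot lie in $\simp(\sigma_A)$. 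If you want to salvage your write-up, replace the transposition by a $3$-cycle and replace the sign homomorphism by the observation that $\simp(\sigma_A)$ acts trivially on the orbit set.
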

\begin{proof}
By~\cite[Main Theorem]{KRWinertaction1, KRWinertaction2},
there exists $\phi \in \inert(\sigma_{A})$ such that the action of $\phi$ on the $\sigma_{A}$-orbits of length $k$ consists of a 3-cycle. We show that $\phi$ can not be written as a product of commutators in $\aut(\sigma_{A}^{2})$ of the form given in~\eqref{eqn:simplecommeqn}. By Theorem~\ref{theorem:simptocomm}, it follows that $\phi \not \in \simp(\sigma_{A})$.

Suppose $\gamma \in \aut(\sigma_{A}^{2})$. Since $k$ is odd, $\sigma_{A}^{2}$ maps length $k$ $\sigma_{A}$-orbits to themselves. Furthermore, since there are no $\sigma_{A}$-periodic points of least period $2k$, it follows that $\aut(\sigma_{A}^{2})$ induces a well-defined action on the set of $\sigma_{A}$-orbits of length $k$. Since $\sigma_{A}$ acts trivially on the set of $\sigma_{A}$-orbits of length $k$, the commutator $\gamma \sigma_{A} \gamma^{-1}\sigma_{A}^{-1}$ acts trivially on the set of length $k$ $\sigma_{A}$-orbits. Thus, since $\phi$ acts non-trivially on the $\sigma_{A}$-orbits of length $k$, $\phi$ can not be written as a product of such commutators.
\end{proof}

For a concrete example of the phenomena exhibited in this corollary, consider the primitive matrix
\begin{equation}
\label{eq:matrixA}
A = \begin{pmatrix}
1 & 1 & 1 & 0 \\
0 & 1 & 0 & 1 \\
0 & 1 & 1 & 0 \\
1 & 0 & 1 & 0 \\
\end{pmatrix}.
\end{equation}
Since the system $(X_{A},\sigma_{A})$ has 3 fixed points and no period 2 points, by Theorem~\ref{th:wagoner-answer}, $\inert(\sigma_{A}) \ne \simp(\sigma_{A})$.

\begin{remark}
Considering the matrix $A$ in~\eqref{eq:matrixA},
 it can be shown using~\cite[Theorem 1]{BF1991} that there exists a product of finite order inert automorphisms in $\aut(\sigma_{A})$ whose action on the set of fixed points in $(X_{A},\sigma_{A})$ is a 3-cycle.
 Letting  $\textnormal{Fin}(\sigma_{A})$ denote the subgroup of $\inert(\sigma_{A})$ generated by elements of finite order,  in light of Theorem~\ref{th:wagoner-answer}, for this matrix $A$ we have the following proper containments:
$$\simp(\sigma_{A}) \subsetneq \textnormal{Fin}(\sigma_{A}) \subsetneq \inert(\sigma_{A}).$$
\end{remark}

In general, we do not know if  $\simp(\sigma_{A})$ is always finite index in $\inert(\sigma_{A})$. Based on Theorems~\ref{theorem:simptocomm} and~\ref{th:wagoner-answer}, as a way to approach this question, we ask the following:
\begin{question}
\label{question:indexcomm}
Assume $(X_A, \sigma_A)$ is a shift of finite type.
Is
\begin{equation*}
\inert(\sigma_{A}) \cap [\aut(\sigma_{A}^{2}),\aut(\sigma_{A}^{2})]
\end{equation*}
finite index in $\inert(\sigma_{A})$?
\end{question}

\subsection{The abelianization of $\autinf{A}$ and Theorem~\ref{thm:main-distinguish}}
\label{sec:abelianization}
For a group $G$, we let $G_{\ab}$ denote its abelianization. With the previous results in hand, we can now show that the abelianization of $\autinf{A}$ for a general mixing shift of finite type $(X_{A},\sigma_{A})$ coincides with the abelianization of its dimension representation.

\begin{theorem}\label{thm:hopefullygeneral}
Suppose $(X_{A},\sigma_{A})$ is a mixing shift of finite type. Then
$$\autinf{A}_{\ab} \cong \aut^{(\infty)}(\mathcal{G}_{A})_{\ab}.$$
\end{theorem}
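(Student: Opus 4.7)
The plan is to combine the two principal results already established in this section: the surjectivity of the stabilized dimension representation (Theorem~\ref{thm:stabledimrepsurj}) and the containment of the stabilized inert subgroup in the commutator (the first, unconditional, part of Theorem~\ref{thm:stablecommutator}). Since by definition $\inert^{(\infty)}(\sigma_{A}) = \ker \pi_{A}^{(\infty)}$, these two facts together give a short exact sequence
\begin{equation*}
1 \longrightarrow \inert^{(\infty)}(\sigma_{A}) \longrightarrow \autinf{A} \xrightarrow{\pi_{A}^{(\infty)}} \aut^{(\infty)}(\mathcal{G}_{A}) \longrightarrow 1
\end{equation*}
in which the kernel of the projection lies inside $[\autinf{A},\autinf{A}]$.

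The proof then reduces to an elementary group-theoretic observation: whenever $N \trianglelefteq G$ satisfies $N \subseteq [G,G]$, the commutator subgroup of $G/N$ equals $[G,G]/N$, so passing to abelianizations gives $(G/N)_{\ab} = G/[G,G] \cdot N = G/[G,G] = G_{\ab}$. Applying this with $G = \autinf{A}$ and $N = \inert^{(\infty)}(\sigma_{A})$ identifies the abelianization of the quotient $\aut^{(\infty)}(\mathcal{G}_{A}) = G/N$ with $\autinf{A}_{\ab}$, which is the claim. Equivalently, one can argue directly that $\pi_{A}^{(\infty)}$ induces a surjection on abelianizations whose kernel is $\bigl([\autinf{A},\autinf{A}]\cdot \inert^{(\infty)}(\sigma_{A})\bigr)/[\autinf{A},\autinf{A}]$, which is trivial by Theorem~\ref{thm:stablecommutator}.

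There is essentially no obstacle here beyond correctly invoking the preceding results; the substance of the theorem has already been absorbed into Theorems~\ref{thm:stabledimrepsurj} and~\ref{thm:stablecommutator}, and the present statement is the structural consequence that packages them together. The only point worth emphasizing is that, in contrast to the last conclusion of Theorem~\ref{thm:stablecommutator}, no abelianness hypothesis on $\aut^{(\infty)}(\mathcal{G}_{A})$ is required: the containment $\inert^{(\infty)}(\sigma_{A}) \subseteq [\autinf{A},\autinf{A}]$ alone suffices to force the abelianizations to agree, regardless of whether the target group is abelian on the nose.
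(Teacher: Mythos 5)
Your proposal is correct and follows essentially the same route as the paper: both arguments rest on exactly the two inputs you cite (surjectivity of $\pi_{A}^{(\infty)}$ from Theorem~\ref{thm:stabledimrepsurj} and the unconditional containment $\inert^{(\infty)}(\sigma_{A}) \subseteq [\autinf{A},\autinf{A}]$ from Theorem~\ref{thm:stablecommutator}), after which only elementary group theory remains. The paper carries out that last step by an explicit diagram chase constructing mutually inverse maps on abelianizations, whereas you invoke the standard fact that a normal subgroup contained in the commutator does not change the abelianization of the quotient; these are the same argument in different packaging, and your closing remark that no abelianness hypothesis on $\aut^{(\infty)}(\mathcal{G}_{A})$ is needed is accurate.
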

\begin{proof}
Consider the following diagram:
\begin{equation}
\xymatrix{
\autinf{A} \ar[d]_{{\rm Ab}_{\sigma_{A}}} \ar[r]^{\pi_{A}^{(\infty)}} & \aut^{(\infty)}(\mathcal{G}_{A}) \ar[d]^{{\rm Ab}_{\mathcal{G}_{A}}} \ar@{-->}[dl]_{f}\\
\left(\autinf{A}\right)_{\ab} & \left(\aut^{(\infty)}(\mathcal{G}_{A})\right)_{\ab} \ar@{-->}[l]^{g}\\
}
\end{equation}
By Theorem~\ref{thm:stablecommutator},
 $\inert^{(\infty)}(\sigma_{A}) \subset [\autinf A, \autinf A]$ and by Theorem~\ref{thm:stabledimrepsurj} the map $\pi_{A}^{(\infty)}$ is surjective, so the map $f$ is well-defined.
Since $f$  factors through the abelianization of $\aut^{(\infty)}(\mathcal{G}_{A})$, the map $g$ exists.
Moreover, since ${\rm Ab}_{\sigma_{A}}$ is surjective, $f$ is surjective, and hence $g$ is surjective.

We claim that the map $g$ is also injective. Suppose $a \in \ker g$. Since the
map ${\rm Ab}_{\mathcal{G}_{A}}$ is surjective, we can  find $b \in \aut^{(\infty)}(\mathcal{G}_{A})$ such that ${\rm Ab}_{\mathcal{G}_{A}}(b) = a$, and hence $f(b) = \Id$. By Theorem~\ref{thm:stabledimrepsurj}, $\pi_{A}^{(\infty)}$ is surjective, so there exists $c \in \autinf{A}$ such that $\pi_{A}^{(\infty)}(c) = b$. Then $c$ lies in the kernel of the map ${\rm Ab}_{\sigma_{A}}$,
which implies that $c$ is a commutator. Thus $\pi_{A}^{(\infty)}(c) = b$ is also  a commutator, and hence $a = {\rm Ab}_{\mathcal{G}_{A}}(b) = \Id$.
\end{proof}

\begin{corollary}\label{cor:rankprimediv}
If $n\geq 2$, then
$\autinf{n}_{\ab} \cong \mathbb{Z}^{\omega(n)}$.
\end{corollary}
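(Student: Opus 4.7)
The plan is to simply combine Theorem~\ref{thm:hopefullygeneral} with Proposition~\ref{prop:stableautodimgroup}; no new ideas are required. First I would invoke Theorem~\ref{thm:hopefullygeneral} with $A = (n)$, which gives the isomorphism
\[
\aut^{(\infty)}(\sigma_n)_{\ab} \cong \aut^{(\infty)}(\mathcal{G}_n)_{\ab}.
\]
This reduces the problem entirely to computing the abelianization of the stabilized automorphism group of the dimension triple for the full shift on $n$ symbols.

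Next I would apply Proposition~\ref{prop:stableautodimgroup}, which identifies
\[
\aut^{(\infty)}(\mathcal{G}_n) \cong \bigl(\mathbb{Z}^{\omega(n)},\mathbb{Z}_{+}^{\omega(n)},\bm{1}\bigr),
\]
with generators given by the multiplication-by-$p$ maps $\mathfrak{m}_p$ ranging over the distinct prime divisors of $n$. In particular, the underlying group is the free abelian group $\mathbb{Z}^{\omega(n)}$, so it coincides with its own abelianization. Substituting back yields
\[
\aut^{(\infty)}(\sigma_n)_{\ab} \cong \mathbb{Z}^{\omega(n)},
\]
which is the desired conclusion.

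There is essentially no obstacle here — the corollary is a direct readout of the two cited results, with the only point worth flagging being that $\aut^{(\infty)}(\mathcal{G}_n)$ is already abelian, so passing to its abelianization is a formality. All the real work was carried out in the proofs of Theorem~\ref{thm:hopefullygeneral} (which used Wagoner's theorem via Theorem~\ref{thm:stablecommutator} together with surjectivity of the stabilized dimension representation) and Proposition~\ref{prop:stableautodimgroup} (which identified the stabilized automorphism group of the dimension triple of a full shift).
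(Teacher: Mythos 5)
Your proposal is correct and matches the paper's argument in substance: the paper derives the corollary immediately from Theorem~\ref{thm:stablecommutator} together with Proposition~\ref{prop:stableautodimgroup}, while you route through Theorem~\ref{thm:hopefullygeneral} (itself a direct consequence of Theorem~\ref{thm:stablecommutator} and surjectivity of the stabilized dimension representation), so the two derivations are essentially identical. Your observation that $\aut^{(\infty)}(\mathcal{G}_{n})$ is already abelian, making the passage to its abelianization a formality, is exactly the point.
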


\begin{proof}
This follows immediately from Theorem~\ref{thm:stablecommutator} and Proposition~\ref{prop:stableautodimgroup}.
\end{proof}

This allows us to complete the proof of Theorem~\ref{thm:main-distinguish}, via the following:
\begin{theorem}\label{thm:invofprims}
If $\autinf n$ and $\autinf m$ are isomorphic, then $\omega(n) = \omega(m)$.
\end{theorem}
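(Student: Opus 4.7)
The plan is to exploit the fact that the abelianization of a group is a group invariant, combined with Corollary~\ref{cor:rankprimediv} which computes this abelianization explicitly for stabilized automorphism groups of full shifts. Since essentially all the substantive work has already been done in the preceding material, the proof should be short.

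First, I would observe that if $\Psi \colon \autinf n \to \autinf m$ is a group isomorphism, then $\Psi$ sends commutators to commutators and hence descends to an isomorphism $\Psi_{\ab} \colon \autinf{n}_{\ab} \to \autinf{m}_{\ab}$ between the abelianizations. Next I would invoke Corollary~\ref{cor:rankprimediv}, which tells us that
\begin{equation*}
\autinf{n}_{\ab} \cong \mathbb{Z}^{\omega(n)} \qquad \text{and} \qquad \autinf{m}_{\ab} \cong \mathbb{Z}^{\omega(m)}.
\end{equation*}
Combining these with the isomorphism $\Psi_{\ab}$ gives $\mathbb{Z}^{\omega(n)} \cong \mathbb{Z}^{\omega(m)}$.

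Finally, since the rank of a finitely generated free abelian group is a well-defined invariant (for instance, the rank equals the dimension of $\mathbb{Z}^k \otimes_{\mathbb{Z}} \mathbb{Q}$ as a $\mathbb{Q}$-vector space), we conclude $\omega(n) = \omega(m)$. There is no genuine obstacle here: the entire content of the theorem is packaged into Corollary~\ref{cor:rankprimediv}, which in turn rests on the computation of the stabilized commutator in Theorem~\ref{thm:stablecommutator} and the identification of $\aut^{(\infty)}(\mathcal{G}_n)$ in Proposition~\ref{prop:stableautodimgroup}.
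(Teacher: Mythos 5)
Your proposal is correct and follows exactly the same route as the paper: pass to abelianizations, apply Corollary~\ref{cor:rankprimediv} to identify them as $\mathbb{Z}^{\omega(n)}$ and $\mathbb{Z}^{\omega(m)}$, and conclude by invariance of the rank of a free abelian group. The only difference is that you spell out the rank argument, which the paper leaves implicit.
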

\begin{proof}
If $\autinf n$ and $\autinf m$ are isomorphic, then their abelianizations are isomorphic. The result then follows from Corollary~\ref{cor:rankprimediv}.
\end{proof}
Towards a converse of Theorem~\ref{thm:invofprims}, observe that by Proposition~\ref{prop:smallprop}, if $m, n$ satisfy $m^{k} = n^{j}$ for some $k$ and $j$, then $\autinf m \cong \autinf n$.

 In general, we ask:
 \begin{question}
 \label{conj:stable}
 For integers $m,n\geq 2$, when are
 $\autinf m$ and $\autinf n$
  isomorphic?
 \end{question}

We end this section with an example showing how Theorem~\ref{thm:hopefullygeneral} can be used to compute the abelianization $\autinf{A}_{\ab}$ of the stabilized automorphism group. In the example, $\autinf{A}_{\ab}$ has nontrivial torsion, and it follows (by Corollary~\ref{cor:rankprimediv}) that $\autinf{A}$ is not isomorphic to $\autinf{n}$ for any $n\in\N$.

\begin{example}
Consider the matrix
$$A = \begin{pmatrix} 5 & 2 & 2 \\ 4 & 1 & 4 \\ 0 & 6 & 3 \end{pmatrix}$$
(this matrix appears in~\cite[Example 6.7]{BLR}). By Theorem~\ref{thm:hopefullygeneral}, in order to compute $\autinf{A}_{\ab}$,  it suffices to compute the abelianization of the stabilized automorphism group of the dimension group.

As shown in~\cite{BLR}, the matrix $A$ has eigenvalues $-3,3,9$ and can be conjugated over $\mathbb{Z}[\frac{1}{3}]$ to a diagonal matrix. For any $k$, $A^{2k}$ then has eigenvalues $9^{k},9^{k}$, and $81^{k}$.
It follows that $\aut(\mathcal{G}_{A^{2k}}) \cong \mathbb{Z} \oplus \GL_{2}(\mathbb{Z}[\frac{1}{3}])$, so
$\aut^{(\infty)}(\mathcal{G}_{A}) \cong \mathbb{Z} \oplus \GL_{2}(\mathbb{Z}[\frac{1}{3}])$ and $\aut^{(\infty)}(\mathcal{G}_{A})_{\ab}$ is isomorphic to $\mathbb{Z} \oplus \GL_{2}(\mathbb{Z}[\frac{1}{3}])_{\ab}$. By Theorem~\ref{thm:hopefullygeneral}, the dimension representation is surjective and coincides with the abelianization of $\autinf{A}$.

The remainder of this example is devoted to computing $\GL_{2}(\mathbb{Z}[\frac{1}{3}])_{\ab}$.  Consider the determinant map
$$\textnormal{det} \colon \GL_{2}(\mathbb{Z}[\frac{1}{3}]) \to \mathbb{Z}[\frac{1}{3}]^{\times}$$
where $\mathbb{Z}[\frac{1}{3}]^{\times}$ denotes the group of units. This map is a split surjection with kernel $\SL_{2}(\mathbb{Z}[\frac{1}{3}])$, with the splitting coming from embedding $\mathbb{Z}[\frac{1}{3}]^{\times} = \GL_{1}(\mathbb{Z}[\frac{1}{3}]) \hookrightarrow \GL_{2}(\mathbb{Z}[\frac{1}{3}])$.  Hence $\GL_{2}(\mathbb{Z}[\frac{1}{3}])$ is isomorphic to the semidirect product $\SL_{2}(\mathbb{Z}[\frac{1}{3}]) \rtimes\mathbb{Z}[\frac{1}{3}]^{\times}$.

In general, the abelianization of a semidirect product $H \rtimes G$ is given by $(H_{\ab})_{G} \times G_{\ab}$, where the subscript $G$ denotes the coinvariants of the $G$-action on $H_{\ab}$ (arising from the $G$-action on $H$). Since $\mathbb{Z}[\frac{1}{3}]^{\times}$ is abelian, the abelianization of the semidirect product $\SL_{2}(\mathbb{Z}[\frac{1}{3}]) \rtimes\mathbb{Z}[\frac{1}{3}]^{\times}$ has the form $$(\SL_{2}(\mathbb{Z}[\frac{1}{3}])_{\ab})_{\mathbb{Z}[\frac{1}{3}]^{\times}} \times \mathbb{Z}[\frac{1}{3}]^{\times}.$$
This leaves us with computing  $(\SL_{2}(\mathbb{Z}[\frac{1}{3}])_{\ab})_{\mathbb{Z}[\frac{1}{3}]^{\times}}$.

The abelianization of $\SL_{2}(\mathbb{Z}[\frac{1}{3}])$ is $\SL_{2}(\mathbb{Z}[\frac{1}{3}])_{\ab} \cong \mathbb{Z}/4$, as computed by Serre~\cite{serre} (see also~\cite{AdemNadim1998}). Thus we only need
to determine the coinvariants of the induced $\mathbb{Z}[\frac{1}{3}]^{\times}$-action on this copy of  $\mathbb{Z}/4$.

The ring map $\mathbb{Z}[\frac{1}{3}] \to \mathbb{Z}/4$ given by $\frac{a}{3^{k}} \mapsto a \mod  4$ induces a surjection mapping $\SL_{2}(\mathbb{Z}[\frac{1}{3}])$ to $\SL_{2}(\mathbb{Z}/4)$. The group
$\SL_{2}(\mathbb{Z}/4)$ has a normal subgroup $N$ of order 12 (this is its commutator subgroup) which is generated by the matrices $\begin{pmatrix} 2 & 3 \\ 3 & 1 \end{pmatrix}$ and $\begin{pmatrix} 3 & 1 \\ 3 & 0 \end{pmatrix}$. Thus $\SL_{2}(\mathbb{Z}/4)$ factors on to an abelian group $G$ of order $4$.
Let $\pi$ denote the composition of the two maps given by
$$\SL_{2}(\mathbb{Z}[\frac{1}{3}]) \to \SL_{2}(\mathbb{Z}/4) \to G.$$
One can check directly that the matrix $\begin{pmatrix} 1 & 1 \\ 0 & 1 \end{pmatrix}$ and its square do not lie in the normal subgroup $N$, and hence do not lie in the kernel of $\pi$. Thus $\pi(\begin{pmatrix} 1 & 1 \\ 0 & 1 \end{pmatrix})$ has order 4, and $\pi(\begin{pmatrix} 1 & 1 \\ 0 & 1 \end{pmatrix})$ is a generator for $G$, and hence also pushes down to a generator for the abelianization.

To compute the coinvariants, we are left with determining
the action of $\mathbb{Z}[\frac{1}{3}]^{\times}$ on the matrix $\begin{pmatrix} 1 & 1 \\ 0 & 1 \end{pmatrix}$ (since it pushes down to a generator of the abelianization). Note that $\mathbb{Z}[\frac{1}{3}]^{\times}$ is generated by $-1$ and $3$. The action of these units on $\begin{pmatrix} 1 & 1 \\ 0 & 1 \end{pmatrix}$ is given by (modulo commutators)
\begin{align*}
   -1 \colon & \begin{pmatrix} 1 & 1 \\ 0 & 1 \end{pmatrix} \mapsto \begin{pmatrix} 1 & 3 \\ 0 & 1 \end{pmatrix} \\
3 \colon & \begin{pmatrix} 1 & 1 \\ 0 & 1 \end{pmatrix} \mapsto \begin{pmatrix} 1 & 3 \\ 0 & 1 \end{pmatrix}.
 \end{align*}
It follows that the orbit of a generator for the abelianization under this action is a subgroup of order 2, and the coinvariants are
$$(\SL_{2}(\mathbb{Z}[\frac{1}{3}])_{\ab})_{\mathbb{Z}[\frac{1}{3}]^{\times}} \cong \mathbb{Z}/2.$$
Thus, we have that
$$\GL_{2}(\mathbb{Z}[\frac{1}{3}])_{\ab} \cong \mathbb{Z}[\frac{1}{3}]^{\times} \oplus \mathbb{Z}/2 \cong \mathbb{Z}/2 \oplus \mathbb{Z} \oplus \mathbb{Z}/2$$
and
$$\aut^{(\infty)}(\mathcal{G}_{A})_{\ab} \cong \mathbb{Z} \oplus \mathbb{Z}/2 \oplus \mathbb{Z} \oplus \mathbb{Z}/2.$$
\end{example}

\section{Stabilized Kim-Roush embedding}\label{sec:stabilizedembedding}
\subsection{Extending the embedding result}
The purpose of this section is to extend the following theorem of Kim and Roush to the stabilized setting:
\begin{theorem}[Kim-Roush Embedding~\cite{KR90}]
Let $(X_A,\sigma_A)$ be a mixing shift of finite type. Then for any $n\geq 2$,  the group $\aut(\sigma_n)$
embeds into the group $\aut(\sigma_A)$.
\label{prop:kim-roush}
\end{theorem}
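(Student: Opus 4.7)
The plan is to construct an injective group homomorphism $\Phi \colon \aut(\sigma_n) \to \aut(\sigma_A)$ via a marker-coding construction. The idea is to locate, inside the language of $X_A$, a ``copy'' of $X_n$ encoded between suitable marker words, so that an automorphism $\phi \in \aut(\sigma_n)$ lifts to an automorphism of $X_A$ by acting on the encoded segments while leaving the surrounding marker structure untouched. The mixing hypothesis on $(X_A,\sigma_A)$ is what provides a sufficient supply of markers and fillers with the needed combinatorial properties.

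First, I would use mixing to produce a marker word $M \in \mathcal{L}(X_A)$ of length $\ell_M$ that does not overlap itself nontrivially, so that the positions of $M$ in any point $x \in X_A$ are unambiguously determined. Next, I would select $n$ distinct filler words $W_1,\ldots,W_n \in \mathcal{L}(X_A)$ of a common length $N$, with $N$ chosen large enough that any finite concatenation of the form $M W_{i_1} W_{i_2}\cdots W_{i_k} M$ (with $i_j \in \{1,\ldots,n\}$) lies in $\mathcal{L}(X_A)$. Mixing guarantees that such $M$ and $W_i$ can be chosen: one uses the fact that for all sufficiently large lengths, every pair of words in the language can be joined by a word of that length. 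This sets up a correspondence between sequences of fillers appearing between consecutive markers and points of the full $n$-shift $X_n$.

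Given $\phi \in \aut(\sigma_n)$, I would define $\Phi(\phi)$ on a point $x \in X_A$ by (a) locating the occurrences of $M$ in $x$, (b) decoding the filler sequence between consecutive markers into a segment of an element of $X_n$, (c) applying $\phi$ to this segment, and (d) re-encoding as a sequence of fillers of the same total length. The choice of marker and filler words must be made so that shifting by one position in $X_A$ corresponds to the appropriate shift in the encoded $X_n$-data, ensuring $\Phi(\phi)\circ\sigma_A = \sigma_A\circ\Phi(\phi)$. For points of $X_A$ with too few markers (for instance, periodic points of very low period), one declares $\Phi(\phi)$ to act as the identity, and verifies that the resulting map on $X_A$ is continuous by producing a sliding block code presentation that reduces to the identity away from points containing enough markers.

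The main obstacle is the combinatorial construction of markers and fillers and verifying that the resulting $\Phi(\phi)$ is genuinely continuous, commutes with $\sigma_A$, and that $\Phi$ is an injective homomorphism. Continuity amounts to exhibiting a sliding block code (via the Curtis–Hedlund–Lyndon theorem), which in turn requires the marker scheme to be ``locally recognizable'': whether position $i$ lies in a filler word, and which filler it belongs to, must be decidable from a bounded window around $i$. Injectivity is the easiest step: if $\phi \neq \id$, one picks a periodic point of $X_n$ on which $\phi$ acts nontrivially, inserts its encoding into $X_A$ between markers, and checks $\Phi(\phi)$ moves this point. The delicate part—the heart of Kim and Roush's original argument—is arranging the filler words so that the coding between markers genuinely realizes all of $X_n$ while simultaneously admitting a shift-equivariant global extension, so that the bookkeeping at marker boundaries does not destroy the automorphism property.
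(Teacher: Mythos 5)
Your overall strategy (markers plus an encoded copy of $X_n$) is the right one and is the skeleton of the Kim--Roush argument that the paper reproduces, in stabilized form, in the proof of Theorem~\ref{prop:stabilized-kim-roush}. But the proposal breaks down at exactly the step you defer to ``bookkeeping at marker boundaries,'' and that step is not bookkeeping --- it is the theorem. The problem is your item (c): ``applying $\phi$ to this segment.'' An automorphism $\phi\in\aut(\sigma_n)$ is a sliding block code of some radius $r$, defined on bi-infinite sequences; on the \emph{finite} word of fillers sitting between two consecutive markers there is no canonical way to apply it, and every natural boundary convention fails one of the required properties. If you extend the segment periodically and apply $\phi$, the context needed to compute the image near one end of the segment wraps around to the other end, which can be arbitrarily far away, so $\Phi(\phi)$ is not a sliding block code and hence not continuous. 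If you pad with a fixed symbol or truncate the output, the boundary errors do not compose, so $\Phi(\phi\psi)\ne\Phi(\phi)\Phi(\psi)$ and $\Phi(\phi)$ need not even be invertible. If instead you declare $\Phi(\phi)$ to be the identity on points whose coded stretches are all finite, continuity again fails, because points with arbitrarily long finite coded stretches accumulate on points with infinite ones.

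The missing idea, which is the heart of the construction in \cite{KR90} and of the paper's proof of the stabilized version, is to take $n^{2}$ data words rather than $n$, viewed as \emph{vertical pairs} $\etwo{W^{u}}{W^{l}}$ of symbols from an $n$-letter alphabet, and to decode a maximal coded stretch by reading the upper row left to right, turning around at the right end, and reading the lower row right to left (the function $\mathrm{next}_x$ in the paper). With this fold-over, every maximal coded stretch --- finite, one-sided, or two-sided infinite --- decodes to an honest point of $X_n$ (a finite stretch of length $k$ yields a point of period $2k$), so $\phi$ acts on it with no boundary convention at all; and because the turn-around happens at the \emph{same physical coordinate} of $X_A$, the decoded $r$-neighborhood of any position lies in a window of bounded width in $X_A$, so $\Phi(\phi)$ is genuinely a block code. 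This one device simultaneously delivers continuity, commutation with $\sigma_A$, and the homomorphism property; without it, or some equivalent substitute, the construction you outline does not close up.
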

Thus our goal is to prove the following.
\begin{theorem}
Let $(X_A,\sigma_A)$ be a mixing shift of finite type. Then for any $n\geq 2$,
the group $\autinf{n}$ embeds into
$\autinf{A}$.
\label{prop:stabilized-kim-roush}
\end{theorem}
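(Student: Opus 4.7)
The strategy is to adapt the original Kim-Roush construction so as to produce a family of embeddings of each $\aut(\sigma_n^k)$ into an appropriate centralizer $\aut(\sigma_A^{m(k)})$ that are compatible with the natural inclusions $\aut(\sigma_n^k) \hookrightarrow \aut(\sigma_n^{km})$, and then to pass to the direct limit. The underlying idea is that the Kim-Roush proof of Theorem~\ref{prop:kim-roush} constructs a ``coding scheme'' inside $X_A$ which identifies regions in which arbitrary $n$-symbol data can be written, and an automorphism of $(X_n,\sigma_n)$ is then transported into $\aut(\sigma_A)$ by applying it inside these regions. In the stabilized setting we intend to use the same scheme, but applied block-wise and uniformly across all powers of $\sigma_A$.

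Concretely, using mixing of $A$ I would first produce a single fixed coding scheme: a $\sigma_A$-periodic orbit of period $P$, a marker block $M$, and admissible ``data blocks'' $u_1,\ldots,u_n\in\CL(X_A)$ of some common length $L$, arranged so that $P$ is a multiple of $|M|+L$, so that bi-infinite concatenations of the form $\cdots M u_{i_{-1}} M u_{i_0} M u_{i_1}\cdots$ (aligned with the fixed periodic orbit) lie in $X_A$, and so that the positions of the markers $M$ inside such a point can be recovered by a sliding block code. Given $\phi\in \aut(\sigma_n^k)$, identify $\phi$ with an automorphism of the full shift $(X_{n^k},\sigma_{n^k})$ by grouping consecutive $k$-tuples of $n$-symbols into single $n^k$-symbols, and define $\Psi_k(\phi)\colon X_A\to X_A$ by locating the marker structure in $x\in X_A$ (acting as the identity wherever the marker structure fails), decoding the identified strip as an element of $X_{n^k}$, applying $\phi$, and re-encoding. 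This is a sliding block code, is invertible via the same construction applied to $\phi^{-1}$, and commutes with $\sigma_A^{(|M|+L)k}$ since the latter shifts the block alignment by exactly one $n^k$-symbol, which commutes with the block-wise action of $\phi$. Thus $\Psi_k(\phi)\in \autinf A$.

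The embeddings $\Psi_k$ are compatible: any $\phi\in\aut(\sigma_n^k)$, when viewed as an element of $\aut(\sigma_n^{km})$, acts on an $n^{km}$-symbol by applying the underlying $k$-block code independently in each of its $m$ consecutive $k$-sub-blocks, and this coincides with what re-encoding under $\Psi_{km}$ produces. Injectivity of each $\Psi_k$ is straightforward: the encoding is a topological section over $X_{n^k}$, so $\phi\neq \Id$ forces $\Psi_k(\phi)$ to move encoded points. Passing to the direct limit of the $\Psi_k$ then yields the desired embedding $\autinf n \hookrightarrow \autinf A$.

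The main technical obstacle is the construction of a coding scheme that is ``scale-independent,'' in the sense that a single choice of markers and data blocks serves uniformly for all powers $\sigma_A^{(|M|+L)k}$: Kim-Roush's original construction only needs to be compatible with $\sigma_A$ itself, so the markers may be chosen relative to the base shift, whereas here we must arrange that the same $M$ and $u_1,\ldots,u_n$ work simultaneously for the block structure at every scale $k$. This is the content of the ``necessary modifications'' referred to by the authors, and it should be achievable because the data of $M$ and the $u_i$ is a purely combinatorial feature of $X_A$, independent of the power of $\sigma_A$ one chooses to centralize.
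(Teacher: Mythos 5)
Your overall strategy is the right one --- adapt the Kim--Roush marker construction so that the whole stabilized group acts --- but two essential difficulties are not addressed, and they are precisely where the paper's proof does its real work. The first concerns finite and one-sided coded stretches. In a general $x\in X_A$ the marker/data pattern occurs along maximal stretches that are typically finite or half-infinite, and such a stretch cannot be ``decoded as an element of $X_{n^k}$'': it is only a finite word. Applying the block code of $\phi$ inside such a stretch while ``acting as the identity wherever the marker structure fails'' does not yield an invertible map: near the ends of a long finite stretch, applying the block code of $\phi^{-1}$ to the output does not recover the input, and one cannot discard the boundary behavior without losing continuity or injectivity. This is exactly why the construction takes a data collection of $n^2$ words viewed as \emph{vertical pairs} over an $n$-letter set and reads each maximal coded stretch as a loop (out along the upper row, back along the lower row, via the ``next'' function), so that every stretch --- finite, one-sided, or two-sided --- decodes to a genuine bi-infinite element of $X_n^2$ on which $\phi$ acts diagonally and invertibly. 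Your sketch omits this mechanism entirely.

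The second gap is the indexing mod $k$. By Lemma~\ref{lem:stabilized-CLH}, an element $\phi\in\aut(\sigma_n^k)$ is given by $k$ different block maps applied according to the coordinate mod $k$, so to apply $\phi$ to a decoded stretch one must assign $\Z$-coordinates to the decoded symbols. If that assignment depends on where the maximal stretch begins (as ``decoding the identified strip'' suggests), then two points of $X_A$ that agree on a long central window but whose stretches terminate differently far away are decoded with different offsets mod $k$; the resulting map is then not continuous, and the claimed compatibility of $\Psi_k$ with $\Psi_{km}$ also fails. The paper resolves this by building the offset $\lfloor i/R\rfloor$ --- a function of the absolute position $i\in\Z$ only, not of the stretch --- into the read map, and it explicitly notes that this complication is absent from the original Kim--Roush embedding. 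Your closing paragraph locates the obstacle in the choice of the marker $M$ and the data words, but those are not the issue; the alignment of the decoding is, and the assertion that it ``should be achievable'' because the combinatorial data is independent of the power of $\sigma_A$ does not supply it.
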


The proof follows much of the original argument given in~\cite{KR90}, with a few modifications. Before beginning, we briefly indicate the idea.  We proceed by constructing a bijection $h$ from the given shift $X_A$ to some other space $K$. While $h$ is nothing more than  a bijection, the advantage in making use of the second space $K$ is that it admits a natural faithful  $\autinf{n}$-action. This action of $\autinf{n}$ leaves the image of $h$ invariant, and so  upon pulling back by $h$, we obtain an embedding of $\autinf{n}$ into the set of bijections from  $X_{A}$ to itself. We then show that this embedding actually lands in $\autinf{A}$.
The construction of the map $h$ uses markers, as used in~\cite{H,BLR} and we review this technique in the proof.

\begin{proof}[Proof of Theorem~\ref{prop:stabilized-kim-roush}]
Let $(X_A,\sigma_A)$ be a mixing shift of finite type.  The proof consists of multiple steps constructing the embedding.
\subsubsection*{Finding markers}
Assume that there exists
a word $M\in \CL (X_A)$ (a \textit{marker}) and a collection
${\mathcal{D}} \subset \CL (X_A)$ of $n^2$ words of some fixed length such that the word $M$ overlaps
$MDM$, for any $D\in {\mathcal{D}}$, only in the initial and finial segments (the \textit{data}). The existence of such pairs of marker and a data set of size $n^2$ is guaranteed for any $n\in\N$ since we assume that $X_A$ is a mixing shift of finite type.

Since there are $n^2$ words in  ${\mathcal{D}}$, we can view them as pairs of words,
from some collection of size $n$ of some other words. Namely,
we define an abstract set of $n$ words $\CW$ such that each $D \in {\mathcal{D}}$
is a pair of two words from $\CW$. Since there are $n$ words in  $\CW$,
we can view the full shift over these words as $(X_n,\sigma_n)$, and the stabilized automorphism group of this shift is the one we realize as a subgroup of $\autinf{A}$.

It is convenient to consider
the elements in $\mathcal{D}$ as vertical pairs, viewing them as
  \begin{align*}
    D &= \begin{pmatrix}
           W^u \\
           W^l
         \end{pmatrix}
  \end{align*}
where $W^u,W^l\in \CW$. For simplicity of the presentation we assume that all of the words $\mathcal{D}$ are words of length
1, which is possible after passing via a conjugacy, if needed, to a copy of $(X_A,\sigma_A)$.  Then for $x\in X_A$ and some index $j$, if $x_j=D$ we can write
\begin{align*}
    x_j &= \begin{pmatrix} x_j^u \\ x_j^l \end{pmatrix}.
\end{align*}

\subsubsection*{Coded stretches in the shift}
Fix some $R \in \mathbb{N}$.
An \textit{$(R, M, \mathcal{D})$-coded stretch} in $x\in X_A$ is an $R$-gapped (possibly finite) arithmetic progression  $C\subset \Z$ such that $x_j\in \mathcal{D}$ for all $j\in C$, and $C$ is  maximal with respect to these properties. That is, if $\max(C)$ exists then $x_{\max(C)+R} \notin \mathcal{D}$, and if $\min(C)$ exists, then $x_{\min(C)-R} \notin \mathcal{D}$.

Note that coded stretches may be finite, two sided infinite, or one sided infinite.
Since $X_A$ is mixing, there are points $x\in X_A$ with arbitrarily long coded stretches (including infinite ones). Moreover, each word in $\CL(X_n)$, whether finite or infinite, appears as a coded stretch of some $x\in X_A$.
For each $x\in X_A$,  let $\BL_x$ denote the union of all the coded stretches in $x$.

Fix some $x\in X_A$.
Recall that for $j\in \BL_x$, $x_j^u$ and $x_j^l$ are two words in $\CW$. Again, we consider elements in $\{u,l\}\times \BL_x$ as vertical pairs, so  if $p=\begin{pmatrix}\epsilon \\ j \end{pmatrix}\in \{u,l\}\times \BL_x$, we write
$x_p = x^{\epsilon}_j \in \CW$.

\subsubsection*{The function $\text{next}$}
We define an invertible map $\text{next}_{x}\colon \left\{ u,l\right\} \times\BL_{x}\to\left\{ u,l\right\} \times\BL_{x}$
by setting
$$\text{next}_{x}\left(_{j}^{u}\right)=\begin{cases}
\left(_{j+R}^{u}\right) & \text{if }j+R\in\BL_{x}\\
\left(_{j}^{l}\right) & \text{if}\,j+R\notin\BL_{x}
\end{cases}
$$
and
$$\text{next}_{x}\left(_{j}^{l}\right)=\begin{cases}
\left(_{j-R}^{l}\right) & \text{if }j-R\in\BL_{x}\\
\left(_{j}^{u}\right) & \text{if}\,j-R\notin\BL_{x}
\end{cases}.$$
Fix $\begin{pmatrix} \epsilon \\ j
\end{pmatrix}$ where $j \in \BL_x$.
Repeated application of the next function produces an element in $X_n$ when starting with an element in $X_{A}$, by reading the words  appearing in the current coded stretch when applying this function; for example,
\begin{align*}
\begin{matrix}
\dots &  * &  W_1^u & \rightarrow & W_2^u & \rightarrow & W_3^u & * & \dots \\
 &   &  \uparrow &  &  &  & \downarrow &  &  \\
\dots &  * &  W_1^l & \leftarrow & W_2^l & \leftarrow & W_3^l & * & \dots .
\end{matrix}
\end{align*}

Let $C$ be a finite or one-sided coded stretch, and let $j,j'\in C$.
Note that starting to read from $\begin{pmatrix} \epsilon \\ j
\end{pmatrix}$ or from $\begin{pmatrix} \epsilon' \\ j'
\end{pmatrix}$ yields the same element in $X_n$, up to a shift.
However, for a two sided stretch $C$, the element of $X_{n}$ read from the $u$ row has nothing
to do with the element read from the $l$ row.

\subsubsection*{The function $\text{read}$}
To maintain the group structure when embedding the group $\autinf{n}$, we are forced to keep track of which level an element belongs to (as $\phi\in\autk{k}{n}$ applies $k$ different block maps, depending on the index $\mbox{mod } k$).
For this, we define a {\em read} map which depends on the index, in such a way that the word read from  $\begin{pmatrix} \epsilon \\ j
\end{pmatrix}$ and from $\begin{pmatrix} \epsilon \\ j'
\end{pmatrix}$ would be identical (where identical means not just up
to a shift).
Formalizing this, define   $\text{read}_{x}\colon \BL_{x}\to X_{n}^{2}$ by setting  $\text{read}_{x}\left(i\right)=\left(y^{u},y^{l}\right)$ where
\begin{align*}
\begin{matrix}
y_{\left\lfloor \frac{i}{R}\right\rfloor + z}^{u}=x_{(\text{next}_{x})^{z}
\left(\begin{array}{c}
u\\ i
\end{array}\right)} &  \mbox{and} & y_{-\left\lfloor \frac{i}{R}\right\rfloor + z}^{l}=x_{(\text{next}_{x})^{z}
\left(\begin{array}{c}
l\\ i
\end{array}\right)}
\end{matrix}
\end{align*}
 for all $z\in\Z$.

We note that this complication does not arise in the original embedding of Kim and Roush~\cite{KR90} of  $\aut(\sigma_n)$ in $\aut(\sigma_{A})$, as one can
define the read map without the floor functions (similarly for the multidimensional version of Hochman~\cite{Hoch}).

Let  $Y=\CA\cup X_{n}^{2}$, where $\CA$ is the alphabet of $X_A$,
and consider the set $\bar{K}=\prod_{j\in\Z}Y_j$.

\subsubsection*{Definition of the map $h$}
Define a map   $h\colon X_A \to \bar{K}$ by  setting
\begin{align}
\label{eq:def-of-h}
h(x)_{j}=\begin{cases}
\text{read}_{x}(j) & \text{if }j\in\BL_{x}\\
x_{j} & \text{otherwise}
\end{cases}.
\end{align}
Thus $h$ assigns to every $x\in X_A$ a sequence in $\bar{K}$ in the following
way. If $x_j$ is not included in any coded stretch, $h$ copies the symbol $x_j$ to the $j$ coordinate of the new element in $\bar{K}$.
If $x_j$ is included in a coded stretch, there are two elements in $X_n$ that are read
from this stretch: the one associated with the upper row, and the one associated with the lower row, and this pair of elements is placed in the $j$ coordinate of the new element in $\bar{K}$.

Set $K = \im(h) \subset \bar{K}$.

\subsubsection*{The map $h$ is injective}
We claim that the map $h$ is injective.  To see this, we check the action of the inverse of $h$ on its image.   For any
coordinate of a given
point in $K$, there is either an element from $\CA$ or there
is a pair in $X^2_n$. In the first case, $h^{-1}$ copies the symbol. In the second case, we  (re)-form the pair
composed of one symbol from the first element and the other from the
second element from $X^2_n$. More precisely, in this case:
\begin{align*}
h^{-1}(k)_j=\begin{cases}
\begin{pmatrix}((k_j)_1)_{-\left\lfloor \frac{i}{R}\right\rfloor} \\ ((k_j)_2)_{\left\lfloor \frac{i}{R}\right\rfloor} \end{pmatrix}  & \text{if }k_j\in X^2_n\\
k_j & \text{if }k_j \in \CA
\end{cases}.
\end{align*}
This verifies the claim.

We now make use of the representation of the element $x$ as $h(x)$
by exploiting the natural associated  $\autinf{n}$
action. On $Y$, we have  a pointwise action of $\autinf{n}$
(and trivial action on the $\CA$ part), and this action naturally
extends to a diagonal action on $\bar{K}$. In other words, there is a group homomorphism $\autinf{n} \to \bij(\bar{K})$.

\subsubsection*{Stabilized automorphisms keep the set $K$ invariant}
Next we claim that every element in $\autinf{n}$ is a bijection that keeps the set $K$ invariant, and the restriction action of $\autinf{n}$ on $K$ is faithful.
To check this,  note
that each element of $\autinf{n}$ keeps $K$
invariant by the mixing
assumption. In fact, the same  holds for
any map $X_n \to X_n$.
As $K$ is invariant, we can consider the restriction of the $\autinf{n}$-action to $K$.
Since all words of $\CL(X_n)$
appear as coded stretches for  some $x\in X_A$, every word in $X_n$ appears in some coordinate of some element in $K$,
and as the action of $\autinf{n}$ on $X_n$ is faithful (by definition), we conclude that the action on $K$ is faithful as well.
Thus the claim follows.

In other words, this realizes $\autinf{n}$ as a subgroup of $\bij(K)$.  Furthermore, the bijection $h\colon X_A \to K$ induces a group isomorphism $h_*\colon \bij(K) \to \bij(X_A)$.

\subsubsection*{Stabilized automorphisms give rise to continuous maps commuting with some power of the shift}
By pushing $\autinf{n}$ through the injective map $h_*$, we realize $\autinf{n}$ as a subgroup of $\bij(X_A)$. To verify that the image lies in $\autinf{n}$, we are left with checking that every $\phi\in \autinf{n} \subseteq \bij(K)$ gives rise to a continuous $h_* \phi \in \Homeo(X_A)$ which commutes with some power of $\sigma_A$.

To do this, we make use of the block map description
of the stabilized automorphism group (Lemma~\ref{lem:stabilized-CLH}).
Fix some $\phi\in \aut^{(k)}(\sigma_{n})$ of radius $r$. That is, $\phi$ can be represented as
$k$ block maps of radius $r$, where $r$ is some number greater than $k$.
Now if $x$ and $x'$ are two points in $X_A$ which are close, then by definition they agree on a large number of coordinates around the $0$ coordinate. In particular, their coded stretches (if they exist) in this area coincide. So there exists large $s>0$ such that $\BL_x \cap [-s,s] = \BL_{x'} \cap [-s,s]$.
Since $\phi$ is of radius $r$, $h_*\phi(x)$ and $h_*\phi(x')$ agree on $[-s+r,s-r]$, and hence $h_{*}\phi$ is a continuous map.
Finally to check that $h_{*}\phi$ commutes with a power of the shift, using the fact that $\phi\in \aut^{(k)}(\sigma_{n})$
is induced by a $k$-tuple of block maps on $X_n$, it is easy to check that $h_* \phi$ can be modeled
by a $k\cdot R$-tuple of block maps on $X_A$.

This concludes the proof of Theorem~\ref{prop:stabilized-kim-roush}.
\end{proof}

\subsection{Residual Finiteness and subgroup properties}\label{sec:residual}
For a mixing shift of finite type $(X_{A},\sigma_{A})$, the classical automorphism group $\aut(\sigma_{A})$ is residually finite (see~\cite[Section 3]{BLR}). The stabilized Kim-Roush Embedding, together with simplicity of the stabilized inerts for the full shifts, implies that the stabilized group $\autinf{A}$ is never residually finite. We show below that, in addition, $\autinf{A}$ always contains a divisible group.
\begin{proposition}\label{prop:notresidfinite}
Let $(X_{A}, \sigma_{A})$ be a mixing shift of finite type. Then $\autinf{A}$ contains an infinite simple subgroup, and a divisible subgroup. In particular, the group $\autinf{A}$ is not residually finite.
\end{proposition}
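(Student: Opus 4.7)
The plan is to deduce all three conclusions from the Stabilized Kim-Roush Embedding (Theorem~\ref{prop:stabilized-kim-roush}) combined with Theorem~\ref{th:even-simple}. Fix any $n \ge 2$: by Theorem~\ref{prop:stabilized-kim-roush} there is an embedding $\iota\colon \autinf n \hookrightarrow \autinf A$, so it suffices to produce an infinite simple subgroup and a divisible subgroup inside $\autinf n$ and push each of them through $\iota$.

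For the infinite simple subgroup, I would take $\inertinf n$, which is simple by Theorem~\ref{th:even-simple}. This group is infinite because each $\aut(\sigma_n^k)$ already contains nontrivial inert simple graph automorphisms coming from the $k$-block presentation of the full shift, and the number of such automorphisms (and hence the size of their union in $\inertinf n$) grows without bound with $k$. Thus $\iota(\inertinf n)$ is an infinite simple subgroup of $\autinf A$. For the failure of residual finiteness, I would then invoke two elementary facts: subgroups of residually finite groups are residually finite, and no infinite simple group is residually finite (any nontrivial homomorphism onto a finite group would be injective by simplicity, contradicting infiniteness). Applying these to $\iota(\inertinf n) \subset \autinf A$ completes that part.

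For the divisible subgroup, the plan is to exploit the rational conjugacies $(X_n, \sigma_n^{n^k}) \cong (X_{n^{n^k}}, \sigma_{n^{n^k}})$ supplied by Proposition~\ref{prop:eventualconj}. For each $k$, cyclic permutation of the alphabet on the $n^{n^k}$-shift pulls back to a finite-order element of $\aut(\sigma_n^{n^k}) \subset \autinf n$ whose order is divisible by arbitrarily large powers of any prime $p \mid n$. With careful choices I would organize these elements into a compatible tower $(g_k)_{k \ge 1}$ satisfying $g_{k+1}^p = g_k$, so that the direct limit $\bigcup_k \langle g_k \rangle$ is isomorphic to the Prüfer group $\mathbb{Z}(p^\infty)$, which is divisible; pushing forward by $\iota$ places a copy of $\mathbb{Z}(p^\infty)$ inside $\autinf A$. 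The main obstacle is arranging the compatibility $g_{k+1}^p = g_k$, since independently chosen alphabet permutations on different block presentations need not be related by the correct power relation, and building the tower requires a careful direct construction rather than an abstract limit argument.
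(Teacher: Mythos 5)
Your treatment of the infinite simple subgroup and of residual finiteness is correct and is essentially the paper's argument: reduce to a full shift via the stabilized Kim--Roush embedding, take $\inert^{(\infty)}(\sigma_{n})$ (infinite, and simple by Theorem~\ref{th:even-simple}), and use that subgroups of residually finite groups are residually finite while infinite simple groups are not.

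The divisible-subgroup part, however, has a genuine gap, and you have put your finger on exactly where it is: you assert that the finite-order elements coming from alphabet permutations on the various block presentations can be ``organized into a compatible tower $(g_k)$ satisfying $g_{k+1}^p = g_k$,'' but you never produce such a tower, and you concede that independently chosen permutations need not satisfy the required power relation. That compatibility is the entire content of the claim; without it you have only an abundance of finite cyclic subgroups, which does not yield a divisible group. The paper closes this gap with a concrete root-extraction construction: if $\phi_{0} \in \aut(\sigma_{2}^{k})$ is a $0$-block code, view the alphabet of $\sigma_{2}^{mk}$ as $m$-tuples of $k$-blocks, let $\alpha_{0}$ apply $\phi_{0}$ to the top block only and let $c_{m}$ cyclically rotate the $m$ blocks; then $\left(\alpha_{0}c_{m}\right)^{m} = \phi_{0}$, so every $0$-block code acquires an $m$-th root one level up the stabilization. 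Starting from a $0$-block code of order $m$ and iterating produces a copy of $\mathbb{Z}[\tfrac{1}{m}]/\mathbb{Z}$ inside $\aut^{(\infty)}(\sigma_{2})$. Note that this gives the roots explicitly rather than hunting for them among pre-existing alphabet permutations, which is why the compatibility problem you identify never arises. To complete your proof you would need to supply this (or an equivalent) construction.
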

\begin{proof}
Since any subgroup of a residually finite group is residually finite, and infinite simple groups are not residually finite, by Theorem~\ref{prop:stabilized-kim-roush} it suffices to prove the statements for full shifts. The group $\inertinf{n}$ is always infinite, and by Theorem~\ref{th:even-simple}, is simple, giving the first part. For the divisible subgroup, let $m \ge 2$. We show that $\autinf{2}$ contains the divisible group $\mathbb{Z}[\frac{1}{m}] / \mathbb{Z}$.
We claim that if $\phi_{0} \in \aut(\sigma_{2}^{k})$ is given by a 0-block code, then there exists $\phi_{1} \in \aut(\sigma_{2}^{mk})$ such that $\phi_{1}^{m} = \phi_{0}$. The result then follows by letting $\phi_{0}$ be any 0-block code of order $m$ in $\aut(\sigma_{2}^{j})$ for some $j, m$, and induction.

To prove the claim, suppose we have such $\phi_{0}$. We consider the alphabet for the shift $\sigma_{2}^{mk}$ as symbols $\begin{pmatrix} a_{0} \\ \vdots \\ a_{m-1} \end{pmatrix}$ where $a_{i} \in \{0,1\}^{k}$. Define 0-block codes in $\aut(\sigma_{2}^{mk})$ as follows:
$$ \alpha_{0}(\begin{pmatrix} a_{0} \\ \vdots \\ a_{m-1} \end{pmatrix}) =
\begin{pmatrix} \phi_{0}(a_{0}) \\ a_{1} \\ \vdots \\ a_{m-1} \end{pmatrix}, \hspace{.3in}  a_{i} \in \{0,1\}^{k}
$$
and
$$
c_{m}\begin{pmatrix} a_{0} \\ a_{1} \\ \vdots \\ a_{m-1} \end{pmatrix} = \begin{pmatrix} a_{1} \\ a_{2} \\ \vdots \\ a_{0} \end{pmatrix}.
$$
Then it is easy to  check that
$$\left(\alpha_{0}c_{m}\right)^{m} = \phi_{0},$$
as desired.
\end{proof}

This method can be used to produce other embeddings into $\autinf{n}$.
Given a prime $p\geq 2$, consider the direct limit $\SL_{\infty}^{\diag}(\mathbb{F}_{p})$ of the systems $(\SL_{2^{n}}(\mathbb{F}_{p}),i_{n})$ where $i_{n} \colon \SL_{2^{n}}(\mathbb{F}_{p}) \to \SL_{2^{n+1}}(\mathbb{F}_{p})$ is the map given by $A \mapsto A \oplus A$. A construction analogous to the one given in the proof of Proposition~\ref{prop:notresidfinite} can be used to produce an embedding of $\SL_{\infty}^{\diag}(\mathbb{F}_{p})$ into $\autinf{p}$.

We end this section with an example of how results in the stabilized setting can be used to study the classical automorphism group $\aut(\sigma_{A})$.

\begin{lemma}
For a full shift $(X_n, \sigma_n)$, the group $\aut(X_n)$ embeds into the group $\inert(X_n)$.
\end{lemma}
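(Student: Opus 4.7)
The plan is to invoke the non-stabilized Kim--Roush Embedding (Theorem~\ref{prop:kim-roush}) with $(X_A,\sigma_A)=(X_n,\sigma_n)$, using the fact that the full shift $(X_n,\sigma_n)$ is itself a mixing shift of finite type. This immediately yields an injective homomorphism $\iota\colon\aut(\sigma_n)\hookrightarrow\aut(\sigma_n)$ whose image consists of marker-type automorphisms. The main content of the lemma is then to verify that this image lies in $\inert(\sigma_n)$.

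To verify that each $\iota(\phi)$ is inert, I would compute its action on Krieger's dimension group $D_n\cong\mathcal{G}_n\cong\Z[\tfrac{1}{n}]$ via the isomorphism $\theta([U])=\delta_n^{-k-m}(v_{U,m}\cdot n^k)$. Concretely, fix a ray $R(x,m)$ and observe that $\iota(\phi)(R(x,m))$ is contained in $R(\iota(\phi)(x),m-r)$, where $r$ is the block-code radius of $\iota(\phi)$, and can be expressed as an $(m+r')$-beam (with $r'$ the radius of $\iota(\phi)^{-1}$). By exploiting the marker/data structure of the embedding~-- in which $\iota(\phi)$ acts only inside coded stretches and by bijections on the two data tracks~-- I would check that the count of rays in this beam produces $\theta([\iota(\phi)(R(x,m))])=n^{-m}=\theta([R(x,m)])$. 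Since the classes $[R(x,m)]$ generate $\mathcal{G}_n^+$, this gives $\pi_n(\iota(\phi))=\mathrm{id}$, so $\iota(\phi)\in\inert(\sigma_n)$.

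An alternative, perhaps cleaner, route is as follows: after choosing a block length large enough that markers occur as transitions in a single block of a higher block presentation $(X_B,\sigma_B)$ of $(X_n,\sigma_n)$, one may express each $\iota(\phi)$ as a $1$-block map on $X_B$ that fixes every vertex of $\Gamma_B$ (since outside marker-containing blocks $\iota(\phi)$ is the identity, and within them it merely permutes edges carrying the same endpoints). Thus $\iota(\phi)$ is a product of simple graph automorphisms of $\Gamma_B$, and by definition $\simp(\sigma_B)\subset\inert(\sigma_B)$. Pulling back along the conjugacy $(X_B,\sigma_B)\cong(X_n,\sigma_n)$ shows $\iota(\phi)\in\inert(\sigma_n)$.

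The main obstacle is carrying out either computation carefully: in the dimension-group approach, one must precisely account for what happens at marker boundaries when $x$ has markers appearing in its pre-tail, and verify that the bijection provided by $\phi$ on data words of each length is exactly what forces the beam count at level $m+r'$ to be $n^{r'}$; in the higher block approach, one must check that the block length can be chosen large enough that the marker word appears only at the boundary of each block of the presentation, so that the edge permutations induced by $\iota(\phi)$ fix every vertex of $\Gamma_B$ and genuinely are simple graph symmetries.
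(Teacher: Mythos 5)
Your first route is in the same spirit as the paper's proof, but you have left the decisive step unresolved: you propose to verify $\theta\bigl(\bigl[f_{*}(\phi)(R(x,m))\bigr]\bigr)=n^{-m}$ for \emph{every} ray by counting $(m+r')$-rays in the image beam, and you yourself flag the marker-boundary bookkeeping this requires as ``the main obstacle'' without carrying it out. The paper avoids that computation entirely. Since $(X_n,\sigma_n)$ is a full shift, $\mathcal{G}_n\cong\Z[\tfrac{1}{n}]$ has rank one, the class of any single $0$-ray rationally generates it, and all $0$-rays of the full shift are equivalent; hence an automorphism of the dimension triple is completely determined by its value on one $0$-ray class. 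One then only needs a single symbol $a$ for which $f_{*}(\phi)(R_a)$ is again a $0$-ray --- which the marker construction supplies --- and inertness follows with no beam count at all. Without this ``one ray suffices'' reduction, your argument as written is a plan rather than a proof: the case of rays whose left tails meet coded stretches is precisely where the counting is delicate, and it is not done.

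Your alternative route would fail. In a higher block presentation $(X_B,\sigma_B)$ of a full shift, an $N$-block is determined by its initial $(N-1)$-block together with its terminal $(N-1)$-block, so the graph $\Gamma_B$ has at most one edge between any ordered pair of vertices; consequently $\simp(\Gamma_B)$ is trivial, and no nontrivial automorphism of $(X_n,\sigma_n)$ --- in particular no nontrivial $f_{*}(\phi)$ --- can be exhibited as a simple graph automorphism of such a presentation. There is no choice of block length that fixes this, because markers and data occur at all positions, not only at block boundaries. Realizing an automorphism as simple requires a conjugacy to a presentation with genuinely parallel edges, and whether Kim--Roush images are products of simple automorphisms is both unnecessary for the lemma and not something this construction delivers.
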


\begin{proof}
Let $f\colon \aut(\sigma_n) \to \aut(\sigma_n)$ be a Kim-Roush embedding.
Since we are considering a full shift,
for any $\phi\in\aut(\sigma_n)$, the action of $f_{*}(\phi)$ on the dimension group $\mathcal{G}_{n}$ is determined by its action on any $0$-ray $R$, since the equivalence class of any $0$-ray rationally generates $\mathcal{G}_{n}$.

For a symbol $a$, let $R_a$
denote the $0$-ray of points $x$ such that $x_{i} = a$ for all $i \le 0$.  By construction of the Kim-Roush embedding, there is some symbol $a$ such that
$f_{*}(\phi)(R_{a})$ is again a $0$-ray. Since all $0$-rays in $(X_{n},\sigma_{n})$ are equivalent, this implies that $f_{*}(\phi)$ acts trivially on the dimension group, i.e. $f_{*}(\phi) \in \inert(X_n)$.
\end{proof}

\begin{theorem}
Let $G$ be a finitely generated group which embeds into $\aut(\sigma_{n})$. Then $G$ embeds (using a possibly different embedding) into $[\aut(\sigma_{n}),\aut(\sigma_{n})]$.
\end{theorem}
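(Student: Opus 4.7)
Plan. The strategy is to start from the embedding into $\inert(\sigma_{n})$ provided by the preceding lemma and then refine it to land inside the commutator subgroup.

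Let $\iota\colon G \hookrightarrow \aut(\sigma_{n})$ be the given embedding and let $f\colon \aut(\sigma_{n}) \to \aut(\sigma_{n})$ be a Kim-Roush self-embedding as in the preceding lemma. The composition $f \circ \iota$ is an injective homomorphism $G \hookrightarrow \inert(\sigma_{n})$. Since $\aut(\sigma_{n})/\inert(\sigma_{n})$ injects into the abelian group $\aut(\mathcal{G}_{n})$, we already know that $[\aut(\sigma_{n}),\aut(\sigma_{n})] \subseteq \inert(\sigma_{n})$, but the reverse containment is not known for a general full shift, so additional work is needed to push the image inside the commutator.

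The refinement step chooses the marker word $M$ and the data set $\mathcal{D}$ in the Kim-Roush construction so that, for every $g \in G$, the image $f(g)$ is exhibited as a single commutator in $\aut(\sigma_{n})$. The mechanism is modeled on the identity $\tilde{\tau}=\phi_{0}\sigma_{A}\phi_{0}^{-1}\sigma_{A}^{-1}$ from the proof of Lemma~\ref{lemma:swapcomm}: we arrange the marker geometry so that the block-code action of $f(g)$ on coded stretches admits a factorization of the form $f(g) = [\sigma_{n}^{r},\tilde{f}(g)]$, where $\tilde{f}(g) \in \aut(\sigma_{n})$ and the integer $r$ depends only on the marker period. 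Because $f$ is already a homomorphism by construction, each element of the image $f(G)$ is a commutator, and thus $f(G) \subseteq [\aut(\sigma_{n}),\aut(\sigma_{n})]$. Crucially, the assignment $g \mapsto \tilde{f}(g)$ need not itself be a homomorphism, which sidesteps the usual obstruction to producing commutator embeddings of non-abelian $G$: we only need each individual output to lie in the commutator subgroup.

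The main obstacle is the refinement step. The identity underlying Lemma~\ref{lemma:swapcomm} natively realizes $\tilde{\tau}$ as an element of $[\aut(\sigma_{A}^{2}),\aut(\sigma_{A}^{2})]$, that is, inside the stabilized commutator rather than inside $[\aut(\sigma_{A}),\aut(\sigma_{A})]$ itself. To bring the identity down to the non-stabilized level one must design the Kim-Roush markers so that the conjugating element may be chosen as a power of $\sigma_{n}$ and so that the auxiliary block code $\tilde{f}(g)$ commutes with $\sigma_{n}$; this forces a compatible choice of block length, marker length, and structure of $\mathcal{D}$. Verifying that such a choice of markers exists for arbitrary $\phi \in \aut(\sigma_{n})$ (and, in particular, for the images of a finite generating set of $G$), and that the resulting $\tilde{f}(g)$ indeed lies in $\aut(\sigma_{n})$ rather than only in $\aut(\sigma_{n}^{2})$, constitutes the main technical content of the argument.
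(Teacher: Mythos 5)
There is a genuine gap, and in fact the proposed mechanism cannot work as stated. You want to realize each $f(g)$ as a single commutator $f(g) = [\sigma_{n}^{r},\tilde{f}(g)]$ where the conjugating element is a power of $\sigma_{n}$ and where $\tilde{f}(g)$ lies in $\aut(\sigma_{n})$ (not merely in $\aut(\sigma_{n}^{2})$). But by Ryan's Theorem the center of $\aut(\sigma_{n})$ is exactly $\langle\sigma_{n}\rangle$, so $\sigma_{n}^{r}$ commutes with every element of $\aut(\sigma_{n})$; any commutator $[\sigma_{n}^{r},\tilde{f}(g)]$ with $\tilde{f}(g)\in\aut(\sigma_{n})$ is therefore the identity. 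The whole point of the identity $\tilde{\tau}=\phi_{0}\sigma_{A}\phi_{0}^{-1}\sigma_{A}^{-1}$ in Lemma~\ref{lemma:swapcomm} is that $\phi_{0}$ does \emph{not} commute with $\sigma_{A}$ (only with $\sigma_{A}^{2}$), which is precisely why the conclusion lands in $[\aut(\sigma_{A}^{2}),\aut(\sigma_{A}^{2})]$ and cannot be pushed down to $[\aut(\sigma_{A}),\aut(\sigma_{A})]$ by this route. Beyond this obstruction, the "refinement step" is never actually carried out: you defer the entire construction to "the main technical content of the argument," so even setting aside the centrality issue, nothing is proved.

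The paper avoids this dead end by a different and much softer argument, and this is where the hypothesis that $G$ is finitely generated (which your proposal never uses) enters. After the first Kim--Roush embedding places $G$ inside $\inert(\sigma_{n})\subset\inert^{(\infty)}(\sigma_{n})=[\autinf{n},\autinf{n}]$ (Theorem~\ref{thm:stablecommutator}), finite generation guarantees that the finitely many generators, together with the finitely many commutator expressions witnessing their membership in the stabilized commutator, all lie in $[\autk{m}{n},\autk{m}{n}]$ for a single finite $m$. Since $\autk{m}{n}\cong\aut(\sigma_{n^{m}})$, one then applies a \emph{second} Kim--Roush embedding $\aut(\sigma_{n^{m}})\hookrightarrow\aut(\sigma_{n})$; because any group homomorphism carries commutators to commutators, the composite embedding lands in $[\aut(\sigma_{n}),\aut(\sigma_{n})]$. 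You should replace your refinement step with this two-stage argument: descend from the stabilized commutator to a finite level using finite generation, then re-embed.
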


\begin{proof}
Suppose $G$ embeds into $\aut(\sigma_{n})$. Composing this embedding with a Kim-Roush embedding $f$ gives an embedding of $G$ into $\inert(\sigma_{n})$ (by the previous lemma). In particular, $G$ embeds in $\inertinf{n}$, which, by Theorem~\ref{thm:stablecommutator}, is a subgroup of $[\autinf{n}, \autinf{n}]$.
Since $G$ is finitely generated, it follows that $G$ embeds inside $[\autk m n, \autk m n]$ for some $m\in\N$. We can then apply another  Kim-Roush embedding, this time to embed $\autk m n$ (which is isomorphic to $\aut(\sigma_{n^{m}})$) into $\aut(\sigma_{n})$.
The composition of these embeddings takes $G$ into $[\aut(\sigma_{n}),\aut(\sigma_{n})]$.
\end{proof}

While we focus mainly on  positive entropy mixing
shifts of finite type, the following proposition holds in greater generality. This is the
only obstruction, of which we are aware, for realization of a countable group in $\autinf{A}$.  The same proof as in Boyle, Lind, and Rudolph~\cite{BLR} immediately gives:
\begin{proposition}
    Let $(X,\sigma)$ be any subshift. Then any finitely generated subgroup of $\aut^{(\infty)}(\sigma)$ has a solvable word problem.
\end{proposition}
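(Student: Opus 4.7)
The plan is to reduce the statement to the classical non-stabilized case and then invoke the result of Boyle, Lind, and Rudolph directly. The key observation is that the stabilized automorphism group is a directed union of classical centralizers, and any finitely generated subgroup is automatically trapped inside a single level of this union; consequently, nothing genuinely new about stabilization is visible to a finitely generated subgroup.

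Given $G = \langle g_{1}, \ldots, g_{n} \rangle \le \autinf{X}$, the first step is simply to unpack the hypothesis. By the definition $\autinf{X} = \bigcup_{k \ge 1} \aut^{(k)}(\sigma_{X})$, each generator $g_{i}$ lies in $\aut^{(k_{i})}(\sigma_{X})$ for some $k_{i} \ge 1$. Setting $K = \mathrm{lcm}(k_{1}, \ldots, k_{n})$ and using that $\aut^{(k)}(\sigma_{X}) \subseteq \aut^{(km)}(\sigma_{X})$ for all $k, m \ge 1$, every generator lies in $\aut^{(K)}(\sigma_{X}) = \aut(X, \sigma_{X}^{K})$. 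Thus $G$ sits as a finitely generated subgroup of the classical centralizer of $\sigma_{X}^{K}$ inside $\Homeo(X)$.

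Next, I would identify this centralizer with the automorphism group of an honest subshift, so that the original Boyle-Lind-Rudolph statement applies without modification. The power system $(X, \sigma_{X}^{K})$ is topologically conjugate to a subshift via the $K$-block recoding $\phi\colon X \to (\mathcal{A}^{K})^{\mathbb{Z}}$ defined by $\phi(x)_{i} = (x_{Ki}, x_{Ki+1}, \ldots, x_{Ki+K-1})$: the image $Y = \phi(X)$ is closed and shift-invariant in $(\mathcal{A}^{K})^{\mathbb{Z}}$, and $\phi$ intertwines $\sigma_{X}^{K}$ with the shift on $Y$. Hence $\aut(X, \sigma_{X}^{K}) \cong \aut(Y, \sigma_{Y})$, realizing $G$ as a finitely generated subgroup of $\aut(Y, \sigma_{Y})$. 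Applying the Boyle-Lind-Rudolph argument to $(Y, \sigma_{Y})$ now yields the conclusion: each generator is represented as a sliding block code via the Curtis-Hedlund-Lyndon theorem, products of finitely many such codes can be computed explicitly with controlled memory and anticipation, and deciding whether the resulting block code is the identity reduces to a finite check on words in the language of $Y$. There is no genuine obstacle here; the only content beyond the classical argument of Boyle, Lind, and Rudolph is the trivial but conceptually clarifying observation that finite generation forces one to live at a single finite level of the stabilization.
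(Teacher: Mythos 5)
Your argument is correct and is essentially the paper's: the paper's entire proof is the assertion that ``the same proof as in Boyle--Lind--Rudolph immediately gives'' the result, and your reduction --- finite generation traps $G$ inside a single level $\aut^{(K)}(\sigma_{X})$ via the least common multiple of the $k_{i}$, which is then recoded by $K$-blocks to the automorphism group of an honest subshift where the Boyle--Lind--Rudolph argument applies verbatim --- is exactly the implicit content of that assertion. If anything, you supply more detail than the paper does.
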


\section{Simplicity of the stabilized inerts for full shifts}\label{sec:simplicity}
\subsection{Simplicity}
For a mixing shift of finite type $(X_{A},\sigma_{A})$, the classical inert subgroup $\inert(\sigma_{A})$ has an abundance of normal subgroups. For example, given $\phi \in \inert(\sigma_{A})$ and $k \in \mathbb{N}$, $\phi$ leaves invariant the set $P_{k}(\sigma_{A})$ of $\sigma_{A}$-periodic points of period $k$, and there is a well-defined homomorphism from $\inert(\sigma_{A})$ to $\textnormal{Sym}(P_{k}(\sigma_{A}))$. Moreover, if $\id \ne \phi$, then there exists some $k$ such that $\phi$ acts nontrivially on $P_{k}(\sigma_{A})$, and it follows from this that the group $\inert(\sigma_{A})$ is in fact residually finite (see~\cite[Section 3]{BLR} for details).

In contrast, different behavior arises in the stabilized setting, where the inert subgroup has no nontrivial normal subgroups.  The remainder of this section is devoted to the proof of Theorem~\ref{th:even-simple},
which we restate for convenience:
\begin{theorem*}[Theorem~\ref{th:even-simple}]
For any $n \ge 2$, the group of stabilized inert automorphisms of the full shift $(X_{n},\sigma_{n})$ is simple.
\end{theorem*}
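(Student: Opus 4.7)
The plan is to identify a certain locally finite subgroup $H \subseteq \inertinf{n}$ and reduce simplicity of $\inertinf{n}$ to simplicity of $H$ by combining two ingredients: a generation result of Boyle expressing $\inertinf{n}$ in terms of $H$ and the shift, and a key lemma asserting that every non-trivial normal subgroup of $\inertinf{n}$ meets $H$ non-trivially. Concretely, let $\Gamma_n^{(k)}$ denote the one-vertex $k$-th higher block presentation of the full $n$-shift, whose simple graph symmetries form $\sym(n^k)$. The natural embeddings $\sym(n^k) \hookrightarrow \sym(n^{km})$, sending a permutation $\pi$ of length-$k$ words to its action $\pi^{\otimes m}$ on length-$km$ words viewed as ordered $m$-tuples of length-$k$ words, assemble these into $H := \simpinf{n}$. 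I would first prove that $H$ is simple; the key arithmetic input is that for any $N \times N$ permutation matrix $P$,
\begin{equation*}
\det(P \otimes P) = (\det P)^{2N} = 1,
\end{equation*}
so the image of $\sym(n^k)$ inside $\sym(n^{2k})$ under the diagonal embedding lies in $\alt(n^{2k})$. Therefore $H$ is isomorphic to a directed union of alternating groups $\alt(n^{2k})$, each simple once $n^{2k} \ge 5$, and since a directed union of simple groups along injective transition maps is itself simple, $H$ is simple.

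Next I would invoke a theorem of Boyle (closely related to Wagoner's Theorem~\ref{thm:efog}) to express $\inertinf{n}$ in terms of $H$ and $\sigma_n$: every element of $\inertinf{n}$ lies in $\langle H, \sigma_n \rangle$. Since a word $h_1 \sigma_n^{a_1} \cdots h_\ell \sigma_n^{a_\ell}$ with each $h_i \in H$ maps to $\delta_n^{\sum a_i}$ under the stabilized dimension representation, inertness forces $\sum a_i = 0$. Rewriting such a word by pushing the shifts to the right as $h_1' h_2' \cdots h_\ell'$ with $h_i' \in \sigma_n^{b_i} H \sigma_n^{-b_i}$ for appropriate partial sums $b_i$ of the $a_j$'s, we see that $\inertinf{n}$ is generated by the shift-conjugates $\sigma_n^{a} H \sigma_n^{-a}$, $a \in \Z$.

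The main technical work is Lemma~\ref{lemma:lemma1}: any non-trivial normal subgroup $N$ of $\inertinf{n}$ satisfies $N \cap H \ne \{\Id\}$. The strategy I would pursue is to begin with $\Id \ne \phi \in N \cap \inert(\sigma_n^k)$, locate a finite configuration (a periodic orbit or a long cylinder) on which $\phi$ acts non-trivially, and iterate commutators $[\phi, \psi]$ where $\psi$ runs through suitably chosen simple graph symmetries in $H$ designed to surgically disturb that configuration. The hard part, which the excerpt flags as occupying most of the section, is not merely to produce a non-trivial element of $N$ (commutators give this easily) but to arrange that the resulting element actually lies in $H$, i.e., is induced by a simple graph symmetry of some $\Gamma_n^{(k')}$, rather than merely in $\inertinf{n}$. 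This demands a delicate cancellation argument that forces all block-code dependence of $[\phi,\psi]$ outside a single aligned window to be killed, and it is here that the abundance of simple graph symmetries of a full shift, where every permutation of any length-$k'$ alphabet is realized, becomes indispensable.

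Finally, to conclude simplicity, suppose $N$ is a non-trivial normal subgroup of $\inertinf{n}$. By Lemma~\ref{lemma:lemma1}, $N \cap H$ is a non-trivial subgroup of $H$, and it is normal in $H$ since $H \subseteq \inertinf{n}$. Simplicity of $H$ then forces $H \subseteq N$. For each $a \in \Z$, the conjugate $\sigma_n^{-a} N \sigma_n^a$ is again a non-trivial normal subgroup of $\inertinf{n}$ (using that $\sigma_n$ normalizes $\inertinf{n}$, since inert automorphisms form a normal subgroup of $\autinf{n}$), so applying the Lemma and simplicity of the isomorphic subgroup $\sigma_n^a H \sigma_n^{-a}$ yields $\sigma_n^a H \sigma_n^{-a} \subseteq N$ for every $a \in \Z$. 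Combined with the generation statement above, this gives $N = \inertinf{n}$.
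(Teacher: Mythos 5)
Your outer architecture matches the paper's: you take $H=\simpinf{n}$, prove it is simple as a directed union of alternating groups, use Boyle's factorization to see that the conjugates $\sigma_n^{a}H\sigma_n^{-a}$ generate $\inertinf{n}$, and reduce everything to the claim that a nontrivial normal subgroup of $\inertinf{n}$ must meet some such conjugate nontrivially. Those reductions are all correct, and your sign computation $\sgn(\tau\times\tau)=\sgn(\tau)^{2N}=1$, showing that the diagonal embedding $\sym(n^{k})\hookrightarrow\sym(n^{2k})$ lands in $\alt(n^{2k})$, is a clean (and slightly slicker) version of what the paper does by counting transpositions in Lemma~\ref{lemma:joel}.

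The problem is that you have not proved Lemma~\ref{lemma:lemma1}, and that lemma is essentially the entire mathematical content of the theorem; everything else is a short formal reduction. What you offer in its place is a strategy (``iterate commutators $[\phi,\psi]$ \dots\ a delicate cancellation argument that forces all block-code dependence outside a single aligned window to be killed'') with no indication of how the cancellation is actually achieved, and you yourself identify exactly where it is hard: producing an element of $N$ that is honestly induced by a simple graph symmetry, not merely inert. The paper's proof does not proceed by direct commutator surgery at all. It first upgrades some $\alpha\in N$ to one satisfying a list of normal-form conditions (Lemma~\ref{eqn:setupconditions}), writes $\alpha=\psi_1\sigma\psi_2\sigma^{-1}$ via Boyle's lemma, and then studies the subgroup $\mathcal{K}_N$ of pairs $(\phi_1,\phi_2)\in\simp(\Gamma^{(2)})\times\simp(\Gamma^{(2)})$ with $\phi_1\sigma\phi_2^{-1}\sigma^{-1}\in N$. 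Goursat's Lemma applied to $\mathcal{K}_N$ either produces a nontrivial kernel $N_1$ or $N_2$, which immediately yields a nontrivial element of $N\cap\simpinf{n}$, or else forces an isomorphism between two subgroups of $\sym(E^{(2)})$; a lengthy combinatorial argument (primitivity plus Jordan's theorem on $p$-cycles) shows these subgroups are $\sym(E^{(2)})$ or $\alt(E^{(2)})$, so the isomorphism is conjugation by a single permutation, which is forced to be the swap map, contradicting the normal-form conditions on $\alpha$. Without either this argument or a worked-out version of your own, the proposal establishes only the comparatively easy implication ``Lemma~\ref{lemma:lemma1} implies simplicity,'' not the theorem itself.
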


Simplicity of various groups defined via dynamical systems has been shown in other contexts (see for example~\cite{JM, Matui, Nek}). For many of these groups, an important and useful property is the existence of elements of the group which act by the identity on certain regions of the domain space. In contrast to such groups, the action of the group $\inertinf{n}$ on the shift space is of a very different nature; for example, for any mixing shift of finite type $(X_{A},\sigma_{A})$, and in particular any full shift, if $\id \ne \phi \in \inertinf{A}$, then for any open subset $U \subset X_{A}$, $\phi \ne \id$ on $U$ (in other words, $\inertinf{A}$ never contains non-trivial elements with small support).

\subsection{Stabilized simple automorphisms}
\label{sec:simple-setup}
Many of the ingredients in the proof of Theorem~\ref{th:even-simple} hold more generally, and
so we start with some preliminaries that hold for more than the full shift.

Assume $(X_A, \sigma_A)$ is a mixing shift of finite type defined by a $k \times k$ primitive $\mathbb{Z}_{+}$-matrix $A$ (note that the full shift on $n$ symbols corresponds to $A = (n))$.  Let $\Gamma_{A}$ denote a directed labeled graph associated to $A$ and let $\simp(\Gamma_{A})$ denote the subgroup of simple automorphisms in $\aut(\sigma_{A})$ induced by simple graph symmetries of $\Gamma_{A}$. Note that $\simp(\Gamma_A)$ is contained in $\simp(\sigma_{A})$, but the converse inclusion does not hold.

Recall that $E_{i,j}$
denotes the set of edges between vertices $i$ and $j$ in the graph $\Gamma_A$.
There is a natural isomorphism
\begin{equation}
    \label{eq:simp-is-sym}
\simp(\Gamma_{A}) \cong \prod_{i,j=1}^{k}\sym(E_{i,j}),
\end{equation}
where we adopt the convention that if
$E_{i,j} = \emptyset$ for some choice of  $i$ and $j$, we assume that $\sym(E_{i,j})$ is the trivial group with one element.

We define the subgroup of even simple graph automorphisms $\simp_{\ev}(\Gamma_{A})$ in $\simp(\Gamma_{A})$ by pulling back the associated product of alternating subgroups, meaning the subgroup $\prod_{i,j=1}^{k}\alt(E_{i,j})$,
via the isomorphism in~\eqref{eq:simp-is-sym}.

Let $\Gamma^{(m)}_{A}$ denote a graph which presents the shift $(X_{A},\sigma_{A}^{m})$; thus $\simp(\Gamma^{(m)}_{A}) \subset \aut(\sigma_{A}^{m})$. We note the graphs $\Gamma_{A}^{(m)}$ and $\Gamma_{A^{m}}$ differ only up to a choice of labeling. For any $k, m \ge 1$ we have an inclusion map
\begin{equation}
\label{eq:def-im}
    i_{m,k} \colon \simp(\Gamma^{(m)}_{A}) \hookrightarrow \simp(\Gamma^{(km)}_{A}),
    \end{equation}
and by making the natural identifications among the iterates, this homomorphism agrees with the restriction of the map
$$\aut(\sigma^{m}_{A}) \hookrightarrow \aut(\sigma_{A}^{km})$$
to $\simp(\Gamma^{(m)}_{A})$.
\begin{proposition}
For any $k,m \ge 1$, the map $i_{m,k}$ takes $\simp_{\ev}(\Gamma^{(m)}_{A})$ into $\simp_{\ev}(\Gamma^{(km)}_{A})$.
\end{proposition}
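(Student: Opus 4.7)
The plan is to compute explicitly how a simple graph symmetry $\tau \in \simp(\Gamma^{(m)}_{A})$ acts on the edge sets of $\Gamma^{(km)}_{A}$ via $i_{m,k}$, and then invoke a standard sign computation for product permutations. First I would identify an edge of $\Gamma^{(km)}_{A}$ between vertices $i$ and $j$ with a path of length $km$ in $\Gamma_A$, which decomposes canonically as a sequence of $k$ consecutive paths of length $m$, i.e.\ $k$ consecutive edges of $\Gamma^{(m)}_A$. Choosing the intermediate vertices $i_0 = i, i_1, \ldots, i_{k-1}, i_k = j$, this gives a disjoint union decomposition
$$E^{(km)}_{i,j} \;=\; \bigsqcup_{(i_1, \ldots, i_{k-1})} E^{(m)}_{i, i_1} \times E^{(m)}_{i_1, i_2} \times \cdots \times E^{(m)}_{i_{k-1}, j},$$
where the union runs over all tuples of intermediate vertices and we adopt the convention that an empty factor contributes nothing.

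The next step is to verify that under this identification, $i_{m,k}(\tau)$ acts on each product factor coordinate-wise, namely as the product permutation $\tau|_{E^{(m)}_{i, i_1}} \times \cdots \times \tau|_{E^{(m)}_{i_{k-1}, j}}$. Concretely, $\tau$ is a $1$-block code in the alphabet $E(\Gamma^{(m)}_A)$, and the natural $k$-block recoding which conjugates the SFT on $E(\Gamma^{(km)}_A)$ with $(X_{A},\sigma_{A}^{km})$ turns $\tau$ into a $1$-block code on $E(\Gamma^{(km)}_A)$ that applies the original block map to each of the $k$ length-$m$ constituent edges independently. This is the step where I expect the main care is required: one must pin down the identification of the higher-power presentations well enough to see that the image of $\tau$ in $\simp(\Gamma^{(km)}_A)$ is precisely this diagonal action; once that is in hand, the rest of the argument is a mechanical sign computation.

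Finally, I would invoke the standard fact that for permutations $\pi_s \in \sym(X_s)$, the product permutation on $X_1 \times \cdots \times X_k$ has sign
$$\sgn(\pi_1 \times \cdots \times \pi_k) = \prod_{s=1}^{k} \sgn(\pi_s)^{\prod_{t \neq s} |X_t|}.$$
If $\tau \in \simp_{\ev}(\Gamma^{(m)}_{A})$, then each restriction $\tau|_{E^{(m)}_{a,b}}$ is even, so every factor $\sgn(\pi_s)$ in the above formula is $1$, and hence each product permutation appearing in the decomposition of $E^{(km)}_{i,j}$ is even. The restriction of $i_{m,k}(\tau)$ to $E^{(km)}_{i,j}$ is a disjoint union of such even permutations and is therefore even. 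Since this holds for every pair $(i,j)$, we conclude $i_{m,k}(\tau) \in \simp_{\ev}(\Gamma^{(km)}_{A})$, as desired.
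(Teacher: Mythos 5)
Your proof is correct, and it takes a genuinely different route from the paper. The paper fixes a vertex pair $I,J$, takes $\tau\in\alt(E_{I,J})$, writes it as a product of an even number $2l$ of transpositions, and shows by a counting argument that each transposition's image under $i_{m,k}$ is a product of the same number $r$ of $2$-cycles (namely $r=\tfrac12 M$, where $M$ is the number of admissible length-$k$ words over the edges of $\Gamma^{(m)}_A$ containing at least one of the two swapped edges, a count independent of which pair was swapped); the image is then a product of $2lr$ transpositions, hence even. You instead decompose each edge set of $\Gamma^{(km)}_A$ as a disjoint union, over tuples of intermediate vertices, of products $E^{(m)}_{i,i_1}\times\cdots\times E^{(m)}_{i_{k-1},j}$, observe that $i_{m,k}(\tau)$ preserves each factor (because $\tau$ fixes all vertices, so it preserves sources, targets, and intermediate vertices of paths) and acts there as a product permutation, and then apply the sign formula $\sgn(\pi_1\times\cdots\times\pi_k)=\prod_{s}\sgn(\pi_s)^{\prod_{t\ne s}|X_t|}$. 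Both arguments are sound; the coordinate-wise identification in your second step is exactly right and is the only place requiring care. Your version has the advantage of computing the parity of $i_{m,k}(\tau)$ restricted to each individual edge set $E^{(km)}_{i,j}$, which is literally the defining condition for membership in $\simp_{\ev}(\Gamma^{(km)}_A)$ (the paper's count is phrased globally over all target edge sets, though it localizes by the same symmetry), and it yields as a byproduct a general sign formula for $i_{m,k}$ on all of $\simp(\Gamma^{(m)}_A)$, not just on the even part. The paper's argument is more elementary in that it needs no structure on $E^{(km)}_{i,j}$ beyond counting, and its word-counting style matches the techniques used elsewhere in that section.
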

\begin{proof}
Fix vertices $I,J$ in $\Gamma_{A}^{(m)}$, and let $\tau \in \alt(E_{I,J})$. Letting $\tilde{\tau}$ denote the element of $\simp_{\ev}(\Gamma_{A}^{(m)})$ corresponding to $\tau$ under the isomorphism in~\eqref{eq:simp-is-sym}, it suffices to show that $i_{m,k}(\tilde{\tau})$ lies in $\simp_{\ev}(\Gamma_{A}^{(km)})$. We may write $\tilde{\tau}$ as a product of an even number of transpositions $\tilde{\tau} = \prod_{i=1}^{2l}\tilde{\tau}_{i}$, and for each $1 \le i \le 2l$, since $\tilde{\tau}_{i}$ is an involution, we may write $i_{m,k}(\tilde{\tau}_{i}) = \prod_{j=1}^{r_{i}}c_{j}$ where each $c_{j}$ is a 2-cycle. It suffices then to show that $r_{p} = r_{q}$ for any $1 \le p,q \le 2l$. Given some $1 \le p \le 2l$, suppose the involution $\tilde{\tau}_{p}$ corresponds (under the isomorphism~\eqref{eq:simp-is-sym}) to the transposition in $\alt(E_{I,J})$ which permutes a pair of edges $e_{p},f_{p}$ between vertices $I$ and $J$. Then the value $r_{p}$ is given by $\frac{1}{2} M_{p}$, where $M_{p}$ denotes the number of distinct words $w$ of length $k$, over the alphabet given by the edge set of $\Gamma_{A}^{(m)}$, where each word $w$ contains at least one $e_{p}$ or $f_{p}$. Since the number $M_{p}$ of such words is independent of what $e_{p},f_{p}$ are, it follows that $M_{p} = M_{q}$ for any other $1 \le q \le 2l$, as desired.
\end{proof}

We consider the corresponding stabilized groups, defining the subgroups
$$\simpinf{A} = \bigcup_{m=1}^{\infty} \simp(\Gamma^{(m)}_{A}) \subset \autinf{A}$$
and
$$\simpinfev{A} = \bigcup_{m=1}^{\infty}\simp_{\ev}(\Gamma^{(m)}_{A}) \subset \simpinf{A}.$$
Thus $\alpha \in \autinf{A}$ lies in $\simpinf{A}$ when  $\alpha$ is induced by a simple graph symmetry of $\Gamma^{(m)}_{A}$ for some $m\geq 1$, and $\alpha \in \simpinfev{A}$ if for some $m\geq 1$, $\alpha$ is induced by a simple graph symmetry of $\Gamma^{(m)}_{A}$ which consists of only even permutations on every edge set for $\Gamma^{(m)}_{A}$.
We note that it follows from the definitions that
$$\simpinf{A} \subset \inertinf{A}.$$

With this notation, Wagoner's Theorem (Theorem~\ref{thm:efog})
states that for a mixing shift of finite type $(X_{A},\sigma_{A})$, $\inertinf{A}$ is generated by the collection of subgroups
$\Psi_{*}^{-1}(\simpinf{B})$, where $\Psi \colon (X_{A}, \sigma_{A}^{m})\to (X_{B}, \sigma_{B}^{m})$ is any
conjugacy and $ m\geq 1$ is any integer.

The key lemma in the proof Theorem~\ref{th:even-simple} is:
\begin{lemma}\label{lemma:lemma1}
Let $n \ge 2$ and let $N$ be a nontrivial normal subgroup of $\inertinf{n}$. There exists $m \ge 0$ and $\id \ne \zeta \in \simpinf{n}$ such that $\sigma_{n}^{m} \zeta \sigma_{n}^{-m} \in N$.
\end{lemma}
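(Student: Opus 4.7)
The plan is to start from an arbitrary non-trivial $\phi \in N$, say $\phi \in \aut(\sigma_n^K)$, and through a sequence of reductions produce a non-trivial element of $N$ that, after conjugation by some power of $\sigma_n$, lies in $\simpinf{n}$.

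My first step is a re-blocking: since a sliding block code in $\aut(\sigma_n^K)$ of radius $r$ becomes a block code of radius $1$ once we view $(X_n,\sigma_n^{KM})$ as a full shift on the alphabet $\mathcal{A}=\{0,\dots,n-1\}^{KM}$ for $M$ large (using the Stabilized Curtis--Lyndon--Hedlund Theorem, Lemma~\ref{lem:stabilized-CLH}), I may assume from the outset that $\phi\in\aut(\sigma_n^{KM})$ is a block code of radius at most $1$ with respect to $\mathcal{A}$, while $\phi$ is still non-trivial in $N$.

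The central mechanism exploits the normality of $N$ together with the inclusion $\simpinf{n}\subset\inertinf{n}$: for every $\alpha\in\simpinf{n}$, the commutator $[\phi,\alpha]=\phi\alpha\phi^{-1}\alpha^{-1}$ also lies in $N$. I would then choose $\alpha$ carefully---typically a transposition or a $3$-cycle in $\sym(\mathcal{A}')$ for a suitably higher-order alphabet $\mathcal{A}'$, corresponding to a simple graph automorphism of $\Gamma_n^{(L)}$ that permutes just a handful of letters---to make $[\phi,\alpha]$ strictly simpler than $\phi$, where ``simpler'' is measured by some combinatorial invariant such as the block radius, the number of alphabet letters on which $\phi$ acts non-trivially, or the number of short $\sigma_n$-periodic orbits moved. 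Iterating, I aim to reach an element of $N$ which is itself a $0$-block code over some alphabet $\{0,\dots,n-1\}^{L'}$, i.e., a simple graph automorphism of $\Gamma_n^{(L')}$. The shift-conjugation factor $\sigma_n^m$ in the statement accounts for the fact that the reduction may yield a block code whose alignment is shifted relative to the canonical one used to define simple graph automorphisms; applying $\sigma_n^{-m}$ realigns it into $\simpinf{n}$.

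The main obstacle, and the reason this occupies the bulk of Section~\ref{sec:simplicity}, is structural: as emphasized in the introduction to the section, $\inertinf{n}$ contains no nontrivial elements with small topological support, so the standard localization strategy underlying most simplicity proofs is unavailable. Every reduction must be performed globally, carefully tracking the simultaneous action of $\phi$ and $\alpha$ on every $\sigma_n$-periodic orbit at once. The delicate point is guaranteeing that the iterated commutator $[\phi,\alpha]$ remains non-trivial at each stage, so that the reduction terminates at a genuinely non-trivial simple element rather than collapsing to the identity; this will require combinatorial arguments about how $\phi$ permutes long words in $\mathcal{L}(X_n)$, likely coupled with counting arguments tied to the Perron structure of the full shift. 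I expect this non-triviality verification, rather than the overall scheme, to be where the majority of the technical work lies.
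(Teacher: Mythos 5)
Your opening moves---recoding so that the element is given by a short block code, and exploiting normality by combining it with simple automorphisms---are indeed how the paper's argument begins: Lemma~\ref{lemma:alphainN} conjugates a nontrivial element of $N$ by shift-conjugates of carefully chosen $0$-block involutions to install the technical property~\eqref{eqn:snowishere1}. But the heart of your proposal, the descent on a complexity invariant, is precisely the step you have not supplied, and none of the candidate invariants you name plausibly works. Composing block codes adds radii, so $[\phi,\alpha]$ has radius comparable to twice that of $\phi$, not smaller; a commutator with a transposition of letters generically moves \emph{more} short periodic orbits than $\phi$ does, not fewer; and you give no mechanism forcing $[\phi,\alpha]\neq\id$ at each stage. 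Since, as you yourself note, nontrivial elements of $\inertinf{n}$ never have small support, there is no localization available to make such a descent terminate at a nontrivial $0$-block code rather than at the identity. In short, the entire difficulty of the lemma has been deferred to the remark that you expect the non-triviality verification to be where the work lies.

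The paper's actual proof is not a descent at all. It uses Boyle's strengthening of Wagoner's theorem (Lemma~\ref{lemma:mike}) to write a suitably prepared element of $N$ as $\psi_{1}\sigma^{m}\psi_{2}\sigma^{-m}$ with $\psi_{1},\psi_{2}$ simple graph automorphisms, observes that
\[
\mathcal{K}_{N}=\{(\phi_{1},\phi_{2})\colon \phi_{1}\sigma\phi_{2}^{-1}\sigma^{-1}\in N\}
\]
is a subgroup of $\simp(\Gamma^{(2)})\times\simp(\Gamma^{(2)})\cong\sym(E^{(2)})\times\sym(E^{(2)})$, and applies Goursat's Lemma. If either Goursat kernel $N_{1},N_{2}$ is nontrivial, a simple graph automorphism (conjugated by a power of $\sigma$) lies in $N$ and the lemma follows at once; otherwise a long permutation-group analysis (primitivity plus production of a $p$-cycle, then Jordan's theorem, then the rigidity of automorphisms of $\sym$ and $\alt$) forces the Goursat isomorphism to be conjugation by the swap map $\mathfrak{s}$, which contradicts the nontriviality of the prepared element's action on period-two points (Lemma~\ref{lemma:separate}). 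To repair your proposal you would need either a genuinely decreasing invariant for the commutator scheme---which seems unlikely to exist here---or some such global, non-inductive mechanism.
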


The proof of Lemma~\ref{lemma:lemma1} is technical and long, and we postpone it until Section~\ref{sec:proof of lemma1}.
For now, we assume this result and proceed to develop the other tools needed in the proof of Theorem~\ref{th:even-simple}.

\begin{lemma}
\label{lemma:joel}
Assume $(X_A, \sigma_A)$ is a mixing shift of finite type defined by a primitive $\mathbb Z_+$-matrix $A$. Then the following hold:
\begin{enumerate}
\item\label{item:partone}
The commutator subgroup of $\simpinf{A}$ is $\simpinfev{A}$.
\item\label{item:parttwo}
The group $\simpinfev{A}$ is simple.
\item
\label{item:partthree}
If $A = (n)$ for some $n \ge 2$, then $\simpinf{n} = \simpinfev{n}$.
\end{enumerate}
\end{lemma}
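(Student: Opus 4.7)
The plan is to handle the three parts somewhat independently: parts (i) and (iii) follow from the product structure $\simp(\Gamma^{(m)}_{A})\cong\prod_{I,J}\sym(E^{(m)}_{I,J})$ together with standard facts about finite symmetric groups, while part (ii) is the substantive claim and requires a spreading argument that genuinely uses primitivity of $A$. For part (i), since $[\sym(X),\sym(X)]=\alt(X)$ for any finite set $X$, one has $[\simp(\Gamma^{(m)}_{A}),\simp(\Gamma^{(m)}_{A})]=\simp_{\ev}(\Gamma^{(m)}_{A})$ at each finite level, and since any two elements of $\simpinf{A}$ lie in a common $\simp(\Gamma^{(m)}_{A})$, taking unions over $m$ yields $[\simpinf{A},\simpinf{A}]=\simpinfev{A}$. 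For part (iii), I would reuse the count in the proof of the preceding proposition: a single edge transposition $\rho=(e,f)\in\simp(\Gamma^{(m)}_{n})\cong S_{n^m}$ has image $i_{m,k}(\rho)$ consisting of $\tfrac12(n^{mk}-(n^m-2)^k)$ transpositions in $S_{n^{mk}}$; factoring $a^k-b^k=(a-b)\sum_{i=0}^{k-1}a^{k-1-i}b^i$ with $a=n^m$ and $b=n^m-2$, this count reduces modulo $2$ to $k\cdot n^{m(k-1)}$, which is even for $k=2$. Hence $i_{m,2}$ sends every element of $\simp(\Gamma^{(m)}_{n})$ into $\simp_{\ev}(\Gamma^{(2m)}_{n})$, giving $\simpinf{n}=\simpinfev{n}$.

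For part (ii), primitivity of $A$ yields some $m_{0}$ with $|E^{(m)}_{I,J}|\ge 5$ for every vertex pair $(I,J)$ and every $m\ge m_{0}$; for such $m$, $\simp_{\ev}(\Gamma^{(m)}_{A})\cong\prod_{I,J}\alt(E^{(m)}_{I,J})$ is a finite direct product of nonabelian finite simple groups, whose normal subgroups are exactly the sub-products $\prod_{(I,J)\in S}\alt(E^{(m)}_{I,J})$ indexed by subsets $S$ of the vertex pairs. Now let $\{\id\}\ne N\trianglelefteq\simpinfev{A}$, and choose $m\ge m_{0}$ with $N\cap\simp_{\ev}(\Gamma^{(m)}_{A})\ne\{\id\}$; being a normal sub-product, this intersection contains some full factor $\alt(E^{(m)}_{I_{0},J_{0}})$ and hence a $3$-cycle $\tau=(e_{1}\,e_{2}\,e_{3})$ on three distinct edges $e_{i}\in E^{(m)}_{I_{0},J_{0}}$. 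The heart of the proof is to show that for all sufficiently large $k$, the element $i_{m,k}(\tau)\in N$ projects nontrivially onto every factor $\alt(E^{(km)}_{I',J'})$ of $\simp_{\ev}(\Gamma^{(km)}_{A})$. Identifying an edge of $\Gamma^{(km)}_{A}$ with a length-$k$ walk $w=f_{1}\cdots f_{k}$ in $\Gamma^{(m)}_{A}$, the element $i_{m,k}(\tau)$ acts coordinate-wise by $\tau$, so its action on the $(I',J')$-factor is nontrivial precisely when some length-$k$ walk from $I'$ to $J'$ has some $f_{j}\in\{e_{1},e_{2},e_{3}\}$. Primitivity of $A^{m}$ provides, for $k$ large, walks of prescribed large lengths from $I'$ to $I_{0}$ and from $J_{0}$ to $J'$; concatenating such walks with $e_{1}$ at a middle position produces the required walk, and the construction can be made uniform in the finitely many pairs $(I',J')$. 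Once $i_{m,k}(\tau)$ projects nontrivially onto every factor, the sub-product $N\cap\simp_{\ev}(\Gamma^{(km)}_{A})$ must be the full group, so $\simp_{\ev}(\Gamma^{(km)}_{A})\subset N$; applying this at a cofinal sequence of levels gives $N=\simpinfev{A}$.

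The main technical obstacle is the spreading step in part (ii): showing that a single $3$-cycle supported in one small factor at level $m$ acts nontrivially on every factor at level $km$ once $k$ is large enough. Everything else is structural, or appeals to standard results about symmetric and alternating groups, but this step is where primitivity of $A$ (rather than merely irreducibility) is genuinely needed, via a uniform combinatorial count of walks between designated vertices that pass through a specified edge.
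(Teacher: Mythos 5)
Your proposal is correct and follows essentially the same route as the paper's proof: parts (i) and (iii) rest on the product-of-symmetric-groups structure and the parity count of induced transpositions (the paper just does the $k=2$ count directly, obtaining $2n^{m}-2$ transpositions), and part (ii) spreads a nontrivial element of $N$ to every factor of $\simp_{\ev}(\Gamma^{(km)}_{A})$ by building length-$k$ walks between arbitrary vertex pairs that pass through a moved edge, which is exactly the paper's path argument using primitivity together with simplicity of $\alt(E^{(km)}_{I,J})$ once all entries of $A^{km}$ are at least five. The only cosmetic difference is that you first isolate a $3$-cycle by invoking the classification of normal subgroups of a product of nonabelian simple groups, whereas the paper works directly with an arbitrary nontrivial $\alpha\in N$ and its nontrivial $(I,J)$-component.
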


\begin{proof}
For Part~\eqref{item:partone}, clearly $\simpinfev{A}$ is contained in $[\simpinf{A},\simpinf{A}]$. For the
other inclusion, consider a commutator $\alpha\beta\alpha^{-1}\beta^{-1} \in \simpinf{A}$, where $\alpha, \beta \in \simpinf{A}$. We may assume that both $\alpha, \beta \in \simp(\Gamma^{(m)}_{A})$ for some $m\geq 1$. Then for each vertex pair $i$ and $j$ in the graph $\Gamma_{A}^{(m)}$, the component of $\alpha \beta \alpha^{-1} \beta^{-1}$ in $\sym(E_{i,j})$ lies in $\alt(E_{i,j})$.  Thus $\alpha\beta\alpha^{-1}\beta^{-1} \in \simpinfev{A}$.

For Part~\eqref{item:parttwo}, let $\{e\} \ne N$ be a normal subgroup of $\simpinfev{A}$. For  $k\geq 1$ and a pair of vertices $i,j$ in the graph $\Gamma_{A}^{(k)}$, let $\alt^{(k)}_{i,j}$ denote the subgroup of $\simpinfev{A}$ obtained by pulling back the alternating subgroup contained in the $\sym(E_{i,j})$ component of $\simp_{\ev}^{(k)}(\Gamma_{A})$.

Let $e \ne \alpha \in N$ and choose $K\geq 1$ such that $\alpha \in \simp_{\ev}(\Gamma^{(K)}_{A})$. By passing to larger $K$ if necessary, since $A$ is primitive we may assume that all entries in
$A^{K}$ are greater than or equal to five.  We claim that for any $i,j\geq 1$ and for all $m$ sufficiently large, we have $N \cap \alt^{(Km)}_{i,j} \ne \{e\}$. Since $\alpha$ is nontrivial,
for some choice of $I,J$ we have that
$\alpha_{I,J}$, the component of $\alpha$ in $\alt^{(K)}_{I,J}$, is also nontrivial.
Choose a path $\gamma$ of length $m \ge 3$ in $\Gamma^{(K)}_{A}$ such that $\gamma$ begins at $i$, ends at $j$, and passes through an edge from $I$ to $J$ on which $\alpha_{I,J}$ acts nontrivially. Then $\gamma$ corresponds to an edge in $\Gamma^{(Km)}_{A}$ starting at vertex $i$ and ending at vertex $j$ on which $i_{K,m}(\alpha_{I,J})$ acts nontrivially. It follows that
\begin{equation}\label{eqn:smallpiece}
N \cap \alt_{i,j}^{(Km)}
\end{equation}
is nontrivial, proving the claim.

Since each entry of $A^{K}$ is at least 5, it follows that $\alt^{(Km)}_{i,j}$ is simple for all $i,j\geq 1$ and $m \geq 3$. Moreover $N$ is normal in $\simpinfev{A}$, and so $N \cap \alt_{i,j}^{(Km)}$ is normal in $\alt_{i,j}^{(Km)}$. Thus, since the
intersection in~\eqref{eqn:smallpiece} is nontrivial, it follows that
for all $i,j\geq 1$ and $m \geq 3$ we have that $\alt_{i,j}^{(Km)} \subset N$.
Therefore, $N$ contains the subgroup generated by the collection of subgroups
$$\Bigl\{\simp_{\ev}(\Gamma^{(Km)}_{A})\Bigr\}_{m = 3}^{\infty}.$$
Given any $r\geq 1$, there exists $M \ge 3$ such that $r$ divides $M$, so the subgroup $\simp(\Gamma^{(KM)}_{A})$ contains the subgroup $\simp(\Gamma^{(r)}_{A})$. It follows that $\simpinfev{A}$ is contained in the group generated by the collection
$$\Bigl\{\simp_{\ev}(\Gamma^{(Km)}_{A})\Bigr\}_{m = 3}^{\infty}$$
and hence
$$\simpinfev{A} \subset N,$$
proving Part~\eqref{item:parttwo}.

For Part~\eqref{item:partthree}, let $l \ge 1$
and suppose $\iota \in \simp(\Gamma^{(l)}_{n})$ is an order two automorphism induced
by the simple graph symmetry of $\Gamma^{(l)}_{n}$ which permutes two edges $e$ and $f$ and leaves all other edges fixed. We claim $i_{l,2}(\iota) \in \simp_{ev}({\Gamma^{(2l)}_{n}})$ (recall that the inclusion map $i_{l,2}$ is defined in~\eqref{eq:def-im}). To check this, observe that $i_{l,2}(\iota)$ is induced by the action of $\iota$ on paths of length two in $\Gamma^{(l)}_{n}$ of the form $ab$, where at least one of $a$ or $b$ is either $e$ or $f$. The action of $i_{l,2}(\iota)$ on such pairs of words is given by the composition of $2n-2$ transpositions, and it follows that $i_{l,2}(\iota) \in \simp_{ev}({\Gamma^{(2l)}_{n}})$, proving the claim.
Since such involutions generate all of $\simpinf{n}$, the equality in Part~\eqref{item:partthree} follows.
\end{proof}

It follows from Parts~\eqref{item:parttwo} and~\eqref{item:partthree} of Lemma~\ref{lemma:joel} that for a full shift $A = (n)$, $\simpinf{n}$ is a simple group.
\begin{lemma}\label{lemma:simplconjugates}
If $(X_{A},\sigma_{A})$ is a mixing shift of finite type, then:
\begin{enumerate}
\item
\label{item:oneone}
For any $\alpha \in \autinf{A}$, the group $\alpha \simpinfev{A} \alpha^{-1}$ is a simple subgroup of $\inertinf{A}$. Moreover, if $N$ is a normal subgroup in $\inertinf{A}$ such that
$$\alpha \simpinfev{A} \alpha^{-1} \cap N \ne \{e\},$$
then
$$\alpha \simpinfev{A} \alpha^{-1} \subset N.$$
\item
If for some $m_{1} \ge 0$
$$\sigma_{A}^{m_{1}}\simpinfev{A}\sigma_{A}^{-m_{1}} \subset N,$$
then for any $m \ge 0$
$$\sigma_{A}^{m}\simpinfev{A}\sigma_{}^{-m} \subset N.$$
\end{enumerate}
\end{lemma}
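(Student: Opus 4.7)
Lemma~\ref{lemma:joel}(ii) tells us that $\simpinfev{A}$ is simple, and since conjugation by any $\alpha \in \autinf{A}$ is a group isomorphism, $\alpha\simpinfev{A}\alpha^{-1}$ is again simple. Because $\inertinf{A} = \ker \pi_A^{(\infty)}$ is normal in $\autinf{A}$, conjugation by $\alpha$ sends $\inertinf{A}$ to itself, and so the simple group $\alpha\simpinfev{A}\alpha^{-1}$ is a subgroup of $\inertinf{A}$. For the second assertion, observe that $N \cap \alpha\simpinfev{A}\alpha^{-1}$ is a normal subgroup of $\alpha\simpinfev{A}\alpha^{-1}$: for any $s$ in the latter (which sits inside $\inertinf{A}$) and any $n$ in the intersection, the conjugate $sns^{-1}$ lies both in $\alpha\simpinfev{A}\alpha^{-1}$ and, by normality of $N$ in $\inertinf{A}$, in $N$. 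Our hypothesis makes this intersection nontrivial, and the simplicity of $\alpha\simpinfev{A}\alpha^{-1}$ then forces it to be the whole group, yielding the inclusion.

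\textbf{Part (ii).} The case $m = m_1$ is immediate, so assume $m \ne m_1$. By Part (i) applied with $\alpha = \sigma_A^m$, it suffices to exhibit a nontrivial element of $\sigma_A^m \simpinfev{A} \sigma_A^{-m}$ that also lies in $N$. The plan is to find a nontrivial $\tau \in \simpinfev{A}$ that commutes with $\sigma_A^{m-m_1}$; for any such $\tau$,
\[
\sigma_A^{m_1}\tau\sigma_A^{-m_1} \;=\; \sigma_A^m \tau \sigma_A^{-m},
\]
so the left-hand side, which belongs to $\sigma_A^{m_1}\simpinfev{A}\sigma_A^{-m_1} \subset N$ by hypothesis, is simultaneously an element of $\sigma_A^m\simpinfev{A}\sigma_A^{-m}$, giving the needed common nontrivial element.

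To construct such a $\tau$, I would select a divisor $k$ of $m - m_1$ for which $\simp_{ev}(\Gamma_A^{(k)})$ contains a nontrivial element, and take $\tau$ from there. Since $\tau \in \simp_{ev}(\Gamma_A^{(k)}) \subset \aut(\sigma_A^k)$ and $k$ divides $m - m_1$, it follows that $\tau$ commutes with $\sigma_A^{m-m_1}$, as required. By the primitivity of $A$, for sufficiently large $k$ some entry of $A^k$ is at least $3$, which via the factorization $\simp_{ev}(\Gamma_A^{(k)}) \cong \prod_{i,j}\alt(E_{i,j})$ (from the isomorphism analogous to \eqref{eq:simp-is-sym}) guarantees that $\simp_{ev}(\Gamma_A^{(k)})$ is nontrivial; taking $k = |m-m_1|$ succeeds as soon as $|m-m_1|$ exceeds the primitivity threshold.

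The main obstacle is the regime in which $|m-m_1|$ is small enough that all its divisors produce trivial $\simp_{ev}(\Gamma_A^{(k)})$ — this is the delicate point that requires care. In the full-shift setting where this lemma will be applied (en route to Theorem~\ref{th:even-simple}), one sidesteps this difficulty by invoking Lemma~\ref{lemma:joel}(iii): the inclusion of the nontrivial group $\simp(\Gamma_{(n)})$ into $\simpinfev{n}$ supplies nontrivial shift-commuting elements of $\simpinfev{A}$, which serve as $\tau$ uniformly for every $m - m_1$. For the general primitive case, one would fill in the small $|m-m_1|$ by a bootstrapping argument: iterate the construction above, first passing from $m_1$ to intermediate exponents reachable via divisors where $\simp_{ev}$ is nontrivial, and then use the reached inclusions as new hypotheses to stitch together coverage of all $m \ge 0$.
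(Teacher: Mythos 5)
Your Part (i) is exactly the paper's argument (the paper simply cites Lemma~\ref{lemma:joel} for it), and your Part (ii) runs on the same engine: Part (i) plus a nontrivial element common to two shift-conjugates of $\simpinfev{A}$. The difference is organizational. You pass directly from $m_{1}$ to $m$, which forces you to produce a nontrivial $\tau \in \simp_{\ev}(\Gamma_{A}^{(k)})$ with $k \mid (m-m_{1})$ and hence to confront the regime where $|m-m_{1}|$ is small and the relevant $A^{k}$ have all entries $\le 2$. The paper instead first establishes the presentation-independent statement $\simpinfev{A} \subset N$: after noting that $\simpinfev{A}$ is unchanged if $\Gamma_{A}$ is replaced by $\Gamma_{A}^{(l)}$, it assumes without loss of generality that every $A^{m}$ has an entry greater than $2$, takes a nontrivial $\gamma \in \simp_{\ev}(\Gamma_{A})$ commuting with $\sigma_{A}$ (hence with $\sigma_{A}^{m_{1}}$) to conclude $\simpinfev{A} \cap N \ne \{e\}$ and so $\simpinfev{A} \subset N$, and then for each $m$ uses a nontrivial element of $\simp_{\ev}(\Gamma_{A}^{(m)})$, which lies in both $\simpinfev{A} \subset N$ and $\sigma_{A}^{m}\simpinfev{A}\sigma_{A}^{-m}$; this removes any dependence on the arithmetic of $m-m_{1}$. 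Your fallback for full shifts via Lemma~\ref{lemma:joel} is correct and suffices for the application to Theorem~\ref{th:even-simple}, but your ``bootstrapping'' for general mixing shifts of finite type is only a sketch; it can be completed, for instance by choosing two consecutive (hence coprime) large exponents $K, K+1$ for which $\simp_{\ev}(\Gamma_{A}^{(K)})$ and $\simp_{\ev}(\Gamma_{A}^{(K+1)})$ are nontrivial and chaining steps of size $\pm K$ and $\pm(K+1)$, applying Part (i) at each step.
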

\begin{proof}
The first part follows immediately from Lemma~\ref{lemma:joel}. For the second part, since $\simp^{(\infty)}(\Gamma^{(l)}_{A}) = \simpinfev{A}$ for any $l \ge 1$ and $A$ is primitive, we may assume without loss of generality that for all $m \ge 1$, $A^{m}$ contains an entry which is strictly greater than $2$. Then
$$\sigma_{A}^{m_{1}}\simpinfev{A}\sigma_{A}^{-m_{1}} \cap \simpinfev{A} \ne \{e\},$$
since there exists some $\gamma \in \simp_{\textnormal{ev}}(\Gamma_{A})$ which commutes with $\sigma_{A}$, and hence with $\sigma_{A}^{m_{1}}$. Then since
$$\sigma_{A}^{m_{1}}\simpinfev{A}\sigma_{A}^{-m_{1}} \subset N,$$
it follows that
$$\simpinfev{A} \cap N \ne \{e\}.$$
Part~\eqref{item:oneone} now implies
$$\simpinfev{A} \subset N.$$
Given $m \ge 1$, since $A^{m}$ contains an entry strictly greater than $2$, the group $\simp_{\ev}(\Gamma^{(m)}_{A})$ is nontrivial.  Thus we have that
$$\sigma_{A}^{m} \simpinfev{A} \sigma_{A}^{-m} \cap \simpinfev{A} \ne \{e\}$$
and hence
$$\sigma_{A}^{m} \simpinfev{A} \sigma_{A}^{-m} \cap N \ne \{e\}.$$
Part~\eqref{item:oneone} then implies that
$$\sigma_{A}^{m} \simpinfev{A} \sigma_{A}^{-m} \subset N,$$
as desired.
\end{proof}

Finally, we use a lemma of Boyle, which is a stronger version of Wagoner's Theorem (Theorem~\ref{thm:efog}):
\begin{lemma}[Boyle~\cite{Boyle1988}]\label{lemma:mike0}
Let $(X_{A},\sigma_{A})$ be a mixing shift of finite type and suppose $\alpha \in \inertinf{A}$. There exists $m_{1},m_{2} \ge 1$ and $\psi_{1},\psi_{2} \in \simp(\Gamma_{A}^{(m_{1})})$ such that $\alpha = \psi_{1} \sigma_{n}^{m_{2}} \psi_{2} \sigma_{n}^{-m_{2}}$.
\end{lemma}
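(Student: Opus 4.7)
The plan is to view this as a refinement of Wagoner's Theorem (Theorem~\ref{thm:efog}), which only asserts that $\alpha$ is \emph{some} product of simple automorphisms, and to show this product can be consolidated into the specific two-term form asserted in the lemma. The first step is to apply Wagoner's Theorem to $\alpha$ to obtain a decomposition $\alpha = \phi_1 \phi_2 \cdots \phi_k$, where each $\phi_i \in \aut(\sigma_A^{m_1'})$ is a simple automorphism for some common $m_1'$. By the definition of simple automorphism, each factor has the form $\phi_i = g_i^{-1} \tilde{\tau}_i g_i$ for some conjugacy $g_i$ and simple graph symmetry $\tilde{\tau}_i$ of the relevant presenting graph.

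The central technical step, which I expect to be the main obstacle, is to show that after passing to a sufficiently large power $m_1$ (a multiple of $m_1'$), each factor $\phi_i$ can be rewritten in the form $\sigma_A^{n_i} \psi_i \sigma_A^{-n_i}$ for some $\psi_i \in \simp(\Gamma_A^{(m_1)})$ and some offset $n_i \geq 0$. The idea is to use Williams' description of a conjugacy between shifts of finite type as a composition of state splittings and amalgamations: in the stabilized setting, the effect of such a splitting on a simple graph symmetry can be absorbed, up to an additional simple graph symmetry of $\Gamma_A^{(m_1)}$, into conjugation by an appropriate power of the shift. Passing to a large enough $m_1$ provides the structural room to encode these refinements intrinsically within $\Gamma_A^{(m_1)}$.

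Granting this realization step, the final task is to consolidate the product
\begin{equation*}
\alpha = \prod_{i=1}^{k} \sigma_A^{n_i} \psi_i \sigma_A^{-n_i}
\end{equation*}
into just two factors of the desired form. Here one uses commutation relations: simple graph symmetries of $\Gamma_A^{(m_1)}$ whose supports (as permutations of edges) are disjoint commute with each other, and a simple graph symmetry of $\Gamma_A^{(m_1)}$ commutes with $\sigma_A^{m_1}$. By choosing $m_2$ sufficiently large, one can arrange the factors so that all the shift conjugations align at a single offset $m_2$, after which the ``unshifted'' portions merge into a single $\psi_1 \in \simp(\Gamma_A^{(m_1)})$ and the ``shifted'' portions merge into a single $\psi_2 \in \simp(\Gamma_A^{(m_1)})$. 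This yields the desired expression $\alpha = \psi_1 \cdot \sigma_A^{m_2} \psi_2 \sigma_A^{-m_2}$. The delicate point throughout is the bookkeeping: one must ensure that, in carrying the conjugacies $g_i$ into shift conjugations and then collapsing the product, no more than one independent shift offset remains, which is where the richness of $\simp(\Gamma_A^{(m_1)})$ at large $m_1$ is essential.
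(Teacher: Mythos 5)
There is a genuine gap, and it is worth noting first that the paper does not prove this lemma at all: it is Boyle's ``Eventual extensions of finite codes'' theorem, cited directly from~\cite{Boyle1988} (the paper's only added content is the remark, in the proof of the refined Lemma~\ref{lemma:mike}, that the exponent bookkeeping in Boyle's own proof can be arranged to give the stated form). Your strategy runs in the wrong direction: you try to derive this statement from Wagoner's Theorem (Theorem~\ref{thm:efog}), but the paper explicitly describes Lemma~\ref{lemma:mike0} as a \emph{stronger} version of Wagoner's Theorem, and indeed Boyle's result is how one \emph{proves} Wagoner's Theorem in~\cite{Boyle1988}. Concretely, your ``central technical step'' --- that a single simple automorphism $g^{-1}\tilde{\tau}g$ can be rewritten as $\sigma_{A}^{n}\psi\sigma_{A}^{-n}$ with $\psi\in\simp(\Gamma_{A}^{(m_{1})})$ after passing to a large power --- is already essentially the full content of Boyle's theorem for a single code; the appeal to Williams' splitting/amalgamation decomposition is a plausible entry point but is exactly where the real marker-and-extension work of~\cite{Boyle1988} lives, and you do not supply it.

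The consolidation step also fails as stated. Even granting that $\alpha=\prod_{i}\sigma_{A}^{n_{i}}\psi_{i}\sigma_{A}^{-n_{i}}$, there is no group-theoretic mechanism that collapses $k$ conjugates at distinct offsets into exactly two factors: the set $\simp(\Gamma_{A}^{(m_{1})})\cdot\sigma_{A}^{m_{2}}\simp(\Gamma_{A}^{(m_{1})})\sigma_{A}^{-m_{2}}$ is a product of two subgroups, not a subgroup, so a product of two such expressions naively has four terms, not two. The commutation facts you invoke do not help: the supports of the $\psi_{i}$ need not be disjoint, and while $\psi_{i}$ commutes with $\sigma_{A}^{m_{1}}$ (so offsets only matter modulo $m_{1}$), that still leaves up to $m_{1}$ incompatible phases --- note that $\sigma_{A}\psi\sigma_{A}^{-1}$ for $\psi\in\simp(\Gamma_{A}^{(m_{1})})$ is a phase-shifted block code that is generally \emph{not} in $\simp(\Gamma_{A}^{(m_{1})})$, so there is no single offset $m_{2}$ at which everything ``aligns.'' The two-term form must be constructed globally for $\alpha$ in one pass, as Boyle does, rather than assembled factor by factor from Wagoner's decomposition.
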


We have now assembled the ingredients to prove Theorem~\ref{th:even-simple}:
\begin{proof}[Proof of Theorem~\ref{th:even-simple}]
Suppose $N$ is a nontrivial normal subgroup of $\inertinf{n}$. By Lemma~\ref{lemma:lemma1}, there exists $m_{1} \ge 1$ such that
$$\sigma_{n}^{m_{1}}\simpinf{n}\sigma_{n}^{-m_{1}} \cap N \ne \{e\}.$$
Since $\simpinf{n} = \simpinfev{n}$ by Part~\eqref{item:partthree} of Lemma~\ref{lemma:joel}, we have that
$$\sigma_{n}^{m_{1}}\simpinfev{n}\sigma_{n}^{-m_{1}} \cap N \ne \{e\}.$$
Hence, by Lemma~\ref{lemma:simplconjugates},
$$\sigma_{n}^{m_{1}} \simpinfev{n} \sigma_{n}^{-m_{1}} \subset N.$$
Applying Lemma~\ref{lemma:simplconjugates} again, it follows that $N$ contains $\sigma_{n}^{-m}\simpinf{n}\sigma_{n}^{m}$ for all $m \ge 0$. By Lemma~\ref{lemma:mike0}, the collection of subgroups $\sigma_{n}^{-m}\simpinf{n}\sigma_{n}^{m}$, $m \ge 0$, generate $\inertinf{n}$, completing the proof.
\end{proof}

\subsection{Proof of Lemma~\ref{lemma:lemma1}}
\label{sec:proof of lemma1}
\subsubsection{Notation}
We start with some notation used in the proof of Lemma~\ref{lemma:lemma1} and we maintain this notation for the remainder of this section.

For $m \ge 1$ let $E^{(m)}(\Gamma_{n})$ denote the edge set of $\Gamma^{(m)}_{n}$.
Label the edges of $E^{(1)}(\Gamma_{n})$ by $\{1,2,\ldots,n\}$.
Note that we may label the edge sets $E^{(m)}(\Gamma_{n})$ such that for all $m \ge 2$,
$$E^{(m)}(\Gamma_{n}) = \prod_{i=1}^{m}E^{(1)}(\Gamma_{n}).$$

When working with $E^{(2)}(\Gamma_{n})$ for some $\Gamma_{n}$, we  denote points in $E^{(2)}(\Gamma_{n})$ by $\etwo{x_{1}}{y_{1}}$ where $x_{1},y_{1} \in E^{(1)}(\Gamma_{n})$. We refer to rows and columns of $E^{(2)}(\Gamma_{n})$, with the convention that row $i$ of $E^{(2)}(\Gamma_{n})$ refers to the set of points in $E^{(2)}(\Gamma_{n})$ of the form
$$\Bigl\{\etwo{i}{y}\colon y \in E^{(1)}(\Gamma_{n})\Bigr\},$$
while column $i$ refers to the set of points in $E^{(2)}(\Gamma_{n})$ of the form
$$\Bigl\{\etwo{x}{i}\colon x \in E^{(1)}(\Gamma_{n})\Bigr\}.$$

Assume $(X_{n},\sigma_{n})$ is a full shift and let $\mathcal{A}_{\sigma_{n}}$ denote the corresponding alphabet for the shift space.
By definition, $\mathcal{A}_{\sigma_{n}} = E^{(1)}(\Gamma_{n})$.
Thus, for $m \ge 1$, we identify the alphabet $\mathcal{A}_{\sigma_{n}^{m}}$ with the set of elements of the form $\begin{pmatrix} a_{0} \\ \vdots \\ a_{m-1} \end{pmatrix}$ where $a_{i} \in \mathcal{A}_{\sigma_{n}}$ for $i=1, \ldots, m-1$.

Given a point $x \in X$, as usual we write $x = (x_i)_{i\in\Z}$.
When we need to indicate where $x_0$ is located, we use a dot to indicate this; thus the point
$$x = \smalldots a \overset{\scriptscriptstyle \bullet}{b} c \smalldots$$
has $x_{0} = b$.

Given any $a \in \mathcal{A}_{\sigma_n}$, let $p_{a}$ denote the point $\smalldots aaa \smalldots $,
which is fixed by $\sigma_{n}$.

We let $P_{k}(\sigma_{n})$ denote the set of $k$-periodic points for $\sigma_{n}$, so $P_{k}(\sigma_{n})$ consists of all points $x$ for which $\sigma_{n}^{k}(x) = x$ (note that $P_{k}(\sigma_{n})$ in general contains, but is \emph{not} equal to, the set of points of \emph{least} period $k$). We can identify $P_{k}(\sigma_{n})$ with $E^{(k)}(\Gamma_{n})$, and similarly, given $m \ge 1$,  we can identify $P_{k}(\sigma^{m}_{n})$ with $E^{(k)}(\Gamma^{(m)}_{n})$.

To avoid overly cumbersome notation, we often suppress the $n$, writing $\Gamma$ and $\sigma$ instead of $\Gamma_n$ and $\sigma_n$, with the understanding that we are still working with a full shift on $n$ symbols.

Thus for the remainder of this section, we assume $(X_n, \sigma_n)$ is a full shift on $n\geq 2$ symbols,
and without loss of generality, we assume that $n\geq 7$.  This is not a restrictive assumption, as
in the stabilized setting, $\inert^{(\infty)}(\sigma_{n}) \cong \inert^{(\infty)}(\sigma_{n}^{m}) \cong \inert^{(\infty)}(\sigma_{n^{m}})$ for any $m \ge 1$.

Finally, for the remainder of this section, we fix a nontrivial normal subgroup $N$ of $\inert^{(\infty)}(\sigma_{n})$, and
our goal is to prove Lemma~\ref{lemma:lemma1},
showing that there exists $m \ge 0$ and $\id \ne \zeta \in \simpinf{n}$ such that $\sigma_{n}^{m} \zeta \sigma_{n}^{-m} \in N$.

\subsubsection{Existence of an inert with additional properties}
We start by recording a slightly stronger version of Lemma~\ref{lemma:mike0}:
\begin{lemma}[Boyle~\cite{Boyle1988}]\label{lemma:mike}
Suppose $\alpha \in \inertinf{n}$. There exists $M \ge 1$ such that for all $m \ge M$, there exist $\psi_{1}^{(m)},\psi_{2}^{(m)} \in \simp(\Gamma_{n}^{(2m)})$ such that $\alpha = \psi_{1}^{(m)}\sigma_{n}^{m}\psi_{2}^{(m)}\sigma_{n}^{-m}$.
\end{lemma}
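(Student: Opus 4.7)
The plan is to refine the decomposition from Lemma~\ref{lemma:mike0} by exploiting two forms of flexibility available in the stabilized setting: the level-raising inclusions $i_{k,L}\colon \simp(\Gamma_n^{(k)})\hookrightarrow \simp(\Gamma_n^{(kL)})$, and the commutation $[\sigma_n^k,\psi]=e$ whenever $\psi\in\simp(\Gamma_n^{(k)})$.

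First I would apply Lemma~\ref{lemma:mike0} to obtain
$$\alpha = \psi_1\sigma_n^{m_0}\psi_2\sigma_n^{-m_0},\qquad \psi_1,\psi_2\in\simp(\Gamma_n^{(k_0)}),$$
for fixed parameters $k_0,m_0\ge 1$. Setting $M := 2k_0(m_0+k_0)$, for every $m\ge M$ one may first replace $k_0$ by a multiple so that $k_0\mid 2m$, giving $\psi_1,\psi_2\in\simp(\Gamma_n^{(2m)})$ via the inclusion. In the arithmetic progression $m\equiv m_0\pmod{k_0}$, the relation $\sigma_n^{m_0}\psi_2\sigma_n^{-m_0}=\sigma_n^{m}\psi_2\sigma_n^{-m}$ (coming from commutation of $\psi_2$ with $\sigma_n^{k_0}$) yields $\alpha = \psi_1\sigma_n^m\psi_2\sigma_n^{-m}$ immediately, and one takes $\psi_1^{(m)}=\psi_1$, $\psi_2^{(m)}=\psi_2$.

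To cover the remaining residues, I would analyze the conjugate $\sigma_n^{m_0-m}\psi_2\sigma_n^{m-m_0}$, which acts by the permutation $\pi_2$ defining $\psi_2$ on length-$k_0$ blocks at positions $\equiv m-m_0\pmod{k_0}$. These split into two disjoint families: the blocks contained inside aligned length-$2m$ blocks, which together define a bona fide simple graph symmetry $\tilde\eta_2\in\simp(\Gamma_n^{(2m)})$, and the blocks that straddle a boundary between consecutive aligned length-$2m$ blocks, one per boundary. For $m\ge k_0$, each straddling $k_0$-block lies entirely within the length-$2m$ window centered at the $2m$-block boundary, so the straddling contribution may be written as $\sigma_n^m\zeta\sigma_n^{-m}$ for some $\zeta\in\simp(\Gamma_n^{(2m)})$ obtained by placing the permutation $\pi_2$ at the central sub-block of each aligned length-$2m$ block. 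Substituting this decomposition back and using the commutation of $\zeta$ with $\sigma_n^{2m}$ to reorganize the resulting product, one arrives at the required form $\alpha = \psi_1^{(m)}\sigma_n^m\psi_2^{(m)}\sigma_n^{-m}$.

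The main obstacle is the final rearrangement: combining $\psi_1$, the shifted factor $\sigma_n^m\tilde\eta_2\sigma_n^{-m}$, and the straddling correction $\zeta$ into a single shifted commutator whose two outer factors lie in $\simp(\Gamma_n^{(2m)})$ requires carefully tracking the noncommutations between $\sigma_n^m$ and $0$-block codes at level $2m$. The factor of $2$ in the level $\Gamma_n^{(2m)}$ (as opposed to $\Gamma_n^{(m)}$) is essential here, since $\sigma_n^m$ then acts as the ``half-shift" on length-$2m$ alphabet symbols, providing exactly the degree of freedom needed to absorb the straddling data into a single length-$2m$ symbol.
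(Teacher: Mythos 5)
The paper does not prove this lemma by bootstrapping from Lemma~\ref{lemma:mike0} at all: its proof is a one-line re-examination of Boyle's original argument in~\cite{Boyle1988}, observing that the parameters there (in Boyle's notation, $n = 2p-k+m$, so $t = 2(p+m)$) can be chosen to realize the level $2m$ for every sufficiently large $m$. Your route is genuinely different, and parts of it do work: the decomposition of $\sigma_n^{m_0-m}\psi_2\sigma_n^{m-m_0}$ into an interior factor $\tilde\eta_2$ and a straddling factor is legitimate, the two factors commute because their supporting windows are disjoint, and the final rearrangement you worry about is actually painless --- writing the straddling factor as $\sigma_n^{-m}\zeta\sigma_n^{m}$ with $\zeta\in\simp(\Gamma_n^{(2m)})$, one gets $\alpha=(\psi_1\zeta)\,\sigma_n^{m}\tilde\eta_2\,\sigma_n^{-m}$ directly, since $\zeta$ commutes with $\sigma_n^{m}\tilde\eta_2\sigma_n^{-m}$.

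The genuine gap is the divisibility hypothesis $k_0\mid 2m$ on which your entire construction silently rests. Both $\psi_1$ and the interior factor are $0$-block codes applied along the aligned $k_0$-grid; they define elements of $\simp(\Gamma_n^{(2m)})$ (indeed, they even lie in $\aut(\sigma_n^{2m})$) only when that grid is $2m$-periodic, i.e.\ when $k_0\mid 2m$. Your proposed fix --- ``replace $k_0$ by a multiple so that $k_0\mid 2m$'' --- goes the wrong way: the inclusions $i_{k,L}$ only raise the level to multiples of $k_0$, and a multiple of $k_0$ divides $2m$ only if $k_0$ itself does. So for $k_0\ge 3$ (say $k_0=3$, $m=7$) the argument produces nothing, and since Lemma~\ref{lemma:mike0} gives you no control over $k_0$, you cannot choose a different starting decomposition adapted to each residue class of $m$. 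Your argument therefore establishes the conclusion only for $m$ in the arithmetic progression $k_0/\gcd(2,k_0)\mid m$, not for all $m\ge M$; obtaining the full statement seems to require going back inside Boyle's construction, which is exactly what the paper does.
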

\begin{proof}
This can be deduced from the proof of~\cite[Theorem, pg. 970]{Boyle1988} (in the notation used in the proof there, for $m$ large enough, we can choose $n = 2p-k+m \ge 0$, so that $t = k+m+n = 2p+2m = 2(p+m)$).
\end{proof}

From here on, we usually suppress the $n$ and just write $\sigma$ for $\sigma_{n}$.

Suppose $\alpha \in \inert(\sigma)$ and that $\alpha$ is induced by a block code $h_{\alpha}$ of range $r \ge 1$; thus $h_{\alpha} \colon \mathcal{A}_{\sigma}^{2r+1} \to \mathcal{A}_{\sigma}$.
We say that
\begin{equation}
\label{eqn:snowishere1}
\tag{*}
\alpha \text{ satisfies property~\eqref{eqn:snowishere1} if
there exist distinct } a,b,c \in \mathcal{A}_{\sigma} \text{ such that }
\end{equation}
\begin{enumerate}
\item
\label{condition:one}
$\alpha(p_{a}) = p_{a}$;
\item
\label{condition:two}
$h_{\alpha} \left(a^{r}a ba^{r-1} \right) \ne a \in \mathcal{A}_{\sigma}$;
\item
\label{condition:three}
For all $0 \le i \le r$, $h_{\alpha}(a^{r-i}b a^{r+i}) = a$ and $h_{\alpha}(a^{2r-i}c a^{i})=a$.
\end{enumerate}

\begin{lemma}\label{lemma:innandonp}
Suppose $\alpha \in \inert(\sigma)$ is induced by a block code $h_{\alpha}$ of range $r$
and satisfies~\eqref{eqn:snowishere1} for some $a,b,c \in \mathcal{A}_{\sigma}$. Then there exists $m \ge 1$ such that, upon vieweing $\alpha$
as an element of $\inert(\sigma^{2m})$, all of the following hold:
\begin{enumerate}
\item
\label{it:one1}
For some $\psi_{1}^{(m)},\psi_{2}^{(m)} \in \simp(\Gamma^{(2m)})$, we have $\alpha = \psi_{1}^{(m)}\sigma^{m}\psi_{2}^{(m)}\sigma^{-m}$;
\item
\label{it:two2}
$\alpha(p_{a}) = p_{a}$;
\item
\label{it:three3}
For $w=ba^{m-2}c$, the point $p_{aw} = \ldots a^{m}w \overset{\scriptscriptstyle \bullet}{a}a^{m-1}w \ldots$ is a point of least period two for $\sigma^{m}$,
and in particular, $\alpha(p_{aw}) \in P_{2}(\sigma^{m})$;
\item
\label{it:four4}
The point $\alpha(p_{aw})$ in $P_{2}(\sigma^{m})$ satisfies $\left(\alpha(p_{aw})\right)_{m-1} \ne a$ and satisfies
$\left(\alpha(p_{aw})\right)_{i} = a$ for all $m \le i \le 2m-1$.
\end{enumerate}
Furthermore, using the identification of
$P_{2}(\sigma^{m})$ and $E^{(2)}(\Gamma^{(m)})$, we have the following:
\begin{enumerate}[label={(\alph*})]
\item
\label{it:parta}
$\alpha \etwo{a^{m}}{a^{m}} = \etwo{a^{m}}{a^{m}}$;
\item
\label{it:partb}
$\alpha \etwo{a^{m}}{w} = \etwo{w^{\prime}}{a^{m}}$ for some word $w^{\prime}$ of length $m$ where $w^{\prime} \ne a^{m}$.
\end{enumerate}
\end{lemma}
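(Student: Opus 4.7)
The plan is to choose $m$ large enough that two things hold simultaneously: first, Lemma~\ref{lemma:mike} will supply the factorization required in~\eqref{it:one1}, and second, $m \ge 2r+2$, so that any radius-$r$ window around an index in $[m-1,2m-1]$ inside one period of $p_{aw}$ will contain at most one of the two distinguished non-$a$ symbols $b,c$ in the period block $a^m b a^{m-2}c$. With such $m$, each of the four numbered conclusions will follow from a direct block-code computation using property~\eqref{eqn:snowishere1}, and the two lettered conclusions will then be immediate translations under the identification $P_2(\sigma^m) \leftrightarrow E^{(2)}(\Gamma^{(m)})$.

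With $m$ so chosen, part~\eqref{it:one1} is just Lemma~\ref{lemma:mike}, and part~\eqref{it:two2} is exactly condition~\eqref{condition:one} of~\eqref{eqn:snowishere1}. For part~\eqref{it:three3}, I will observe that since $a\ne b$ the two length-$m$ half-periods $a^m$ and $w = b a^{m-2} c$ of the length-$2m$ block $a^m w$ are distinct, so $p_{aw}$ has least $\sigma$-period $2m$ and hence least $\sigma^m$-period $2$; since $\alpha$ commutes with $\sigma^m$ it preserves $P_2(\sigma^m)$, so $\alpha(p_{aw}) \in P_2(\sigma^m)$.

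The core work will be part~\eqref{it:four4}. Reading off one period of $p_{aw}$, positions $0,\dots,m-1$ hold $a$'s, position $m$ holds $b$, positions $m+1,\dots,2m-2$ hold $a$'s, and position $2m-1$ holds $c$; this repeats with period $2m$. The bound $m \ge 2r+2$ will guarantee that for every $i \in [m-1,2m-1]$, the window of radius $r$ around $i$ sees at most one of $b,c$ (from the current or a neighboring period). I will then evaluate $\alpha(p_{aw})_i = h_\alpha((p_{aw})_{i-r},\dots,(p_{aw})_{i+r})$ case by case: at $i = m-1$ the window is $a^{r+1} b a^{r-1}$, so condition~\eqref{condition:two} yields a value $\ne a$; for $i = m+j$ with $0 \le j \le r$ the window equals $a^{r-j} b a^{r+j}$, so the $b$-clause of~\eqref{condition:three} yields $a$; for $i = m+j$ with $m-1-r \le j \le m-1$ the window equals $a^{r+(m-1-j)}\, c\, a^{r-(m-1-j)}$, so the $c$-clause of~\eqref{condition:three} yields $a$; and for intermediate $j$ the window is $a^{2r+1}$ and yields $a$ by condition~\eqref{condition:one}. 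This will give exactly part~\eqref{it:four4}.

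Finally, parts~\eqref{it:parta} and~\eqref{it:partb} will be reformulations: under the identification of $P_2(\sigma^m)$ with $E^{(2)}(\Gamma^{(m)})$ that sends a 2-periodic point to the pair of its length-$m$ blocks on $[0,m-1]$ and $[m,2m-1]$, we have $p_a \leftrightarrow \etwo{a^m}{a^m}$ and $p_{aw} \leftrightarrow \etwo{a^m}{w}$. Thus~\eqref{it:parta} is just a restatement of~\eqref{it:two2}, and letting $w'$ be the length-$m$ block of $\alpha(p_{aw})$ on $[0,m-1]$, part~\eqref{it:four4} shows both that the block of $\alpha(p_{aw})$ on $[m,2m-1]$ is $a^m$ and that the last symbol of $w'$ is not $a$, so $w' \ne a^m$, yielding~\eqref{it:partb}. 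The only genuine obstacle is the bookkeeping needed to verify carefully that the bound $m \ge 2r+2$ really does separate the $b$- and $c$-windows across adjacent periods, so that every window encountered in the case split is covered by exactly one clause of~\eqref{eqn:snowishere1}; all remaining computations are routine.
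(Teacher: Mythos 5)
Your proposal is correct and follows essentially the same route as the paper's proof: fix $m \ge 2r+2$ large enough for Lemma~\ref{lemma:mike}, verify Part~\eqref{it:four4} by sliding the radius-$r$ window across one period of $p_{aw}$ and matching each window against the appropriate clause of~\eqref{eqn:snowishere1}, and then translate through the identification of $P_{2}(\sigma^{m})$ with $E^{(2)}(\Gamma^{(m)})$. Your case split for Part~\eqref{it:four4} is in fact somewhat more explicit than the paper's, which treats only the index $m-1$ in detail and cites Condition~\eqref{condition:three} for the rest.
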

\begin{proof}
By  Lemma~\ref{lemma:mike},
Part~\eqref{it:one1} holds for all sufficiently large $m$, so in particular for some $m \ge 2r + 2$.
Part~\eqref{it:two2} is obvious, and  since $a,b,c$ are distinct, Part~\eqref{it:three3} follows.
To prove Part~\eqref{it:four4}, note that since $\alpha(p_{a}) = p_{a}$, it follows that $h_{\alpha}(a^{2r+1}) = a$.
Since $m \ge 2r + 2$, we have that $m-r-1 \ge r + 1$, and it follows that
$$\sigma^{m-1}(p_{aw}) = \ldots w \underbrace{a \ldots a}_{m-r-1}\underbrace{a \ldots a}_{r}\overset{\scriptscriptstyle \bullet}{a}w\ldots$$
Thus $\left(\alpha (p_{aw})\right)_{m-1}=\left(\sigma^{m-1}\alpha (p_{aw})\right)_{0}=\left(\alpha \sigma^{m-1}(p_{aw})\right)_{0} = h_{\alpha}(a^{r}aba^{r-1}) \ne a$.
Using Condition~\eqref{condition:three} of~\eqref{eqn:snowishere1}, it follows that $\left(\alpha(p_{aw})\right)_{i} = a$ for all $m \le i \le 2m-1$.

Parts~\ref{it:parta} and~\ref{it:partb} follow immediately by translating the results via the identification.
\end{proof}

Given symbols $a,b \in \mathcal{A}_{\sigma}$, we use the shorthand $a \leftrightarrow b$ to denote the $0$-block code involution in $\aut(\sigma)$ which permutes the symbols $a$ and $b$ and leaves all other symbols fixed.
\begin{lemma}\label{lemma:alphainN}
There exists $\alpha \in N$ satisfying property~\eqref{eqn:snowishere1}.
\end{lemma}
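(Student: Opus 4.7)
My plan is to start from any nontrivial element of $N$ and, through a sequence of group-theoretic manipulations that preserve membership in $N$, refine it into one whose block code has the asymmetric behavior demanded by property~\eqref{eqn:snowishere1}. The key leverage is that since $N$ is normal in $\inertinf{n}$, it is closed under conjugation by every element of $\inertinf{n}$---in particular by shifts $\sigma^{j}$, by simple graph automorphisms of $\Gamma_{n}^{(m)}$ for any $m$, and by arbitrary stabilized inerts---and hence under commutators $[\gamma,\beta]$ with $\gamma\in\inertinf{n}$ and $\beta\in N$. All refinements below are of this form.

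The first step is to select the base symbol~$a$. Any nontrivial $\beta\in N$ induces a permutation on the finite set of fixed points $\{p_{1},\ldots,p_{n}\}$ of $\sigma$. Replacing $\beta$ with a suitable power, or with a suitable commutator $[\beta,\tau]$ for a simple graph automorphism $\tau$ permuting the symbols, one produces an element of $N$ fixing some $p_{a}$ and yet still nontrivial; the hypothesis $n\ge 7$ leaves enough symbols to perform this adjustment without collapsing to the identity, since a power or commutator that kills the permutation action on the finite set of fixed points may still act nontrivially on longer periodic orbits.

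Next, I would localize the nontrivial action near $p_{a}$. Because the resulting element $\beta^{\prime}\in N$ is not the identity, its block code of some range $r$ alters the central coordinate on some input window. Conjugating $\beta^{\prime}$ by a suitable power of $\sigma$ and by simple graph automorphisms that permute only non-$a$ symbols, one transports this distinguishing window so that its central coordinate equals $a$ and so that only one non-$a$ symbol appears in the window, positioned at $+1$; call this non-$a$ symbol $b$, and pick $c\ne a,b$. Then I would take a sequence of commutators in $N$, of the shape $[\sigma^{j}\beta^{\prime}\sigma^{-j},\beta^{\prime}]$ together with further conjugates by simple graph automorphisms swapping non-$a$ symbols, designed to cancel the block-code outputs on the unwanted windows $a^{r-i}ba^{r+i}$ and $a^{2r-i}ca^{i}$ for $0\le i\le r$, while preserving the non-$a$ output at the single window $a^{r}aba^{r-1}$.

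The hard part will be the bookkeeping in this last step: each cancellation---though it remains inside $N$---can reintroduce non-$a$ outputs on other windows, so the argument must proceed iteratively, handling one forbidden window at a time and verifying inductively that the distinguishing output at $a^{r}aba^{r-1}$ survives all subsequent corrections. Balancing the vanishing conditions in~(iii) against the nonvanishing condition~(ii) of~\eqref{eqn:snowishere1} is what makes the combinatorics delicate. The assumption $n\ge 7$, giving at least six non-$a$ symbols, is used essentially here: it provides enough independent accessory symbols to absorb the corrections on each window of length $2r+1$ without any one correction entangling with another.
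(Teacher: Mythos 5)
Your high-level strategy---exploit normality of $N$, arrange that the element fixes some $p_{a}$, then massage its block code---points in the same direction as the paper, but the step that carries all the content of the lemma is left unexecuted, and the mechanism you propose for it is not shown to work. The paper does not arrange conditions~(ii) and~(iii) of~\eqref{eqn:snowishere1} by an iterative cancellation of bad windows. Instead it first recodes: passing to $\sigma^{k}$ and then $\sigma^{3k}$, the automorphism becomes a \emph{range-one} block code over a huge alphabet, so that a single letter of the new alphabet encodes an entire window of the old one; a $0$-block involution on the new alphabet can then move whole windows at once. This recoding is essential and is absent from your plan: conjugating a range-$r$ code by symbol permutations, as you propose, sends the distinguishing window $w_{0}$ to $\tau^{-1}(w_{0})$, which has the same multiset of distinct symbols as $w_{0}$, so there is no way to transport an arbitrary distinguishing window into one of the shape $a^{r}aba^{r-1}$ this way. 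Second, the paper needs the initial element of $N$ to do more than fix $p_{A}$: it must move two disjoint pairs of fixed points, $p_{D_{1}}\mapsto p_{D_{2}}$ and $p_{E_{1}}\mapsto p_{E_{2}}$, with $A,D_{1},D_{2},E_{1},E_{2}$ all distinct (obtained by multiplying conjugates, using that $\inert(\sigma^{\ell})$ realizes every permutation of $P_{1}(\sigma^{\ell})$). These moved symbols are the engine of the whole construction: the words $v_{d},w_{d},v_{e},w_{e}$ are built out of $D_{1}$'s and $E_{1}$'s precisely so that their images under the block code are the constant-$A$ letter, while the intermediate images contain $D_{2}$'s and $E_{2}$'s and hence are untouched when the conjugating involution is undone. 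Your step~1 secures only a fixed $p_{a}$ and a nontrivial element, which is not enough data to run any version of this argument.

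Concretely, the paper conjugates once by $\beta_{1}$ (to create the non-$a$ output at the single window $a^{r}aba^{r-1}$, via $aab\leftrightarrow e_{1}e_{1}e_{1}$) and once by the explicit involution $\beta_{2}$ ($aba\leftrightarrow v_{d}$, $baa\leftrightarrow w_{d}$, $aac\leftrightarrow w_{e}$, $aca\leftrightarrow v_{e}$), and then verifies~(iii) by a finite case check; no iteration and no cancellation bookkeeping is needed, because each forbidden window is sent to a word whose image is already known. Your proposal replaces this with commutators $[\sigma^{j}\beta^{\prime}\sigma^{-j},\beta^{\prime}]$ and an inductive correction scheme, while conceding that each correction may reintroduce bad outputs; no argument is given that the process terminates or that condition~(ii) survives it, and this is exactly where the lemma's difficulty lies. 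A smaller point: the role you assign to $n\ge 7$ here is misplaced---at the stage where the letters $a,b,c,d_{1},d_{2},e_{1},e_{2}$ are chosen, the relevant alphabet is $\mathcal{A}_{\sigma^{3k}}$, which is enormous for any $n\ge 2$; the hypothesis $n\ge 7$ is needed later, in the primitivity and Jordan's-theorem arguments of Section~\ref{sec:lemma-obtainment}.
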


\begin{proof}
Suppose $\id \ne \alpha \in N$ and $\alpha \in \inert(\sigma^{\ell})$ for some $\ell\geq 1$. By passing to a larger $\ell$ if necessary, we may assume that $\alpha$ acts nontrivially on $P_{1}(\sigma^{\ell})$. Since $\inert(\sigma^{\ell})$ can induce any permutation on $P_{1}(\sigma^{\ell})$, and since $N$ is normal, by replacing $\alpha$ with some other $\alpha^{\prime} \in N$ if needed, we can assume that $\alpha$ satisfies:
\begin{align*}
& \alpha(p_{A}) = p_{A} \text{ for some }p_{A} \in P_{1}(\sigma^{\ell}) \text{ with } A \in \mathcal{A}_{\sigma^{\ell}},\\
&\alpha(p_{D_{1}}) = p_{D_{2}} \text{ for some }p_{D_{1}},p_{D_{2}} \in P_{1}(\sigma^{\ell}) \text{ with } D_{1},D_{2} \in \mathcal{A}_{\sigma^{\ell}}, \\
&\alpha(p_{E_{1}}) = p_{E_{2}} \text{ for some }p_{E_{1}},p_{E_{2}} \in P_{1}(\sigma^{\ell}) \text{ with } E_{1},E_{2} \in \mathcal{A}_{\sigma^{\ell}},\\
\text{ and } & A,D_{1},D_{2},E_{1},E_{2} \text{ are all distinct.}
\end{align*}

Suppose $\alpha$ is induced by a block code $h_{\alpha}$ of range $r$.  Without loss of generality,
we may assume that $r \ge 1$ (if $r=0$, the conclusion of Lemma~\ref{lemma:lemma1} already holds).

Set $k = 2\ell r+1$. By considering $\alpha$ as an element of $\inert(\sigma^{k})$, we may assume that $\alpha$ is given by a block code $h^{(k)}_{\alpha}$ of range $1$.

Consider the words
$$v_d = \begin{pmatrix} A^{k} \\ D_{1}^{k} \\ A^{k} \end{pmatrix}  \begin{pmatrix} A^{k} \\ A^{k} \\ A^{k} \end{pmatrix} \begin{pmatrix} A^{k} \\ D_{1}^{k} \\ A^{k} \end{pmatrix} $$
and
$$
v_e =  \begin{pmatrix} A^{k} \\ E_{1}^{k} \\ A^{k} \end{pmatrix}  \begin{pmatrix} A^{k} \\ A^{k} \\ A^{k} \end{pmatrix} \begin{pmatrix} A^{k} \\ E_{1}^{k} \\ A^{k} \end{pmatrix}
$$
of length three over the alphabet $\mathcal{A}_{\sigma^{3k}}$.
Viewing $\alpha$ as an automorphism lying in $\inert(\sigma^{3k})$,
we have that $\alpha$ is induced by some block $h^{(3k)}_{\alpha}$ of radius one, and this block code satisfies
$$h^{(3k)}_{\alpha}(v_{d}) = \begin{pmatrix} A^{k} \\ A^{k} \\ A^{k} \end{pmatrix},  \quad h^{(3k)}_{\alpha}(v_{e}) = \begin{pmatrix} A^{k} \\ A^{k} \\ A^{k} \end{pmatrix} , $$
while
\begin{equation}\label{eqn:hD1}
h^{(3k)}_{\alpha} \left( \begin{pmatrix} A^{k} \\ A^{k} \\ A^{k} \end{pmatrix}
\begin{pmatrix} A^{k} \\ D_{1}^{k} \\ A^{k} \end{pmatrix}
\begin{pmatrix} A^{k} \\ A^{k} \\ A^{k} \end{pmatrix}
\right) = \begin{pmatrix} * \\ D_{2} \\ * \end{pmatrix} \ne \begin{pmatrix} A^{k} \\ D_{1}^{k} \\ A^{k} \end{pmatrix},
\end{equation}
\begin{equation}\label{eqn:hE1}
h^{(3k)}_{\alpha} \left( \begin{pmatrix} A^{k} \\ A^{k} \\ A^{k} \end{pmatrix}  \begin{pmatrix} A^{k} \\ E_{1}^{k} \\ A^{k} \end{pmatrix} \begin{pmatrix} A^{k} \\ A^{k} \\ A^{k}\end{pmatrix}  \right) = \begin{pmatrix} * \\ E_{2} \\ * \end{pmatrix} \ne \begin{pmatrix} A^{k} \\ E_{1}^{k} \\ A^{k} \end{pmatrix}
\end{equation}
(note that $h_{\alpha}(D_{1}^{r}) = D_{2} \ne D_{1}$ and $h_{\alpha}(E_{1}^{r}) = E_{2} \ne E_{1}$).

Define the words
$$w_{d} =    \begin{pmatrix} D_1^{k} \\ D_1^{k} \\ A^{k} \end{pmatrix} \begin{pmatrix} A^{k} \\ A^{k} \\ A^{k} \end{pmatrix}   \begin{pmatrix} A^{k} \\ D_1^{k} \\ D_1^{k} \end{pmatrix}, \quad
w_{e} =    \begin{pmatrix} E_1^{k} \\ E_1^{k} \\ A^{k} \end{pmatrix}
\begin{pmatrix} A^{k} \\ A^{k} \\ A^{k} \end{pmatrix}
  \begin{pmatrix} A^{k} \\ E_1^{k} \\ E_1^{k} \end{pmatrix}$$
and note that $h^{(3k)}_{\alpha}(w_{d}) = \begin{pmatrix} A^{k} \\ A^{k} \\ A^{k} \end{pmatrix}$ and $h^{(3k)}_{\alpha}(w_{e}) = \begin{pmatrix} A^{k} \\ A^{k} \\ A^{k} \end{pmatrix}$.

We set convenient notation for some letters in $\mathcal{A}_{\sigma^{3k}}$: given $X \in \mathcal{A}_{\sigma^{k}}$ we define
$$x = \begin{pmatrix} X^{k} \\ X^{k} \\ X^{k} \end{pmatrix}.$$
Thus for example
$$a = \begin{pmatrix} A^{k} \\ A^{k} \\ A^{k} \end{pmatrix}.$$
Choose  $b,c \in \mathcal{A}_{\sigma^{3k}}$ such that $a,b,c,d_{1},d_{2},e_{1},e_{2}$ are all distinct and such that $h^{(3k)}_{\alpha}(aac) \ne b$ (this is possible since, for example, $h^{(3k)}_{\alpha}(aac)$ contains letters from the original alphabet).

Define the automorphism $\beta_{1} \in \inert(\sigma^{9k})$ by
$$\beta_{1} = \sigma^{3k} \left( e_{1}e_{1}e_{1} \leftrightarrow aab \right) \sigma^{-3k}$$
(note that this is the conjugacy by $\sigma^{3k}$ of the involution $e_{1}e_{1}e_{1} \leftrightarrow aab$) and let $\alpha_{1} = \beta_{1}^{-1} \alpha \beta_{1}$. Then $\alpha_{1} \in N$, and can be induced by a block code of range $4$ on the alphabet $\mathcal{A}_{\sigma^{3k}}$.
Furthermore, we have
$$\smalldots a^{4}\overset{\scriptscriptstyle \bullet}{a}ba^{3} \smalldots
\stackrel{\beta_{1}}\longrightarrow \smalldots a^{3}e_{1}\overset{\scriptscriptstyle \bullet}{e_{1}}e_{1}a^{3} \smalldots \stackrel{\alpha}\longrightarrow \smalldots \overset{\scriptscriptstyle \bullet}{e_{2}} \smalldots \stackrel{\beta_{1}^{-1}}\longrightarrow \smalldots \overset{\scriptscriptstyle \bullet}{e_{2}} \smalldots$$
and $\beta_{1}(p_{a}) = p_{a}$, and so $\alpha_{1}$ satisfies conditions~\eqref{condition:one} and~\eqref{condition:two} of~\eqref{eqn:snowishere1} for the letters $a, b$.

Define the automorphism $\beta_{2} \in \inert(\sigma^{9k})$ by $\beta_{2} = \sigma^{3k} \beta_{2}^{\prime} \sigma^{-3k}$, where $\beta_{2}^{\prime}$ is the $0$-block code involution on the alphabet $\mathcal{A}_{\sigma^{3k}}$ which performs the following permutation on symbols
\begin{equation}
\beta_{2}^{\prime} \colon
\begin{cases}
aba \leftrightarrow v_{d}\\
baa \leftrightarrow w_{d}\\
aac \leftrightarrow w_{e}\\
aca \leftrightarrow v_{e}\\
\end{cases}
\end{equation}
and consider $\alpha_{2} = \beta_{2}^{-1}\alpha_{1}\beta_{2}$. Then $\alpha_{2} \in N$, and still satisfies conditions~\eqref{condition:one} and~\eqref{condition:two} of~\eqref{eqn:snowishere1}. To see that it satisfies condition~\eqref{condition:three} is a matter of checking case by case. For example,
$$\smalldots a^{3}a \overset{\scriptscriptstyle \bullet}{b} a a^{3} \smalldots \stackrel{\beta_{2}}\longrightarrow \smalldots a^{3}\overset{\scriptscriptstyle \bullet}{v_{d}}a^{3} \smalldots \stackrel{\alpha_{1}}\longrightarrow \smalldots *\overset{\scriptscriptstyle \bullet}{a} \smalldots \stackrel{\beta_{2}^{-1}}\longrightarrow \smalldots *\overset{\scriptscriptstyle \bullet}{a} \smalldots$$
since, by~\eqref{eqn:hD1}, $*$ is some word containing $D_{2}$'s. Next,
$$\smalldots a^{3}b \overset{\scriptscriptstyle \bullet}{a} a a^{3} \smalldots \stackrel{\beta_{2}}\longrightarrow \smalldots a^{3}\overset{\scriptscriptstyle \bullet}{w_{d}}a^{3} \smalldots \stackrel{\alpha_{1}}\longrightarrow \smalldots *\overset{\scriptscriptstyle \bullet}{a} \smalldots \stackrel{\beta_{2}^{-1}}\longrightarrow \smalldots *\overset{\scriptscriptstyle \bullet}{a} \smalldots$$
since $*$ also contains some $D_{2}$'s. Furthermore,
$$\smalldots a^{3}a \overset{\scriptscriptstyle \bullet}{c} a a^{3} \smalldots \stackrel{\beta_{2}}\longrightarrow \smalldots a^{3}\overset{\scriptscriptstyle \bullet}{v_{e}}a^{3} \smalldots \stackrel{\alpha_{1}}\longrightarrow \smalldots *\overset{\scriptscriptstyle \bullet}{a} \smalldots \stackrel{\beta_{2}^{-1}}\longrightarrow \smalldots *\overset{\scriptscriptstyle \bullet}{a} \smalldots$$
since, by~\eqref{eqn:hE1}, $*$ contains $E_{2}$'s, and
$$\smalldots a^{3}a \overset{\scriptscriptstyle \bullet}{a}c a^{3} \smalldots \stackrel{\beta_{2}}\longrightarrow \smalldots a^{3}\overset{\scriptscriptstyle \bullet}{w_{e}}a^{3} \smalldots \stackrel{\alpha_{1}}\longrightarrow \smalldots *\overset{\scriptscriptstyle \bullet}{a} \smalldots \stackrel{\beta_{2}^{-1}}\longrightarrow \smalldots *\overset{\scriptscriptstyle \bullet}{a} \smalldots$$
since $*$ contains some $E_{2}$'s.
\end{proof}

Combining Lemmas~\ref{lemma:innandonp} and~\ref{lemma:alphainN}, we obtain the existence of an automorphism, which for convenience we also denote by $\alpha$, with $\alpha \in N$, such that $\alpha$ satisfies the conditions in  Lemma~\ref{lemma:innandonp} for some  $m\geq 1$. The automorphism $\alpha$ constructed in~\ref{lemma:alphainN} also satisfies an additional property that we note for use in the sequel: there exists some word $z_{1}$ (for example, let $z_{1} = be_{1}^{m-1}$) such that, with the symbol $a$ given by Lemma~\ref{lemma:alphainN}, writing $\alpha \etwo{a^{m}}{z_{1}} = \etwo{x}{y}$, we have $x \ne a^{m}$ and $y \ne a^{m}$.

For ease of notation, for the remainder of the section we suppress the power $m$, and write $\sigma$ instead of $\sigma^{m}$, and  write $\psi_{1},\psi_{2}$ for the simple automorphisms
$\psi_{1}^{(m)}, \psi_{2}^{(m)}$ produced by Lemma~\ref{lemma:innandonp}.

It is convenient to recode the alphabet for our shift, and to do so we choose a bijection $\mathcal{A}_{\sigma} \leftrightarrow \{1,2,\ldots,n\}$ such that $1 \mapsto a^{m}$, and let $\{1,2,\ldots,n\}$ be the alphabet of our shift.
Summarizing, we have shown:
\begin{lemma}
\label{eqn:setupconditions}
There exists
$\alpha \in \inert(\sigma^{2})$ satisfying the following properties:
\begin{enumerate}
\item
\label{item:alpha1}
$\alpha \in N$;
\item
\label{item:alpha2}
$\alpha = \psi_{1}\sigma\psi_{2}\sigma^{-1}$, for some $\psi_{1},\psi_{2} \in \simp(\Gamma^{(2)})$;
\item
\label{item:alpha3}
$\alpha \etwo{1}{1} = \etwo{1}{1}$;
\item
\label{item:alpha4}
$\alpha \etwo{1}{u_{1}} = \etwo{u_{2}}{1}$ for some $1 \ne u_{1}$ and some $u_{2} \in \{1,2,\ldots,n\}$;
\item
\label{item:alpha5}
there exists $u_{3} \in \{1,2,\ldots,n\}$ such that neither component of $\alpha \etwo{1}{u_{3}}$ is 1.
\end{enumerate}
\end{lemma}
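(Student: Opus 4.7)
The plan is to assemble this lemma directly from Lemma~\ref{lemma:innandonp}, Lemma~\ref{lemma:alphainN}, and a small additional observation about the specific $\alpha$ produced in the proof of the latter. The first step is to invoke Lemma~\ref{lemma:alphainN} to obtain some $\alpha \in N$ satisfying property~\eqref{eqn:snowishere1} for distinct symbols $a,b,c \in \mathcal{A}_\sigma$. I would carry along the auxiliary symbols $e_1, E_2$ appearing in the construction of $\alpha$ in the proof of Lemma~\ref{lemma:alphainN}, since they will be needed to verify condition (5).

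Next I would feed this $\alpha$ into Lemma~\ref{lemma:innandonp} to obtain an integer $m \ge 1$ such that, viewed as an element of $\inert(\sigma^{2m})$, the automorphism $\alpha$ factors as $\psi_1^{(m)} \sigma^m \psi_2^{(m)} \sigma^{-m}$ with $\psi_i^{(m)} \in \simp(\Gamma^{(2m)})$, fixes $\etwo{a^m}{a^m}$, and sends $\etwo{a^m}{w}$ to $\etwo{w'}{a^m}$ with $w' \ne a^m$ for $w = b a^{m-2} c$. After bijectively relabeling $\mathcal{A}_{\sigma^m}$ with $\{1,\ldots,n\}$ so that the word $a^m$ is sent to $1$, and writing $\sigma$ in place of $\sigma^m$, conditions~\eqref{item:alpha1}--\eqref{item:alpha4} follow immediately: \eqref{item:alpha1} because $N$ is normal and closed under the constructions performed, \eqref{item:alpha2} from the Boyle-type factorization in Lemma~\ref{lemma:innandonp}(i), \eqref{item:alpha3} from item~\ref{it:parta}, and \eqref{item:alpha4} from item~\ref{it:partb}.

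The final step is to verify condition~\eqref{item:alpha5}. Here I would exploit the precise form of $\alpha$ produced in the proof of Lemma~\ref{lemma:alphainN}: the conjugations by $\beta_1$ and $\beta_2$ were constructed so that an input whose top row is entirely $a$'s but whose bottom row begins with $b$ followed by $e_1$'s is first swapped (via $\beta_2, \beta_1$) into a block whose center contains the pattern $e_1 e_1 e_1$, after which the action of the original inert introduces an $E_2$, and then $\beta_1^{-1}, \beta_2^{-1}$ do not return the result to a column of $a$'s in either row. Taking $z_1 = b e_1^{m-1}$ and tracing $\alpha \etwo{a^m}{z_1} = \etwo{x}{y}$, one checks that $x$ contains an $E_2$ and hence $x \ne a^m$, while $y$ retains symbols distinct from $a$ by the same mechanism, so $y \ne a^m$ as well. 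The letter $u_3$ corresponding to $z_1$ under the relabeling then satisfies~\eqref{item:alpha5}.

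The bulk of the work has already been done in Lemmas~\ref{lemma:innandonp} and~\ref{lemma:alphainN}, so the present lemma is essentially a packaging statement. The only genuinely new content is the bookkeeping verification of condition~\eqref{item:alpha5}, where I expect the main care to lie: one must check that the compositions of involutions $\beta_1, \beta_2$ used in Lemma~\ref{lemma:alphainN}, when applied to the specific input $\etwo{a^m}{z_1}$, genuinely propagate the non-$a$ letters $D_2$ and $E_2$ into \emph{both} rows of the output rather than cancelling back to $a^m$ in either coordinate. Once this single block-code check is done, a relabeling of the alphabet completes the proof.
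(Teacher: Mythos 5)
Your proposal matches the paper's own argument: the lemma is obtained exactly by combining Lemma~\ref{lemma:alphainN} with Lemma~\ref{lemma:innandonp}, recording the additional property for the word $z_{1} = be_{1}^{m-1}$ (which is precisely the example the paper uses to secure condition~(v)), and then recoding the alphabet so that $a^{m} \mapsto 1$. The paper in fact asserts the $z_{1}$ property with even less detail than you propose to supply, so your plan to trace the block codes $\beta_{1},\beta_{2}$ explicitly is the same approach carried out slightly more carefully, not a different route.
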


\subsubsection{Constructing a particular subgroup $\mathcal{K}$ of $\sym(E^{(2)}) \times \sym(E^{(2)})$}
Consider the set
\begin{equation}
\label{def:KN}
\mathcal{K}_{N} = \{(\phi_{1},\phi_{2}) \in  \simp(\Gamma^{(2)}) \times \simp(\Gamma^{(2)}) \colon \phi_{1}\sigma\phi_{2}^{-1}\sigma^{-1} \in N\}.
\end{equation}
\begin{lemma}
The set $\mathcal{K}_{N}$ defined in~\eqref{def:KN} is a subgroup of $\simp(\Gamma^{(2)}) \times \simp(\Gamma^{(2)})$.
\end{lemma}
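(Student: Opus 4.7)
The plan is to verify the three subgroup axioms directly, leveraging the normality of $N$ in $\inertinf{n}$ together with the obvious inclusion $\simp(\Gamma^{(2)}) \subset \inertinf{n}$. Since simple automorphisms are inert, any element of the form $\phi_1 \sigma \phi_2^{-1} \sigma^{-1}$ with $\phi_1, \phi_2 \in \simp(\Gamma^{(2)})$ automatically lies in $\inertinf{n}$, so the question is only whether such elements belong to $N$, and for that we freely conjugate by elements of $\simp(\Gamma^{(2)})$ using normality of $N$.

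First I would dispatch the identity: the pair $(\Id, \Id)$ gives $\Id \cdot \sigma \cdot \Id \cdot \sigma^{-1} = \Id \in N$, so $(\Id, \Id) \in \mathcal{K}_N$. Next, for closure under inversion, suppose $(\phi_1, \phi_2) \in \mathcal{K}_N$, so $\phi_1 \sigma \phi_2^{-1} \sigma^{-1} \in N$. Taking the group inverse, $\sigma \phi_2 \sigma^{-1} \phi_1^{-1} \in N$. To get the element $\phi_1^{-1} \sigma \phi_2 \sigma^{-1}$ demanded by $(\phi_1^{-1}, \phi_2^{-1}) \in \mathcal{K}_N$, I would write
\begin{equation*}
\phi_1^{-1} \sigma \phi_2 \sigma^{-1} \;=\; \phi_1^{-1}\bigl(\sigma \phi_2 \sigma^{-1} \phi_1^{-1}\bigr)\phi_1,
\end{equation*}
which is a conjugate of an element of $N$ by $\phi_1 \in \simp(\Gamma^{(2)}) \subset \inertinf{n}$, hence lies in $N$ by normality.

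For closure under products, given $(\phi_1,\phi_2),(\psi_1,\psi_2) \in \mathcal{K}_N$ I need to show that $(\phi_1\psi_1)\sigma(\phi_2\psi_2)^{-1}\sigma^{-1} \in N$. The natural move is to insert $\phi_1^{-1}\phi_1$ to split the expression into a piece conjugated from $N$ and a piece already in $N$:
\begin{equation*}
\phi_1\psi_1\sigma\psi_2^{-1}\phi_2^{-1}\sigma^{-1}
=\Bigl[\phi_1\bigl(\psi_1\sigma\psi_2^{-1}\sigma^{-1}\bigr)\phi_1^{-1}\Bigr]\cdot\Bigl[\phi_1\sigma\phi_2^{-1}\sigma^{-1}\Bigr].
\end{equation*}
The bracketed factor on the left is a $\phi_1$-conjugate of the element $\psi_1\sigma\psi_2^{-1}\sigma^{-1} \in N$, so it lies in $N$ by normality; the right bracketed factor is in $N$ by hypothesis. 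The product therefore lies in $N$, and $(\phi_1\psi_1,\phi_2\psi_2) \in \mathcal{K}_N$.

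There is no real obstacle here; the lemma is purely formal once one notes that simple automorphisms are inert and that $N \trianglelefteq \inertinf{n}$. The content, which will matter in subsequent lemmas, is that $\mathcal{K}_N$ records the failure of the pair $(\phi_1,\phi_2)$ to push $\alpha = \psi_1\sigma\psi_2\sigma^{-1}$ into or out of $N$ in a group-theoretically coherent way, and the verification above is exactly what makes $\mathcal{K}_N$ amenable to being studied as a subgroup of $\simp(\Gamma^{(2)}) \times \simp(\Gamma^{(2)})$.
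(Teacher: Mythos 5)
Your proof is correct and takes essentially the same approach as the paper's: both verify the subgroup axioms directly by rewriting the relevant products as (conjugates of) elements of $N$, using that $N$ is normal in $\inertinf{n}$ and that the conjugating elements are inert. The only cosmetic difference is that the paper's closure step conjugates by $\sigma\phi^{-1}\sigma^{-1}$ while you conjugate by $\phi_{1}$ itself; both are legitimate since each lies in $\inertinf{n}$.
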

\begin{proof}
Assume $\phi_{1}\sigma\phi_{2}^{-1}\sigma^{-1}, \phi_{3}\sigma\phi_{4}^{-1}\sigma^{-1} \in N$.  Then $\sigma\phi_{4}^{-1}\sigma^{-1}\phi_{3} \in N$, and hence
$$\sigma \phi_{4}^{-1} \sigma^{-1}\phi_{3} \phi_{1} \sigma \phi_{2}^{-1} \sigma^{-1} \in N$$
and
$$\phi_{3} \phi_{1} \sigma \phi_{2}^{-1}\phi_{4}^{-1} \sigma^{-1} = \phi_{3} \phi_{1} \sigma (\phi_{4}\phi_{2})^{-1} \sigma^{-1} \in N.$$
Lastly, if $\phi_{1} \sigma \phi_{2}^{-1} \sigma^{-1} \in N$, then $\phi_{1}^{-1}\sigma \phi_{2} \sigma^{-1} = \phi_{1}^{-1}\sigma \phi_{2} \sigma^{-1} \phi_{1}^{-1} \phi_{1} \in N$.
\end{proof}

To simplify notation, for the remainder of this section we write
$E^{(m)}$ instead of $E^{(m)}(\Gamma)$.
By definition, $E^{(2)}$ is the edge set of $\Gamma^{(2)}$, so there is an isomorphism
\begin{equation}
\label{def:H}
\mathcal{H} \colon \simp(\Gamma^{(2)}) \longrightarrow \sym(E^{(2)})
\end{equation}
and hence an isomorphism
$$\mathcal{H} \times \mathcal{H} \colon \simp(\Gamma^{(2)}) \times \simp(\Gamma^{(2)}) \longrightarrow \sym(E^{(2)}) \times \sym(E^{(2)}).$$
Define
\begin{equation}
\label{def:K}
\mathcal K = (\mathcal H\times \mathcal H)(\mathcal K_N),
\end{equation}
meaning that $\mathcal{K}$ is the image of $\mathcal{K}_{N}$ under this isomorphism. Thus we have
$$\mathcal{K} \subset \sym(E^{(2)}) \times \sym(E^{(2)}).$$

Letting $\alpha \in \inert(\sigma^{2})$ be the element of $N$ satisfying Lemma~\ref{eqn:setupconditions},
and maintaining the notation of that lemma, we have $\alpha = \psi_{1}\sigma\psi_{2}\sigma^{-1}$, for some $\psi_{1},\psi_{2} \in \simp(\Gamma^{(2)})$ so
$$(\psi_{1},\psi_{2}^{-1}) \in \mathcal{K}_{N}.$$
Defining
\begin{equation}
\label{eq:gammas}
\gamma_{1} = \mathcal{H}(\psi_{1}), \quad \gamma_{2} = \mathcal{H}(\psi_{2}),
\end{equation}
it follows that
\begin{equation}\label{eqn:apairinK}
(\gamma_{1},\gamma_{2}^{-1}) \in \mathcal{K}.
\end{equation}

Recall we have $E^{(2)} = E^{(1)} \times E^{(1)}$, and we write points in $E^{(2)}$ as $\begin{pmatrix}x \\ y \end{pmatrix}$ where $x,y \in E^{(1)}$. We embed $\sym(E^{(1)}) \times \sym(E^{(1)})$ into $\sym(E^{(2)})$ via the map
\begin{equation}
(\phi_{1},\phi_{2}) \mapsto \begin{pmatrix} \phi_{1} \\ \phi_{2} \end{pmatrix},
\end{equation}
where $\begin{pmatrix} \phi_{1} \\ \phi_{2} \end{pmatrix}\begin{pmatrix} x \\ y \end{pmatrix} = \begin{pmatrix} \phi_{1}(x) \\ \phi_{2}(y) \end{pmatrix}$.
Define
\begin{equation}
\label{def:P}
P \text{ to be the subgroup of $\sym(E^{(2)})$ that is the image of this embedding.}
\end{equation}

\begin{lemma}\label{lemma:PwithK}
For any $\begin{pmatrix} \phi_{1} \\ \phi_{2} \end{pmatrix} \in P$, we have $\left( \begin{pmatrix} \phi_{1} \\ \phi_{2} \end{pmatrix}, \begin{pmatrix} \phi_{2} \\ \phi_{1} \end{pmatrix} \right) \in \mathcal{K}$.
\end{lemma}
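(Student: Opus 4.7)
The plan is to unwind the definitions and observe that the element $\tilde\phi_1\sigma\tilde\phi_2^{-1}\sigma^{-1}$ associated to the purported pair in $\mathcal{K}$ turns out to equal the identity. Concretely, given $\etwo{\phi_{1}}{\phi_{2}}\in P$, let $\tilde\phi_1,\tilde\phi_2\in\simp(\Gamma^{(2)})$ be the unique elements with $\mathcal{H}(\tilde\phi_1)=\etwo{\phi_{1}}{\phi_{2}}$ and $\mathcal{H}(\tilde\phi_2)=\etwo{\phi_{2}}{\phi_{1}}$. By the definitions of $\mathcal{K}$ and $\mathcal{K}_{N}$, the claim reduces to showing that $\tilde\phi_1\sigma\tilde\phi_2^{-1}\sigma^{-1}\in N$.

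The main step is to compute how conjugation by $\sigma$ acts on a simple graph automorphism of $\Gamma^{(2)}$ whose underlying permutation lies in $P$. Viewing $\tilde\phi_1\in\aut(\sigma^{2})$ as a $1$-block code on the alphabet $E^{(2)}=E^{(1)}\times E^{(1)}$, it acts at each pair of positions $(2k,2k+1)$ by $(x_{2k},x_{2k+1})\mapsto(\phi_1(x_{2k}),\phi_2(x_{2k+1}))$. Conjugation by $\sigma$ reindexes the $2$-blocks so that they start at odd positions, and a direct coordinate-wise check using $(\sigma x)_{i}=x_{i+1}$ shows that $\sigma\tilde\phi_1\sigma^{-1}$ acts at $(2k,2k+1)$ by $(x_{2k},x_{2k+1})\mapsto(\phi_2(x_{2k}),\phi_1(x_{2k+1}))$. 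Hence $\mathcal{H}(\sigma\tilde\phi_1\sigma^{-1})=\etwo{\phi_{2}}{\phi_{1}}=\mathcal{H}(\tilde\phi_2)$, so $\sigma\tilde\phi_1\sigma^{-1}=\tilde\phi_2$. (This is essentially the same row-swap mechanism already exploited in Lemma~\ref{lemma:swapcomm}.)

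Using this identity, $\sigma\tilde\phi_2^{-1}\sigma^{-1}=\sigma^{2}\tilde\phi_1^{-1}\sigma^{-2}$, and since $\tilde\phi_1\in\aut(\sigma^{2})$ commutes with $\sigma^{2}$, this simplifies to $\tilde\phi_1^{-1}$. Thus $\tilde\phi_1\sigma\tilde\phi_2^{-1}\sigma^{-1}=\tilde\phi_1\tilde\phi_1^{-1}=\id$, which lies in $N$ trivially. There is no serious obstacle in this lemma: its entire content is the observation that the $P$-component of $\simp(\Gamma^{(2)})$ is conjugated by $\sigma$ via the coordinate swap on $E^{(1)}\times E^{(1)}$, a consequence of the parity shift induced by $\sigma$.
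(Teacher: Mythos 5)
Your proof is correct and follows essentially the same route as the paper: both reduce the claim to showing $\tilde\phi_1\sigma\tilde\phi_2^{-1}\sigma^{-1}=\id\in N$ via the observation that conjugation by $\sigma$ swaps the two coordinates of a simple graph automorphism coming from $P$. You simply make explicit the coordinate-wise verification that the paper labels ``straightforward to check.''
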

\begin{proof}
Let $\begin{pmatrix} \tilde{\phi}_{1} \\ \tilde{\phi}_{2} \end{pmatrix} \in \simp(\Gamma^{(2)})$ be the automorphism induced by the permutation $\begin{pmatrix} \phi_{1} \\ \phi_{2} \end{pmatrix}$ on the edge set $E^{(2)}(\Gamma)$.
Thus $\begin{pmatrix} \phi_{1} \\ \phi_{2} \end{pmatrix} = \left( \mathcal{H} \times \mathcal{H} \right) \begin{pmatrix} \tilde{\phi}_{1} \\ \tilde{\phi}_{2} \end{pmatrix}$. It is straightforward to check that
$$\begin{pmatrix} \tilde{\phi}_{1} \\ \tilde{\phi}_{2} \end{pmatrix} \sigma \begin{pmatrix} \tilde{\phi_{2}}^{-1} \\ \tilde{\phi}_{1}^{-1} \end{pmatrix} \sigma^{-1} = \begin{pmatrix} \tilde{\phi}_{1} \\ \tilde{\phi}_{2} \end{pmatrix} \begin{pmatrix} \tilde{\phi}_{1}^{-1} \\ \tilde{\phi}_{2}^{-1} \end{pmatrix} = \id \in N$$
so
\begin{equation*}\left( \begin{pmatrix} \tilde{\phi}_{1} \\ \tilde{\phi}_{2} \end{pmatrix} , \begin{pmatrix} \tilde{\phi}_{2} \\ \tilde{\phi}_{1} \end{pmatrix} \right) \in \mathcal{K}_{N}.\quad\qedhere\end{equation*}
\end{proof}

Define the swapping element $\mathfrak{s} \in \sym(E^{(2)})$ by
\begin{equation}
\label{def:swap}
\mathfrak{s}\begin{pmatrix} p \\ q \end{pmatrix} = \begin{pmatrix} q \\ p \end{pmatrix}.
\end{equation}

Recall we can identify period two points for $\sigma$ with the set $E^{(2)}$. Then $\sigma$ induces an action on $E^{(2)}$, and this action agrees with the action of $\mathfrak{s}$ on $E^{(2)}$.

\begin{lemma}\label{lemma:separate}
For the elements $\gamma_1, \gamma_2$ defined in Equation~\eqref{eq:gammas}, we have
$\gamma_{2}^{-1} \ne \mathfrak{s}^{-1} \gamma_{1} \mathfrak{s}$.
\end{lemma}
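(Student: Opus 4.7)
The plan is to argue by contradiction: I assume $\gamma_{2}^{-1} = \mathfrak{s}^{-1}\gamma_{1}\mathfrak{s}$ and show this forces the automorphism $\alpha$ to act trivially on the set of period-$2$ points $P_{2}(\sigma) = E^{(2)}$, which then contradicts property~\eqref{item:alpha4} of Lemma~\ref{eqn:setupconditions}.

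First I would unwind the action of $\alpha = \psi_{1}\sigma\psi_{2}\sigma^{-1}$ on $E^{(2)}$ under the identification $P_{2}(\sigma) \cong E^{(2)}$. Each $\psi_{i} \in \simp(\Gamma^{(2)})$ is a one-block code on the $\sigma^{2}$-alphabet $E^{(2)}$, and on a point of period $1$ under $\sigma^{2}$ it simply applies the underlying permutation $\gamma_{i} = \mathcal{H}(\psi_{i})$. On the other hand, $\sigma$ swaps the two coordinates of any $\etwo{a}{b} \in E^{(2)}$ and thus acts as the involution $\mathfrak{s}$. Composing the four steps and using $\mathfrak{s} = \mathfrak{s}^{-1}$, this yields
$$\alpha|_{E^{(2)}} \;=\; \gamma_{1}\,\mathfrak{s}\,\gamma_{2}\,\mathfrak{s}^{-1}.$$
Substituting the assumed identity, rewritten as $\gamma_{1} = \mathfrak{s}\gamma_{2}^{-1}\mathfrak{s}^{-1}$, into this formula collapses it to $\mathfrak{s}\gamma_{2}^{-1}\gamma_{2}\mathfrak{s}^{-1} = \id$.

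It remains to exhibit a point of $E^{(2)}$ moved by $\alpha$, and this is essentially handed to us. Property~\eqref{item:alpha4} of Lemma~\ref{eqn:setupconditions} provides $u_{1} \neq 1$ and some $u_{2}$ with $\alpha\etwo{1}{u_{1}} = \etwo{u_{2}}{1}$, and these two elements of $E^{(2)}$ can coincide only when $u_{2} = 1$ and $u_{1} = 1$, which is ruled out. Hence $\alpha|_{E^{(2)}}$ is nontrivial, contradicting the previous paragraph and completing the proof. There is no real obstacle here; the content is just reconciling the two descriptions of $\sigma$ (as the shift on $\mathcal{A}_{\sigma}$ and as the swap on the $\sigma^{2}$-encoding of period-$2$ points) with the fact that the properties arranged in Lemma~\ref{eqn:setupconditions} were set up precisely so that $\alpha$ visibly moves a point of $E^{(2)}$.
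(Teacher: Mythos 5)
Your proof is correct and follows essentially the same route as the paper's: both substitute the assumed relation into the formula $\overline{\alpha} = \gamma_{1}\mathfrak{s}\gamma_{2}\mathfrak{s}^{-1}$ for the action of $\alpha$ on $E^{(2)}$, collapse it to the identity, and contradict property~\eqref{item:alpha4} of Lemma~\ref{eqn:setupconditions} (your explicit check that $\etwo{1}{u_{1}} \ne \etwo{u_{2}}{1}$ because $u_{1} \ne 1$ is a nice touch the paper leaves implicit).
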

\begin{proof}
By Lemma~\eqref{eqn:setupconditions}, $\alpha = \psi_{1}\sigma\psi_{2}\sigma^{-1}$ for some $\psi_{1},\psi_{2} \in \simp(\Gamma^{(2)})$.
If $\gamma_{2}^{-1} = \mathfrak{s}^{-1} \gamma_{1} \mathfrak{s}$, then $\alpha$ acts on $E^{(2)}$ by the permutation
$$\gamma_{1} \mathfrak{s} \mathfrak{s}^{-1} \gamma_{1}^{-1} \mathfrak{s} \mathfrak{s}^{-1} = \gamma_{1}\gamma_{1}^{-1} = \id.$$ But this contradicts Lemma~\ref{eqn:setupconditions}, as $\alpha$ acts nontrivially on $E^{(2)}$.
\end{proof}

\subsubsection{Completion of the proof of Lemma~\ref{lemma:lemma1}}

To translate properties of $\mathcal K$ to subgroups of $\sym(E^{(2)})$, we make use of the following result:
\begin{lemma}[Goursat's Lemma (see~\cite{lang})]
Let $G_{1},G_{2}$ be groups and let $H$ be a subgroup of $G_{1} \times G_{2}$. Then there exist subgroups $H_{1} \subset G_{1}, H_{2} \subset G_{2}$, normal subgroups $N_{1} \unlhd H_{1}, N_{2} \unlhd H_{2}$, and an isomorphism $\Psi \colon H_{1} / N_{1} \to H_{2} / N_{2}$ such that
$$H = \{(x,y) \in H_{1} \times H_{2} \colon \Psi([x]) = [y]\}.$$
\end{lemma}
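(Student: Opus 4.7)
The plan is to build the four subgroups and the isomorphism directly from $H$ using the coordinate projections, and then verify everything by repeatedly exploiting that $H$ is closed under the group operation.

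First, I would let $\pi_i\colon G_1\times G_2\to G_i$ denote the coordinate projections and set $H_i=\pi_i(H)$, which are immediately subgroups of $G_i$. For the normal pieces, I would define
$$N_1=\{x\in G_1:(x,e_{G_2})\in H\},\qquad N_2=\{y\in G_2:(e_{G_1},y)\in H\},$$
viewed as subgroups of $H_1,H_2$ respectively (the projection identifies $H\cap(G_1\times\{e\})$ bijectively with $N_1$, and similarly for $N_2$). To see $N_1\unlhd H_1$, given $h_1\in H_1$ pick any lift $(h_1,h_2)\in H$; then for $n\in N_1$ the conjugate $(h_1,h_2)(n,e)(h_1,h_2)^{-1}=(h_1 n h_1^{-1},e)$ still lies in $H$, so $h_1 n h_1^{-1}\in N_1$. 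The argument for $N_2$ is symmetric.

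Next I would define $\Psi\colon H_1/N_1\to H_2/N_2$ by $\Psi([x])=[y]$ for any $y$ with $(x,y)\in H$. Well-definedness splits into two checks, each of which uses only that $H$ is a group: if $(x,y_1),(x,y_2)\in H$, then $(x,y_1)^{-1}(x,y_2)=(e,y_1^{-1}y_2)\in H$, so $y_1^{-1}y_2\in N_2$; and if $[x_1]=[x_2]$, so $x_2=x_1 n$ with $n\in N_1$, then multiplying a chosen lift of $x_1$ by $(n,e)\in H$ produces a lift of $x_2$ with the same second coordinate. That $\Psi$ is a homomorphism is immediate from $(x_1,y_1)(x_2,y_2)=(x_1 x_2,y_1 y_2)\in H$. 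Surjectivity follows from the definition of $H_2$. For injectivity, if $\Psi([x])=[e]$ then some $(x,n_2)\in H$ has $n_2\in N_2$, and multiplying by $(e,n_2^{-1})\in H$ gives $(x,e)\in H$, whence $x\in N_1$.

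Finally I would verify the description of $H$. The inclusion $H\subseteq\{(x,y)\in H_1\times H_2:\Psi([x])=[y]\}$ is built into the definition of $\Psi$. For the reverse inclusion, given $(x,y)$ with $\Psi([x])=[y]$, choose $y'$ with $(x,y')\in H$; then $y=y'n_2$ for some $n_2\in N_2$, and $(x,y)=(x,y')\cdot(e,n_2)\in H$. There is no real obstacle here: every step is a short diagram chase in $G_1\times G_2$. The only thing to track carefully is the double life of $N_i$, once as a subgroup of $G_i$ and once as the slice $H\cap(G_i\times\{e\})\subset H$; since these are identified by the relevant projection, no ambiguity arises, and the proof proceeds mechanically.
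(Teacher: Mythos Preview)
Your proof is correct and is the standard argument for Goursat's Lemma. The paper does not give its own proof; it simply cites Lang's \emph{Algebra}, and the classical proof there proceeds exactly as you do: take $H_i=\pi_i(H)$, $N_i$ the kernel slices, and build $\Psi$ from the graph relation.
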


Applying Goursat's Lemma to the group $\mathcal{K}$, we obtain:
\begin{corollary}
\label{cor:goursat}
Let $\mathcal K$ be the subgroup defined in~\eqref{def:K}.
There exist $H_{1},H_{2} \subset \sym(E^{(2)})$, normal subgroups $N_{1} \unlhd H_{1}, N_{2} \unlhd H_{2}$, and an isomorphism $\Psi \colon H_{1} / N_{1} \to H_{2} / N_{2}$ such that
$$\mathcal{K} = \{(\phi_{1},\phi_{2}) \in H_{1} \times H_{2} \colon \Psi([\phi_{1}]) = [\phi_{2}]\}.$$
\end{corollary}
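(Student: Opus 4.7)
The plan is to apply Goursat's Lemma directly, taking $G_1 = G_2 = \sym(E^{(2)})$ and $H = \mathcal{K}$. Since $\mathcal{K}$ is a subgroup of $\sym(E^{(2)}) \times \sym(E^{(2)})$ by the construction preceding the statement, Goursat's Lemma applies verbatim and produces subgroups $H_1, H_2 \subset \sym(E^{(2)})$, normal subgroups $N_1 \unlhd H_1$ and $N_2 \unlhd H_2$, and an isomorphism $\Psi \colon H_1/N_1 \to H_2/N_2$ such that $\mathcal{K}$ has exactly the asserted description.

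In slightly more detail, I would first recall that $H_1$ is obtained as the projection of $\mathcal{K}$ onto the first coordinate and $H_2$ is the projection onto the second coordinate, while $N_1$ is defined as the set of $\phi_1 \in H_1$ such that $(\phi_1, \id) \in \mathcal{K}$ and symmetrically $N_2$ is the set of $\phi_2 \in H_2$ such that $(\id, \phi_2) \in \mathcal{K}$. The map $\Psi$ is then defined by sending a coset $[\phi_1] \in H_1/N_1$ to the coset $[\phi_2] \in H_2/N_2$ for any choice of $\phi_2$ with $(\phi_1, \phi_2) \in \mathcal{K}$; the proof that this is well-defined and an isomorphism is exactly the content of Goursat's Lemma.

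There is no genuine obstacle here: the statement is simply the instantiation of Goursat's Lemma in the case at hand. The only thing to check is that $\mathcal{K}$ is indeed a subgroup of the product, but this was already established in the paragraph introducing $\mathcal{K}_N$ and transported via the isomorphism $\mathcal{H} \times \mathcal{H}$ used to define $\mathcal{K}$.

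\begin{proof}
This is a direct application of Goursat's Lemma to the subgroup $\mathcal{K} \subset \sym(E^{(2)}) \times \sym(E^{(2)})$.
\end{proof}
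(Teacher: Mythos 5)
Your proposal is correct and matches the paper exactly: the paper states Goursat's Lemma immediately before this corollary and obtains the statement by applying it directly to $\mathcal{K} \subset \sym(E^{(2)}) \times \sym(E^{(2)})$, whose subgroup property was established for $\mathcal{K}_{N}$ and transported via $\mathcal{H} \times \mathcal{H}$. Your extra remarks on the explicit construction of $H_{1}, H_{2}, N_{1}, N_{2}, \Psi$ are the standard content of Goursat's Lemma and are consistent with the paper.
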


We turn our attention then to studying the subgroups $H_{1}, H_{2}, N_{1}, N_{2}$. The key lemma regarding their structure is the following:
\begin{lemma}\label{lemma:obtainment}
Assume both subgroups $N_1$ and $N_2$ of Corollary~\ref{cor:goursat} are trivial.  Then at least one of the following holds:
\begin{enumerate}
\item
$H_{1} = \sym(E^{(2)})$ and $H_{2} = \sym(E^{(2)})$.
\item
$H_{1} = \alt(E^{(2)})$ and $H_{2} = \alt(E^{(2)})$.
\end{enumerate}
\end{lemma}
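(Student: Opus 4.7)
Under the hypothesis $N_{1} = N_{2} = \{e\}$, Corollary~\ref{cor:goursat} yields that $\mathcal{K}$ is the graph of an isomorphism $\Psi \colon H_{1} \to H_{2}$, and Lemma~\ref{lemma:PwithK} implies that $P$ lies in both $H_{1}$ and $H_{2}$, with $\Psi|_{P}$ coinciding with conjugation by $\mathfrak{s}$. The pair $(\gamma_{1}, \gamma_{2}^{-1}) \in \mathcal{K}$ then gives $\gamma_{1} \in H_{1}$ with $\Psi(\gamma_{1}) = \gamma_{2}^{-1}$. Note moreover that $\gamma_{1} \notin P$: otherwise $\Psi(\gamma_{1}) = \mathfrak{s}\gamma_{1}\mathfrak{s}^{-1} = \mathfrak{s}^{-1}\gamma_{1}\mathfrak{s}$, contradicting Lemma~\ref{lemma:separate}.

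The plan is to show that $H_{1}$ is either $\sym(E^{(2)})$ or $\alt(E^{(2)})$, apply the same reasoning to $H_{2}$, and then use the fact that $\Psi$ is an isomorphism to conclude $H_{1} = H_{2}$. I would first prove that $H_{1}$ acts primitively on $E^{(2)} = E^{(1)}\times E^{(1)}$. Since $P\subset H_{1}$ is transitive, any nontrivial $H_{1}$-invariant block system must be coarser than some block system preserved by $P$; these are precisely the row partition and the column partition of $E^{(2)}$. Using items~(iv) and~(v) of Lemma~\ref{eqn:setupconditions} together with the identity $\alpha|_{E^{(2)}} = \gamma_{1}\mathfrak{s}\gamma_{2}\mathfrak{s}^{-1}$, one extracts from the action of $\alpha$ on $\etwo{1}{u_{1}}$ and $\etwo{1}{u_{3}}$ enough information to conclude that $\gamma_{1}$ preserves neither partition, which establishes primitivity of $H_{1}$.

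For the classification step, the primitive supergroups of $P$ in $\sym(E^{(2)})$ are $\sym(E^{(2)})$, $\alt(E^{(2)})$, and the product-action wreath product $P \rtimes \langle \mathfrak{s}\rangle \cong \sym(E^{(1)})\wr \sym_{2}$. I would eliminate the third possibility by showing $\gamma_{1}\notin P \cup P\mathfrak{s}$: the case $\gamma_{1}\in P$ is already ruled out above, and writing $\gamma_{1} = \pi\mathfrak{s}$ with $\pi\in P$ forces $\alpha$ to act on $E^{(2)}$ as $\pi\gamma_{2}\mathfrak{s}^{-1}$, which, when combined with items~(iii) and~(iv) of Lemma~\ref{eqn:setupconditions}, produces a contradiction with the prescribed values of $\alpha$ on $\etwo{1}{1}$ and $\etwo{1}{u_{1}}$. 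Consequently $H_{1}\supseteq \alt(E^{(2)})$, so $H_{1}\in\{\sym(E^{(2)}),\alt(E^{(2)})\}$. The symmetric analysis applied to the second coordinate yields $H_{2}\in\{\sym(E^{(2)}),\alt(E^{(2)})\}$ as well; and since $|\sym(E^{(2)})| = 2|\alt(E^{(2)})|$, the isomorphism $\Psi\colon H_{1}\to H_{2}$ forces $|H_{1}| = |H_{2}|$, so either $H_{1} = H_{2} = \sym(E^{(2)})$ or $H_{1} = H_{2} = \alt(E^{(2)})$.

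The main technical obstacle is the second step: extracting from the rather indirect combinatorial data about $\alpha$ in Lemma~\ref{eqn:setupconditions} sufficient control over $\gamma_{1}$ alone, rather than about the composite $\gamma_{1}\mathfrak{s}\gamma_{2}\mathfrak{s}^{-1}$, to both establish primitivity and rule out the wreath-product alternative. This is the step where Lemma~\ref{lemma:separate} and the three specific features of $\alpha$ guaranteed by Lemma~\ref{eqn:setupconditions} must be used in tandem, and the rigidity of the constraint $\Psi|_{P} = \mathrm{Ad}(\mathfrak{s})$ is what makes the bookkeeping work.
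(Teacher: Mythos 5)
Your overall route is genuinely different from the paper's: where the paper shows that at least one $H_i$ is ``substantial,'' constructs an explicit $p$-cycle in it, and invokes Jordan's theorem, you propose to classify the primitive overgroups of $P$ in $\sym(E^{(2)})$ and then eliminate the wreath-product alternative. The parts you carry out in detail are correct and agree with the paper: $\mathcal{K}$ is the graph of an isomorphism $\Psi\colon H_1\to H_2$ with $\Psi|_{P}=\mathrm{Ad}(\mathfrak{s})$, the exclusion $\gamma_1\notin P$ via Lemma~\ref{lemma:separate}, the identification of the $P$-block systems with the row and column partitions, and the final cardinality argument forcing $H_1=H_2$. But three steps have real gaps.

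First, you assert that $\gamma_1$ preserves neither partition. The data of Lemma~\ref{eqn:setupconditions} constrain only the composite $\overline{\alpha}=\gamma_1\mathfrak{s}\gamma_2\mathfrak{s}^{-1}$, and what can actually be extracted is Lemma~\ref{lemma:freeness}: \emph{at least one} of $\gamma_1,\gamma_2$ is free. Nothing rules out that $\gamma_1$ is row-preserving while all the freeness sits in $\gamma_2$ (this is exactly why Lemma~\ref{lemma:startingarrangement} is phrased as ``at least one of $H_1,H_2$ is substantial''). This gap is repairable, since your endgame is symmetric in $H_1$ and $H_2$, but as written the claim about $\gamma_1$ specifically is unjustified. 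Second, and most seriously, the statement that the only primitive subgroups of $\sym(E^{(2)})$ containing $P$ are $\sym(E^{(2)})$, $\alt(E^{(2)})$, and $\sym(E^{(1)})\wr\sym_2$ is precisely the hard content of the lemma and cannot be cited as known. The non-$2$-transitive case can be done elementarily (the orbital graphs of an overgroup of $P$ force containment in the automorphism group of the rook's graph, which is $\sym(E^{(1)})\wr\sym_2$), but ruling out a $2$-transitive proper overgroup requires either CFSG-based classification results or a Jordan-type argument producing an element of small support --- which is exactly what the paper's Lemmas~\ref{lemma:somecycles} through~\ref{lemma:substantialpcycle} spend several pages constructing; Bochert-type elementary bounds are not strong enough here since $|P|=(n!)^2$ is far below $(n^2)!^{1/2}$. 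Third, your elimination of $\gamma_1=\pi\mathfrak{s}$ with $\pi\in P$ does not go through as described: writing $\overline{\alpha}=\pi\gamma_2\mathfrak{s}^{-1}$ and imposing conditions (iii) and (iv) of Lemma~\ref{eqn:setupconditions} only forces $\pi$ to fix $\etwo{1}{1}$ and $\gamma_2$ to send $\etwo{u_1}{1}$ into column one; no contradiction results without substantial further input. So the proposal is an interesting alternative skeleton, but its two load-bearing steps (the overgroup classification and the freeness of $\gamma_1$) are unproved, and the first of these is essentially equivalent in difficulty to the argument the paper actually gives.
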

As the proof of this lemma is lengthy and involves checking multiple cases, we defer its proof to Section~\ref{sec:lemma-obtainment}.

For use in the proof of Lemma~\ref{lemma:lemma1}, we recall the following classical theorem:
\begin{theorem}\label{thm:amazingfact}
Suppose $|X| > 6$, $G$ is either $\sym(X)$ or $\alt(X)$, and $\Psi \colon G \to G$ is an automorphism. Then there exists $g \in \sym(X)$ such that $\Psi(h) = g^{-1}hg$ for all $h \in G$.
\end{theorem}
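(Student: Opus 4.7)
This is a classical result about the outer automorphism groups of the symmetric and alternating groups: for $n \neq 6$, one has $\mathrm{Out}(S_n) = 1$, so every automorphism of $S_n$ is inner, while $\mathrm{Out}(A_n) = \mathbb{Z}/2$, realized by conjugation by any odd permutation. My plan is to reduce everything to the $G = \sym(X)$ case and then show that every automorphism of $\sym(X)$ must send transpositions to transpositions, at which point constructing $g$ is essentially bookkeeping.

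The first step will be to establish the transposition-preservation property for $G = \sym(X)$ with $n = |X| \geq 7$. I will classify the conjugacy classes of involutions in $\sym(X)$ by cycle type: an involution is a product of $k$ disjoint transpositions for some $1 \leq k \leq \lfloor n/2 \rfloor$, and the class of such elements has size $\frac{n!}{2^k \, k! \, (n-2k)!}$. A direct calculation shows that for $n \neq 6$, the conjugacy class of single transpositions (size $\binom{n}{2}$) is the \emph{unique} class of involutions of that cardinality; in particular, no product of $k \geq 2$ disjoint transpositions gives the same class size when $n \geq 7$. Since $\Psi$ is an automorphism it permutes the conjugacy classes of involutions and preserves class sizes, so $\Psi$ must send the class of transpositions to itself.

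Given this, I will construct $g \in \sym(X)$ explicitly. Fix an enumeration $X = \{x_1, \ldots, x_n\}$. Write $\Psi((x_1\, x_2)) = (y_1\, y_2)$ for some distinct $y_1, y_2 \in X$. For each $i \geq 3$, the transposition $\Psi((x_1\, x_i))$ must share exactly one element with $\Psi((x_1\, x_2))$, because $(x_1\, x_2)$ and $(x_1\, x_i)$ do not commute (their product has order $3$) while two disjoint transpositions do commute. Using such commutation/noncommutation relations inductively and choosing the shared letter consistently, I will define a bijection $g \colon X \to X$ by $g(x_1) = y_1$, $g(x_2) = y_2$, and $g(x_i)$ equal to the remaining letter of $\Psi((x_1\, x_i))$ for $i \geq 3$, then verify $\Psi((x_i\, x_j)) = (g(x_i)\, g(x_j))$ for all $i, j$. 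Since transpositions generate $\sym(X)$, this forces $\Psi(h) = g^{-1} h g$ for all $h$.

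For $G = \alt(X)$ I will reduce to the previous case as follows. Let $\Psi \in \aut(\alt(X))$. Using the classical fact (which I would verify by a centralizer/class-size argument, completely parallel to the one above for transpositions, this time applied to $3$-cycles when $n \geq 7$) that $\Psi$ preserves the class of $3$-cycles, and that $3$-cycles generate $\alt(X)$, one obtains in the same way a permutation $g \in \sym(X)$ such that conjugation by $g$ agrees with $\Psi$ on every $3$-cycle, and hence on all of $\alt(X)$. The main obstacle throughout is the sharp hypothesis $|X| > 6$: for $|X| = 6$ the transposition class and the class of triple-transpositions both have $15$ elements, which is precisely what allows the exceptional outer automorphism of $S_6$ and $A_6$; ruling this out via the class-size count is the heart of the argument, and everything else is assembling a conjugating permutation one generator at a time.
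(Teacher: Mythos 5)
The paper does not actually prove this statement: it is recorded as a classical theorem (H\"older's determination of $\aut(\sym(X))$ and $\aut(\alt(X))$ for $|X|\neq 6$) and used as a black box in the proof of Lemma~\ref{lemma:lemma1}, so there is no in-paper argument to compare against. Your outline is the standard textbook proof and is correct in substance. The class-size computation is right: the class of products of $k$ disjoint transpositions has the size you state, equality with $\binom{n}{2}$ forces $(n-2)(n-3)\cdots(n-2k+1)=2^{k-1}k!$, and the only solution with $k\ge 2$ is $k=3$, $n=6$ --- exactly the coincidence behind the exceptional outer automorphism --- so for $n\ge 7$ the transposition class is preserved. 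Two spots in your sketch still need honest verification. First, the consistency of the ``shared letter'': if $\Psi((x_1x_2))=(y_1y_2)$ and $\Psi((x_1x_3))=(y_1y_3)$, you must rule out $\Psi((x_1x_4))=(y_2y_3)$; this follows, e.g., because $(x_1x_2)(x_1x_3)(x_1x_4)$ is a $4$-cycle while $(y_1y_2)(y_1y_3)(y_2y_3)$ is a transposition, or because $\Psi$ would then fail to be injective on $\{(x_2x_3),(x_1x_4)\}$. Second, the $\alt(X)$ case is not quite ``the same way'': the order-$3$ class-size count does isolate the $3$-cycles for $n\ge 7$ (the offending coincidence $(n-3)(n-4)(n-5)=6$ again occurring only at $n=6$, and no order-$3$ class splits in $\alt(X)$ for $n\ge 5$ since each such element centralizes an odd permutation), but reconstructing $g$ from the images of $3$-cycles is more delicate than for transpositions, because a $3$-cycle and its inverse have the same support and the commutation pattern of two $3$-cycles depends on both the overlap of supports and their orientations. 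These are standard, fixable gaps in an otherwise sound plan for a result the paper deliberately leaves to the literature.
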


We have now assembled the tools to prove Lemma~\ref{lemma:lemma1} (modulo the deferral of the technical statement in Lemma~\ref{lemma:obtainment}):
\begin{proof}[Proof of Lemma~\ref{lemma:lemma1}]
Let $N_1, N_2$ be the subgroups produced in Corollary~\ref{cor:goursat} and let $\Psi \colon H_{1} / N_{1} \to H_{2} / N_{2}$ be the isomorphism in the same result.

Assume first that $N_{1} \ne \{\id\}$, so there is some  $\phi_1\neq \id$ with $\phi_{1} \in N_{1}$. Then $\Psi([\phi_{1}]) = \Psi([\id]) = [\id] \in H_{2}/N_{2}$, so $(\phi_{1},\id) \in \mathcal{K}$.
This implies that
$$\mathcal{H}^{-1}(\phi_{1})\sigma \sigma^{-1} = \mathcal{H}^{-1}(\phi_{1}) \in N.$$
But since $\mathcal{H}^{-1}(\phi_{1}) \in \simp(\Gamma)$, the statement of Lemma~\ref{lemma:lemma1} follows.
Likewise, if $N_{2} \ne \{\id\}$, then $(\id,\phi_{2}) \in \mathcal{K}$ for some $ \phi_{2} \in N_{2}$, and again the result follows.  Thus we are left with showing that  either $N_{1} \ne \{\id\}$ or $N_{2} \ne \{\id\}$.

We proceed by contradiction
and  suppose that both $N_{1} = \{\id\}$ and $N_{2} = \{\id\}$.
Combining Corollary~\ref{cor:goursat}
and Lemma~\ref{lemma:obtainment}, we have that the isomorphism $\Psi$ is either
$$\Psi \colon \sym(E^{(2)}) \to \sym(E^{(2)})$$
or
$$\Psi \colon \alt(E^{(2)}) \to \alt(E^{(2)}).$$
By Theorem~\ref{thm:amazingfact}, we have that $\Psi$ is given by $\Psi(h) = g^{-1}hg$ for some $g \in \sym(E^{(2)})$.

We claim that $g$ is the swap map
$\mathfrak{s}$, defined in~\eqref{def:swap}.

To check this claim,
note that for any $\begin{pmatrix} \phi_{1} \\ \phi_{2} \end{pmatrix} \in P$, where $P$ is defined in~\eqref{def:P}, it follows from Lemma~\ref{lemma:PwithK} that $\left( \begin{pmatrix} \phi_{1} \\ \phi_{2} \end{pmatrix}, \begin{pmatrix} \phi_{2} \\ \phi_{1} \end{pmatrix} \right) \in K$.  Thus
$$g^{-1} \begin{pmatrix} \phi_{1} \\ \phi_{2} \end{pmatrix} g = \begin{pmatrix} \phi_{2} \\ \phi_{1} \end{pmatrix}$$
and hence
\begin{equation}\label{eqn:relforgiss}
g^{-1} \mathfrak{s}\begin{pmatrix} \phi_{2} \\ \phi_{1} \end{pmatrix} \mathfrak{s}^{-1} g = \begin{pmatrix} \phi_{2} \\ \phi_{1} \end{pmatrix}
\end{equation}
for all $\phi_{1},\phi_{2} \in \sym(E^{(1)})$.
We now check that this implies that $\mathfrak{s}^{-1}g = \id$. If not,  there exists $\begin{pmatrix} x_{1} \\ y_{1} \end{pmatrix}, \begin{pmatrix} x_{2} \\ y_{2} \end{pmatrix} \in E^{(2)}$ such that $\mathfrak{s}^{-1}g\begin{pmatrix} x_{1} \\ y_{1} \end{pmatrix} = \begin{pmatrix} x_{2} \\ y_{2} \end{pmatrix}$ and $\begin{pmatrix} x_{1} \\ y_{1} \end{pmatrix} \ne \begin{pmatrix} x_{2} \\ y_{2} \end{pmatrix}$.
Either $x_{1} \ne x_{2}$ or $y_{1} \ne y_{2}$; assume $x_{1} \ne x_{2}$ (the other case is similar).
Choose  $z \in E^{(1)}$ such that $z \ne x_{1},x_{2}$ and define $\phi_{3} \in \sym(E^{(1)})$ to be the transposition swapping $x_{2}$ and $z$. Then
$$\begin{pmatrix} \phi_{3} \\ \id \end{pmatrix} \mathfrak{s}^{-1}g \begin{pmatrix} \phi_{3} \\ \id \end{pmatrix}^{-1} \begin{pmatrix} x_{1} \\ y_{1} \end{pmatrix} = \begin{pmatrix} \phi_{3} \\ \id \end{pmatrix} \begin{pmatrix} x_{2} \\ y_{2} \end{pmatrix} = \begin{pmatrix} z \\ y_{2} \end{pmatrix} \ne \begin{pmatrix} x_{2} \\ y_{2} \end{pmatrix} = \mathfrak{s}^{-1}g\begin{pmatrix} x_{1} \\ y_{1} \end{pmatrix}$$
contradicting~\eqref{eqn:relforgiss}, thus proving the claim.

Since  $(\gamma_{1},\gamma_{2}^{-1}) \in \mathcal{K}$ (see~\eqref{eqn:apairinK})we have $\gamma_{2}^{-1} = \Psi(\gamma_{1})$.  It then follows from the claim that
$\gamma_{2}^{-1} = \mathfrak{s}^{-1}\gamma_{1}\mathfrak{s}$.
But this contradicts Lemma~\ref{lemma:separate}, completing the proof.
\end{proof}

\subsection{The proof of Lemma~\ref{lemma:obtainment}}
\label{sec:lemma-obtainment}
\subsubsection{Preliminary reductions}
We are left with showing Lemma~\ref{lemma:obtainment}.  Recall that Corollary~\ref{cor:goursat}
gives us the existence  of  subgroups $H_{1},H_{2} \subset \sym(E^{(2)})$, normal subgroups $N_{1} \unlhd H_{1}, N_{2} \unlhd H_{2}$, and an isomorphism $\Psi \colon H_{1} / N_{1} \to H_{2} / N_{2}$ such that
$$\mathcal{K} = \{(\phi_{1},\phi_{2}) \in H_{1} \times H_{2} \colon \Psi([\phi_{1}]) = [\phi_{2}]\}.$$
The statement of Lemma~\ref{lemma:obtainment}
is that when both subgroups $N_1$ and $N_2$ are trivial,
at least one of the following holds:
\begin{enumerate}
\item
$H_{1} = \sym(E^{(2)})$ and $H_{2} = \sym(E^{(2)})$.
\item
$H_{1} = \alt(E^{(2)})$ and $H_{2} = \alt(E^{(2)})$.
\end{enumerate}
We start with some terminology used to study these subgroups.

For a finite set $X$, recall  that $\sym(X)$ denotes the group of permutations of the set $X$.
If $K \subset \sym(X)$ is a subgroup, a nonempty subset $A \subset X$ is called a $K$-\emph{block} if for all $g \in K$ either $g(A) = A$ or $g(A) \cap A = \emptyset$.
A subgroup $K \subset \sym(X)$ is called \emph{primitive} if the only $K$-blocks are singletons and $X$. We say the subgroup $K \subset \sym(X)$ \emph{contains a $p$-cycle} if it contains some element $\tau \in K$ such that $\tau$ consists of a single $p$-cycle.

\begin{theorem}[Jordan ({see~\cite[Theorem 13.9]{Wielandt}})]
Suppose $K \subset \sym(X)$ is primitive and contains a $p$-cycle for some prime $p < |X|-2$. Then $K = \alt(X)$ or $K = \sym(X)$.
\end{theorem}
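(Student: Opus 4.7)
The plan is to prove Jordan's theorem via the standard machinery of Jordan sets. Recall that a subset $J\subseteq X$ is called a \emph{Jordan set} for $K$ if $|J|\geq 2$, $|X\setminus J|\geq 1$, and the pointwise stabilizer $K_{(X\setminus J)}$ acts transitively on $J$. The hypothesis gives us a ready-made Jordan set: if $\tau\in K$ is the $p$-cycle, let $S$ be its support, so $|S|=p$ and $\langle\tau\rangle\leq K_{(X\setminus S)}$ already acts transitively on $S$.

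First I would prove the following ``contraction lemma,'' which is really the heart of the argument: if $K$ is primitive on $X$ and admits a Jordan set $J$ with $2\leq |J|\leq |X|-2$, then $K$ admits a Jordan set $J'$ with $|J'|=|J|+1$ (or $K$ is already $2$-transitive). The idea is that $J$ cannot be a block of imprimitivity, so by primitivity one can find $g\in K$ with $g(J)\cap J$ nonempty and $g(J)\neq J$. A case analysis on whether $g(J)\subsetneq J\cup g(J)\subsetneq X$ shows that $J\cup g(J)$ is again a Jordan set (one combines the transitive action of $K_{(X\setminus J)}$ on $J$ with the conjugate action of $K_{(X\setminus g(J))}$ on $g(J)$, using overlap to fuse them into a single transitive action on the union). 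Iterating yields a Jordan set of size $|X|-1$, from which $2$-transitivity follows immediately (the stabilizer of the one excluded point acts transitively on the remaining $|X|-1$ points, and combined with transitivity of $K$ on $X$ this gives $2$-transitivity).

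Next I would bootstrap: once $K$ is $2$-transitive, for any $x\in X$ the point-stabilizer $K_x$ is primitive on $X\setminus\{x\}$. Since $p<|X|-2$, one has $|X|-1>p+1$, so we may pick $x\notin S$; then $\tau\in K_x$ is still a $p$-cycle on the $(|X|-1)$-point set $X\setminus\{x\}$, and the hypothesis $p<(|X|-1)-2$ of the theorem remains satisfied for $K_x$ acting on $X\setminus\{x\}$. By induction on $|X|$, $K_x\supseteq\alt(X\setminus\{x\})$, and this together with transitivity on $X$ forces $K\supseteq\alt(X)$. To pin down which of $\alt(X)$ or $\sym(X)$ holds, observe that the parity of $\tau$ is determined by $p$: if $p$ is odd, $\tau$ is even and the containment is consistent with either possibility; if $p=2$, $\tau$ is an odd transposition and forces $K=\sym(X)$. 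In all cases $K\in\{\alt(X),\sym(X)\}$, which is the conclusion (the base case of the induction is $|X|=p+3$, the smallest allowed, and here the same Jordan-set argument plus a direct check finishes).

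The hard part will be the contraction lemma, specifically verifying in each overlap configuration that the newly constructed subset is genuinely a Jordan set, i.e., that the relevant pointwise stabilizer really does act transitively on it. One must chase the generators carefully: the transitive action on $J\cup g(J)$ is witnessed by products of elements from $K_{(X\setminus J)}$ and $gK_{(X\setminus J)}g^{-1}=K_{(X\setminus g(J))}$, and showing these products lie in $K_{(X\setminus(J\cup g(J)))}$ and link arbitrary points of $J\cup g(J)$ is the main combinatorial content. Once this lemma is in place, everything else reduces to routine induction and parity considerations.
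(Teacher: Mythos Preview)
The paper does not prove this statement; it is quoted from Wielandt as a classical theorem of Jordan and used as a black box in the proof of Lemma~\ref{lemma:obtainment}. Your outline follows the standard Jordan-set route, but two of the steps do not go through as written.

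First, the assertion that $2$-transitivity of $K$ forces the point stabilizer $K_x$ to be primitive on $X\setminus\{x\}$ is false in general: $\mathrm{PSL}_3(2)$ acting on the seven points of the Fano plane is $2$-transitive, yet the stabilizer of a point is imprimitive on the remaining six (the three lines through the fixed point form a nontrivial block system). What would rescue the induction is that the Jordan-set growth, carried out carefully, actually yields $(|X|-p+1)$-transitivity rather than merely $2$-transitivity; since $|X|-p+1\ge 4$, the point stabilizer is then at least $3$-transitive and hence primitive. You need to state and use this stronger conclusion before invoking the inductive hypothesis.

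Second, and more seriously, the induction on $|X|$ never reaches its base case $|X|=p+3$: there the required inequality $p<(|X|-1)-2$ reads $p<p$, and ``a direct check'' is not a proof. This case is where the real content of Jordan's theorem lies. One classical way to handle it is to use $4$-transitivity to conjugate the $p$-cycle $\tau$ so that the new support meets the old one in exactly $p-1$ points, and then to show (using primality of $p$) that the two overlapping $p$-cycles generate, on their $(p+1)$-point union, a subgroup containing a $3$-cycle; once a $3$-cycle is in hand, $3$-transitivity gives $\alt(X)\subseteq K$. Without this or an equivalent argument the base case, and with it the whole induction, remains open.
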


Thus  to prove Lemma~\ref{lemma:obtainment}, by Jordan's Theorem,
since $H_{1}, H_{2} \subset \sym(E^{(2)})$, it suffices to show that at least one of $H_{1}, H_{2}$ is primitive and also contains a $p$-cycle for some prime $p<|E^{(2)}|-2$.

We start with some technical results on subgroups of  $\sym(E^{(2)})$, then prove primitivity, and
then show how to generate a $p$-cycle for some prime $p < |E^{(2)}|-2$.

\subsubsection{Subgroups of  $\sym(E^{(2)})$}
To denote the first and second components of an element $\etwo{x}{y} \in E^{(2)}$, we write
$$\etwo{x}{y}_{1} = x, \quad \etwo{x}{y}_{2} = y.$$
We say that an element $\tau \in \sym(E^{(2)})$ is
\begin{enumerate}
\item
\emph{row-preserving} if $\tau \etwo{x_{1}}{y_{1}}_{1} = \tau \etwo{x_{1}}{y_{2}}_{1}$ for all $y_{1},y_{2} \in E^{(1)}$.
\item
\emph{column-preserving} if $\tau \etwo{x_{1}}{y_{1}}_{2} = \tau \etwo{x_{2}}{y_{1}}_{2}$ for all $x_{1}, x_{2} \in E^{(1)}$.
\item
\emph{free} if $\tau$ is neither row-preserving nor column-preserving.
\end{enumerate}

For any element $\tau \in \sym(E^{(2)})$, there exists a pair of functions $\tau_{1}, \tau_{2} \colon E^{(2)} \to E^{(1)}$ such that
$$\tau \etwo{x_{1}}{y_{1}}= \begin{pmatrix} \tau_{1}\etwo{x_{1}}{y_{1}} \\ \tau_{2}\etwo{x_{1}}{y_{1}} \end{pmatrix}.$$
It follows quickly from the definitions that:
\begin{enumerate}
\item
$\tau$ is row-preserving if and only if $\tau_{1}\etwo{x}{y}$ is independent of $y$,
\item
$\tau$ is column-preserving if and only if $\tau_{2}\etwo{x}{y}$ is independent of $x$.
\end{enumerate}
It is also easy to check that:
\begin{enumerate}
\item
The collection of  $\tau \in \sym(E^{(2)})$ that are row-preserving
forms a subgroup.
\item
The collection of  $\tau \in \sym(E^{(2)})$ that are column-preserving
forms a subgroup.
\item
Any $\tau \in P$, where $P$ is the subgroup defined in~\eqref{def:P}, is both row-preserving and column-preserving.
\end{enumerate}

In Lemma~\ref{eqn:setupconditions}, we showed the existence of $\alpha\in N$
of the form $\alpha = \psi_{1}\sigma\psi_{2}\sigma^{-1}$  for some $\psi_{1},\psi_{2} \in \simp(\Gamma^{(2)})$.
The automorphism $\alpha$ acts on $P_{2}(\sigma)$, and upon identifying $P_{2}(\sigma)$ with $E^{(2)}$, there is a corresponding permutation of $E^{(2)}$ induced by $\alpha$, which we denote by $\overline{\alpha} \in \sym(E^{(2)})$.
(Recall that we are identifying $E^{(2)}$ with $E^{(1)} \times E^{(1)}$, and that $E^{(1)} = \{1,2,\ldots,n\}$.)

Recall that $\gamma_1, \gamma_2$ are defined in~\eqref{eq:gammas} and the swap map $\mathfrak{s}$
is defined in~\eqref{def:swap}.
By Part~\eqref{item:alpha3}  of Lemma~\ref{eqn:setupconditions},
we have that $\overline{\alpha}(\etwo{1}{1}) = \etwo{1}{1}$.
Since the subgroup $P$ (see~\eqref{def:P})
acts transitively on $E^{(2)}$, there exists some $\phi \in P$ such that $\gamma_{1}\phi \etwo{1}{1} = \etwo{1}{1}$. Letting $\tilde{\phi}$ denote the automorphism in $\simp(\Gamma^{(2)})$ corresponding to $\phi \in \sym(E^{(2)})$, we have that
$$\alpha = \psi_{1}\sigma\psi_{2}\sigma^{-1} = \psi_{1}\tilde{\phi} \tilde{\phi}^{-1} \sigma \psi_{2} \sigma^{-1} = \psi_{1} \tilde{\phi} \sigma \sigma^{-1}\tilde{\phi}^{-1}\sigma \psi_{2} \sigma^{-1} \in N.$$
Since $\phi \in P$, it is straightforward to check that $\sigma^{-1} \tilde{\phi}^{-1}\sigma \in \simp(\Gamma^{(2)})$,
and hence $(\psi_{1}\tilde{\phi},\psi_{2}^{-1}\sigma^{-1}\tilde{\phi}\sigma) \in \mathcal{K}_{N}$.
Furthermore (recall that the isomorphism $\mathcal{H}$ is defined in~\eqref{def:H}),
$$\mathcal{H}(\sigma^{-1}\tilde{\phi}\sigma) = \mathfrak{s}^{-1}\phi\mathfrak{s},$$
and it follows that $(\gamma_{1} \phi,\gamma_{2}^{-1}\mathfrak{s}^{-1}\phi \mathfrak{s}) \in \mathcal{K}$.
Abusing notation, we replace $\gamma_{1}$ and $\gamma_{2}^{-1}$ by $\gamma_{1}\phi$ and $\gamma_{2}^{-1}\mathfrak{s}^{-1}\phi\mathfrak{s}$, respectively. Then $\gamma_{1} \etwo{1}{1} = \etwo{1}{1}$. Since $\overline{\alpha} \etwo{1}{1} = \etwo{1}{1}$ and $\sigma \etwo{1}{1} = \etwo{1}{1}$,
it follows that $\gamma_{2} \etwo{1}{1} = \etwo{1}{1}$ as well.

By Part~\ref{it:partb} of Lemma~\ref{lemma:innandonp}, $\overline{\alpha} \etwo{1}{u_{1}} = \etwo{u_{2}}{1}$ for some $u_{1} \ne 1 ,u_{2} \ne 1$. Since $\alpha \in \aut(\sigma)$, it follows that $\overline{\alpha} \etwo{u_{1}}{1} = \etwo{1}{u_{2}}$ as well. Finally, recall in our notation the action $\overline{\alpha}$ of $\alpha$ on $E^{(2)}$ is given by
$$\overline{\alpha} = \gamma_{1}\mathfrak{s}\gamma_{2}\mathfrak{s}^{-1}.$$
\begin{lemma}\label{lemma:freeness}
Either $\gamma_{1}$ is free or $\gamma_{2}$ is free.
\end{lemma}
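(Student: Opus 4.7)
The plan is to argue by contradiction: assume neither $\gamma_1$ nor $\gamma_2$ is free, so each is either row-preserving or column-preserving, giving four cases. For row-preserving $\gamma$, write $\gamma\etwo{x}{y}=\etwo{f(x)}{h(x,y)}$ with $f$ a permutation of $E^{(1)}$ and $y\mapsto h(x,y)$ a permutation for each fixed $x$; symmetrically, for column-preserving $\gamma$, write $\gamma\etwo{x}{y}=\etwo{h'(x,y)}{g(y)}$. The normalizations $\gamma_i\etwo{1}{1}=\etwo{1}{1}$ force $f_i(1)=1$, $g_i(1)=1$, and $h_i(1,1)=h_i'(1,1)=1$. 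In each case I will compute $\overline{\alpha}\etwo{1}{u_1}=\gamma_1\mathfrak{s}\gamma_2\mathfrak{s}^{-1}\etwo{1}{u_1}$ (or $\overline{\alpha}\etwo{u_1}{1}$) directly and derive a contradiction with $u_1\neq 1$ or $u_2\neq 1$.

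The three ``mixed'' cases close quickly using only $\overline{\alpha}\etwo{1}{u_1}=\etwo{u_2}{1}$. When $\gamma_1$ is row-preserving and $\gamma_2$ is column-preserving, $\gamma_2\etwo{y}{1}$ has second coordinate $g_2(1)=1$; after $\mathfrak{s}$ this $1$ becomes the first coordinate, and the row-preserving $\gamma_1$ (which fixes row $1$) keeps it there, so $\overline{\alpha}\etwo{1}{y}$ always sits in row $1$, contradicting $u_2\neq 1$. When $\gamma_1$ is column-preserving and $\gamma_2$ is row-preserving, a direct computation shows that the second coordinate of $\overline{\alpha}\etwo{1}{u_1}$ equals $g_1(f_2(u_1))$; demanding this equal $1$ forces $f_2(u_1)=1=f_2(1)$, contradicting $u_1\neq 1$. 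When both are column-preserving, the analogous computation gives $h_2'(u_1,1)=1=h_2'(1,1)$, contradicting injectivity of $h_2'(\cdot,1)$.

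The main obstacle is the fourth case, in which both $\gamma_1$ and $\gamma_2$ are row-preserving; here the single relation $\overline{\alpha}\etwo{1}{u_1}=\etwo{u_2}{1}$ is insufficient. To close this case I will invoke the symmetric relation $\overline{\alpha}\etwo{u_1}{1}=\etwo{1}{u_2}$ that was recorded just before the lemma. It stems from the fact that the $\alpha$ produced by Lemma~\ref{lemma:alphainN} lies in $\inert(\sigma^{j})$ for some odd $j$, so on $P_2(\sigma)$ (where $\sigma^2=\id$) we have $\alpha\sigma=\sigma\alpha$ and consequently $\overline{\alpha}\mathfrak{s}=\mathfrak{s}\overline{\alpha}$. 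With both $\gamma_i$ row-preserving and $f_i(1)=1$, the chain
\[
\overline{\alpha}\etwo{u_1}{1}=\gamma_1\mathfrak{s}\gamma_2\etwo{1}{u_1}=\gamma_1\mathfrak{s}\etwo{1}{h_2(1,u_1)}=\gamma_1\etwo{h_2(1,u_1)}{1}
\]
has first coordinate $f_1(h_2(1,u_1))$, which must equal $1$; injectivity of $f_1$ together with $f_1(1)=1$ forces $h_2(1,u_1)=1=h_2(1,1)$, and then injectivity of $h_2(1,\cdot)$ yields $u_1=1$, a contradiction. This exhausts the four cases and proves the lemma.
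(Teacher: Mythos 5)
Your proof is correct and is essentially the paper's argument: both rest on pushing the identities $\overline{\alpha}\etwo{1}{u_{1}}=\etwo{u_{2}}{1}$ and $\overline{\alpha}\etwo{u_{1}}{1}=\etwo{1}{u_{2}}$ through $\overline{\alpha}=\gamma_{1}\mathfrak{s}\gamma_{2}\mathfrak{s}^{-1}$ and exploiting that $\gamma_{1},\gamma_{2}$ fix $\etwo{1}{1}$ together with injectivity of the induced row/column maps, the only difference being that the paper organizes this as two cases on $\gamma_{2}$ (each directly forcing $\gamma_{1}$ to be free) rather than your contrapositive four-case split. The one quibble is your parenthetical justification of $\overline{\alpha}\mathfrak{s}=\mathfrak{s}\overline{\alpha}$ via an ``odd $j$'' --- the actual reason is simply that $\alpha$ commutes with the (renamed) $\sigma$, whose action on $P_{2}(\sigma)$ is exactly $\mathfrak{s}$ --- but since the identity $\overline{\alpha}\etwo{u_{1}}{1}=\etwo{1}{u_{2}}$ is already recorded in the paper immediately before the lemma, this is harmless.
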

\begin{proof}
Suppose $\gamma_{2}$ is row-preserving. Then $\gamma_{2} \etwo{1}{u_{1}} = \etwo{1}{v_{1}}$ for some $v_{1} \in E^{(1)}$, $v_{1} \ne 1$, since $\gamma_{2}$ fixes $\etwo{1}{1}$. Then:
$$\etwo{1}{u_{2}} = \overline{\alpha} \etwo{u_{1}}{1} = \gamma_{1} \mathfrak{s} \gamma_{2} \mathfrak{s}^{-1} \etwo{u_{1}}{1}= \gamma_{1} \mathfrak{s} \gamma_{2} \etwo{1}{u_{1}} = \gamma_{1} \mathfrak{s} \etwo{1}{v_{1}}= \gamma_{1} \etwo{v_{1}}{1}.$$
Thus $\gamma_{1} \etwo{v_{1}}{1} = \etwo{1}{u_{2}}$. Since $\gamma_{1}$ fixes $\etwo{1}{1}$, it follows that $\gamma_{1}$ is free.

Suppose instead that $\gamma_{2}$ is column-preserving. Then, likewise, we have $\gamma_{2} \etwo{u_{1}}{1} = \etwo{v_{2}}{1}$ for some $v_{2} \in E^{(1)}$, $v_{2} \ne 1$, since $\gamma_{2}$ fixes $\etwo{1}{1}$.
Thus, as in the first case, we then have:
$$\etwo{u_{2}}{1} = \overline{\alpha} \etwo{1}{u_{1}} = \gamma_{1} \mathfrak{s} \gamma_{2} \mathfrak{s}^{-1} \etwo{1}{u_{1}} = \gamma_{1} \mathfrak{s} \gamma_{2} \etwo{u_{1}}{1} = \gamma_{1} \mathfrak{s} \etwo{v_{2}}{1}= \gamma_{1} \etwo{1}{v_{2}}.$$
Since $\gamma_{1}$ fixes $\etwo{1}{1}$, it again follows that $\gamma_{1}$ is free.
\end{proof}

For a subgroup $H \subset \sym(E^{(2)})$, we say $H$ \emph{contains the arrangement}
\begin{equation}\label{eqn:arrangementdef}
\begin{cases}
      \etwo{x_{1}}{y_{1}} \mapsto \etwo{x^{\prime}_{1}}{y^{\prime}_{1}} \\
      \vdots \\
      \etwo{x_{n}}{y_{n}} \mapsto \etwo{x^{\prime}_{n}}{y^{\prime}_{n}}
   \end{cases}
\end{equation}
if $H$ contains an element $\phi$ such that $\phi$ maps points as in~\eqref{eqn:arrangementdef}.
Note that not all points of $E^{(2)}$ may be listed, and
 if a point is not listed it means we make no claim how $\phi$ acts on that point.
 Instead of writing $\etwo{x_{1}}{y_{1}} \mapsto \etwo{x_{1}}{y_{1}}$,
 we  simply write $\etwoid{x_{1}}{y_{1}}$.

\begin{lemma}\label{lemma:diagmove}
Suppose $H$ is a subgroup of $\sym(E^{(2)})$ and $P \subset H$, where $P$ is the subgroup defined in~\eqref{def:P}.
\begin{enumerate}
\item
\label{it:row-tau}
Suppose there exists $\tau \in H$ such that $\tau$ is not row-preserving. Then at least one of the following holds:
\begin{enumerate}
\item
$H$ contains the arrangement
\begin{equation}\label{eqn:arr1}
\begin{cases}
      \etwoid{1}{1}\\
      \etwo{1}{2} \mapsto \etwo{2}{2}
   \end{cases}
\end{equation}
\item
$H$ contains the arrangement
\begin{equation}\label{eqn:arr2}
\begin{cases}
    \etwoid{1}{1}\\
    \etwo{2}{1} \mapsto \etwo{1}{2}\
\end{cases}
\end{equation}
\end{enumerate}
\item
\label{it:column-tau}
Suppose there exists $\tau \in H$ such that $\tau$ is not column-preserving. Then at least one of the following holds:
\begin{enumerate}
\item
$H$ contains the arrangement
\begin{equation}\label{eqn:arr1}
\begin{cases}
      \etwoid{1}{1}\\
      \etwo{2}{1} \mapsto \etwo{2}{2}
   \end{cases}
\end{equation}
\item
$H$ contains the arrangement
\begin{equation}\label{eqn:arr2s}
\begin{cases}
    \etwoid{1}{1}\\
    \etwo{2}{1} \mapsto \etwo{1}{2}
\end{cases}
\end{equation}
\end{enumerate}
\item If $H$ contains some $\tau$ where $\tau$ is free, then $H$ contains the arrangement
\label{it:free-tau}
\begin{equation}\label{eqn:arr2}
\begin{cases}
    \etwoid{1}{1}\\
    \etwo{1}{2} \mapsto \etwo{2}{1}
\end{cases}
\end{equation}
\end{enumerate}
\end{lemma}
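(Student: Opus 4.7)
The plan is to prove each part by a case analysis exploiting the fact that $P$ acts on $E^{(2)}$ by independent permutations of the two coordinates: any $p \in P$ has the form $p\etwo{x}{y} = \etwo{\phi_1(x)}{\phi_2(y)}$ with $\phi_1,\phi_2 \in \sym(E^{(1)})$. Consequently, given any pair of points in $E^{(2)}$ sharing exactly one coordinate, there is an element of $P$ sending that pair to any other pair with the same sharing structure. This lets us freely normalize both the domain and the image of any given element of $H$ by pre- and post-composing with elements of $P$.

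For Part~\eqref{it:row-tau}, I would extract from the non-row-preserving hypothesis witnesses $x$ and $y_1 \ne y_2$ with $\tau\etwo{x}{y_i} = \etwo{a_i}{b_i}$ and $a_1 \ne a_2$. The argument splits on whether $b_1 \ne b_2$ or $b_1 = b_2$. In the first case, the two image points differ in both coordinates, so I choose $p_1, p_2 \in P$ such that $p_2 \tau p_1$ sends $\etwo{1}{1} \mapsto \etwo{1}{1}$ and $\etwo{1}{2} \mapsto \etwo{2}{2}$, producing arrangement~(a). In the second case, $\tau$ carries two points in a common row to two points in a common column, which is the opposite of the pattern required in arrangement~(b); so I apply the same normalization recipe to $\tau^{-1} \in H$, which sends two points in a common column to two points in a common row, and thereby obtain arrangement~(b). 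Part~\eqref{it:column-tau} follows by the entirely symmetric argument with rows and columns interchanged, with the dichotomy now on whether the first coordinates of the images agree.

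For Part~\eqref{it:free-tau}, a free $\tau$ is both non-row- and non-column-preserving, so Parts~\eqref{it:row-tau} and~\eqref{it:column-tau} produce elements $\alpha, \beta \in H$ realizing arrangement (1a) or (1b), and (2a) or (2b), respectively. The pivotal observation is that arrangements (1b) and (2b) are literally the same statement, and the inverse of an element realizing it is precisely the arrangement required in Part~\eqref{it:free-tau}. Hence if either (1b) or (2b) is produced, inverting that element in $H$ finishes the proof. The only remaining case is when $\alpha$ realizes (1a) and $\beta$ realizes (2a); here a direct computation gives
$$
\beta^{-1}\alpha\etwo{1}{1} = \beta^{-1}\etwo{1}{1} = \etwo{1}{1}, \qquad \beta^{-1}\alpha\etwo{1}{2} = \beta^{-1}\etwo{2}{2} = \etwo{2}{1},
$$
using $\beta\etwo{2}{1} = \etwo{2}{2}$. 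Thus $\beta^{-1}\alpha \in H$ realizes the desired arrangement.

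I expect the main labor to be the bookkeeping in Parts~\eqref{it:row-tau} and~\eqref{it:column-tau}: checking that the required $p_1, p_2 \in P$ exist in each subcase amounts to verifying that the coordinate assignments are consistent and extendable to bijections of $E^{(1)}$, which is straightforward but tedious. There is no conceptual obstacle; the essential insight enabling Part~\eqref{it:free-tau} is simply that arrangement~(b) in Parts~\eqref{it:row-tau}/\eqref{it:column-tau} is the inverse of the target arrangement in Part~\eqref{it:free-tau}, which makes the free case fall out of the other two by taking an inverse or by the single commutator-style computation above.
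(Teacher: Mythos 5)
Your proposal is correct and follows essentially the same strategy as the paper's proof: normalize $\tau$ on both sides by elements of $P$ (using that $P$ acts independently and transitively on each coordinate), with the final dichotomy governed by whether the two image points share a coordinate, and obtain Part~\eqref{it:free-tau} either by inverting an element realizing the common arrangement~(b) or by the same product computation $\beta^{-1}\alpha$ that the paper uses. The only difference is organizational: you identify the dichotomy up front and write down the normalizers directly, whereas the paper assembles them one transposition at a time and reaches the dichotomy ($t=1$ versus $t>1$) at the last step.
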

\begin{proof}
The proofs of Parts~\eqref{it:row-tau} and~\eqref{it:column-tau} are similar, so we only prove case~\eqref{it:column-tau}, assuming that $\tau$ is not column-preserving.

Since $\tau$ is not column-preserving, there exist
$a_{1},a_{2},b_{1} \in E^{(1)}$ such that $\left(\tau\etwo{a_{1}}{b_{1}}\right)_{2} \ne \left(\tau\etwo{a_{2}}{b_{1}}\right)_{2}$.
The group $P$ acts transitively on $E^{(2)}$, so there exists $\phi_{1} \in P$ such that $\phi_{1}\tau \etwo{a_{1}}{b_{1}} = \etwo{a_{1}}{b_{1}}$.
It follows that $\phi_{1}\tau\etwo{a_{2}}{b_{1}}_{2} \ne b_{1}$.
Choose $\phi_{2} \in P$ such that $\phi_{2}\etwo{1}{1} = \etwo{a_{1}}{b_{1}}$, let $\phi_{3} = \phi_{2}^{-1}\phi_{1}\tau\phi_{2}$, and let $\etwo{a_{3}}{1} = \phi_{2}^{-1}\etwo{a_{2}}{b_{1}}$. Note that $a_{3} \ne 1$. We have $\phi_{3}\etwo{1}{1} = \etwo{1}{1}$, and setting $k = \phi_{3}\etwo{a_{3}}{1}_{2}$, we have $k \ne 1$ (since $\phi_{1}\tau\etwo{a_{2}}{b_{1}}_{2} \ne b_{1}$).
Letting $\phi_{4} = \etwo{\id}{k \leftrightarrow 2}\phi_{3}$,
it follows that $\phi_{4}\etwo{a_{3}}{1}_{2} = 2$. Finally, let $\phi_{5} = \phi_{4}\etwo{2 \leftrightarrow a_{3}}{\id}$, so that $\phi_{5}\etwo{2}{1} = \etwo{t}{2}$ for some $t$. Note that we still have $\phi_{5}\etwo{1}{1} = \etwo{1}{1}$. If $t=1$, then $\phi_{5}$ gives arrangement~\eqref{eqn:arr2s}. If $t>1$ then letting $\phi_{6} = \etwo{t \leftrightarrow 2}{\id}\phi_{5}$, $\phi_{6}$ gives arrangement~\eqref{eqn:arr1}.

Turning to Part~\eqref{it:free-tau}, suppose $\tau \in H$ and $\tau$ is free. By  Parts~\eqref{it:row-tau} and~\eqref{it:column-tau}, either $H$ contains the arrangement
\begin{equation}\label{eqn:arr2}
\begin{cases}
    \etwoid{1}{1}\\
    \etwo{2}{1} \mapsto \etwo{1}{2}
\end{cases}
\end{equation}
in which case (upon taking an inverse) we are done, or $H$ contains both arrangements
\begin{equation}\label{eqn:arr2}
\begin{cases}
    \etwoid{1}{1}\\
    \etwo{2}{1} \mapsto \etwo{2}{2}
\end{cases}
\text{ and } \quad
\begin{cases}
    \etwoid{1}{1}\\
    \etwo{1}{2} \mapsto \etwo{2}{2}.
\end{cases}
\end{equation}
In the latter case, if $\phi_{1}, \phi_{2}$ implement these arrangements, then $\phi_{1}^{-1}\phi_{2}$ implements the arrangement
\begin{equation}\label{eqn:arr2}
\begin{cases}
    \etwoid{1}{1}\\
    \etwo{1}{2} \mapsto \etwo{2}{1}
    .
\end{cases}
\end{equation}

\end{proof}

\subsubsection{Structures in the subgroups $H_1, H_2$}
We  use pictures to depict the action of elements of $\sym(E^{(2)})$. Since $E^{(2)} = E^{(1)} \times E^{(1)}$, we consider $E^{(2)}$ as a grid of points. When we say $\phi \in \sym(E^{(2)})$ \emph{acts by}
\begin{center}
\begin{tikzpicture}
  \filldraw [black] (0,3) circle (2pt)
                   (1,3) circle (2pt)
                   (2,3) node{x}
                   (3,3) node{x}
                                      (0,1) node{x}
                                                   (0,2) node{x}
                                                         (1,1) node{x}
                                                                      (1,2) node{x}
                   (2,2) circle (2pt)
                                        (3,2) node{x}
                                                             (2,1) node{x}
                  (3,1) node{x}
               (1.5,.5) node{\vdots}
               (3.6,2) node{\dots};
 \draw[<->]      (1.1,2.9)   -- (1.9,2.1);

\end{tikzpicture}
\end{center}
we mean that $\phi$ acts on $E^{(2)}$ as drawn in the picture, with the following conventions:
\begin{enumerate}
\item
A dot associated with no arrow represents a point fixed by $\phi$.
\item
An \textbf{x}
means the point could be mapped anywhere, i.e. we make no assumption on how that point is mapped by $\phi$.
\item
Ellipses indicate the type of action continues in that direction, and so the use of ellipses following $\textbf{x}$'s means that we make no assumption
on how $\phi$ acts on points in that direction.
\item
When no ellipses are present, $\phi$ acts by the identity on any unrepresented points (i.e. points in $E^{(2)}$ which do not appear in the picture).
\end{enumerate}

\begin{definition}\label{def:substantialsubgroup}
We say a subgroup $H \subset \sym(E^{(2)})$ is \emph{substantial} if it contains both of the following:
\begin{enumerate}
\item
\label{item:startingone}
A free element.
\item
\label{item:startingtwo}
An involution implementing at least one of the following arrangements:
\vskip.2in
\begin{minipage}{0.4\textwidth}
\begin{center}
\begin{tikzpicture}
  \filldraw [black] (0,3) circle (2pt)
  (0,2) circle (2pt)
         (1,1) circle (2pt)
                   (1,3) node{x}
                   (2,3) node{x}
                   (2,0) node{x}
                                      (0,1) node{x}
(1,2) node{x}

                                                                       (2,2) node{x}
                                        (1,0) node{x}
                                                             (2,1) node{x}
                  (0,0) node{x}
               (1.5,-.5) node{\vdots}
               (2.6,1.5) node{\dots};
 \draw[<->]      (0.1,1.9)   -- (0.9,1.1);
\end{tikzpicture}
\end{center}
\end{minipage}
\begin{minipage}{0.4\textwidth}
\begin{center}
\begin{tikzpicture}
  \filldraw [black] (0,3) circle (2pt)
                   (1,3) circle (2pt)
                   (2,3) node{x}
                   (3,3) node{x}
                                      (0,1) node{x}
                                                   (0,2) node{x}
                                                         (1,1) node{x}
                                                                      (1,2) node{x}
                   (2,2) circle (2pt)
                                        (3,2) node{x}
                                                             (2,1) node{x}
                  (3,1) node{x}
               (1.5,.5) node{\vdots}
                 (3.6,2) node{\dots};
 \draw[<->]      (1.1,2.9)   -- (1.9,2.1);
\end{tikzpicture}
\end{center}
\end{minipage}

\begin{minipage}{0.4\textwidth}
\begin{center}
\captionsetup{font=footnotesize}
\captionof*{figure}{Arrangement (a)}
\end{center}
\end{minipage}
\begin{minipage}{0.4\textwidth}
\begin{center}
\captionsetup{font=footnotesize}
\captionof*{figure}{Arrangement (b)}
\end{center}
\end{minipage}
\end{enumerate}
\end{definition}

\begin{lemma}\label{lemma:startingarrangement}
At least one of the subgroups $H_{1}, H_{2}$ in $\sym(E^{(2)})$ is substantial.
\end{lemma}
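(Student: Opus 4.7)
The plan is to verify the two conditions of Definition~\ref{def:substantialsubgroup} separately: existence of a free element, and existence of an involution implementing arrangement (a) or (b).

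For the first condition, I would invoke Lemma~\ref{lemma:freeness}, which gives that at least one of $\gamma_{1}, \gamma_{2}$ is free. Since freeness is preserved under inversion, either $\gamma_{1} \in H_{1}$ or $\gamma_{2}^{-1} \in H_{2}$ is a free element of the respective subgroup. Without loss of generality I may assume $\gamma_{1}$ is free and work in $H_{1}$; the other case is entirely symmetric, with the roles of $H_{1}, H_{2}$ and $\gamma_{1}, \gamma_{2}^{-1}$ interchanged throughout.

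For the involution, I would exploit that $H_{1}$ contains both the free element $\gamma_{1}$ and the entire subgroup $P$ (by Lemma~\ref{lemma:PwithK}). Applying Lemma~\ref{lemma:diagmove}(\ref{it:free-tau}) to $H_{1}$ with $\tau = \gamma_{1}$ produces an element $\phi \in H_{1}$ fixing $\etwo{1}{1}$ and sending $\etwo{1}{2}$ to $\etwo{2}{1}$. The strategy is to promote $\phi$ to an honest involution of the required geometric form by conjugating a carefully chosen involution $\iota \in P$ by $\phi$: taking $\iota$ to fix $\etwo{1}{1}$ and swap a single pair of points aligned in a row or column of $E^{(2)}$ (for instance $\etwo{2}{j} \leftrightarrow \etwo{3}{j}$ for a well-chosen $j$), the conjugate $\phi \iota \phi^{-1} \in H_{1}$ is automatically an involution fixing $\etwo{1}{1}$, and its swap structure is the image under $\phi$ of the swap of $\iota$. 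With the additional information on $\alpha$ provided by Lemma~\ref{eqn:setupconditions}---in particular the existence of $u_{3}$ with neither coordinate of $\alpha \etwo{1}{u_{3}}$ equal to $1$---together with iterated applications of Lemma~\ref{lemma:diagmove}(\ref{it:row-tau}) and~(\ref{it:column-tau}) to refine $\phi$ by multiplying with elements of $P$, one should be able to force $\phi(\etwo{2}{j})$ and $\phi(\etwo{3}{j})$ into the diagonal configuration shown in arrangement (a) or (b), with a fixed point in the same column or row.

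The main obstacle is the precise bookkeeping of where $\phi$ sends the two points swapped by $\iota$: Lemma~\ref{lemma:diagmove} only controls $\phi$ on two points at a time, whereas matching arrangement (a) or (b) requires knowing $\phi$'s action on three points (the fixed dot plus the two swapped dots). This means one must iteratively pre- and post-compose $\phi$ with suitable elements of $P$, and possibly with additional elements of $H_{1}$ produced from products of $(\gamma_{1}, \gamma_{2}^{-1})$ with elements of the form $\bigl(\etwo{\phi_{1}}{\phi_{2}},\etwo{\phi_{2}}{\phi_{1}}\bigr) \in \mathcal{K}$, until the action on all three relevant points is prescribed. This is the combinatorial heart of the argument.
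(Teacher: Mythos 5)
There are two genuine gaps here, one structural and one in the combinatorial core. First, the reduction ``WLOG $\gamma_{1}$ is free and work in $H_{1}$'' is not available. Lemma~\ref{lemma:freeness} tells you that one of $\gamma_{1},\gamma_{2}$ is free, but it does not follow that the subgroup containing that free element is the one which ends up substantial. The paper's proof turns on a further dichotomy that your proposal never engages: whether the free element maps some point of the cross ${\rm CR}=\{\etwo{x}{y}\colon x=1\text{ or }y=1\}$ into its complement ${\rm IS}$. If $\gamma_{1}$ is free but leaves ${\rm CR}$ invariant, there is no evident way to manufacture inside $H_{1}$ an involution fixing $\etwo{1}{1}$ and swapping a ${\rm CR}$-point with an ${\rm IS}$-point (note that every element of $P$ fixing $\etwo{1}{1}$ preserves ${\rm CR}$, so $P$ alone cannot help). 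In exactly this case the paper uses condition~\eqref{item:alpha5} of Lemma~\ref{eqn:setupconditions} (the symbol $u_{3}$) together with the factorization $\overline{\alpha}=\gamma_{1}\mathfrak{s}\gamma_{2}\mathfrak{s}^{-1}$ to force $\gamma_{2}$ to be free \emph{and} to send $\etwo{1}{u_{3}}$ into ${\rm IS}$, and it is then $H_{2}$ that is shown substantial. In your write-up the $u_{3}$ property is mentioned only in passing, but it carries the entire weight of this case; without it the argument cannot decide which of $H_{1},H_{2}$ to work in.

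Second, the construction of the arrangement involution is left open at precisely the point where the work happens. Your germ of an idea --- conjugate a $P$-involution by a partially controlled $\phi$ --- is also the paper's, but your order of quantifiers (fix $\iota$ first, then try to steer $\phi$ on the three relevant points) is what creates the bookkeeping problem you acknowledge you cannot resolve with Lemma~\ref{lemma:diagmove}. Moreover the $\phi$ you extract from Lemma~\ref{lemma:diagmove}\eqref{it:free-tau} sends $\etwo{1}{2}$ to $\etwo{2}{1}$, a ${\rm CR}$-point to a ${\rm CR}$-point, so it carries no ${\rm IS}$ information whatsoever, and arrangements (a) and (b) require swapping a ${\rm CR}$-point with an ${\rm IS}$-point. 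The paper instead starts from a $\phi$ fixing $\etwo{1}{1}$ with $\phi\etwo{2}{1}=\etwo{x_{2}}{y_{2}}\in{\rm IS}$ (this is where the ${\rm CR}\to{\rm IS}$ hypothesis enters), uses a pigeonhole count (since $n\ge 7$, $\phi$ cannot map all of $L_{1}=\{\etwo{x}{y}\colon x>2,\ y>1\}$ into the small set ${\rm CR}$) to find $\etwo{x_{3}}{y_{3}}\in L_{1}$ with $\phi\etwo{x_{3}}{y_{3}}\in{\rm IS}$, and only then chooses $\tau_{1}\in P$ to swap the two points $\phi\etwo{x_{3}}{y_{3}}$ and $\etwo{x_{2}}{y_{2}}$ of the image of $\phi$. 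The conjugate $\phi^{-1}\tau_{1}\phi$ is then automatically an involution fixing $\etwo{1}{1}$ and swapping $\etwo{2}{1}$ with $\etwo{x_{3}}{y_{3}}$, and a final conjugation by $P$ yields arrangement (a). Choosing the swapped pair inside the image of $\phi$ is what reduces the problem to tracking two points rather than three; without that inversion, and without the ${\rm CR}\to{\rm IS}$ input, the construction you sketch does not close.
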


Before the proof, we introduce some notation. Define
\begin{equation}
\label{def:CR}
{\rm CR} = \left\{\etwo{x}{y} \in E^{(2)} \colon \textnormal{ either } x=1 \textnormal{ or } y=1\right\}
\end{equation}
(thus ${\rm CR}$ is the union of row one and column one in $E^{(2)}$), and define
\begin{equation}
\label{def:IS}
{\rm IS} = E^{(2)} \setminus {\rm CR}.
\end{equation}

\begin{proof}
First suppose $H$ is a subgroup of $\sym(E^{(2)})$ with $P \subset H$ and suppose $\phi \in H$ satisfies both of the following:
\begin{enumerate}
\item
\label{claim:one}
$\phi \etwo{1}{1} = \etwo{1}{1}$.
\item
\label{claim:two}
$\phi \etwo{x_{1}}{y_{1}} \in {\rm IS}$ for some $\etwo{x_{1}}{y_{1}} \in {\rm CR}$, $\etwo{x_{1}}{y_{1}} \ne \etwo{1}{1}$.
\end{enumerate}
We prove that $H$ contains an involution implementing at least one of the arrangements in Part~\eqref{item:startingtwo} of Definition~\ref{def:substantialsubgroup}. Thus suppose that we have such a $\phi$ and some $\etwo{1}{1} \ne \etwo{x_{1}}{y_{1}} \in {\rm CR}$ with $\phi \etwo{x_{1}}{y_{1}} = \etwo{x_{2}}{y_{2}} \in {\rm IS}$. Suppose first that $\etwo{x_{1}}{y_{1}}$ is in column one (so $y_{1} = 1$ and $x_{1} > 1$, since $\etwo{x_{1}}{y_{1}} \ne \etwo{1}{1}$). By replacing $\phi$ by
$\etwo{2 \leftrightarrow x_{1}}{id}^{-1} \phi \etwo{2 \leftrightarrow x_{1}}{id}$, we may assume
that $x_{1} = 2$. Set
$$L_{1} = \{\etwo{x}{y} \in E^{(2)} \colon x > 2 \textnormal{ and } y > 1\}.
$$ Since $n$ is large, there exists some
$\etwo{x_{3}}{y_{3}} \in L_{1}$ such that $\phi \etwo{x_{3}}{y_{3}} \in {\rm IS}$. Choose some involution $\tau_{1} \in P$ such that $\tau_{1} \etwo{1}{1} = \etwo{1}{1}$ and $\tau_{1} \phi \etwo{x_{3}}{y_{3}} = \etwo{x_{2}}{y_{2}}$. Then
$$\phi^{-1} \tau_{1} \phi \etwo{x_{3}}{y_{3}} = \phi^{-1} \etwo{x_{2}}{y_{2}} = \etwo{x_{1}}{y_{1}} = \etwo{2}{1}.$$
Thus $\phi^{-1} \tau_{1} \phi$ is an involution in $H$ fixing $\etwo{1}{1}$ which satisfies $\phi^{-1} \tau_{1} \phi \etwo{2}{1} = \etwo{x_{3}}{y_{3}} \in L_{1}$, and we may choose
another involution $\tau_{2} \in P$ such that $\tau_{2}$ fixes $\etwo{1}{1}$, and $\tau_{2} \etwo{x_{3}}{y_{3}} = \etwo{3}{2}$. Now the involution $\tau_{2}^{-1} \phi^{-1} \tau_{1} \phi \tau_{2}$ is in $H$, and implements the first arrangement.
The case that $\etwo{x_{1}}{y_{1}}$ is in row one is similar, and produces an involution in $H$ implementing the second arrangement.  This completes the proof of the claim.

Recall we have $\gamma_{1} \in H_{1}, \gamma_{2}^{-1} \in H_{2}$ (see~\eqref{eq:gammas}) and both $\gamma_{1}$ and $\gamma_{2}$ fix $\etwo{1}{1}$.
By Lemmma~\ref{lemma:freeness}, either $\gamma_{1}$ is free or $\gamma_{2}$ is free.
Suppose then that $\gamma_{1}$ is free.
If $\gamma_{1}$ maps any point (necessarily not $\etwo{1}{1}$) in ${\rm CR}$ into ${\rm IS}$, then $H_{1}$ satisfies both parts of Definition~\ref{def:substantialsubgroup} by the claim above. Suppose then that $\gamma_{1}$ leaves ${\rm CR}$ invariant. Then $\gamma_{1} \mathfrak{s}$ leaves ${\rm CR}$ invariant, and fixes $\etwo{1}{1}$. By condition~\eqref{item:alpha5} of Lemma~\ref{eqn:setupconditions}, $\overline{\alpha} = \gamma_{1} \mathfrak{s} \gamma_{2} \mathfrak{s}^{-1}$ maps the points $\etwo{1}{u_{3}}$ and $\etwo{u_{3}}{1}$ into ${\rm IS}$. Since $\mathfrak{s}$ leaves ${\rm CR}$ and hence ${\rm IS}$ invariant, this means $\gamma_{2}$ maps both $\etwo{1}{u_{3}}$ and $\etwo{u_{3}}{1}$ into ${\rm IS}$.
Since $\gamma_{2}$ fixes $\etwo{1}{1}$, this implies $\gamma_{2}$ is neither row-preserving nor column-preserving,
and so is free.  Furthermore, $\gamma_{2}$ maps a point in ${\rm CR}$ (specifically, $\etwo{1}{u_{3}}$) into ${\rm IS}$. By the claim,
 this implies $H_{2}$ satisfies both conditions~\eqref{item:startingone} and~\eqref{item:startingtwo} of Definition~\ref{def:substantialsubgroup}.

A similar argument shows that if $\gamma_{2}$ is free and preserves ${\rm CR}$, then $H_{1}$ satisfies both conditions~\eqref{item:startingone} and~\eqref{item:startingtwo} of Definition~\ref{def:substantialsubgroup}, finishing the proof.
\end{proof}

\subsubsection{Primitivity}
Our goal now is to show that any substantial subgroup of $\sym(E^{(2)})$ which contains $P$ is primitive.

We  make use of the following lemma from~\cite{GenSymGroups}.
\begin{lemma}[See~{\cite[page 735]{GenSymGroups}}]
\label{lemma:primitive-group}
Suppose $X$ is a finite set, $K \subset \sym(X)$ is transitive, and $x \in X$. Then $K$ is primitive if the only blocks which contain $x$ are $\{x\}$ and $X$.
\end{lemma}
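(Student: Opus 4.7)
The plan is to show directly that any block of $K$ must be either a singleton or all of $X$, by transporting it via a group element to a block containing the distinguished point $x$ and then invoking the hypothesis.

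More precisely, I would start by fixing an arbitrary nonempty $K$-block $B \subset X$, with the goal of showing $|B| = 1$ or $B = X$. The first case is when $x \in B$: this is immediate from the hypothesis. The interesting case is when $x \notin B$. Here I would invoke transitivity of $K$ on $X$ to produce some $g \in K$ and some $y \in B$ with $g(y) = x$. The key observation is then that the translate $g(B)$ is itself a $K$-block, and it contains $x$.

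To verify this last observation, I would briefly check that the collection of blocks is closed under the action of $K$: if $B$ is a block and $h \in K$, then for any $k \in K$, one has $k(g(B)) = (kg)(B)$, which either equals $g(B)$ (when $(g^{-1}kg)(B) = B$) or is disjoint from $g(B)$ (when $(g^{-1}kg)(B) \cap B = \emptyset$), directly from the block condition applied to $B$ with the element $g^{-1}kg$. This is the only mildly non-formal step, and it is completely routine.

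With that in hand, the hypothesis forces $g(B) = \{x\}$ or $g(B) = X$, and applying $g^{-1}$ yields $B = \{g^{-1}(x)\} = \{y\}$ or $B = X$, as desired. Since $B$ was an arbitrary block, every block of $K$ is a singleton or all of $X$, i.e.\ $K$ is primitive. I do not anticipate any genuine obstacle here; the argument is a two-line application of transitivity together with the translation-invariance of the notion of block.
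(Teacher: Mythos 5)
Your proof is correct. The paper itself states this lemma without proof, citing Isaacs--Zieschang, and your argument is precisely the standard one underlying that reference: translates of blocks are blocks (your verification via $g^{-1}kg$ is right), so transitivity moves any block onto one containing the distinguished point $x$, where the hypothesis applies. There is no gap.
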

\begin{lemma}
\label{lemma:primitive}
Suppose $H \subset \sym(E^{(2)})$ is a subgroup which contains $P$ and is substantial. Then $H$ is primitive.
\end{lemma}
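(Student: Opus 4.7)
The plan is to verify the criterion from Lemma~\ref{lemma:primitive-group}: since $P \subset H$ acts transitively on $E^{(2)}$, so does $H$, and it suffices to show that the only $H$-blocks containing $\etwo{1}{1}$ are $\{\etwo{1}{1}\}$ and all of $E^{(2)}$.

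First I would classify the $P$-blocks containing $\etwo{1}{1}$, since any $H$-block is automatically a $P$-block. The stabilizer of $\etwo{1}{1}$ in $P = \sym(E^{(1)}) \times \sym(E^{(1)})$ is $\mathrm{Stab}(1) \times \mathrm{Stab}(1)$, and its orbits on $E^{(2)} \setminus \{\etwo{1}{1}\}$ are precisely three: $R_1 \setminus \{\etwo{1}{1}\}$, $C_1 \setminus \{\etwo{1}{1}\}$, and ${\rm IS}$, where $R_1$ denotes the first row and $C_1$ the first column. Any $P$-block containing $\etwo{1}{1}$ is a union of $\{\etwo{1}{1}\}$ with some of these orbits, giving only eight candidates. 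Applying simple elements such as $(1 \leftrightarrow 2, \id) \in P$ to test the block property, one eliminates each hybrid union (namely $\{\etwo{1}{1}\} \cup {\rm IS}$, ${\rm CR} = R_1 \cup C_1$, $R_1 \cup {\rm IS}$, and $C_1 \cup {\rm IS}$), leaving only $\{\etwo{1}{1}\}$, $R_1$, $C_1$, and $E^{(2)}$ as the $P$-blocks through $\etwo{1}{1}$.

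The remaining step is to rule out $R_1$ and $C_1$ as $H$-blocks, and here I would invoke that $H$ is substantial. Suppose $R_1$ were an $H$-block. Since $P \subset H$ acts transitively on the set of rows $\{R_1,\ldots,R_n\}$, which partitions $E^{(2)}$, the $H$-orbit of $R_1$ would have to be exactly this row partition. Consequently every $h \in H$ would permute the rows: for each $x \in E^{(1)}$ there would exist $\pi(x) \in E^{(1)}$ with $h(R_x) = R_{\pi(x)}$, forcing $\left(h\etwo{x}{y}\right)_1 = \pi(x)$ to be independent of $y$. In other words every element of $H$ would be row-preserving, contradicting the existence of a free element in $H$ guaranteed by Definition~\ref{def:substantialsubgroup}. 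The same argument with columns in place of rows rules out $C_1$, using that the free element is also not column-preserving.

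I expect the main obstacle to be the block classification in the first step, which requires a small case analysis; the substantial hypothesis is only invoked at the end, and purely through the free element. The involution from Definition~\ref{def:substantialsubgroup} is not needed for primitivity (it presumably enters only in the subsequent construction of a $p$-cycle needed to apply Jordan's theorem).
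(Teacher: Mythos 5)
Your proof is correct, and it takes a genuinely different (more structural) route than the paper's. Both arguments rest on the same two pillars --- the decomposition of $E^{(2)}$ into $\etwo{1}{1}$, the rest of row one, the rest of column one, and ${\rm IS}$, together with the free element supplied by substantiality --- but they deploy them differently. The paper argues directly about an $H$-block $A$ containing $\etwo{1}{1}$ and a second point: it first shows any block meeting ${\rm IS}$ is all of $E^{(2)}$, and in the remaining case uses Part~\eqref{it:free-tau} of Lemma~\ref{lemma:diagmove} to manufacture a specific element of $H$ fixing $\etwo{1}{2}$ and sending $\etwo{1}{3}$ into ${\rm IS}$, forcing $A$ to meet ${\rm IS}$. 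You instead invoke the general theory of block systems: a block through $\etwo{1}{1}$ must be a union of orbits of the point stabilizer in $P$, which after a short check leaves only $\{\etwo{1}{1}\}$, row one, column one, and $E^{(2)}$ as candidates; and a proper nontrivial block would force its $H$-translates to be exactly the row (or column) partition, making every element of $H$ row- (or column-) preserving and contradicting the existence of a free element. Your version buys a cleaner dependency structure --- it bypasses Lemma~\ref{lemma:diagmove} entirely, needing only the bare existence of a free element rather than the normalized arrangement that lemma produces --- and your closing remark is accurate: the involution in Definition~\ref{def:substantialsubgroup} plays no role in primitivity and enters only in the construction of the $p$-cycle needed for Jordan's theorem. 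The one step you should spell out is the elimination of the four hybrid unions, which requires $n \ge 3$ so that, for instance, $\etwo{3}{3} \in {\rm IS}$ is fixed by the swap of rows one and two; this is harmless under the section's standing assumption $n \ge 7$.
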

\begin{proof}
Since the subgroup $P$ (see~\eqref{def:P}) acts transitively on $E^{(2)}$ and $P \subset H$, the subgroup
$H$ also acts transitively on $E^{(2)}$.
By Lemma~\ref{lemma:primitive-group}, it suffices to show that if $A$ is any $H$-block containing $\etwo{1}{1}$ and at least one other point, then $A$ must be all of $E^{(2)}$.

Let $A$ be an $H$-block containing $\etwo{1}{1}$ and some other point $\etwo{x_{1}}{y_{1}}$.
We claim that if $A$ contains a point in ${\rm IS}$ (recall that the set ${\rm IS}$ is defined in~\eqref{def:IS}), then $A = E^{(2)}$.
To check this, suppose $A$ contains $\etwo{u_{1}}{v_{1}} \in {\rm IS}$. If $\etwo{u_{2}}{v_{2}}$ is any other point in ${\rm IS}$, then there exists $\phi \in P$ such that
\begin{equation*}
\phi \colon
\begin{cases}
\etwoid{1}{1}\\
\etwo{u_{1}}{v_{1}} \mapsto \etwo{u_{2}}{v_{2}}
\end{cases}
\end{equation*}
It follows that $A$ contains ${\rm IS}$. Now $\etwo{\id}{1 \leftrightarrow 2}A \cap A \ne \emptyset$ and $\etwo{\id}{1 \leftrightarrow 2}A$ contains all of column $1$ except $\etwo{1}{1}$, so $A$ contains all of column $1$ (since $A$ already contained $\etwo{1}{1}$). Likewise, $\etwo{1 \leftrightarrow 2}{\id}A \cap A \ne \emptyset$ so $A$ must contain all of row $1$. Thus $A$ must contain all of $E^{(2)}$, proving the claim.

To finish the proof of the lemma, it suffices then to show that $A$ contains some point in
${\rm IS}$.
By assumption, $A$ contains some point $\etwo{x_{1}}{y_{1}} \ne \etwo{1}{1}$. The only remaining cases then are that either $\etwo{x_{1}}{y_{1}}$ lies in row 1 or $\etwo{x_{1}}{y_{1}}$ lies in column 1. We prove the first case; the second case is analogous.

Assume $x_{1}=1$. Then for any $1 \ne z \in E^{(1)}$, $\etwo{\id}{z \leftrightarrow y_{1}}A \cap A$ contains $\etwo{1}{1}$, so $A$ contains $\etwo{1}{z}$ for all such $z$, and $A$ contains row 1. Let $\rho \in \sym(E^{(1)})$ denote the 3-cycle mapping $3 \mapsto 2, 2 \mapsto 1, 1 \mapsto 3$. Since $H$ is substantial, it contains a free element. Thus by part $(iii)$ of Lemma~\ref{lemma:diagmove}
there is some $\tilde{\gamma} \in H$ such that
\begin{equation*}
\tilde{\gamma} \colon
\begin{cases}
\etwoid{1}{1}\\
\etwo{1}{2} \mapsto \etwo{2}{1}.
\end{cases}
\end{equation*}
Then $\etwo{\id}{\rho^{-1}}\tilde{\gamma}\etwo{\id}{\rho} \in H$ and
\begin{equation*}
\etwo{\id}{\rho^{-1}}\tilde{\gamma}\etwo{\id}{\rho} \colon
\begin{cases}
\etwoid{1}{2}\\
\etwo{1}{3} \mapsto \etwo{2}{2}.
\end{cases}
\end{equation*}
Since $A$ contains row 1, it contains $\etwo{1}{2}$, so this implies that $A$ contains $\etwo{2}{2}$, completing the proof.
\end{proof}

\subsubsection{Obtaining a $p$-cycle}
The main goal of this subsection is to prove the following lemma.
\begin{lemma}\label{lemma:substantialpcycle}
Let $H \subset \sym(E^{(2)})$ be a subgroup which contains $P$ and is substantial. Then $H$ contains a $p$-cycle for some prime $p < |E^{(2)}|-2$.

\end{lemma}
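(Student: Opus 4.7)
The plan is to exhibit a $3$-cycle in $H$, since then the lemma follows immediately: $3$ is prime and $3 < n^{2}-2$ for $n\ge 7$. The tools at my disposal inside $H$ are the subgroup $P$ of row/column permutations, the free element guaranteed by substantiality, and the specific involution $\iota$ from arrangement (a) or (b), which fixes $\etwo{1}{1}$, swaps a known pair of points $\{B,C\}$, and acts as some unknown involution on the remaining ``x'' points. Decomposing $\iota = \tau_{0}\tau_{1}\cdots\tau_{k}$ as a product of disjoint transpositions, where $\tau_{0}=(B\,C)$ is known and $\tau_{1},\dots,\tau_{k}$ are unknown, the basic goal is to find $\phi\in P$ so that $\iota\cdot\phi\iota\phi^{-1}$ is a single $3$-cycle.

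First I would try to exploit the classical identity $(a\,b)(b\,c)=(a\,c\,b)$: choosing $\phi\in P$ with $\phi(B)=B$ and $\phi(C)=D$ for some new point $D$ would make the product of the known parts act as the $3$-cycle $(B\,C\,D)$. Simultaneously, I want $\phi$ to conjugate each unknown $\tau_{i}$ to itself, so that the unknown parts of $\iota$ and $\phi\iota\phi^{-1}$ cancel in the product. Since the subgroup of $P$ pointwise-stabilizing any finite chosen set still has order roughly $((n-2)!)^{2}$, there is enormous freedom in choosing such $\phi$, and I would aim to combine a pigeonhole/averaging argument with the explicit row/column structure of $P$ to find a $\phi$ meeting both conditions. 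The free element supplied by substantiality, together with Lemma~\ref{lemma:diagmove}(iii), provides the flexibility needed to move points like $D$ into position relative to $B$ and $C$.

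The hard part will be controlling the unknown transpositions $\tau_{i}$: since we do not know what they are, no single $\phi$ can be guaranteed a priori to normalize them. To overcome this, I would fall back on an iterative construction --- taking products of several conjugates $\iota\cdot\phi_{1}\iota\phi_{1}^{-1}\cdot\phi_{2}\iota\phi_{2}^{-1}\cdots$, each chosen to eliminate another piece of the uncontrolled support --- or perform a case analysis on the two arrangement types (a) and (b), leveraging the explicit positions of the dots and ellipses in Definition~\ref{def:substantialsubgroup} to argue that one of them forces enough structure to isolate a $3$-cycle. Once a $3$-cycle is produced, the lemma is immediate, and combined with the primitivity shown in Lemma~\ref{lemma:primitive} this will feed into the proof of Lemma~\ref{lemma:obtainment} via Jordan's theorem.
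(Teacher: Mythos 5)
Your overall frame---reduce to producing a short cycle and feed it into Jordan's theorem together with primitivity---matches the paper, and so does the general mechanism of multiplying the given involution against $P$-conjugates of itself to cancel the unknown part. But the proposal has a genuine gap at exactly the point you flag as ``the hard part.'' Your first mechanism asks for a single $\phi\in P$ that simultaneously moves $C$ to a fresh point $D$ \emph{and} normalizes every unknown transposition $\tau_{1},\dots,\tau_{k}$. Such a $\phi$ need not exist: the unknown part of $\iota$ can be an essentially arbitrary involution on the $\mathbf{x}$-points, and an element of $P$ acts by permuting whole rows and columns, so the subgroup of $P$ normalizing that involution can easily be trivial (or at least fail to contain anything satisfying your two pointwise conditions). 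The ``enormous freedom'' in the pointwise stabilizer of a finite set does not help, because condition (3) is a global constraint on all of $E^{(2)}$, not a condition at finitely many prescribed points. The fallback---``an iterative construction \dots or a case analysis''---is a restatement of the difficulty rather than a resolution of it.

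What actually makes the paper's argument work, and what is missing from your plan, is a structural identity that tames the unknown support without having to normalize it. Setting $\gamma_{4}=\gamma_{3}\star\phi^{R}_{1,2}$, one gets $\gamma_{4}^{\phi^{R}_{1,2}}=\gamma_{4}^{-1}$, which forces every cycle of $\gamma_{4}$ supported off $R_{1,2}$ to be an involution and every odd-length cycle to meet the complement of $R_{1,2}$ in at most one point. Raising to a suitable power coprime to $3$ then kills all the uncontrolled $2$-cycles while preserving the distinguished $3$-cycle, leaving a permutation built only from cycles of $3$-power length, each meeting $R_{1,2}$, and satisfying a counting bound of the form $\sum_{i}(|c_{i}|-1)\le 2n$. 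It is these facts---not a normalization of the unknown transpositions---that make the subsequent case analysis (Lemmas~\ref{lemma:somecycles} and~\ref{lemma:r1234}) terminate, and note that the output there is sometimes a $5$- or $7$-cycle rather than a $3$-cycle, which is why the lemma is stated for a general prime $p<|E^{(2)}|-2$. Without an ingredient of this kind your proposal does not close.
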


We start with some notation to aid in describing the arrangements.
Define
\begin{equation}
\label{def:Rij}
R_{i.j} =  \Bigl\{\etwo{i}{y}\colon y \in E^{(1)}(\Gamma)\Bigr\} \cup  \Bigl\{\etwo{j}{y}\colon y \in E^{(1)}(\Gamma)\Bigr\},
\end{equation}
and
\begin{equation}
\label{def:Cij}
C_{i,j} = \Bigl\{\etwo{x}{i}\colon x \in E^{(1)}(\Gamma)\Bigr\} \cup
\Bigl\{\etwo{x}{j}\colon x \in E^{(1)}(\Gamma)\Bigr\}.
\end{equation}
Thus $R_{i,j}$ denotes the set of points in $E^{(2)}$ which belong to either row $i$ or $j$, and $C_{i,j}$ denotes the set of points in $E^{(2)}$ which belong to either column $i$ or $j$.

Given $1 \le i,j \le n$,
let $\phi^{C}_{i,j}$ denote the involution in $P$ swapping columns $i$ and $j$ and let $\phi^{R}_{i,j}$ denote the involution in $P$ swapping rows $i$ and $j$. Given any $\phi_{1}, \phi_{2} \in H_{1}$ we let $\phi_{2}^{\phi_{1}} = \phi_{1}^{-1}\phi_{2}\phi_{1}$, and for $\tau,\phi \in H_{1}$, define
$$\tau \star \phi = \left( \tau^{\phi} \right)^{-1} \tau = \phi^{-1} \tau^{-1} \phi \tau.$$

(While $\tau \star \phi$ is usually denoted by $[\phi,\tau]$, we find the $\star$ notation to be more readable.)

We  frequently use the following observation: if $c$ is a cycle whose support does not intersect $C_{i,j}$ (respectively, $R_{i,j}$), then $c \star \phi^{C}_{i,j} = \id$ ($c \star \phi^{R}_{i,j} = \id$, respectively).

Let us briefly outline the proof of Lemma~\ref{lemma:substantialpcycle}. Suppose $H$ is a substantial subgroup of $\sym(E^{(2)})$ which contains $P$. To show $H$ contains a $p$-cycle, we begin by letting $\gamma_{3}$ denote some element of $H$ which acts by one of the arrangements in Definition~\ref{def:substantialsubgroup}; say Arrangement $(a)$. Letting $\gamma_{4} = \gamma_{3} \star \phi^{R}_{1,2}$, by passing from $\gamma_{3}$ to this $\gamma_{4}$, any $2$-cycles in $\gamma_{3}$ whose support were disjoint from rows one and two vanish. Moreover, the element $\gamma_{4}$ has a distinguished $3$-cycle whose support consists of the points $\etwo{1}{1}, \etwo{2}{1}, \etwo{3}{2}$, and we use this distinguished cycle to reduce to a collection of cases, which we then handle.  The proof of this occupies the remainder of this section.

\begin{lemma}\label{lemma:somecycles}
Suppose $H$ is a subgroup of $\sym(E^{(2)})$ which contains $P$ and any of the following arrangements:
\vskip.2in

\begin{minipage}[b]{0.5\textwidth}
\centering
\begin{tikzpicture}[scale=0.75]
\filldraw [black] (0,3) circle (2pt)
                   (1,3) circle (2pt)
                   (2,3) circle (2pt)
                   (0,2) circle (2pt)
                   (1,2) circle (2pt)
                   (2,2) circle (2pt)
                   (0,1) circle (2pt)
                   (1,1) circle (2pt)
                   (2,1) circle (2pt)
                   (0,0) circle (2pt)
                   (1,0) circle (2pt)
                   (2,0) circle (2pt)
                  ;
      \draw[<->]      (0,2.9)   -- (0,2.1);
            \draw[<->]      (1,2.9)   -- (1,2.1);

\end{tikzpicture}
\captionsetup{font=footnotesize}
\captionof*{figure}{Arrangement (1)}
\end{minipage}
\begin{minipage}[b]{0.5\textwidth}
\centering
\begin{tikzpicture}[scale=0.75]
  \filldraw [black] (0,3) circle (2pt)
                   (1,3) circle (2pt)
                   (2,3) circle (2pt)
                   (0,2) circle (2pt)
                   (1,2) circle (2pt)
                   (2,2) circle (2pt)
                   (0,1) circle (2pt)
                   (1,1) circle (2pt)
                   (2,1) circle (2pt)
                   (0,0) circle (2pt)
                   (1,0) circle (2pt)
                   (2,0) circle (2pt)
                  ;
      \draw[<->]      (0.1,3)   -- (0.9,3);
            \draw[<->]      (0.1,2)   -- (0.9,2);
\end{tikzpicture}
\captionsetup{font=footnotesize}
\captionof*{figure}{Arrangement (4)}
\end{minipage}

\vskip.2in
\begin{minipage}[b]{0.5\textwidth}
\centering
\begin{tikzpicture}[scale=0.75]
  \filldraw [black] (0,3) circle (2pt)
                   (1,3) circle (2pt)
                   (2,3) circle (2pt)
                   (0,2) circle (2pt)
                   (1,2) circle (2pt)
                   (2,2) circle (2pt)
                   (0,1) circle (2pt)
                   (1,1) circle (2pt)
                   (2,1) circle (2pt)
                   (0,0) circle (2pt)
                   (1,0) circle (2pt)
                   (2,0) circle (2pt)
                  ;
       \draw[<->]      (0,2.9)   -- (0,2.1);
            \draw[<->]      (1,2.9)   -- (1,2.1);

         \draw[<->]      (0,0.9)   -- (0,0.1);
            \draw[<->]      (1,0.9)   -- (1,0.1);

\end{tikzpicture}
\captionsetup{font=footnotesize}
\captionof*{figure}{Arrangement (2)}
\end{minipage}
\begin{minipage}[b]{0.5\textwidth}
\centering
\begin{tikzpicture}[scale=0.75]
  \filldraw [black] (0,3) circle (2pt)
                   (1,3) circle (2pt)
                   (2,3) circle (2pt)
                   (0,2) circle (2pt)
                   (1,2) circle (2pt)
                   (2,2) circle (2pt)
                   (0,1) circle (2pt)
                   (1,1) circle (2pt)
                   (2,1) circle (2pt)
                   (0,0) circle (2pt)
                   (1,0) circle (2pt)
                   (2,0) circle (2pt)
                  ;
       \draw[<->]      (0.1,3)   -- (0.9,3);
            \draw[<->]      (0.1,2)   -- (0.9,2);

         \draw[<->]      (0.1,1)   -- (0.9,1);
            \draw[<->]      (0.1,0)   -- (0.9,0);

\end{tikzpicture}
\captionsetup{font=footnotesize}
\captionof*{figure}{Arrangement (5)}
\end{minipage}

\vskip.2in
\begin{minipage}[b]{0.5\textwidth}
\centering
\begin{tikzpicture}[scale=0.75]
  \filldraw [black] (0,3) circle (2pt)
                   (1,3) circle (2pt)
                   (2,3) circle (2pt)
                   (0,2) circle (2pt)
                   (1,2) circle (2pt)
                   (2,2) circle (2pt)
                   (0,1) circle (2pt)
                   (1,1) circle (2pt)
                   (2,1) circle (2pt)
                   (0,0) circle (2pt)
                   (1,0) circle (2pt)
                   (2,0) circle (2pt)
                  ;
   \draw[->]      (0,2.9)   -- (0,2.1);
            \draw[->]      (1,2.9)   -- (1,2.1);

         \draw[->]      (0,1.9)   -- (0,1.1);
            \draw[->]      (1,1.9)   -- (1,1.1);
            \draw[->] (-.1,1.1) .. controls (-.5,2) .. (-.1, 2.9);
                        \draw[->] (.9,1.1) .. controls (.5,2) .. (.9, 2.9);
\end{tikzpicture}
\captionsetup{font=footnotesize}
\captionof*{figure}{Arrangement (3)}
\end{minipage}
\begin{minipage}[b]{0.5\textwidth}
\centering
\begin{tikzpicture}[scale=0.75]
  \filldraw [black] (0,3) circle (2pt)
                   (1,3) circle (2pt)
                   (2,3) circle (2pt)
                   (0,2) circle (2pt)
                   (1,2) circle (2pt)
                   (2,2) circle (2pt)
                   (0,1) circle (2pt)
                   (1,1) circle (2pt)
                   (2,1) circle (2pt)
                   (0,0) circle (2pt)
                   (1,0) circle (2pt)
                   (2,0) circle (2pt)
                  ;
             \draw[->]      (0.1,3)   -- (0.9,3);
            \draw[->]      (1.1,3)   -- (1.9,3);

             \draw[->]      (1.9,3.1)  .. controls (1.5,3.4) and (0.5,3.4)  .. (0.1,3.1);

            \draw[->]      (0.1,2)   -- (0.9,2);
            \draw[->]      (1.1,2)   -- (1.9,2);
            \draw[->]      (1.9,2.1)  .. controls (1.5,2.4) and (0.5,2.4)  .. (0.1,2.1);
\end{tikzpicture}
\captionsetup{font=footnotesize}
\captionof*{figure}{Arrangement (6)}
\end{minipage}
Then $H$ contains a $3$-cycle.
\end{lemma}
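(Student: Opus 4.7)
My plan is to treat the six arrangements by first reducing Arrangements (1) and (2) to Arrangement (3), and Arrangements (4) and (5) to Arrangement (6), and then extracting a single $3$-cycle from Arrangement (3) by a short algebraic argument in $\langle P, \rho\rangle$, with Arrangement (6) handled symmetrically. The key observation underlying the extraction is that if $\sigma \in H$ acts as a fixed $3$-cycle $c \in \alt(E^{(1)})$ on columns $j$ and $k$ and fixes every other column, then the collection of such elements over all pairs $\{j,k\}$ behaves multiplicatively like the edges of the complete graph in $(\mathbb{Z}/3)^n$, and the identity $e_1 \equiv -(e_1+e_2) - (e_1+e_3) + (e_2+e_3) \pmod{3}$ then lets us isolate the action on a single column.

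Concretely, for Arrangement (3) set $\rho_{12} := (\etwo{1}{1},\etwo{2}{1},\etwo{3}{1})(\etwo{1}{2},\etwo{2}{2},\etwo{3}{2})$ and let $\rho_{13}, \rho_{23} \in H$ be the conjugates of $\rho_{12}$ by the column swaps $\phi^C_{2,3}, \phi^C_{1,3} \in P$, respectively. A direct computation on each column shows that $\rho_{12}^{-1}\rho_{13}^{-1}\rho_{23}$ acts on column $1$ as $c^{-1}\cdot c^{-1} = c$ (a single $3$-cycle on rows $\{1,2,3\}$), on columns $2$ and $3$ as $c^{-1}\cdot c$ or $c \cdot c^{-1}$ (which are trivial), and trivially on every other column. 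Hence $H$ contains the $3$-cycle $(\etwo{1}{1},\etwo{2}{1},\etwo{3}{1})$. Arrangement (6) is handled identically, interchanging the roles of rows and columns throughout.

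To reduce Arrangement (1) to Arrangement (3), let $\tau = (\etwo{1}{1},\etwo{2}{1})(\etwo{1}{2},\etwo{2}{2})$ and let $r \in P$ be the row $3$-cycle $(1,2,3)$; then $\tau^r = (\etwo{2}{1},\etwo{3}{1})(\etwo{2}{2},\etwo{3}{2})$, and on each of columns $1$ and $2$ the product $\tau \cdot \tau^r$ is a composition of two transpositions sharing a common point, hence the $3$-cycle on rows $\{1,2,3\}$ --- so $\tau\cdot\tau^r$ is an Arrangement (3)-element. For Arrangement (2), written $\tau_2 = T_1 T_2$ with $T_1$ supported on rows $\{1,2\}$ and $T_2$ on rows $\{3,4\}$, the row swap $\phi^R_{3,5} \in P$ fixes $T_1$ and sends $T_2$ to a transposition pair on rows $\{4,5\}$, so $\tau_2 \cdot \tau_2^{\phi^R_{3,5}} = T_2 \cdot T_2^{\phi^R_{3,5}}$ and the same ``two transpositions sharing a common point'' mechanism produces an Arrangement (3)-element on rows $\{3,4,5\}$. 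Arrangements (4) and (5) reduce to Arrangement (6) by the symmetric constructions using column $3$-cycles and column swaps. The main obstacle is the careful algebraic bookkeeping in each case --- tracking precisely which cycles cancel and which persist in each commutator or product --- but the standing assumption $n \ge 7$ provides enough free rows and columns (notably row $5$ in the Arrangement (2) argument) to carry out every reduction.
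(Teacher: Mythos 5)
Your proposal is correct, and it takes a genuinely different route from the paper's. The paper treats the arrangements by separate picture-by-picture manipulations: for arrangement (1) it forms $\gamma_{4} = \bigl(\gamma_{3}^{\phi^{R}_{2,3}}\bigr)\gamma_{3}$ (which is an arrangement-(3)-type element), then takes $\gamma_{3}\gamma_{4}^{\phi^{C}_{1,3}}$ and squares; arrangement (3) is reduced to arrangement (2); and arrangement (2) is handled by conjugating the two transposition pairs apart and running a variant of the arrangement-(1) computation. You invert the flow of reductions --- everything is funneled \emph{into} arrangement (3) --- and then dispose of arrangement (3) by one conceptual computation: the elements ``act by a fixed $3$-cycle $c$ on columns $j$ and $k$, identity elsewhere'' all lie in an abelian subgroup isomorphic to a subgroup of $(\Z/3)^{n}$, where the identity $-(e_{1}+e_{2})-(e_{1}+e_{3})+(e_{2}+e_{3})=-2e_{1}\equiv e_{1} \pmod 3$ isolates a single column. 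The individual steps check out: $\rho_{13},\rho_{23}$ are conjugates of the given element by column swaps in $P\subset H$; the three elements commute because each preserves every column and restricts there to a power of the same $3$-cycle on the same three rows; and the reductions $(1)\to(3)$ and $(2)\to(3)$ use only that a product of two distinct transpositions sharing one point is a $3$-cycle, applied identically in each of the two affected columns (plus, for arrangement (2), a spare row $5$, which $n\ge 7$ supplies; the disjoint-support cancellation $\tau_{2}\tau_{2}^{\phi^{R}_{3,5}}=T_{2}T_{2}^{\phi^{R}_{3,5}}$ is also right). What your approach buys is uniformity --- one extraction mechanism instead of three ad hoc ones --- and it would generalize verbatim to parallel $p$-cycles for any odd prime $p$, since $-2$ is invertible mod $p$; the paper's argument is more hands-on and must be verified picture by picture, but needs no auxiliary abelian-subgroup observation.
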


\begin{proof}
We prove the lemma for arrangements $(1), (2), (3)$; the proofs for arrangements $(4), (5), (6)$ are similar.

Suppose the arrangement $(1)$ is implemented by the involution $\gamma_{3}$. Then
$$\gamma_{4} = \left( \gamma_{3}^{\phi^{R}_{2,3}} \right) \gamma_{3}$$ acts by the arrangement
\begin{center}
\begin{tikzpicture}[scale=0.75]
  \filldraw [black] (0,3) circle (2pt)
                   (1,3) circle (2pt)
                   (2,3) circle (2pt)
                   (0,2) circle (2pt)
                   (1,2) circle (2pt)
                   (2,2) circle (2pt)
                   (0,1) circle (2pt)
                   (1,1) circle (2pt)
                   (2,1) circle (2pt)
                   (0,0) circle (2pt)
                   (1,0) circle (2pt)
                   (2,0) circle (2pt)
                  ;
   \draw[->]      (0,2.9)   -- (0,2.1);
            \draw[->]      (1,2.9)   -- (1,2.1);

         \draw[->]      (0,1.9)   -- (0,1.1);
            \draw[->]      (1,1.9)   -- (1,1.1);
            \draw[->] (-.1,1.1) .. controls (-.5,2) .. (-.1, 2.9);
                        \draw[->] (.9,1.1) .. controls (.5,2) .. (.9, 2.9);
\end{tikzpicture}
\end{center}
and $\gamma_{3} \gamma_{4}^{\phi^{C}_{1,3}}$ acts by the arrangement
\begin{center}
\begin{tikzpicture}[scale=0.75]
  \filldraw [black] (0,3) circle (2pt)
                   (1,3) circle (2pt)
                   (2,3) circle (2pt)
                   (0,2) circle (2pt)
                   (1,2) circle (2pt)
                   (2,2) circle (2pt)
                   (0,1) circle (2pt)
                   (1,1) circle (2pt)
                   (2,1) circle (2pt)
                   (0,0) circle (2pt)
                   (1,0) circle (2pt)
                   (2,0) circle (2pt)
                  ;
   \draw[<->]      (0,2.9)   -- (0,2.1);

         \draw[<->]      (1,1.9)   -- (1,1.1);
          \draw[->]      (2,2.9)   -- (2,2.1);
           \draw[->]      (2,1.9)   -- (2,1.1);
                        \draw[->] (1.9,1.1) .. controls (1.5,2) .. (1.9, 2.9);
\end{tikzpicture}
\end{center}
Squaring now produces a $3$-cycle.

Suppose now the arrangement $(2)$ is implemented by some $\gamma_{3}$. Then $\gamma_{4} = \gamma_{3}^{\phi^{R}_{3,5}}$ acts by the arrangement
\begin{center}
\begin{tikzpicture}[scale=0.75]
  \filldraw [black] (0,3) circle (2pt)
                   (1,3) circle (2pt)
                   (2,3) circle (2pt)
                   (0,2) circle (2pt)
                   (1,2) circle (2pt)
                   (2,2) circle (2pt)
                   (0,1) circle (2pt)
                   (1,1) circle (2pt)
                   (2,1) circle (2pt)
                   (0,0) circle (2pt)
                   (1,0) circle (2pt)
                   (2,0) circle (2pt)
                   (0,-1) circle (2pt)
                   (1,-1) circle (2pt)
                   (2,-1) circle (2pt)
                  ;
      \draw[<->]      (0,2.9)   -- (0,2.1);
            \draw[<->]      (1,2.9)   -- (1,2.1);
             \draw[<->]      (0,-0.1)   -- (0,-0.9);
            \draw[<->]      (1,-0.1)   -- (1,-0.9);

\end{tikzpicture}
\end{center}
Setting $\gamma_{5} = \left( \gamma_{4}^{\phi^{R}_{2,3}} \right) \gamma_{4}$, the element $\left( \gamma_{4} \gamma_{5}^{\phi^{C}_{1,3}} \right)^{2}$ consists of a single $3$-cycle.

Suppose now the arrangement $(3)$ is implemented by some $\gamma_{3}$. Then $\gamma_{4} = \left(\gamma_{3}^{\phi^{R}_{3,4}}\right)\gamma_{3}$ acts by the arrangement
\begin{center}
\begin{tikzpicture}[scale=0.75]
  \filldraw [black] (0,3) circle (2pt)
                   (1,3) circle (2pt)
                   (2,3) circle (2pt)
                   (0,2) circle (2pt)
                   (1,2) circle (2pt)
                   (2,2) circle (2pt)
                   (0,1) circle (2pt)
                   (1,1) circle (2pt)
                   (2,1) circle (2pt)
                   (0,0) circle (2pt)
                   (1,0) circle (2pt)
                   (2,0) circle (2pt)
                  ;
                   \draw[<->] (-0.1,2.9) .. controls (-0.5,1.5) .. (-0.1,0.1);
                        \draw[<->] (1.1,2.9) .. controls (1.5,1.5) .. (1.1,0.1);
            \draw[<->]      (0,1.9)   -- (0,1.1);
            \draw[<->]      (1,1.9)   -- (1,1.1);

\end{tikzpicture}
\end{center}
and $\gamma_{5} = \gamma_{4}^{\phi^{R}_{2,4}}$ acts by the arrangement
\begin{center}
\begin{tikzpicture}[scale=0.75]
  \filldraw [black] (0,3) circle (2pt)
                   (1,3) circle (2pt)
                   (2,3) circle (2pt)
                   (0,2) circle (2pt)
                   (1,2) circle (2pt)
                   (2,2) circle (2pt)
                   (0,1) circle (2pt)
                   (1,1) circle (2pt)
                   (2,1) circle (2pt)
                   (0,0) circle (2pt)
                   (1,0) circle (2pt)
                   (2,0) circle (2pt)
                  ;
       \draw[<->]      (0,2.9)   -- (0,2.1);
            \draw[<->]      (1,2.9)   -- (1,2.1);

         \draw[<->]      (0,0.9)   -- (0,0.1);
            \draw[<->]      (1,0.9)   -- (1,0.1);

\end{tikzpicture}
\end{center}
But this is exactly the arrangement in case $(2)$, so the result follows for the same reason.
\end{proof}

\begin{lemma}\label{lemma:r1234}
Suppose $H$ is a subgroup of $\sym(E^{(2)})$ which contains $P$, and suppose $H$ contains an involution $\tau_{1}$ which satisfies the following:
\begin{enumerate}
\item
$\tau_{1}$ is supported in rows $1,2,3,4$, and consists of an even number of $2$-cycles $d_{i}$, $i=1,\ldots,2q$ for some $q\geq 1$.
\item
Each $2$-cycle in $\tau_{1}$ has support containing a point in $R_{1,2}$ and a point in $R_{3,4}$.
\item
Each $2$-cycle in $\tau_{1}$ has a companion $2$-cycle, meaning that for each $2$-cycle $d_{i}$, we have $d_{i + q \textnormal{ mod } 2q} = d_{i}^{\phi^{R}_{1,2}\phi^{R}_{3,4}}$.
\item
$\tau_{1}$ has a pair of $2$-cycles $d_{1}, d_{q+1}$ such that $d_{1} = \left( \etwo{1}{1}, \etwo{4}{2} \right)$ and $d_{q+1} = \left( \etwo{2}{1}, \etwo{3}{2} \right)$.
\end{enumerate}
Then $H$ contains a $p$-cycle for some prime $p < |E^{(2)}|-2$.
\end{lemma}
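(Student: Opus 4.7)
My plan is to manipulate $\tau_1$ by a sequence of $\star$-operations (commutators) with row and column swaps drawn from $P \subset H$, ultimately reducing the action to one of the arrangements covered by Lemma~\ref{lemma:somecycles}, which produces the desired $3$-cycle (and $p=3 < |E^{(2)}|-2$ since $n \ge 7$ implies $|E^{(2)}| = n^2 \ge 49$).

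The first step is to form $\tau_2 = \tau_1 \star \phi^R_{1,2} = \tau_1^{\phi^R_{1,2}} \tau_1$. The key observation is that by hypothesis (ii), each transposition $d_i = (x_i,y_i)$ has $x_i \in R_{1,2}$ and $y_i \in R_{3,4}$, so $\phi^R_{1,2}$ fixes $y_i$ while flipping the row of $x_i$. By the pairing hypothesis (iii), the row-flipped point equals the $R_{1,2}$-endpoint of the companion $d_{i+q}$. A direct pair-by-pair computation on $\{d_i, d_{i+q}\}$ (computing $(x_i,y_i)(x_{i+q},y_{i+q}) \cdot (x_{i+q},y_i)(x_i,y_{i+q})$) collapses to the product of two ``vertical'' transpositions $(p_i,p_{i+q})(q_i,q_{i+q})$, where $p_i, p_{i+q}$ share a column in rows $1,2$ and $q_i, q_{i+q}$ share a column in rows $3,4$. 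Applying this to $i=1$, using hypothesis (iv), the contribution is exactly $(\etwo{1}{1},\etwo{2}{1})(\etwo{3}{2},\etwo{4}{2})$, supported in columns $1$ and $2$.

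The second step is to form $\tau_3 = \tau_2 \cdot \tau_2^{\phi^C_{1,2}}$. Since the main $i=1$ contribution of $\tau_2$ has support in columns $1$ and $2$, conjugation by $\phi^C_{1,2}$ sends it to the disjoint product $(\etwo{1}{2},\etwo{2}{2})(\etwo{3}{1},\etwo{4}{1})$, and these four transpositions are mutually disjoint from the original four. Hence the main contribution of $\tau_3$ realizes precisely the pattern of Arrangement~(2) of Lemma~\ref{lemma:somecycles} on the $2 \times 4$ block in rows $1$--$4$ and columns $1$--$2$.

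The third and most delicate step is to eliminate the extraneous contributions coming from pairs $(d_i, d_{i+q})$ with $i \ge 2$. Here we exploit hypothesis (iv) crucially: since $p_1, p_{q+1}$ are the unique $x$-components of $\tau_1$ sitting in column $1$ (two rows already accounted for by $d_1, d_{q+1}$), no other $p_i$ lies in column $1$, and symmetrically no other $q_i$ lies in column $2$. The extraneous $\tau_2$-contributions thus live in columns $\ne 1$ for their $R_{1,2}$-parts and columns $\ne 2$ for their $R_{3,4}$-parts. Using that $n \ge 7$ and that the extraneous support uses at most $2(q-1)$ columns outside $\{1,2\}$, one can iteratively apply further commutators $\tau_3 \star \phi^C_{j,k}$ with $j,k \ge 3$ chosen to chip away one extraneous pair at a time; at each stage the ``main'' action on columns $1,2$ is preserved (since those column swaps commute with the main arrangement), while each extraneous pair either cancels or reduces in complexity.

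The main obstacle is precisely this isolation step, because the extraneous pair $(p_i,p_{i+q})(q_i,q_{i+q})$ for $i \ge 2$ may have its $p$-part in column $2$ (one of the protected columns) or its $q$-part in column $1$, which means naive conjugation by $\phi^C_{1,2}$ in Step 2 actually does modify these extraneous parts. The resolution requires a careful two-stage elimination: first use column swaps involving columns $\ge 3$ to drive the extraneous $R_{1,2}$-support away from column $2$ and the extraneous $R_{3,4}$-support away from column $1$, and only then execute Step 2. Once Arrangement~(2) is realized exactly by an element of $H$, Lemma~\ref{lemma:somecycles} delivers the $3$-cycle, completing the proof.
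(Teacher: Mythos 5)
Your opening move is correct and is a genuinely different reduction from the paper's: a direct computation does show that $\tau_{1}\star\phi^{R}_{1,2}$ collapses each companion pair $\{d_{i},d_{i+q}\}$ into a product of two ``vertical'' transpositions, one occupying rows $1,2$ of a single column and one occupying rows $3,4$ of a single column, with the pair $d_{1},d_{q+1}$ contributing exactly $(\etwo{1}{1},\etwo{2}{1})(\etwo{3}{2},\etwo{4}{2})$. In the clean case where no other vertical transposition lands in column $1$ or column $2$, your Step 2 does produce Arrangement $(2)$ of Lemma~\ref{lemma:somecycles} and the proof closes quickly; this is tidier than the paper's argument, which never performs this collapse and instead runs a case analysis on how $\tau_{1}$ interacts with $C_{1,2}$.

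However, the elimination of the extraneous transpositions --- which is the entire difficulty of the lemma --- contains a concrete error and an unexecuted step. First, the claim that commutators $\tau_{3}\star\phi^{C}_{j,k}$ with $j,k\ge 3$ preserve the main action on columns $1,2$ ``since those column swaps commute with the main arrangement'' is exactly backwards: for the operation $\tau\star\phi=\phi^{-1}\tau^{-1}\phi\tau$, any cycle of $\tau$ that is fixed by conjugation by $\phi$ (in particular any cycle supported off $C_{j,k}$) is \emph{annihilated} in the product, as the paper itself notes and exploits. So these commutators destroy the main arrangement rather than protecting it. Second, the proposed preprocessing (``drive the extraneous $R_{1,2}$-support away from column $2$ and the extraneous $R_{3,4}$-support away from column $1$'') is never carried out, and it is not clear it can be: conjugating by $\phi^{C}_{2,j}$ to clear column $2$ also displaces the main transposition $(\etwo{3}{2},\etwo{4}{2})$, and when $q$ is large there may be no free columns at all. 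Indeed, in the extreme case $q=n$ (pair $\etwo{1}{j}$ with $\etwo{4}{j+1}$ cyclically), one gets $\tau_{1}\star\phi^{R}_{1,2}=\phi^{R}_{1,2}\phi^{R}_{3,4}\in P$, all distinguished information is lost, and your $\tau_{3}$ is the identity. Handling these configurations is precisely the content of the paper's case analysis (Cases 1a, 1b.1, 1b.2, 2, 3a, 3b), and your sketch does not supply a substitute for it.
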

\begin{proof}
We proceed by cases (recall that $C_{1,2}$ is defined in~\eqref{def:Cij}):
\subsubsection*{Case 1 } Suppose $\tau_{1}$ leaves $C_{1,2}$ invariant and acts nontrivially on $C_{1} \cap R_{3,4}$ (and hence, given the setup, also nontrivially on $C_{2} \cap R_{1,2}$).
Then one of the following two cases occurs:
\subsubsection*{Case 1a}
Suppose $\tau_{1}$ acts by the arrangement
\begin{center}
\begin{tikzpicture}[scale=0.75]
  \filldraw [black] (0,3) circle (2pt)
                   (1,3) circle (2pt)
                   (2,3) node{x}
                   (0,2) circle (2pt)
                   (1,2) circle (2pt)
                   (2,2) node{x}
                   (0,1) circle (2pt)
                   (1,1) circle (2pt)
                   (2,1) node{x}
                   (0,0) circle (2pt)
                   (1,0) circle (2pt)
                   (2,0) node{x}
                   (0,-1) circle (2pt)
                   (1,-1) circle (2pt)
                   (2,-1) circle (2pt)
                   (3,1.5) node{\dots}
                  ;
   \draw[<->]      (0,2.9)   -- (1,0.1);
            \draw[<->]      (1,2.9)   -- (0,1.1);
            \draw[<->]      (0.1,1.9)   -- (0.9,1.1);
            \draw[<->]      (1,1.9)   -- (0,0.1);
\end{tikzpicture}
\end{center}
on $C_{1,2}$. Then $\tau_{1} \star \phi^{C}_{1,2}$
acts by arrangement $(2)$ of Lemma~\ref{lemma:somecycles},  and the result follows.

\subsubsection*{Case 1b:}
Suppose $\tau_{1}$ acts by the arrangement
\begin{center}
\begin{tikzpicture}[scale=0.75]
  \filldraw [black] (0,3) circle (2pt)
                   (1,3) circle (2pt)
                   (2,3) node{x}
                   (0,2) circle (2pt)
                   (1,2) circle (2pt)
                   (2,2) node{x}
                   (0,1) circle (2pt)
                   (1,1) circle (2pt)
                   (2,1) node{x}
                   (0,0) circle (2pt)
                   (1,0) circle (2pt)
                   (2,0) node{x}
                   (0,-1) circle (2pt)
                   (1,-1) circle (2pt)
                   (2,-1) circle (2pt)
                   (3,1.5) node{\dots}
                  ;
   \draw[<->]      (0,2.9)   -- (1,0.1);
            \draw[<->]      (1,2.9)   -- (0,0.1);
            \draw[<->]      (0.1,1.9)   -- (0.9,1.1);
            \draw[<->]      (1,1.9)   -- (0,1.1);
\end{tikzpicture}
\end{center}
on $C_{1,2}$.  We then split this into two further subcases.

\subsubsection*{Subcase 1b.1}
Suppose that $\tau_{1}$ leaves some column $j$ invariant. If $\tau_{1}$ acts by the identity on column $j$,
we set $\tau_{2} = \tau_{1}^{\phi^{R}_{2,4}}$ and $\tau_{3} = \tau_{2} \star \phi^{C}_{2,j}$.
Then, setting $\tau_{4} = \tau_{3} \star \phi^{R}_{2,5}$,
$\tau_{4}$ consists of one $3$-cycle and one $5$-cycle.  Thus $\tau_{4}^{3}$ consists of a single $5$-cycle.

Suppose instead $\tau_{1}$ acts nontrivially on column $j$. Let $\tau_{2} = \tau_{1} \star \phi^{C}_{2,j}$. Then $\tau_{2}$ acts by one of the following:

\begin{minipage}[b]{0.5\textwidth}
\centering
\begin{tikzpicture}[scale=0.75]
  \filldraw [black] (0,3) circle (2pt)
                   (1,3) circle (2pt)
                   (2,3) circle (2pt)
                   (0,2) circle (2pt)
                   (1,2) circle (2pt)
                   (2,2) circle (2pt)
                   (0,1) node{x}
                   (1,1) node{x}
                   (2,1) circle (2pt)
                   (0,0) node{x}
                   (1,0) node{x}
                   (2,0) circle (2pt)
                   (0,-1) circle (2pt)
                   (1,-1) circle (2pt)
                   (2,-1) circle (2pt)

                   (3,-1) circle (2pt)
                   (3,0) circle (2pt)
                   (3,1) circle (2pt)
                   (3,2) circle (2pt)
                   (3,3) circle (2pt)

                   (4,1.5) node{\dots}

                   (5,-1) circle (2pt)
                   (5,0) node{x}
                   (5,1) node{x}
                   (5,2) circle (2pt)
                   (5,3) circle (2pt)
                   (5,3.4) node{\textbf{j}}
                  ;
   \draw[->]      (4.9,2.1)  .. controls (2,2.3) .. (0.1,3);
      \draw[->]      (4.9,2.9) .. controls (2,2.7) .. (0.1,2);
    \draw[->]      (0.1,2.9)   -- (0.9,2.1);
     \draw[->]      (0.1,2.1)   -- (0.9,2.9);
\draw[->]      (1.1,3) .. controls (2,3.3) and (4,3.3) .. (4.9,3);
\draw[->]      (1.1,2) .. controls (2,1.7) and (4,1.7) .. (4.9,2);

\end{tikzpicture}
\captionsetup{font=footnotesize}
\captionof*{figure}{1b.1 (i)}
\end{minipage}
\raisebox{1.31in}{or}
\begin{minipage}[b]{0.5\textwidth}
\centering
\begin{tikzpicture}[scale=0.75]
  \filldraw [black] (0,3) circle (2pt)
                   (1,3) circle (2pt)
                   (2,3) circle (2pt)
                   (0,2) circle (2pt)
                   (1,2) circle (2pt)
                   (2,2) circle (2pt)
                   (0,1) node{x}
                   (1,1) node{x}
                   (2,1) circle (2pt)
                   (0,0) node{x}
                   (1,0) node{x}
                   (2,0) circle (2pt)
                   (0,-1) circle (2pt)
                   (1,-1) circle (2pt)
                   (2,-1) circle (2pt)

                   (3,-1) circle (2pt)
                   (3,0) circle (2pt)
                   (3,1) circle (2pt)
                   (3,2) circle (2pt)
                   (3,3) circle (2pt)

                   (4,1.5) node{\dots}

                   (5,-1) circle (2pt)
                   (5,0) node{x}
                   (5,1) node{x}
                   (5,2) circle (2pt)
                   (5,3) circle (2pt)
                   (5,3.4) node{\textbf{j}}
                  ;
   \draw[->]      (0.1,3)   -- (0.9,3);
      \draw[->]      (0.1,2)   -- (0.9,2);
    \draw[->]      (4.9,3.1)   .. controls (3.5,3.5) and (1.5,3.5) .. (0.1,3.1);
     \draw[->]      (4.9,2.1)   .. controls (3.5,2.5) and (1.5,2.5) .. (0.1,2.1);
\draw[->]      (1.1,1.9)   .. controls (2,1.5) and (4,1.5) .. (4.9,1.9);
\draw[->]      (1.1,2.9)   .. controls (2,2.5) and (4,2.5) .. (4.9,2.9);
\end{tikzpicture}
\captionsetup{font=footnotesize}
\captionof*{figure}{1b.1 (ii)}
\end{minipage}

In the first case, setting $\tau_{3} = \tau_{2} \star \phi^{R}_{2,5}$, we have that
$\tau_{3}^{3}$ consists of a single $5$-cycle and the result follows. In the second case, first let $\tau_{3} = \tau_{2} \star \phi^{R}_{2,5}$, then define $\tau_{4} = \tau_{3}^{\phi^{C}_{j,3}}$, and $\tau_{5} =  \tau_{4}^{\phi^{C}_{3,4}}\tau_{4}$.
Finally, letting $\tau_{6} = \tau_{5} \star \phi^{R}_{2,3}$, $\tau_{7} = \tau_{6}^{\phi^{C}_{2,4}}$, and $\tau_{8} = \tau_{7}^{\phi^{R}_{1,3}}$, then $\tau_{8}$ acts by arrangement $(5)$ in Lemma~\ref{lemma:somecycles} and the result follows.

\subsubsection*{Subcase 1b.2}
Suppose $\tau_{1}$ leaves no column invariant. Then we may assume that
$\tau_{1}$ maps points in column $3$ into some columns $j_{1},j_{2}$. We may assume at least one of $j_{1}, j_{2}$ is not equal to $3$, since if not, we are in subcase 1b.1. Thus without loss of generality, we can suppose
that $j_{1} \ne 3$.

 Suppose first that $j_{2} \ne 3$.
Then letting $\tau_{2} = \tau_{1} \star \phi^{C}_{2,3}$, $\tau_{2}$ acts by one of the following arrangements:

\begin{minipage}[b]{0.5\textwidth}
\centering
\begin{tikzpicture}[scale=0.75]
  \filldraw [black] (0,3) circle (2pt)
                   (1,3) circle (2pt)
                   (2,3) circle (2pt)
                   (4,3) circle (2pt)
                   (5,3) circle (2pt)
                   (0,2) circle (2pt)
                   (1,2) circle (2pt)
                   (2,2) circle (2pt)
                   (4,2) circle (2pt)
                   (5,2) circle (2pt)
                   (0,1) circle (2pt)
                   (1,1) circle (2pt)
                   (2,1) circle (2pt)
                   (4,1) circle (2pt)
                   (5,1) circle (2pt)
                   (0,0) circle (2pt)
                   (1,0) circle (2pt)
                   (2,0) circle (2pt)
                   (4,0) circle (2pt)
                   (5,0) circle (2pt)
                   (3,1.5) node{\dots}
                   (4,3.4) node{${j_{1}}$}
                    (5,3.4) node{${j_{2}}$}
                  ;
   \draw[<->]      (0.1,3)   .. controls (1,3.3) and (4,3.3) .. (4.9,3);
     \draw[<->]      (0.1,2)   .. controls (1,2.3) and (4,2.3) .. (4.9,2);
       \draw[<->]      (0.1,1)   .. controls (1,1.3) and (2,1.3) .. (3.9,1);
         \draw[<->]      (0.1,0)   .. controls (1,0.3) and (2,0.3) .. (3.9,0);

         \draw[<->]      (1.1,3) -- (1.9,3);
         \draw[<->]      (1.1,2) -- (1.9,2);
         \draw[<->]      (1.1,1) -- (1.9,1);
         \draw[<->]      (1.1,0) -- (1.9,0);
\end{tikzpicture}
\captionsetup{font=footnotesize}
\captionof*{figure}{1b.2 (i)}
\end{minipage}
\raisebox{1.1in}{or}
\begin{minipage}[b]{0.5\textwidth}
\centering
\begin{tikzpicture}[scale=0.75]
  \filldraw [black] (0,3) circle (2pt)
                   (1,3) circle (2pt)
                   (2,3) circle (2pt)
                   (4,3) circle (2pt)
                   (5,3) circle (2pt)
                   (0,2) circle (2pt)
                   (1,2) circle (2pt)
                   (2,2) circle (2pt)
                   (4,2) circle (2pt)
                   (5,2) circle (2pt)
                   (0,1) circle (2pt)
                   (1,1) circle (2pt)
                   (2,1) circle (2pt)
                   (4,1) circle (2pt)
                   (5,1) circle (2pt)
                   (0,0) circle (2pt)
                   (1,0) circle (2pt)
                   (2,0) circle (2pt)
                   (4,0) circle (2pt)
                   (5,0) circle (2pt)
                   (3,1.5) node{\dots}
                   (4,3.4) node{${j_{1}}$}
                    (5,3.4) node{${j_{2}}$}
                  ;
   \draw[<->]      (0.1,3)   .. controls (1,2.3) and (4,2.5) .. (4.9,2.1);
     \draw[<->]      (0.1,2)   .. controls (1,2.5) and (4,2.3) .. (4.9,2.9);
       \draw[<->]      (0.1,1)   .. controls (1,0.3) and (2,0.5) .. (3.9,0);
         \draw[<->]      (0.1,0)   .. controls (1,0.5) and (2,0.3) .. (3.9,1);

         \draw[<->]      (1.1,3) -- (1.9,3);
         \draw[<->]      (1.1,2) -- (1.9,2);
         \draw[<->]      (1.1,1) -- (1.9,1);
         \draw[<->]      (1.1,0) -- (1.9,0);

\end{tikzpicture}
\captionsetup{font=footnotesize}
\captionof*{figure}{1b.2 (ii)}
\end{minipage}
Note that while we have drawn these arrangements as
if $j_{1} \ne j_{2}$, we could also have $j_{1} = j_{2}$ and the proof is the same.
Thus for arrangement 1b.2 (i), we set $\tau_{3} = \tau_{2} \star \phi^{R}_{1,5}$, and then $\tau_{4} = \tau_{3}^{\phi^{R}_{2,5}}$, $\tau_{5} = \tau_{4}^{\phi^{C}_{4,j_{2}}}$, $\tau_{6} = \tau_{5}^{\phi^{C}_{1,3}}$, $\tau_{7} =  \tau_{6} \star \phi^{C}_{2,5}$, and $\tau_{8} = \tau_{7}^{\phi^{C}_{3,5}}$, $\tau_{8}$ acts by arrangement $(6)$ of Lemma~\ref{lemma:somecycles}.
For the arrangement 1b.2 (ii), set $\tau_{3} = \tau_{2} \star \phi^{R}_{4,5}$ and then $\tau_{4} = \tau_{3}^{3}$,  $\tau_{5} = \tau_{4}^{\phi^{C}_{1,3}}$, and $\tau_{6} = \tau_{5}^{\phi^{R}_{1,4}\phi^{R}_{2,5}}$. Then $\tau_{6}$ acts by the arrangement $(4)$ in Lemma~\ref{lemma:somecycles}.

Suppose instead that $j_{2} = 3$. Then $\tau_{1}$ fixes two points in column $3$.
Set $\tau_{2} = \tau_{1} \star \phi^{C}_{2,3}$ and $\tau_{3} = \tau_{2}^{3}$. Then setting $\tau_{4} = \tau_{3} \star \left( \phi^{R}_{1,3}\phi^{R}_{2,4} \right)$, we have that $\tau_{4}$ acts by one of the two arrangements 1b.2 (i) or 1b.2 (ii) above, and we proceed as when $j_2 \neq 3$.

\subsubsection*{Case 2}
Suppose $C_{1,2}$ is invariant under $\tau_{1}$ and $\tau_{1}$ acts by the identity on $\etwo{1}{2},\etwo{2}{2},\etwo{3}{1},\etwo{4}{1}$. Then $\tau_{1}$ acts by the arrangement

{\centering
\begin{tikzpicture}[scale=0.75]
  \filldraw [black] (0,4) circle (2pt)
                   (0,3) circle (2pt)
                   (0,2) circle (2pt)
                   (0,1) circle (2pt)
                   (0,0) circle (2pt)
                 (1,4) circle (2pt)
                   (1,3) circle (2pt)
                   (1,2) circle (2pt)
                   (1,1) circle (2pt)
                   (1,0) circle (2pt)
                   (2,4) node{x}
                   (2,3) node{x}
                   (2,2) node{x}
                   (2,1) node{x}
                   (2,0) circle (2pt)
                   (1,-0.5) node{\vdots}
                   (3,2.5) node{\dots}
                  ;
                   \draw[<->] (0.1,3.9) -- (0.9,1.1);
                   \draw[<->] (0.1,2.9) -- (0.9,2.1);
\end{tikzpicture}
\captionsetup{font=footnotesize}
\captionof*{figure}{}}
\noindent
Setting $\tau_{2} = \tau_{1} \star \phi^{C}_{1,2}$, we have reduced to Case 1b, and the result follows.

\subsubsection*{Case 3 }
Suppose $C_{1,2}$ is not invariant under $\tau_{1}$.  Again we split the analysis into cases.

\subsubsection*{Subcase 3a}
Suppose $\tau_{1}$ acts nontrivially on $\etwo{4}{1}$, and hence also on $\etwo{3}{1}$. Then $\tau_{1}$ maps $\etwo{4}{1}$ into some column $j$, and by assumption, we must have $j \ne 1,2$. It follows from the setup that $\tau_{1}$ also maps $\etwo{3}{1}$ into column $j$.  We split the analysis into two subcases.

\subsubsection*{Subcase 3a.1 }
Suppose $\tau_{1}$ fixes both $\etwo{1}{2}$ and $\etwo{2}{2}$, so $\tau_{1}$ acts by one of the following arrangements:

\begin{minipage}[b]{0.5\textwidth}
\centering
\begin{tikzpicture}[scale=0.75]
  \filldraw [black] (0,4) circle (2pt)
                   (0,3) circle (2pt)
                   (0,2) circle (2pt)
                   (0,1) circle (2pt)
                 (1,4) circle (2pt)
                   (1,3) circle (2pt)
                   (1,2) circle (2pt)
                   (1,1) circle (2pt)
                   (2,4) node{x}
                   (2,3) node{x}
                   (2,2) node{x}
                   (2,1) node{x}
                   (3,2.5) node{\dots}
                   (4,4) circle (2pt)
                   (4,3) circle (2pt)
                   (4,2) node{x}
                   (4,1) node{x}
                   (4,4.4) node{${j}$}
                  ;
                   \draw[<->] (0.1,3.9) -- (0.9,1.1);
                   \draw[<->] (0.1,2.9) -- (0.9,2.1);
                   \draw[<->] (0.1,2) .. controls (1,2.7) and (2.5,2.5) .. (3.9,3.9);
                   \draw[<->] (0.1,1) .. controls (1,1.7) and (2.5,1.5) .. (3.9,2.9);

\end{tikzpicture}
\captionsetup{font=footnotesize}
\captionof*{figure}{}
\end{minipage}
\raisebox{1.1in}{or}
\begin{minipage}[b]{0.5\textwidth}
\centering
\begin{tikzpicture}[scale=0.75]
  \filldraw [black] (0,4) circle (2pt)
                   (0,3) circle (2pt)
                   (0,2) circle (2pt)
                   (0,1) circle (2pt)
                 (1,4) circle (2pt)
                   (1,3) circle (2pt)
                   (1,2) circle (2pt)
                   (1,1) circle (2pt)
                   (2,4) node{x}
                   (2,3) node{x}
                   (2,2) node{x}
                   (2,1) node{x}
                   (3,2.5) node{\dots}
                   (4,4) circle (2pt)
                   (4,3) circle (2pt)
                   (4,2) node{x}
                   (4,1) node{x}
                   (4,4.4) node{${j}$}
                  ;
                   \draw[<->] (0.1,3.9) -- (0.9,1.1);
                   \draw[<->] (0.1,2.9) -- (0.9,2.1);
                   \draw[<->] (0.1,2) .. controls (1,2.7) and (2.5,2.5) .. (3.9,2.9);
                   \draw[<->] (0.1,1) .. controls (1,1.7) and (2.5,2.7) .. (3.9,3.9);

\end{tikzpicture}
\captionsetup{font=footnotesize}
\captionof*{figure}{}
\end{minipage}
In either case, setting $\tau_{2} = \tau_{1} \star \phi^{C}_{1,2}$ and $\tau_{3} = \tau_{2} \star \phi^{R}_{4,5}$, we have that  $\tau_{3}$ consists of a $7$-cycle.

\subsubsection*{Subcase 3a2:} Suppose $\tau_{1}$ maps $\etwo{1}{2}$ into column $i$ where $i \ne 1,2$ (it follows from the setup that $\tau_{1}$ also maps $\etwo{2}{2}$ into column $i$). Let $\tau_{2} = \tau_{1} \star \phi^{C}_{1,2}$. Then $\tau_{2}$ acts by one of the following arrangements:

\begin{minipage}[b]{0.5\textwidth}
\centering
\begin{tikzpicture}[scale=0.75]
  \filldraw [black] (0,3) circle (2pt)
                   (1,3) circle (2pt)
                   (2,3) circle (2pt)
                   (0,2) circle (2pt)
                   (1,2) circle (2pt)
                   (2,2) circle (2pt)
                   (0,1) node{x}
                   (1,1) node{x}
                   (2,1) circle (2pt)
                   (0,0) node{x}
                   (1,0) node{x}
                   (2,0) circle (2pt)
                   (0,-1) circle (2pt)
                   (1,-1) circle (2pt)
                   (2,-1) circle (2pt)

                   (3,-1) circle (2pt)
                   (3,0) circle (2pt)
                   (3,1) circle (2pt)
                   (3,2) circle (2pt)
                   (3,3) circle (2pt)

                   (3.4,0.5) node{\dots}
                   (4.4,0.5) node{\dots}

                   (4,3) circle (2pt)
                   (4,2) circle (2pt)
                   (4,1) node{x}
                   (4,0) node{x}
                   (4,-1) circle (2pt)

                   (5,-1) circle (2pt)
                   (5,0) circle (2pt)
                   (5,1) circle (2pt)
                   (5,2) circle (2pt)
                   (5,3) circle (2pt)
                   (5,3.4) node{{$j$}}
                   (4,3.4) node{{$i$}}
                  ;
   \draw[->]      (0.9,3)   -- (0.1,3);
      \draw[->]      (0.9,2)   -- (0.1,2);
    \draw[->]      (0.1,2.9)   -- (4.9,2);
     \draw[->]      (0.1,2.1)   -- (4.9,3);
\draw[->]      (4.9,2.1)   -- (1.1,2.9);
\draw[->]      (4.9,2.9)   -- (1.1,2.1);
\end{tikzpicture}
\captionsetup{font=footnotesize}
\captionof*{figure}{}
\end{minipage}
\raisebox{1.1in}{or}
\begin{minipage}[b]{0.5\textwidth}
\centering

\begin{tikzpicture}[scale=0.75]
  \filldraw [black] (0,3) circle (2pt)
                   (1,3) circle (2pt)
                   (2,3) circle (2pt)
                   (0,2) circle (2pt)
                   (1,2) circle (2pt)
                   (2,2) circle (2pt)
                   (0,1) node{x}
                   (1,1) node{x}
                   (2,1) circle (2pt)
                   (0,0) node{x}
                   (1,0) node{x}
                   (2,0) circle (2pt)
                   (0,-1) circle (2pt)
                   (1,-1) circle (2pt)
                   (2,-1) circle (2pt)

                   (3,-1) circle (2pt)
                   (3,0) circle (2pt)
                   (3,1) circle (2pt)
                   (3,2) circle (2pt)
                   (3,3) circle (2pt)

                   (3.4,0.5) node{\dots}
                   (4.4,0.5) node{\dots}

                   (4,3) circle (2pt)
                   (4,2) circle (2pt)
                   (4,1) node{x}
                   (4,0) node{x}
                   (4,-1) circle (2pt)

                   (5,-1) circle (2pt)
                   (5,0) circle (2pt)
                   (5,1) circle (2pt)
                   (5,2) circle (2pt)
                   (5,3) circle (2pt)
                   (5,3.4) node{{$j$}}
                   (4,3.4) node{{$i$}}
                  ;
   \draw[->]      (0.9,3)   -- (0.1,3);
      \draw[->]      (0.9,2)   -- (0.1,2);
    \draw[->]      (0.1,3.1) .. controls (1.5,3.4) and (3.5,3.4) .. (4.9,3);
     \draw[->]      (0.1,2.1)   .. controls (1.5,2.4) and (3.5,2.4) .. (4.9,2);
\draw[->]      (4.9,1.9)   .. controls (4,1.5) and (2,1.5) .. (1.1,1.9);
\draw[->]      (4.9,2.9)   .. controls (4,2.5) and (2,2.5) .. (1.1,2.9);
\end{tikzpicture}
\captionsetup{font=footnotesize}
\captionof*{figure}{}
\end{minipage}

For the first case, set $\tau_{3} = \tau_{2} \star \phi^{R}_{2,5}$. Then $\tau_{4} = \tau_{3}^{3}$ consists of a single 5-cycle. The second case proceeds analogous to Subcase 1b.1, as illustrated in Figure 1b.1 (ii).

\subsubsection*{Subcase 3b}
Suppose $\tau_{1}$ fixes both $\etwo{4}{1}$ and $\etwo{3}{1}$. Then by assumption, $\tau_{1}$ maps
$\etwo{1}{2}$ and $\etwo{2}{2}$ into some column $j \ne 1,2$. Letting $\tau_{2} = \tau_{1}^{\phi^{C}_{1,2}}$ and $\tau_{3} = \tau_{2}^{\phi^{R}_{1,3}\phi^{R}_{2,4}}$, we are back in Subcase 3a.1.
\end{proof}

We now prove Lemma \ref{lemma:substantialpcycle}
\begin{proof}[Proof of Lemma \ref{lemma:substantialpcycle}]
Since $H$ is substantial, it satisfies both conditions~\eqref{item:startingone} and~\eqref{item:startingtwo} of Definition~\ref{def:substantialsubgroup}. Thus $H$ contains an involution implementing either arrangement $(a)$ or $(b)$ of Definition~\ref{def:substantialsubgroup}. First we note that the subgroup $H$ contains a $p$-cycle for some prime $p < |E^{(2)}|-2$ if and only if the subgroup $\mathfrak{s}^{-1}H\mathfrak{s}$ does. Moreover, $H$ contains an involution implementing arrangement $(b)$ if and only if $\mathfrak{s}^{-1}H\mathfrak{s}$ contains an involution implementing arrangement $(a)$. It follows that it suffices to consider the case that there is
 an involution $\gamma_{3} \in H$ implementing arrangement $(a)$, and we call this arrangement $\mathcal{IC}$:

\begin{center}
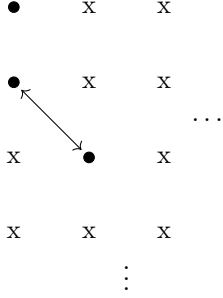

\begin{tikzpicture}
  \filldraw [black] (0,3) circle (2pt)
  (0,2) circle (2pt)
         (1,1) circle (2pt)
                   (1,3) node{x}
                   (2,3) node{x}
                   (2,0) node{x}
                                      (0,1) node{x}
(1,2) node{x}

                                                                       (2,2) node{x}
                                        (1,0) node{x}
                                                             (2,1) node{x}
                  (0,0) node{x}
               (1.5,-.5) node{\vdots}
               (2.6,1.5) node{\dots};
 \draw[<->]      (0.1,1.9)   -- (0.9,1.1);

\end{tikzpicture}
\captionsetup{font=footnotesize}
\captionof{figure}{Arrangement  $\mathcal{IC}$}
\label{def:IC}
\end{center}

Set $\gamma_{4} = \gamma_{3} \star \phi^{R}_{1,2}$.  Then $\gamma_{4}$ acts by the arrangement

\begin{center}
\begin{tikzpicture}[scale=0.75]
  \filldraw [black] (0,3) circle (2pt)
                   (0,2) circle (2pt)
                   (0,1) node{x}
                   (0,0) node{x}
                   (1,3) node{x}
                   (1,2) node{x}
                   (1,1) circle (2pt)
                   (1,0) node{x}
                   (2,3) node{x}
                   (2,2) node{x}
                   (2,1) node{x}
                   (2,0) node{x}
 (-.5,2.5) node{$c_1$}
                   (1.5,-0.5) node{\vdots}
                   (2.5,1.5) node{\dots}
                  ;
   \draw[->]      (0.9,1.1)   -- (0.1,1.9);
\draw[->]      (0,2.1)   -- (0,2.9);
\draw[->]      (0.1,2.9)   -- (1,1.1);
\end{tikzpicture}
\end{center}
and we label the distinguished $3$-cycle as $c_{1}$.

We claim that any cycle in $\gamma_{4}$ whose support does not intersect $R_{1,2}$ (see~\eqref{def:Rij}) must be a $2$-cycle. To see this, note that $\gamma_{4}^{\phi^{R}_{1,2}} = \gamma_{3}\gamma_{3}^{\phi^{R}_{1,2}} = \gamma_{4}^{-1}$. If $c$ is a cycle in $\gamma_{4}$ whose support does not intersect $R_{1,2}$, then $c^{\phi^{R}_{1,2}} = c$, and it follows that $c$ is equal to its inverse, and hence order two, proving the claim.

Thus, we may choose a large $m_{1} \in \mathbb{N}$ which is relatively prime to $3$ such that $\gamma_{5} = \gamma_{4}^{m_{1}}$ consists of cycles $c_{i}$, $i=1,\ldots,L$, each cycle of length $3^{k_{i}}$ for some $k_{i} \ge 1$, and such that each of these $c_{i}$ has support which intersects $R_{1,2}$. Note that $L \ge 1$ since $\gamma_{5}$ still contains the cycle $c_{1}$ (or its inverse). Define
$$\overline{I} = \{i \in \{1,\ldots,L\} \colon \textnormal{ the support of } c_{i} \textnormal{ is not contained in } R_{1,2}\}.$$

We adopt the following notation: if $c$ is a cycle whose support intersects $E^{(2)} \setminus R_{1,2}$ in exactly one point, we denote this point by $\omega(c)$.

Observe that for each $i \in \overline{I}$, $c_{i}$ has support with at most one point not in $R_{1,2}$ (since each $c_{i}$ satisfies $c_{i}^{\phi^{R}_{1,2}} = c_{i}^{-1}$ and each cycle $c_{i}$ is of odd length). Thus for $i \in \overline{I}$, $\omega(c_{i})$ is well-defined. We also note that
\begin{equation}\label{eqn:wicount}
\sum_{i \in \overline{I}} \left( |c_{i}|-1 \right) \le 2n,
\end{equation}
where $|c_{i}|$ denotes the length of a cycle $c_{i}$. In particular, in the case that all the $c_{i}$'s are $3$-cycles, we have $|\overline{I}| \le n$. We also have $1 \le |\overline{I}|$ since $1 \in \overline{I}$ (the cycle $c_{1}$ has support not contained in $R_{1,2}$).

 We now analyze the cases that arise:

\subsubsection*{Case 1}
Suppose $k_{i} = 1$ for all $i \in \{1,\ldots,L\}$ (recall this means each cycle $c_{i}$ has length $3^{k_{i}}$). Thus $\gamma_{5}$ consists of a collection of $3$-cycles,
and since we have the cycle $c_1$ in the arrangement, it follows that $1 \le |\overline{I}| \le n$. We split into two subcases.

\subsubsection*{Subcase 1a}
Suppose there exists $j \ge 3$ such that $\gamma_{5}$ fixes $\etwo{j}{2}$. Set $\gamma_{6} = \left( \gamma_{5}^{\phi^{R}_{3,j}} \right) \gamma_{5}$. Then $\gamma_{6}$ consists of cycles determined by the following:
\begin{enumerate}
\item
Let $i \in \overline{I}$ be an index such that, writing $\omega(c_{i}) = \etwo{x_{i}}{y_{i}}$, either of the following occur:
\begin{enumerate}
\item
$x_{i} = 3$ and $\etwo{j}{y_{i}} = \omega(c_{l})$ for some $l \in \overline{I}$.
\item
$x_{i} = j$ and $\etwo{3}{y_{i}} = \omega(c_{l})$ for some $l \in \overline{I}$.
\end{enumerate}
Then $\gamma_{6}$ contains a pair of $3$-cycles supported in the union of the supports of $c_{i}$ and $c_{l}$.
\item
Let $i \in \overline{I}$ be an index such that, writing $\omega(c_{i}) = \etwo{x_{i}}{y_{i}}$, either of the following occur:
\begin{enumerate}
\item
$x_{i} = 3$ and $\gamma_{5}$ fixes $\etwo{j}{y_{i}}$.
\item
$x_{i} = j$ and $\gamma_{5}$ fixes $\etwo{3}{y_{i}}$.
\end{enumerate}
Then $\gamma_{6}$ contains a pair of $2$-cycles whose support is contained in the set $\left(c_{i} \cap R_{1,2}\right) \cup \left\{\etwo{3}{y_{i}}, \etwo{j}{y_{i}}\right\}$.
\end{enumerate}

Note that the index $1 \in \overline{I}$ falls into the second case.
Set $\gamma_{7} = \gamma_{6}^{3}$ and set $\gamma_{8} = \gamma_{7}^{\phi^{R}_{4,j}}$. Then either $\gamma_{8}$ or $\gamma_{8}^{\phi^{R}_{1,2}}$ satisfies the hypotheses of Lemma~\ref{lemma:r1234}, completing this case.

\subsubsection*{Subcase 1b} Suppose there is no $j \ge 3$ such that $\gamma_{5}$ fixes $\etwo{j}{2}$. This means
that for all $j \ge 3$, there exists some $i(j) \in \overline{I}$ such that the cycle $c_{i(j)}$ intersects column two,
meaning that $\omega(c_{i(j)})$ lies in column two. Since $|\overline{I}| \le n$, there exist at most two other cycles, call them  $c_{\ell_{1}}, c_{\ell_{2}}$, such that $\omega(c_{\ell_{1}})$ lies in some column
$L_{1}$ and $\omega(c_{\ell_{2}})$ lies in some column $L_{2}$, with $L_{1} \ne 2$ and $L_{2} \ne 2$.
The analysis of this splits into three subcases.

\subsubsection*{Subcase 1b.1 } Suppose the support of $c_{\ell_{1}}$ is not contained entirely in column $L_{1}$,
so the support of $c_{\ell_{1}}$ also intersects some column $L_{3} \ne L_{1}$.
By assumption, $\omega(c_{\ell_{1}})$ lies in column $L_{1}$, so we may write $\omega(c_{\ell_{1}}) = \etwo{x_{1}}{L_{1}}$. Furthermore,
it also follows from our assumptions that there must exist some $j \ge 3$ such that $\gamma_{5}$ fixes $\etwo{j}{L_{1}}$. Setting $\gamma_{6} = \gamma_{5}^{\phi^{C}_{L_{3},1}}$, $\gamma_{7} = \gamma_{6}^{\phi^{R}_{x_{1},3}\phi^{C}_{L_{1},2}}$, it follows that $\gamma_{7}$ acts by the arrangement
\begin{center}
\begin{tikzpicture}[scale=0.75]
  \filldraw [black] (0,3) circle (2pt)
                   (0,2) circle (2pt)
                   (0,1) node{x}
                   (0,0) node{x}
                   (1,3) node{x}
                   (1,2) node{x}
                   (1,1) circle (2pt)
                   (1,0) node{x}
                   (2,3) node{x}
                   (2,2) node{x}
                   (2,1) node{x}
                   (2,0) node{x}
                   (1.5,-0.5) node{\vdots}
                   (2.5,1.5) node{\dots}
                  ;
   \draw[->]      (0.9,1.1)   -- (0.1,1.9);
\draw[->]      (0,2.1)   -- (0,2.9);
\draw[->]      (0.1,2.9)   -- (1,1.1);
\end{tikzpicture}
\end{center}
so $\gamma_{7}$ again consists of $3$-cycles all of whose supports intersect $R_{1,2}$. Moreover, $\gamma_{7}$ has a distinguished $3$-cycle which matches $c_{1}$ (or its inverse), and also acts by the identity on some $\etwo{j}{2}$ for some $j \ge 3$, so we can apply Subcase 1a.

\subsubsection*{Subcase 1b.2:} If the support of $c_{\ell_{2}}$ is not entirely contained in column $L_{2}$, the argument proceeds exactly as in  Subcase 1b.1.

\subsubsection*{Subcase 1b.3:} The remaining case is that the support of $c_{\ell_{1}}$ is entirely contained in $L_{1}$ and the support of $c_{\ell_{2}}$ is entirely contained in $L_{2}$ (note that if neither $c_{\ell_{1}}$ nor $c_{\ell_{2}}$ exist, their supports are viewed as empty, and so this scenario is covered by this Subcase).
There exists some cycle $c_{m}$ such that $\omega(c_{m}) = \etwo{J_{1}}{2}$ and the support of $c_{m}$ intersects some column $J_{2} \ne 2$. Set $\gamma_{6} = \gamma_{5} \star \phi^{R}_{3,J_{1}}$. Then, after conjugating by $\phi^{R}_{1,2}$ if necessary, $\gamma_{6}$ acts by one of the following:

\begin{minipage}[b]{0.5\textwidth}
\centering
\begin{tikzpicture}[scale=0.75]
  \filldraw [black] (0,3) circle (2pt)
                   (1,3) circle (2pt)
                   (0,2) circle (2pt)
                   (1,2) circle (2pt)
                   (0,1) circle (2pt)
                   (1,1) circle (2pt)
                   (0,0) circle (2pt)
                   (1,0) circle (2pt)
                   (3,0) circle (2pt)
                   (3,1) circle (2pt)
                   (3,2) circle (2pt)
                   (3,3) circle (2pt)

                   (3.5,1.5) node{\dots}
                   (4.5,1.5) node{\dots}
                   (2,1.5) node{\dots}
                   (2,0.5) node{\Small{\vdots}}

                   (4,3) node{x}
                   (4,2) node{x}
                   (4,1) node{x}
                   (4,0) node{x}
                   (5,0) node{x}
                   (5,1) node{x}
                   (5,2) node{x}
                   (5,3) node{x}
                   (5,3.4) node{${L_{2}}$}
                   (4,3.4) node{${L_{1}}$}
                   (3,3.4) node{${J_{2}}$}
                   (-0.4,0) node{${J_{1}}$}
                  ;
   \draw[<->]      (0.1,3)  .. controls (1,3.3) and (2,3.3) .. (2.9,3);
      \draw[<->]      (1,0.1) .. controls (0.8,0.3) and (0.8,0.7) .. (1,0.9);
\end{tikzpicture}
\captionsetup{font=footnotesize}
\captionof*{figure}{}
\end{minipage}
\raisebox{1.1in}{or}
\begin{minipage}[b]{0.5\textwidth}
\centering
\begin{tikzpicture}[scale=0.75]
  \filldraw [black] (0,3) circle (2pt)
                   (1,3) circle (2pt)
                   (2,3) circle (2pt)
                   (0,2) circle (2pt)
                   (1,2) circle (2pt)
                   (2,2) circle (2pt)
                   (0,1) circle (2pt)
                   (1,1) circle (2pt)
                   (2,1) circle (2pt)
                   (0,0) circle (2pt)
                   (1,0) circle (2pt)
                   (2,0) circle (2pt)
                   (3,0) circle (2pt)
                   (3,1) circle (2pt)
                   (3,2) circle (2pt)
                   (3,3) circle (2pt)

                   (3.5,1.5) node{\dots}
                   (4.5,1.5) node{\dots}
                   (2.5,1.5) node{\dots}

                   (4,3) node{x}
                   (4,2) node{x}
                   (4,1) node{x}
                   (4,0) node{x}
                   (5,0) node{x}
                   (5,1) node{x}
                   (5,2) node{x}
                   (5,3) node{x}
                   (5,3.4) node{${L_{2}}$}
                   (4,3.4) node{${L_{1}}$}
                   (3,3.4) node{${J_{2}}$}
                   (-0.4,0) node{${J_{1}}$}
                  ;
   \draw[<->]      (0.1,2.9) -- (1.9,2.1);
      \draw[<->]      (1,0.1)   -- (1,0.9);
\end{tikzpicture}
\captionsetup{font=footnotesize}
\captionof*{figure}{}
\end{minipage}
In either case, there exists some column $J_{3}$ on which $\gamma_{6}$ acts by the identity, and setting $\gamma_{7} = \gamma_{6} \star \phi^{C}_{2,J_{3}}$, $\gamma_{7}$ acts by

\begin{center}
\begin{tikzpicture}[scale=0.75]
  \filldraw [black] (0,3) circle (2pt)
                   (1,3) circle (2pt)
                   (2,3) circle (2pt)
                   (0,2) circle (2pt)
                   (1,2) circle (2pt)
                   (2,2) circle (2pt)
                   (0,1) circle (2pt)
                   (1,1) circle (2pt)
                   (2,1) circle (2pt)
                   (0,0) circle (2pt)
                   (1,0) circle (2pt)
                   (2,0) circle (2pt)
                   (1.5,1.5) node{\dots}
                   (2,3.4) node{${J_{3}}$}
                  ;
       \draw[<->]      (0,0.1)   -- (0,0.9);
            \draw[<->]      (2,0.1)   -- (2,0.9);
\end{tikzpicture}
\end{center}
We may then conjugate $\gamma_{7}$ to move this pair of $2$-cycles into case $(1)$ of Lemma~\ref{lemma:somecycles}.

\subsubsection*{Case 2}
Suppose there exists a cycle $c_{i}$ with $k_{i} \ge 2$ (recall this means the cycle $c_{i}$ has length $3^{k_{i}}$
and note that this $i$ may not be in $\overline{I}$). Let $k^{\prime} = \max_{i}k_{i}$, let $I_{1} \subset \{1,\ldots,L\}$ be the set of indices for which $k_{i} = k^{\prime}$, and set $\gamma_{6} = \gamma_{5}^{3^{k^{\prime}-1}}$. Then $\gamma_{6}$ is order $3$ and contains $3^{k^{\prime}-1}|I_{1}|$ 3-cycles $d_{i}$, each of whose support intersect $R_{1,2}$.  We proceed by analyzing two subcases.

\subsubsection*{Subcase 2a}
Suppose every $d_{i}$ has support entirely contained in $R_{1,2}$. Note that we still have $\gamma_{6}^{\phi^{R}_{1,2}} = \gamma_{6}^{-1}$. As a result, any cycle $d_{i} = (z_{1},z_{2},z_{3})$ in $\gamma_{6}$ has a companion cycle $d_{i^{\prime}} = (z_{3} + 1 \textnormal{ mod } 2, z_{2} + 1 \textnormal{ mod } 2,z_{1} + 1 \textnormal{ mod } 2)$ in $\gamma_{6}$. Moreover, for each cycle $d_{i} = (z_{1},z_{2},z_{3})$ in $\gamma_{6}$, we must have
$z_{1},z_{2}$, and $z_{3}$ lying in distinct columns. We further note $\gamma_{6}$ acts by the identity on $\etwo{1}{1}, \etwo{2}{1}$. Among all the cycles $d_{i}$, there are two companion cycles,
call them $d_{j}$ and $d_{j^{\prime}}$, whose supports intersect a column, say column $J$, which is furthest to the left.
Thus we have that  $J < J^{\prime}$ for any other column $J^{\prime}$ hit by cycles in the list $d_{i}$.
Consider $\gamma_{7} = \gamma_{6}^{\phi^{C}_{1,J}}\gamma_{6}$. Then $\gamma_{7}$ consists of four $2$-cycles, two of which intersect column one; call these $e_{1}, e_{2}$.
Due to the structure of the companion cycles $d_{j}, d_{j^{\prime}}$,
it follows that $e_{1}$ and $e_{2}$ also intersect some distinct columns $J_{1} < J_{2}$.
Choose a column $J_{3} \ne 1,J_{1},J_{2}$, and set $\gamma_{8} = \gamma_{7} \star \phi^{C}_{1,J_{3}}$. Then $\gamma_{8}$ consists of two $3$-cycles, $e_{1}^{\prime}, e_{2}^{\prime}$, whose support columns consist of $1,J_{1},J_{3}$ and $1,J_{2},J_{3}$, respectively. Since $n$ is large ($n \ge 7$), we may find yet another column $J_{4} \ne 1,J_{1},J_{2},J_{3}$, and let $\gamma_{9} = \gamma_{8}^{\phi^{C}_{J_{1},J_{4}}}\gamma_{8}$. Then $\gamma_{9}$ consists of two $2$-cycles, whose supports intersect four distinct columns. Choosing again a new column $J_{5}$, we have $\gamma_{9}^{\phi^{C}_{J_{4},J_{5}}}\gamma_{9}$ consists of only one $3$-cycle, and we are done.

\subsubsection*{Subcase 2b} Suppose there exists a cycle $d_{i}$ whose support is not contained in $R_{1,2}$. Then $I_{1} \cap \overline{I} \ne \emptyset$ and we can consider the nonempty set of indices
$$\overline{J} = \{j \colon \textnormal{ the support of } d_{j} \textnormal{ is not contained in } R_{1,2}\}.$$
Recall $\gamma_{6} = \gamma_{5}^{3^{k^{\prime}-1}}$ and that $2 \leq k^{\prime}  = \max_{i}k_{i}$.
Since each cycle in $\gamma_{5}$ has at most one point not in $R_{1,2}$, each cycle of length $3^{k^{\prime}}$ in $\gamma_{5}$ contributes one cycle of length $3$ in $\gamma_{6}$ whose support is not contained in $R_{1,2}$.
Thus it follows that $|\overline{J}| = |I_{1} \cap \overline{I}|$, and that, since the support of the cycles of length $3^{k^{\prime}}$ in $\gamma_{5}$ have at least 8 points in $R_{1,2}$, we must have
\begin{equation}\label{eqn:overlineJbound}
|\overline{J}| \le \frac{n}{4}.
\end{equation}
Thus the collection $\{\omega(d_{j}) \colon  j \in \overline{J}\}$ has at most $\frac{n}{4}$ points, and we may choose some $k \in \overline{J}$ such that, upon writing $\omega(d_{k}) = \etwo{x_{k}}{y_{k}}$, there exists some $3 \le \ell \le n$ such that $\gamma_{6}$ fixes the point $\etwo{\ell}{y_{k}}$. Consider
$$\gamma_{7} = \gamma_{6}^{\phi^{R}_{x_{k},\ell}}\gamma_{6}.$$
Then $\gamma_{7}$ contains cycles determined by the following:
\begin{enumerate}
\item
\label{cyclce:one}
A pair of $3$-cycles corresponding to each (un-ordered) pair of indices $j_{1},j_{2} \in \overline{J}$ such that $\omega(d_{j_{1}}) \in R_{x_{k}}, \omega(d_{j_{2}}) \in R_{\ell}$, and $\omega(d_{j_{1}}), \omega(d_{j_{2}})$ lie in the same column.
\item
\label{cyclce:two}
A pair of $2$-cycles corresponding to each index $j \in \overline{J}$ such that either $\etwo{x_{k}}{y_{j}} = \omega(d_{j}) \in R_{x_{k}}$ and $\gamma_{6}$ fixes $\etwo{\ell}{y_{j}}$, or $\etwo{\ell}{y_{j}} = \omega_{d_{j}} \in R_{\ell}$ and $\gamma_{6}$ fixes $\etwo{x_{k}}{y_{j}}$.
\end{enumerate}
The $2$-cycles which arise in case~\eqref{cyclce:two} have support intersecting rows $1,2, x_{k},\ell$. Moreover since $k \in \overline{J}$ satisfies case~\eqref{cyclce:two}, we have at least one pair of $2$-cycles; suppose this pair has support contained in columns $y_{k}, y_{k}^{\prime}$ (note we could have $y_{k} = y_{k}^{\prime}$). Setting $\gamma_{8} = \gamma_{7}^{3}$, we have that
$\gamma_{8}$ consists of only pairs of $2$-cycles corresponding to each $j \in \overline{J}$ satisfying
case~\eqref{cyclce:two}. Setting $\gamma_{9} = \gamma_{8}^{\phi^{R}_{x_{k},3}\phi^{R}_{\ell,4}}$, $\gamma_{9}$ is an involution satisfying the first three  conditions of Lemma~\ref{lemma:r1234}.

Now by~\eqref{eqn:overlineJbound}, there exists a column $F_{1}$ such that $\gamma_{9}$ acts by the identity on the column $F_{1}$. Suppose $y_{k} = y_{k}^{\prime}$. Then $\left( \gamma_{9}^{\phi^{C}_{y_{k},F_{1}}} \right)^{-1} \gamma_{9}$ consists of two pairs of $2$-cycles, supported in rows $1,2,3,4$,; upon conjugating and moving these cycles if necessary, we can apply Lemma~\ref{lemma:somecycles}. If $y_{k} \ne y_{k}^{\prime}$, then setting $\gamma_{10} = \gamma_{9}^{\phi^{C}_{1,y_{k}}\phi^{C}_{2,y_{k}^{\prime}}}$, and if necessary replacing $\gamma_{10}$ with $\gamma_{11} = \gamma_{10}^{\phi^{R}_{1,2}}$, $\gamma_{10}$ is an involution satisfying all four conditions of Lemma~\ref{lemma:r1234}, and the result follows.
\end{proof}

We have now assembled all the ingredients to complete the proof of the technical lemma:
\begin{proof}[Proof of Lemma~\ref{lemma:obtainment}]
Our goal is to show that at least one of
\begin{enumerate}
\item
$H_{1} = \sym(E^{(2)})$ and $H_{2} = \sym(E^{(2)})$.
\item
$H_{1} = \alt(E^{(2)})$ and $H_{2} = \alt(E^{(2)})$
\end{enumerate}
holds. Since both $N_{1}$ and $N_{2}$ are trivial by assumption, and $H_{1}$ and $H_{2}$ are isomorphic by assumption, it suffices to show that at least one of $H_{1}$ or $H_{2}$ is either $\sym(E^{(2)})$ or $\alt(E^{(2)})$. By Jordan's Theorem, it then suffices to show that at least one of $H_{1}, H_{2}$ is primitive and also contains a $p$-cycle for some prime $p<|E^{(2)}|-2$. By Lemma~\ref{lemma:startingarrangement}, at least one of $H_{1}$ or $H_{2}$ is substantial. Since both $H_{1}$ and $H_{2}$ contain $P$, combining Lemma~\ref{lemma:primitive} and Lemma~\ref{lemma:substantialpcycle} gives that at least one of $H_{1}$ or $H_{2}$ satisfies the hypotheses of Jordan's Theorem, and hence is either $\sym(E^{(2)})$ or $\alt(E^{(2)})$, as desired.
\end{proof}

\end{document}